\documentclass[11pt,reqno,a4paper]{amsart}
\usepackage{amsmath,amsthm,verbatim,amscd,amssymb,setspace,enumitem,hyperref}
\usepackage{exscale,color}
\usepackage{enumitem}
\setcounter{tocdepth}{2}
\usepackage{mathrsfs}

\renewcommand{\epsilon}{\varepsilon}
\newcommand{\N}{\mathbb{N}}
\newcommand{\Z}{\mathbb{Z}}

\newcommand{\R}{\mathbb{R}}
\newcommand{\C}{\mathbb{C}}
\renewcommand{\P}{\mathbb{P}}

\renewcommand{\Re}{\operatorname{Re}}

\newcounter{mtheorem}
\newtheorem{mtheorem}[mtheorem]{Theorem}

\newtheorem{mcor}[mtheorem]{Corollary}

\setcounter{mtheorem}{0}

\renewcommand{\P}{\mathbb{P}}
\newcommand{{\vol}}{\rm vol}

\newcommand{\Ric}{\operatorname{Ric}}

\newcommand{\Rm}{\operatorname{Rm}}

\def\tr{\operatorname{tr}}

\newtheoremstyle{fancy}{}{}{\itshape}{}{\textbf\bgroup}{.\egroup}{ }{}
\newtheoremstyle{fancy2}{}{}{\rm}{}{\textbf\bgroup}{.\egroup}{ }{}

\theoremstyle{fancy}
\newtheorem{theorem}{Theorem}[section]
\newtheorem{lemma}[theorem]{Lemma}
\newtheorem{corollary}[theorem]{Corollary}

\newtheorem{prop}[theorem]{Proposition}

\theoremstyle{fancy2}
\newtheorem{definition}[theorem]{Definition}

\newtheorem{remark}[theorem]{Remark}

\newtheorem{claim}[theorem]{Claim}

\newtheorem{def+prop}[theorem]{Definition $\&$ Proposition}
\theoremstyle{plain}

\textheight250mm
\textwidth167mm

\addtolength{\topmargin}{-15mm}
\addtolength{\oddsidemargin}{-20mm}
\addtolength{\evensidemargin}{-20mm}

\setlist{leftmargin=*}

\numberwithin{equation}{section}

\begin{document}
\title{Stability of asymptotically conical gradient Kähler-Ricci expanders}
\date{\today}
\subjclass{Primary 53E30, 53C55, 32W20, 35K65, 58J37, 35B35}
\keywords{K\"ahler-Ricci flow, complex Monge-Amp\`ere equation, stability of non linear PDEs}
\author{Longteng Chen}
\address{Université Paris-Saclay, CNRS, Laboratoire de Mathématiques d'Orsay, 91405 Orsay, France }
\email{longteng.chen@universite-paris-saclay.fr}
\markboth{Longteng Chen}{Stability of Asymptotically conical gradient Kähler-Ricci expanders}
\begin{abstract}
In this work, we consider a perturbation of an asymptotically conical gradient expanding Kähler-Ricci soliton metric $g$ in the same K\"ahler class. We demonstrate that, under suitable assumptions, the normalized Kähler-Ricci flow starting from the initial perturbed metric exists for all time and converges uniformly to an asymptotically conical gradient expanding Kähler-Ricci soliton metric $g_\infty$. Moreover, if the perturbed initial metric is asymptotic to $g$ at spatial infinity, then the limiting metric coincides with the original soliton, that is, $g_\infty = g$.
\end{abstract}
\maketitle
\section{Introduction}
\subsection{Overview}
In this article, we study the stability of the Kähler–Ricci flow on non compact manifolds admitting a Kähler–Ricci soliton metric. As a natural generalization of Kähler–Einstein metrics, Kähler–Ricci solitons play a central role in complex geometry.
 An \emph{expanding K\"ahler-Ricci soliton} is a triple $(M,g,X)$, where $M$ is a complex manifold with complex structure $J$ and a complete K\"ahler metric $g$ and a complete real-holomorphic vector field $X$ satisfying the equation
\begin{equation}\label{soliton2}
\frac{1}{2}\mathcal{L}_{X}g=\Ric(g)+ g.
\end{equation}
If $X=\nabla^{g} f$ for some real-valued smooth function $f$ on $M$,
then we say that $(M,g,X)$ is \emph{gradient}. In this case, the soliton equation \eqref{soliton2}
reduces to\begin{equation}\label{soliton1}
\operatorname{Hess}_{g}(f)=\Ric(g)+ g,
\end{equation}
if $\omega$ is the K\"ahler form of $g$ and $\rho_{\omega}$ is the Ricci form of $\omega$, we rewrite \eqref{soliton1} as:
\begin{equation}
i\partial\bar{\partial}f=\rho_{\omega}+\omega.
\end{equation}

For a gradient K\"ahler-Ricci soliton $(M,g,X)$, the vector field $X$ is called the
\emph{soliton vector field}. Its completeness is guaranteed by the completeness of $g$
\cite{MR2497489}. The smooth real-valued function $f$ satisfying $X=\nabla^g f$ is called the \emph{soliton potential}. It is unique up to a constant.

A K\"ahler-Ricci expander plays an essential role in the analysis of K\"ahler-Ricci flow, because each K\"ahler-Ricci expander provides a \emph{self-similar} solution to K\"ahler-Ricci flow (see Proposition \ref{self-similar solution}).

The first nontrivial gradient expanding Kähler-Ricci soliton is the Gaussian soliton $(\C^n, g_{\mathrm{eucl}}, r\partial_r)$, where the self-similar solution $g(t)$ is static and converges as $t \to 0^+$ to the Kähler cone $(\C^n, g_{\mathrm{eucl}})$. Another example is due to Cao \cite{MR1449972}, who for any $\lambda>1$ constructed a complete soliton $g_\lambda$ with positive bisectional curvature on $\C^n$; its flow $g_\lambda(t)$ converges locally smoothly to the conical metric $\partial\bar{\partial}(|\cdot|^{\frac{2}{\lambda}})$ on $\C^n \setminus \{0\}$.

Feldman–Ilmanen–Knopf \cite{MR2058261} extended this picture by constructing, for $k>n\ge2$, complete gradient expanding solitons on the line bundle $\mathcal{O}(-k)$ over $\C\P^{n-1}$. Their associated flow converges as $t \to 0^+$ to the Kähler cone $(\C^n/\Z^k,\; i\partial\bar\partial\left(\frac{|\cdot|^{2p}}{p}\right)),$
with $p>0$, away from the zero section. In this case, the soliton carries positive scalar curvature.

We observe that the above examples describe an important class of Kähler-Ricci solitons modeled by Kähler cones. Inspired by these examples, we define the notion of an asymptotically conical gradient Kähler-Ricci expander, following the work of Conlon-Deruelle-Sun \cite{MR4711837}.

\begin{definition}[Asymptotically conical gradient K\"ahler-Ricci expander]\label{ACKR expander}
    An asymptotically conical gradient K\"ahler-Ricci expander is a triple $(M,g,X)$ being a complete expanding gradient K\"ahler-Ricci soliton of complex dimension $n\geq 2$ with complex structure $J$
whose curvature $\operatorname{Rm}(g)$ satisfies
\begin{equation*}
\sup_{x\in M}|(\nabla^{g})^{k}\operatorname{Rm}(g)|_{g}(x)d_{g}(p,\,x)^{2+k}<\infty\quad\textrm{for all $k\in\mathbb{N}_{0}$,}
\end{equation*}
where $d_{g}(p,\,\cdot)$ denotes the distance to a fixed point $p\in M$ with respect to $g$. 
\end{definition}

It can be verified that the above three examples (Gaussian soliton, Cao's solitons and FIK solitons) are asymptotically conical gradient K\"ahler-Ricci expanders. 

\subsection{Main results}\label{main results}
Let $(M,g,X)$ be an asymptotically conical gradient Kähler-Ricci expander. In this article, we perturb this metric $g$ by adding a term of the form $\partial\bar\partial\psi_0$ and investigate the resulting metric's evolution under the Kähler-Ricci flow. 
\subsubsection{Statement of main theorems}
Our main results are as follows:
\begin{mtheorem}[Convergence theorem]\label{convergence theorem}
Let $(M, g, X)$ be an asymptotically conical gradient Kähler-Ricci expander with asymptotic K\"ahler cone $(C,g_0)$. Let $f$ be the normalized soliton potential as in Lemma \ref{soliton indentities}. 

Assume that $\psi_0$ is a smooth real-valued function on $M$ satisfying \textbf{Condition II} (see Definition \ref{condition II}). Then there exists a smooth function $\psi\in C^{\infty}(M\times [0,\infty);\R)$ such that $g_\psi(\tau):=g+\partial\bar\partial\psi(\tau)$ is a complete immortal solution to the normalized Kähler-Ricci flow (see Definition \ref{NKRF eq}) for $\tau\ge0$, starting from the initial metric $g_{\psi_0}:=g+\partial\bar\partial\psi_0$. 

Moreover, 
\begin{enumerate}[label=\textnormal{(\alph{*})}, ref=(\alph{*})]
\item the initial metric $g_{\psi_0}$ is asymptotic conical with a unique asymptotic K\"ahler cone $(C,g_0')$;
\item as $\tau \to \infty$, the solution $g_\psi(\tau)$ converges smoothly and uniformly to an asymptotically conical gradient Kähler-Ricci expander $(M, g_\infty, X)$. More precisely,
\begin{enumerate}[label=\textnormal{(\roman{*})}, ref=(\roman{*})]
    \item for every $k \in \mathbb{N}_0$, there exists a constant $C_k > 0$ such that for all $(x, \tau) \in M \times [0, \infty)$,
\begin{equation}\label{convergence rate}
    \begin{split}
    &\left|\left(\nabla^{g_\psi(\tau)}\right)^k\Rm(g_\psi(\tau))\right|_{g_\psi(\tau)}(x)\le C_k f(x)^{-1-\frac{k}{2}};\\
        &\left|(\nabla^{g_\psi(\tau)})^k \left(g_\psi(\tau) - g_\infty\right)\right|_{g_\psi(\tau)}(x) \le C_k e^{-\tau} f(x)^{-1 - \frac{k}{2}}.
    \end{split}
\end{equation}
\item The expanding soliton $(M, g_\infty, X)$ is the unique (up to biholomorphisms) asymptotically conical gradient Kähler-Ricci expander with asymptotic cone $(C,g_0')$.
\end{enumerate}
 \item Furthermore, if the initial perturbation satisfies
\begin{equation}\label{quantitative asymptotic decay}
    \left|(\nabla^g)^k (g_{\psi_0} - g)\right|_g = o\left(f^{-\frac{k}{2}}\right)
\quad \text{for all } k \in \mathbb{N}_0,
\end{equation}
then the asymptotic cone of $(M,g_{\psi_0})$ is $(C,g_0)$ and the limiting metric satisfies $g_\infty = g$.
\end{enumerate}
\end{mtheorem}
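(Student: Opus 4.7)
The approach is to reduce the normalized K\"ahler-Ricci flow for $g_\psi(\tau) = g + i\partial\bar\partial \psi(\tau)$ to a scalar parabolic complex Monge--Amp\`ere equation of the form
\begin{equation*}
\partial_\tau \psi \;=\; \log\frac{(\omega + i\partial\bar\partial\psi)^n}{\omega^n} + X(\psi) + f + c,
\end{equation*}
and to study it in Banach spaces of functions weighted by powers of the soliton potential $f$. The guiding observation is that the soliton identities give $f \sim d_g(p,\cdot)^2/4$ at infinity, so the weight $f^{-k/2}$ is precisely the spatial scale appearing both in Definition \ref{ACKR expander} and in the conclusion.

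For part (a), Condition II will give enough decay on $\psi_0$ and its derivatives that $g_{\psi_0}$ satisfies the curvature decay of an asymptotically conical metric, and the asymptotic cone $(C, g_0')$ is determined by the leading transverse behaviour of $g + i\partial\bar\partial\psi_0$ along the radial direction; uniqueness of the cone reduces to uniqueness of tangent cones at infinity for metrics with quadratic curvature decay. Short-time existence of the flow is obtained by linearising the Monge--Amp\`ere operator at $\psi_0$ and inverting in weighted H\"older spaces. Long-time existence rests on a priori $C^0$ and $C^2$ estimates of Aubin--Yau--Cao type, where the drift $X(\psi)$ coming from the soliton structure is what compensates for the non-compactness of $M$, followed by weighted Shi-type higher-order curvature estimates.

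For the convergence in (b), I would linearise the flow at a candidate limit. The linearised operator is the drift Laplacian $L = \Delta_g + X$ associated to the soliton, and on suitable weighted spaces $L - \mathrm{Id}$ has a spectral gap that yields exponential decay with rate $e^{-\tau}$ for the perturbation. Combined with the a priori estimates, this gives both the curvature bound $C_k f^{-1 - k/2}$ on $g_\psi(\tau)$ and the second inequality in \eqref{convergence rate}. The limit $g_\infty$ satisfies the expanding soliton equation \eqref{soliton2} with the same vector field $X$ and inherits the asymptotic cone $(C,g_0')$ of $g_{\psi_0}$, so that assertion (b)(ii) follows from the Conlon--Deruelle--Sun uniqueness theorem for ACKR expanders with prescribed asymptotic cone.

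Part (c) is then a comparison of asymptotic cones. Under $|(\nabla^g)^k(g_{\psi_0} - g)|_g = o(f^{-k/2})$ the perturbation $i\partial\bar\partial\psi_0$ is subleading at infinity with respect to $\omega$, so $(C,g_0') = (C,g_0)$, and the uniqueness in (b)(ii) applied to $g$ and $g_\infty$ forces $g_\infty = g$. The main obstacle I anticipate is the long-time existence together with the sharp exponential convergence on a non-compact manifold: the weighted estimates must be tuned so that they stay uniformly finite along the flow while still producing the $f^{-1-k/2}$ spatial decay, and one must extract a genuine spectral gap from the drift Laplacian in the chosen weighted spaces in order to obtain the $e^{-\tau}$ rate.
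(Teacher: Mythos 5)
Your overall architecture (reduce to a scalar Monge--Amp\`ere equation, prove long-time existence via weighted a priori estimates, then identify the limit via the Conlon--Deruelle--Sun uniqueness theorem) matches the paper's, but the mechanism you propose for the exponential convergence is where the argument would break down. You plan to linearise the flow at a candidate limit and extract the $e^{-\tau}$ rate from a spectral gap of $L-\mathrm{Id}$ in weighted spaces. Condition II does not assume the perturbation is small: $g_{\psi_0}$ is only required to be bi-Lipschitz to $g$, the limit $g_\infty$ is in general a \emph{different} expander (with cone $(C,g_0')\neq(C,g_0)$), and it is not known in advance. A linearisation argument therefore has no soliton to linearise at and no smallness with which to absorb the quadratic error terms. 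The paper instead exploits an exact structural identity of the normalized Monge--Amp\`ere equation $\partial_\tau\psi=\log\frac{\omega_\psi^n}{\omega^n}+\frac{X}{2}\cdot\psi-\psi$ (note the $-\psi$ term, which is absent from the equation you wrote down and is precisely what produces the rate): differentiating in time gives $\left(\partial_\tau-\Delta_{\omega_\psi,X}\right)\dot\psi=-\dot\psi$, so $e^{\tau}\dot\psi$ solves the drift heat equation along the \emph{nonlinear} flow, and a maximum principle on the non-compact $M$ (built from the caloric barrier $e^{\tau}f_\psi$) yields $|\dot\psi|\le Ce^{-\tau}$ globally. The higher-order decay of the obstruction tensor $T=\mathcal{L}_{\frac{X}{2}}g_\psi-\Ric(g_\psi)-g_\psi$, namely $|\nabla^kT|\le C_ke^{-\tau}f^{-1-\frac{k}{2}}$, is then obtained by induction on $k$ using the weighted quantities $e^{2\tau}f_\psi^{k+2}|\nabla^kT|^2$, not by spectral theory; integrating in $\tau$ gives both lines of \eqref{convergence rate}.

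Two further points. First, the long-time existence is itself nontrivial here: the paper obtains it from Shi's existence theorem for complete metrics of bounded curvature together with a uniform curvature bound proved via a Phong--\v Se\v sum--Sturm type identity for $S=|\Gamma(g_\psi)-\Gamma(g)|^2$ and Yau's $C^2$ estimate with $\psi/f_\psi$ as barrier; your ``weighted Shi-type estimates'' would need to be carried out with barriers adapted to $f_\psi$ so that the bounds are uniform in $\tau$, which is the actual technical content. Second, in part (c) you conclude $g_\infty=g$ directly from the uniqueness in (b)(ii), but the Conlon--Deruelle--Sun theorem only identifies two expanders with the same cone \emph{up to biholomorphism}; it does not force that biholomorphism to be the identity. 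The paper needs the quantitative decay \eqref{quantitative asymptotic decay} to show that the self-similar solutions $g(t)$ and $g_\infty(t)$ come out of the same cone metric in the same gauge, and then invokes a uniqueness theorem for conical K\"ahler--Ricci flows from the companion paper to conclude $g_\infty=g$ on the nose.
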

\begin{remark}
The assumption \eqref{quantitative asymptotic decay} guarantees that the asymptotic cone of $(M,g_{\psi_0})$ is precisely $(C,g_0)$. If, instead of assuming \eqref{quantitative asymptotic decay}, we only require that the asymptotic cone of $(M,g_{\psi_0})$ is $(C,g_0)$, then by Theorem \ref{CDS theorem} we may conclude that $(M,g,X)$ and $(M,g_\infty,X)$ differ by a biholomorphism. However, in the absence of the quantitative condition \eqref{quantitative asymptotic decay}, we cannot assert that this biholomorphism is the identity.

  If the initial perturbation is not necessarily small, we cannot, in general, expect $g_\infty = g$. A simple counterexample is provided by the Gaussian soliton $(\mathbb{C}^n, g_{\mathrm{eucl}})$. Let $\psi_0 = \alpha \frac{r^2}{2}$ for some $\alpha > -1$, where $r$ denotes the radial function. Then $g_\psi(\tau) \equiv g_{\psi_0} = (1+\alpha) g_{\mathrm{eucl}}$ is a solution to the normalized Kähler–Ricci flow as in Theorem \ref{convergence theorem}. Hence, $g_\infty = (1+\alpha) g_{\mathrm{eucl}} \neq g_{\mathrm{eucl}}$ unless $\alpha = 0$.

 From a dynamical systems perspective, Theorem \ref{convergence theorem} implies that the moduli space of solutions to the normalized Kähler–Ricci flow has infinitely many fixed points. Each fixed point, appearing as an asymptotically conical gradient Kähler–Ricci expander, is uniquely characterized by its asymptotic Kähler cone, as established by Conlon, Deruelle, and Sun \cite{MR4711837}. This result drastically differs from Cao's work (Theorem \ref{cao}): on a closed complex manifold $M$ with $c_1(M)<0$, the classical normalized K\"ahler-Ricci flow converges to the unique K\"ahler-Einstein metric on $M$.
\end{remark}
\begin{remark}
    This spatial convergence rate \eqref{convergence rate} is optimal. We consider an initial K\"ahler potential $\psi_0$ such that $(\nabla^g)^k\psi_0=O(f^{-\frac{k}{2}})$ for all $k\in\N_0$, then by Theorem $\ref{convergence theorem}$, the solution to the normalized K\"ahler-Ricci flow $g_\psi(\tau)$ starting from $g_{\psi_0}$ will converge to $g_{\infty}=g$. Thus when $\tau=0$, we have for all $k\in\N_0$,
    \begin{equation*}
        \left|(\nabla^g)^k \left(g_{\psi_0} - g\right)\right| =O(f^{-1 - \frac{k}{2}}). 
    \end{equation*}
\end{remark}

In 2005, Chau and Schnürer \cite{MR2191907} established one of the first stability results for gradient expanding Kähler–Ricci solitons. They proved that Cao’s expander with positive bisectional curvature is stable under smooth perturbations of the soliton potential, provided the perturbed metric remains complete with bounded curvature and decays appropriately outside the unit disk in $\C^n$ ($n\ge2$). In particular, the Kähler–Ricci flow with such initial data, after rescaling, converges back to the soliton as $t\to\infty$. A natural class of admissible perturbations consists of compactly supported smooth functions.

In particular, as a corollary of our main results (Theorem \ref{convergence theorem}), the theorem of Chau and Schnürer extends to general asymptotically conical Kähler–Ricci expanders, namely:
\begin{mcor}
Let $(M,g,X)$ be an asymptotically conical gradient Kähler–Ricci expander with complex structure $J$. Suppose $\psi_0$ is a $JX$-invariant smooth real-valued function on $M$ with compact support, and assume that the perturbed tensor
\begin{equation*}
    g_{\psi_0} := g + \partial \bar{\partial} \psi_0
\end{equation*}
defines a Kähler metric. Then there exists a complete immortal solution $g(t)$ to the Kähler–Ricci flow with initial metric $g(0) = g_{\psi_0}$. Moreover, after a suitable rescaling, $g(t)$ converges uniformly to the original soliton metric $g$ as $t \to \infty$.

\end{mcor}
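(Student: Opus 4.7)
The plan is to obtain the corollary as a direct application of Theorem \ref{convergence theorem} combined with the self-similar correspondence between the normalized and unnormalized Kähler--Ricci flows recorded in Proposition \ref{self-similar solution}. Since $\psi_0$ is compactly supported, every spatial derivative $(\nabla^g)^k \psi_0$ vanishes outside some compact $K$, so any decay estimate of the form $o(f^{-k/2})$ (or a quantitative polynomial bound in $f$) is automatic on $M\setminus K$, while on $K$ the required bounds follow by compactness together with the Kähler hypothesis on $g_{\psi_0}$; the $JX$-invariance ties $\psi_0$ to the soliton structure of $(g,X)$ and I expect it to be one of the clauses of Definition \ref{condition II}. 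The verification of \textbf{Condition II} is therefore essentially a bookkeeping exercise.

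With Condition II in hand, Theorem \ref{convergence theorem} produces a complete immortal solution $g_\psi(\tau)$ of the normalized Kähler--Ricci flow starting from $g_{\psi_0}$ that converges smoothly and uniformly to some asymptotically conical gradient expander $(M, g_\infty, X)$. The same compact-support argument gives $(\nabla^g)^k(g_{\psi_0}-g) = o(f^{-k/2})$ for every $k\in\N_0$, so the quantitative asymptotic decay condition \eqref{quantitative asymptotic decay} is trivially satisfied, and part (c) of Theorem \ref{convergence theorem} then forces $g_\infty = g$.

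It remains to translate the normalized convergence into a statement about the ordinary Kähler--Ricci flow. Proposition \ref{self-similar solution} should provide a substitution of the form $g(t) = (1+t)\,\Phi_{\log(1+t)}^{*}\,g_\psi(\log(1+t))$, where $\Phi_\tau$ denotes the flow generated by an appropriate rescaling of the soliton vector field $X$. Under this correspondence, $g(t)$ is an immortal Kähler--Ricci flow with $g(0)=g_{\psi_0}$, and the smooth uniform convergence $g_\psi(\tau) \to g$ as $\tau \to \infty$ established by Theorem \ref{convergence theorem} translates, after dividing by $1+t$ and pulling back by $\Phi_{\log(1+t)}$, into the rescaled convergence back to $g$ claimed in the corollary.

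The only genuinely new input beyond Theorem \ref{convergence theorem} is the check that Condition II applies to compactly supported $JX$-invariant potentials, and I do not expect any analytic obstacle there: compact support is the strongest form of spatial decay, so every clause of Definition \ref{condition II} that involves behavior near infinity should hold vacuously. The hard part has already been done inside Theorem \ref{convergence theorem}.
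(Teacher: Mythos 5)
Your proposal is correct and is exactly the derivation the paper intends: compact support makes every asymptotic clause of Condition II hold vacuously outside the support (and by smoothness on it), the Killing clause follows from $JX\cdot\psi_0=0$, the decay hypothesis \eqref{quantitative asymptotic decay} is trivially satisfied so part (c) of Theorem \ref{convergence theorem} gives $g_\infty=g$, and Proposition \ref{correspondence} converts the normalized flow into an immortal Kähler--Ricci flow whose rescaled pullbacks converge to $g$. The only nitpick is notational: with the paper's convention $\Phi_t=\Phi_X^{-\frac{1}{2}\log t}$, the unnormalized solution is $g(t)=(1+t)\,\Phi_{1+t}^{*}\,g_\psi(\log(1+t))$, i.e.\ the pullback is by $\Phi_{1+t}$ rather than $\Phi_{\log(1+t)}$.
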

Estimating the maximal existence time of the Kähler–Ricci flow is often a crucial step in the analysis. By Theorem \ref{convergence theorem}, one obtains an immortal solution to the Kähler–Ricci flow (see Proposition \ref{correspondence}) whenever the initial Kähler potential satisfies Condition II (see Definition \ref{condition II}). The next theorem shows that even under weaker assumptions on the initial data, one can still ensure the existence of an immortal solution to the Kähler–Ricci flow:

\begin{mtheorem}[Long time existence theorem]\label{longtime existence theorem}
Let $(M, g, X)$ be an asymptotically conical gradient Kähler-Ricci expander. Let $f$ be the normalized soliton potential as in Lemma \ref{soliton indentities}. Suppose $\psi_0$ is a smooth real-valued function on $M$ satisfying \textbf{Condition I} (see Definition \ref{condition I}).

Then there exists a smooth function $\psi\in C^{\infty}(M\times [0,\infty);\R)$ such that $g_\psi(\tau):=g+\partial\bar\partial\psi(\tau)$ is a complete immortal solution to the normalized Kähler-Ricci flow (see Definition \ref{NKRF eq}) for $\tau\ge0$, starting from the initial metric $g_{\psi_0}:=g+\partial\bar\partial\psi_0$. Moreover, there exists a constant $C>1$ such that for all $(x,\tau)\in M\times [0,\infty)$
\begin{enumerate}
\item $|\frac{\partial}{\partial\tau}\psi(x,\tau)|+f(x)|\nabla^{g_\psi(x,\tau)}\frac{\partial}{\partial\tau}\psi(x,\tau)|_{g_\psi(x,\tau)}\le Ce^{-\tau}$;
    \item $\frac{1}{C}g(x)\le g_\psi(x,\tau)\le Cg(x)$;
    \item $f(x)|\nabla^gg_\psi(x,\tau)|^2_g\le C$;
    \item $|\Rm(g_\psi(x,\tau))|_{g_\psi(x,\tau)}\le C$.
\end{enumerate}
And for all $m\in\N^*$, for any $\alpha>0$ there exists a constant $C_m=C(\dim M,m,\psi_0,\alpha)>0$ such that
\begin{equation*}
    \sup_M\left|\left(\nabla^{g_\psi(\tau)}\right)^m\Rm(g_\psi(\tau))\right|_{g_\psi(\tau)}\le C_m, \quad \textrm{for all $\tau\ge \alpha$}.
\end{equation*}
\end{mtheorem}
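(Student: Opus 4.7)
The plan is first to reduce the (modified) normalized K\"ahler--Ricci flow for $g_\psi(\tau) := g + \partial\bar\partial\psi(\tau)$ to a scalar parabolic complex Monge--Amp\`ere equation for $\psi$, schematically
\begin{equation*}
\partial_\tau\psi = \log\frac{(\omega+i\partial\bar\partial\psi)^n}{\omega^n} + X(\psi) - \psi,
\end{equation*}
where the drift $X(\psi)$ and damping $-\psi$ encode the soliton identity for $g$. Short-time existence on a maximal interval $[0,T_{\max})$ then follows from standard Shi-type theory for the K\"ahler--Ricci flow on complete noncompact manifolds, since Condition I provides an initial datum $g_{\psi_0}$ that is complete, uniformly equivalent to $g$, and of bounded curvature.

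\textbf{Estimate on $\partial_\tau\psi$ and its gradient.} Setting $u := \partial_\tau\psi$, differentiating the Monge--Amp\`ere equation in $\tau$ yields the drift heat equation
\begin{equation*}
(\partial_\tau - \Delta_{g_\psi} - X + 1)u = 0.
\end{equation*}
The damping coefficient $+1$ forces exponential decay: setting $v := e^\tau u$, one has $(\partial_\tau - \Delta_{g_\psi} - X)v = 0$ and the maximum principle gives $|u|\le Ce^{-\tau}$. On the noncompact $M$ this requires either an Omori--Yau type principle for the drift Laplacian $\Delta_{g_\psi}+X$ or a barrier built from the quadratic growth of $f$; Condition I controls the initial value. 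The weighted gradient bound $f|\nabla u|^2 \le Ce^{-2\tau}$ (second half of item (i)) follows from a Bernstein-type calculation on $e^{2\tau}f|\nabla u|^2$: the bad terms are dominated by $u^2$ via Cauchy--Schwarz and the soliton identities, and the maximum principle is closed with cutoffs.

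\textbf{Metric equivalence, curvature, and continuation.} Estimate (ii) is the pivotal $C^2$-bound. The plan is to apply an Aubin--Yau/Chern--Lu argument to $H := \log\operatorname{tr}_g(g_\psi) - A\psi$ for a sufficiently large constant $A>0$: the negative contribution $-A\partial_\tau\psi$, controlled by the just-established bound on $u$, absorbs the positive error from the Chern--Lu computation, while $X(H)$ is compatible with the soliton structure; this yields a one-sided bound on $\operatorname{tr}_g g_\psi$, and the symmetric bound on $\operatorname{tr}_{g_\psi}g$ is obtained similarly. The weighted third-order bound (iii) on $|\nabla^g g_\psi|$ is then a Calabi-type identity on $S := |\nabla^g g_\psi|_g^2$, and the curvature bound (iv) is a Hamilton/Shi-style estimate on $|\Rm|^2_{g_\psi}$, both closed by a suitably modified maximum principle. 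For $\tau\ge\alpha>0$ the higher derivative bounds $|(\nabla^{g_\psi(\tau)})^m\Rm(g_\psi(\tau))|\le C_m$ follow from Shi's local derivative estimates applied with time-localization from $\tau=\alpha/2$. Finally, the uniform curvature and metric-equivalence bounds on $[0,T_{\max})$ combined with Shi's continuation criterion give $T_{\max}=\infty$.

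\textbf{Main obstacle.} The hardest step will be the metric equivalence (ii) on the noncompact manifold in the presence of the soliton drift. The classical Yau $C^2$-argument must be adapted so that the soliton identity $\Ric(g)+g = \frac{1}{2}\mathcal{L}_X g$ cooperates with the Chern--Lu lower bound and so that the drift term $X(H)$ does not destroy the sign of the crucial negative term; it is exactly here that the decay of $\partial_\tau\psi$ enters in an essential way. A secondary technical issue throughout is making the maximum principle rigorous on $M$, which requires either an Omori--Yau principle adapted to $\Delta_{g_\psi}+X$ or carefully chosen weights built from $f$, so that one may simultaneously localize to a compact region and preserve all sign conditions produced by the soliton identities.
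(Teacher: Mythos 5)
Your overall architecture coincides with the paper's: Shi's solution from the bounded-curvature initial metric, reduction to a drift complex Monge--Amp\`ere equation via the $JX$-symmetry, the damped drift heat equation $(\partial_\tau-\Delta_{\omega_\psi,X})\dot\psi=-\dot\psi$ giving $|\dot\psi|\le Ce^{-\tau}$, a Bernstein computation for $e^{2\tau}f_\psi|\nabla^{g_\psi}\dot\psi|^2$, a parabolic Yau/Chern--Lu $C^2$ estimate, a Calabi-type third-order quantity, a coupled curvature bound, and Shi's derivative estimates for $\tau\ge\alpha$. The maximum principle you invoke is realized in the paper by an explicit barrier $\exp\bigl(e^\tau f_\psi/(T-\beta\tau)\bigr)$, exploiting that $f_\psi=f+\tfrac12 X\cdot\psi$ is a Hamiltonian potential for $X$ with respect to $g_\psi$ satisfying $(\partial_\tau-\Delta_{\omega_\psi,X})f_\psi=-f_\psi$; this matches your ``barrier built from the quadratic growth of $f$'' option.

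However, there is a genuine gap in your pivotal step (ii). You propose $H=\log\operatorname{tr}_g(g_\psi)-A\psi$ and claim the Chern--Lu error is absorbed by $-A\partial_\tau\psi$. Both parts are problematic. First, the absorption in the classical argument comes from the term $-A\operatorname{tr}_{\omega_\psi}\omega$ generated by $+A\Delta_{\omega_\psi}\psi$, not from $-A\dot\psi$ (which only contributes a bounded error once $|\dot\psi|\le Ce^{-\tau}$ is known). Second, and more seriously, under Condition I the potential is only controlled by $|\psi|=O(f)$, so $-A\psi$ is an unbounded barrier; moreover the drift produces
\begin{equation*}
\Bigl(\tfrac{\partial}{\partial\tau}-\Delta_{\omega_\psi,X}\Bigr)(-A\psi)=-A\dot\psi+An-A\operatorname{tr}_{\omega_\psi}\omega+A\,\tfrac{X}{2}\cdot\psi ,
\end{equation*}
and $\tfrac{X}{2}\cdot\psi=\psi+O(1)=O(f)$ is unbounded above, while $-A\operatorname{tr}_{\omega_\psi}\omega$ is merely bounded; so the differential inequality cannot be closed. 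The paper's fix is to use the \emph{bounded} barrier $\psi/f_\psi$ instead: its evolution produces the term $\operatorname{tr}_{\omega_\psi}\omega/f_\psi$ up to $O(f_\psi^{-1})$ errors, which exactly matches the Chern--Lu error $\tfrac{C}{f_\psi}(\operatorname{tr}_{\omega_\psi}\omega+1)$ --- the $f_\psi^{-1}$ weight there being precisely the quadratic curvature decay $|\Rm(g)|_g=O(f^{-1})$ of the asymptotically conical expander. Without this weighting (or an equivalent device) the $C^2$ estimate, and hence everything downstream of it, does not go through. A smaller but related omission: your curvature bound (iv) cannot be closed on $|\Rm|^2$ alone because of the cubic term $|\Rm|^3$; the paper couples $|\Rm(g_\psi)|$ with the Calabi quantity $S=|\Gamma(g_\psi)-\Gamma(g)|^2$ so that the good term $-|\overline{\nabla}\Psi|^2\le-\tfrac12|\Rm(g_\psi)|^2+O(f_\psi^{-2})$ absorbs it.
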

\begin{remark}
    Once we have an immortal solution to the normalized Kähler-Ricci flow, the correspondence between the normalized Kähler-Ricci flow and the Kähler-Ricci flow (Proposition \ref{correspondence}) allows us to obtain an immortal solution to the Kähler-Ricci flow as well.
\end{remark}
For closed complex manifolds, Tian–Zhang’s criterion (Theorem \ref{Tian-Zhang}) provides a characterization of the maximal existence time. In 2011, Lott and Zhang \cite{MR2746389} characterized the first singularity time for a K\"ahler-Ricci flow solution on a general complex manifold. Building on \cite{MR2746389}, Chau, Li, and Tam established an estimate for the maximal existence time of the Kähler–Ricci flow in 2016.

Our Theorem \ref{longtime existence theorem} not only recovers Chau–Li–Tam’s Theorem \cite[Theorem 2.2]{MR3532144} in the context of asymptotically conical gradient Kähler–Ricci expanders, but also strengthens it by providing refined quantitative estimates for the Kähler potential on the spacetime $M \times [1,\infty)$. We need to notice that our proof is self-contained and does not rely on their results.

\subsubsection{Initial conditions}
In the context of Kähler geometry, the key idea is to impose appropriate growth conditions on the Kähler potential. To this end, we introduce two growth conditions.

\begin{definition}[Condition I]\label{condition I}
     Let $(M,g,X)$ be an asymptotically conical gradient K\"ahler-Ricci expander. Let $f$ denote the normalized soliton potential as in Lemma \ref{soliton indentities}. A real-valued smooth function $\psi_0$ defined on $M$ satisfies Condition I if
   \begin{enumerate}
        \item(Metric condition) the function $\psi_0$ is strictly $g-$psh, i.e. $g_{\psi_0}:=g+\partial\bar\partial\psi_0>0$. And there exists a constant $C_0>1$ such that $\frac{1}{C_0}g\le g_{\psi_0}\le C_0g$;
        \item(Killing condition) if $J$ denotes the complex structure of $M$, then $\mathcal{L}_{JX}g_{\psi_0}=0$;
        \item(Asymptotic condition) at spatial infinity $|\psi_0|=O(f)$, and $|(\nabla^g)^i (\frac{X}{2}\cdot\psi_0-\psi_0)|_g=O(f^{-\frac{i}{2}})$ for $i=0,1$, and
        \begin{equation*}
            |\Rm(g_{\psi_0})|_g+f^{\frac{1}{2}}|\nabla^gg_{\psi_0}|_g=O(1).
        \end{equation*}
    \end{enumerate}
\end{definition}
\begin{definition}[Condition II]\label{condition II}
   Let $(M,g,X)$ be an asymptotically conical gradient K\"ahler-Ricci expander. Let $f$ denote the normalized soliton potential as in Lemma \ref{soliton indentities}. A real-valued smooth function $\psi_0$ defined on $M$ satisfies Condition II if
   \begin{enumerate}
        \item(Metric condition) the function $\psi_0$ is strictly $g-$psh, i.e. $g_{\psi_0}:=g+\partial\bar\partial\psi_0>0$;
        \item(Killing condition) if $J$ denotes the complex structure of $M$, then $\mathcal{L}_{JX}g_{\psi_0}=0$;
        \item(Asymptotic condition) at spatial infinity $|\psi_0|=O(f)$, and $|(\nabla^g)^i (\frac{X}{2}\cdot\psi_0-\psi_0)|_g=O(f^{-\frac{i}{2}})$ for $i=0,1$, and
        \begin{equation*}
            \begin{split}
                &|(\nabla^g)^k(g_{\psi_0}-g)|_g=O(f^{-\frac{k}{2}}),\\
                &|(\nabla^g)^k(\mathcal{L}_{\frac{X}{2}}g_{\psi_0}-g_{\psi_0})|_g=O(f^{-1-\frac{k}{2}}), \quad \textrm{for all $k\in\N_0$}.
            \end{split}
        \end{equation*}
    \end{enumerate}
\end{definition}
\begin{remark}
By Lemma \ref{proper function}, the function $f$ is asymptotic to $\tfrac{1}{2} d_g(p,\cdot)^2$ at infinity, where $d_g(p,\cdot)$ denotes the distance function with respect to $g$ from a fixed point $p \in M$. Hence, we use $f$ to measure the asymptotic behavior in place of the distance function $d_g(p,\cdot)$.

In Section \ref{Preliminaries}, we give geometric interpretations for Kähler potentials satisfying Condition I or II. We show that certain asymptotic properties follow directly as corollaries of the metric condition and the Killing condition (see Remark \ref{refined initial condition of condition I}). We also show that if $\psi_0$ satisfies Condition II, then there exists a constant $C_0>1$ such that $\frac{1}{C_0}g\le g_{\psi_0}\le C_0g$. (see Remark \ref{refined geometric condition II})

Moreover, the condition
\begin{equation*}
    \bigl|(\nabla^g)^k(g_{\psi_0}-g)\bigr|_g = O\left(f^{-\frac{k}{2}}\right), \quad \textrm{for all $k \in \mathbb{N}_0$},
\end{equation*}
is automatically satisfied, since
\begin{equation*}
    \bigl|(\nabla^g)^k\bigl(\mathcal{L}_{\frac{X}{2}} g_{\psi_0} - g_{\psi_0}\bigr)\bigr|_g = O\left(f^{-1-\frac{k}{2}}\right), \quad \textrm{for all $k \in \mathbb{N}_0$}.
\end{equation*}

Furthermore, we prove that if $\psi_0$ satisfies Condition II, then the corresponding initial metric is asymptotically conical with a unique asymptotic K\"ahler cone.

\end{remark}
\begin{remark}
   Since $(M,g,X)$ is an expanding gradient Kähler-Ricci soliton, the Reeb vector field $JX$ naturally constitutes a Killing vector field for $g$ (see \cite[Lemma 4.1]{2025arXiv250500167C}). To initiate the normalized Kähler-Ricci flow, we consequently require the initial metric $g_{\psi_0}$ to preserve this Killing symmetry - specifically, that $JX$ remains a Killing vector field for $g_{\psi_0}$. Given that $JX$ is a real-holomorphic vector field, this condition is equivalent to the vanishing of the $(1,1)$-form $\partial\bar{\partial}(JX\cdot\psi_0)$. In Section \ref{reduction}, we show that, without altering the initial metric $g_{\psi_0}$, one may always assume that the Kähler potential is $JX$-invariant, i.e. $JX \cdot \psi_0 = 0$. This invariance, together with the associated Killing vector field, allows us to reduce the Kähler–Ricci flow equation to a complex Monge–Ampère equation on the non compact complex manifold $M$.
\end{remark}

We can easily see that Condition II is stronger than Condition I. In fact, the class of functions satisfying Condition I or II is considerably larger than it first appears.
Let $f$ be the normalized Hamiltonian potential of $X$ with respect to $g$, as in Lemma \ref{soliton indentities}. Let $\psi_0$ be a $g-$psh function, and we assume that $JX\cdot\psi_0=0$ and  
\begin{equation*}  
    (\nabla^g)^k\psi_0 = O(f^{-\frac{k}{2}}) \quad \text{for all } k \in \mathbb{N}_0,  
\end{equation*}  
then $\psi_0$ automatically satisfies Condition II. This condition implies that our perturbation is \emph{small}. Under this assumption, the initial metric $g_{\psi_0}$ is asymptotic to $g$ at spatial infinity.

Another example is given by $\psi_0 = \alpha f$ for some small constant $\alpha$. Thus we have $JX\cdot\psi_0=\alpha JX\cdot f=0$. By the soliton equation \eqref{soliton2}, we have  
\begin{equation*}
    g_{\psi_0} = g + \alpha \partial\bar{\partial} f = (1 + \alpha)g + \alpha \mathrm{Ric}(g).
\end{equation*}
For sufficiently small $\alpha$, the initial metric $g_{\psi_0}$ is bi-Lipschitz to $g$.  

Moreover, Lemma \ref{soliton indentities} implies  
\begin{equation*}
    \frac{X}{2} \cdot \psi_0 - \psi_0 = \alpha(|\partial f|_g^2 - f) = -\alpha(R_\omega + n) = O(1). 
\end{equation*}
For higher-order terms, we verify that  
\begin{equation*}  
    \begin{split}  
        (\nabla^g)^k \psi_0 &= O(f^{1-\frac{k}{2}}); \\  
        (\nabla^g)^k \left(\frac{X}{2} \cdot \psi_0 - \psi_0\right) &= O(f^{-\frac{k}{2}}), \quad \text{for all } k \in \mathbb{N}_0.  
    \end{split}  
\end{equation*}  
Thus, in this case, \(\psi_0 = \alpha f\) satisfies Condition II.  

If $\psi_0$ is a Kähler potential satisfying Condition I or II. Unlike in previous work, where the perturbation of the canonical metric was assumed to be small, the difference $g_{\psi_0} - g$ is not necessarily small in our setting. Nevertheless, we are still able to study the stability of the Kähler–Ricci flow on asymptotically conical gradient Kähler–Ricci expanders.
\subsection{Stability of K\"ahler-Ricci flow on K\"ahler-Einstein manifolds}
A central problem in Kähler geometry is the construction of \emph{canonical metrics} on Kähler manifolds via geometric flows. Among these, the Kähler–Ricci flow has proven to be an especially powerful method. A smooth family of K\"ahler metrics $g(t)_{t\in (0,T)}$ on a complex manifold $M$ is a solution to K\"ahler-Ricci flow if
\begin{equation*}
    \frac{\partial}{\partial t}g(t)=-\Ric(g(t)), \quad t\in (0,T).
\end{equation*}
If the maximal existence time is finite, meaning that the Kähler–Ricci flow develops singularities in finite time, an important aspect of its analysis is to study the asymptotic behavior of the flow near these singularities. On the other hand, if the maximal existence time is infinite, it becomes crucial to investigate the limiting metric (if it exists) as time tends to infinity. In 2006, Tian and Zhang \cite{MR2243679} established an algebraic criterion for determining the maximal existence time of the Kähler–Ricci flow on closed complex manifolds.
\begin{theorem}[Tian-Zhang \cite{MR2243679}]\label{Tian-Zhang}
    Let $M$ be a closed complex manifold endowed with K\"ahler metric $g_0$. Let $c_1(M)$ denote the first Chern class of $M$ and let $[\omega_0]$ be the K\"ahler class of $g_0$. If we define
    \begin{equation*}
        T:=\sup\{t>0\ |\ [\omega_0]-tc_1(M)>0\},
    \end{equation*}
    then there exists a unique maximal solution $g(t)$ to K\"ahler-Ricci flow with $g(0)=g_0$ for $t\in [0,T)$.
\end{theorem}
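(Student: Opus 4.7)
The plan is to reduce the tensorial Kähler--Ricci flow to a scalar parabolic complex Monge--Amp\`ere equation on $M$ by introducing an appropriate reference path of K\"ahler forms, to run short-time existence via standard parabolic theory, to establish a priori estimates that allow continuation on $[0,T)$, and finally to invoke the cohomological identity $[\omega(t)]=[\omega_0]-tc_1(M)$ to rule out any extension past $T$.

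Fix a smooth closed $(1,1)$-form $\rho_0$ representing $c_1(M)$ (for instance $\rho_0=\rho_{\omega_0}$, the Ricci form of $\omega_0$), and define
\begin{equation*}
\hat\omega_t:=\omega_0-t\rho_0.
\end{equation*}
By the definition of $T$ the class $[\hat\omega_t]$ is K\"ahler for all $t<T$; adjusting $\rho_0$ within its cohomology class by $i\p\bar\p$ of a smooth function if necessary, one may arrange that $\hat\omega_t$ is itself a smooth path of K\"ahler forms on each $[0,T']$ with $T'<T$. Writing a putative solution as $\omega(t)=\hat\omega_t+i\p\bar\p u(t)$ with $u(0)=0$, and fixing a smooth volume form $\Omega$ such that $-i\p\bar\p\log\Omega=\rho_0$, the equation $\p_t\omega=-\Ric(\omega)$ translates, via the $\p\bar\p$-lemma on the closed manifold $M$, into the scalar parabolic complex Monge--Amp\`ere equation
\begin{equation*}
\frac{\p u}{\p t}=\log\frac{(\hat\omega_t+i\p\bar\p u)^n}{\Omega},\qquad u(\cdot,0)=0.
\end{equation*}
Short-time existence of a smooth solution follows from the inverse function theorem in parabolic H\"older spaces, since the linearisation is a uniformly parabolic scalar operator on the closed manifold.

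To prolong the solution up to time $T$, I would fix arbitrary $T'<T$ and derive a priori bounds with constants depending only on $g_0$, $T'$, and the geometry of $\{\hat\omega_t\}_{t\in[0,T']}$. The $C^0$ bounds on $u$ and on $\p u/\p t$ follow from the maximum principle applied to the equation and to its time derivative. The key second-order estimate
\begin{equation*}
\mathrm{tr}_{\hat\omega_t}\omega(t)\le C \qquad\text{on } M\times[0,T']
\end{equation*}
is obtained by applying the Aubin--Yau maximum principle to the auxiliary function $\log\mathrm{tr}_{\hat\omega_t}\omega(t)-Au$, with $A>0$ chosen to dominate the bisectional curvature of $\hat\omega_t$ on $[0,T']$; combined with the lower bound on $(\omega(t))^n/\Omega$ coming from the $\p_t u$ estimate, this yields uniform equivalence $C^{-1}\hat\omega_t\le\omega(t)\le C\hat\omega_t$. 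Higher regularity then follows from Evans--Krylov (or the Calabi $C^3$ identity) together with parabolic Schauder theory, giving uniform smooth bounds on $[0,T']$, and a standard continuation argument shows that the maximal smooth existence time is at least $T$.

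Conversely, the smooth flow cannot exist past $T$: on any smooth existence interval the class of $\omega(t)$ is forced to be $[\omega_0]-tc_1(M)$, which contains no K\"ahler form for $t\ge T$ by the very definition of $T$. Uniqueness on $[0,T)$ follows from the maximum principle applied to the difference of two solutions of the scalar Monge--Amp\`ere equation. I expect the main obstacle to be the uniform second-order estimate: the constants produced by the Aubin--Yau computation depend quantitatively on the curvature of the reference path $\hat\omega_t$, and the freedom to replace $\rho_0$ within its cohomology class in order to keep $\hat\omega_t$ a genuine K\"ahler form throughout $[0,T')$ is exactly the flexibility that makes the estimate possible on each $[0,T']$.
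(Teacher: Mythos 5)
The paper does not prove this statement: it is quoted verbatim from Tian--Zhang with a citation, so there is no internal argument to compare against. Your outline is the standard proof of that theorem (reduction to the parabolic complex Monge--Amp\`ere equation along a reference path $\hat\omega_t\in[\omega_0]-tc_1(M)$, a priori estimates on each $[0,T']$ with $T'<T$, Evans--Krylov plus Schauder, and the cohomological obstruction $[\omega(t)]=[\omega_0]-tc_1(M)$ ruling out existence past $T$), and the overall architecture is correct.

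The one step that would fail as literally written is the claim that the bound on $\partial u/\partial t$ ``follows from the maximum principle applied to \dots its time derivative.'' Differentiating the scalar equation gives
\begin{equation*}
\Bigl(\frac{\partial}{\partial t}-\Delta_{\omega(t)}\Bigr)\dot u=\operatorname{tr}_{\omega(t)}\bigl(\partial_t\hat\omega_t\bigr),
\end{equation*}
and the right-hand side is the trace, against the unknown evolving metric, of a form with no sign and no a priori bound; the naive maximum principle yields nothing. The upper bound on $\dot u$ requires the barrier $t\dot u-u-nt$, and -- more importantly -- the lower bound on $\dot u$ (which is exactly the lower bound on $\omega(t)^n/\Omega$ you need to convert the Aubin--Yau trace estimate into two-sided metric equivalence, without which Evans--Krylov does not apply) requires the barrier $(T''-t)\dot u+u+nt$ for some $T'<T''<T$. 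Its heat operator produces $+\operatorname{tr}_{\omega(t)}\hat\omega_{T''}$, which is positive precisely because $[\omega_0]-T''c_1(M)$ is K\"ahler. This is where the definition of $T$ enters the estimates quantitatively (not only through the positivity of the reference path, as your last paragraph suggests), and it needs to be made explicit for the continuation argument to close.
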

A direct application arises on manifolds with negative first Chern class: in 1985, Cao \cite{MR799272} used the Kähler–Ricci flow to construct Kähler–Einstein metrics in this setting.
\begin{theorem}[Cao \cite{MR799272}]\label{cao}
Let $M$ be a closed complex manifold with $c_1(M)<0$. For any Kähler metric $g_0$ whose Kähler form lies in $-c_1(M)$, there exists a unique solution $g(t)$ to the Kähler–Ricci flow with $g(0)=g_0$ for all $t \geq 0$. Moreover, as $t \to \infty$, the rescaled metrics $\tfrac{1}{t} g(t)$ converge smoothly to the unique Kähler–Einstein metric $g_{\textrm{KE}} \in -c_1(M)$.
\end{theorem}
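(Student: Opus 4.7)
The plan is to prove Cao's theorem by reducing the flow to a parabolic complex Monge-Ampère equation and then running standard parabolic a priori estimates. First I would invoke the Tian--Zhang criterion (Theorem \ref{Tian-Zhang}) with $[\omega_0]=-c_1(M)$: for every $t\ge 0$, $[\omega_0]-tc_1(M)=(1+t)[\omega_0]>0$, so the unnormalized Kähler-Ricci flow admits a unique smooth solution $g(t)$ on $[0,\infty)$. Uniqueness on short time intervals (hence globally) follows from the strict parabolicity of the flow, which is the standard DeTurck-type argument.

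Next I would pass to the normalized flow by the time rescaling $s=\log(1+t)$, $\tilde\omega(s)=\frac{1}{1+t}\omega(t)$, which satisfies $\partial_s\tilde\omega=-\Ric(\tilde\omega)-\tilde\omega$ and keeps $[\tilde\omega(s)]=-c_1(M)$. Fixing a reference metric $\omega_{\mathrm{ref}}\in -c_1(M)$ and a smooth volume form $\Omega$ with $-\Ric(\Omega)=\omega_{\mathrm{ref}}$, I would write $\tilde\omega(s)=\omega_{\mathrm{ref}}+i\partial\bar\partial\varphi(s)$ and derive the scalar equation
\begin{equation*}
\partial_s\varphi=\log\frac{(\omega_{\mathrm{ref}}+i\partial\bar\partial\varphi)^n}{\Omega}-\varphi,
\end{equation*}
with initial datum determined by $\omega_0$. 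Short-time existence is automatic; the goal is to produce a smooth limit as $s\to\infty$. Because the statement of the theorem is about $\frac{1}{t}g(t)$, convergence of $\varphi(s)$ in $C^\infty$ to some $\varphi_\infty$ will translate into smooth convergence of $\frac{1}{t}g(t)$ to $g_{\mathrm{KE}}:=\omega_{\mathrm{ref}}+i\partial\bar\partial\varphi_\infty$, and the limit equation $\log\frac{(\omega_{\mathrm{ref}}+i\partial\bar\partial\varphi_\infty)^n}{\Omega}=\varphi_\infty$ rearranges into $\Ric(g_{\mathrm{KE}})=-g_{\mathrm{KE}}$, i.e.\ the Kähler-Einstein equation.

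The core of the argument is the uniform a priori estimates on $\varphi$. I would carry out, in order: (i) a $C^0$ bound on $\varphi$ by applying the maximum principle at a space-time maximum/minimum and exploiting the sign of $-\varphi$ on the right hand side; (ii) exponential decay of $\partial_s\varphi$, obtained by differentiating the equation in $s$ and applying the maximum principle to $e^{s}\partial_s\varphi$, so that $|\partial_s\varphi|\le Ce^{-s}$; (iii) Yau's $C^2$ estimate for $\tr_{\omega_{\mathrm{ref}}}\tilde\omega$ by the maximum principle on $\log\tr_{\omega_{\mathrm{ref}}}\tilde\omega - A\varphi$ for $A$ large; (iv) Calabi's $C^3$ estimate; and (v) parabolic Schauder/bootstrapping for all higher derivatives. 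Combined with (ii), this yields Cauchy-in-$s$ convergence in every $C^k$, so $\varphi(s)\to\varphi_\infty$ smoothly. Finally, uniqueness of $g_{\mathrm{KE}}$ in $-c_1(M)$ is a direct maximum principle comparison (Calabi) that I would invoke as a standard lemma.

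The main obstacle will be the $C^2$ estimate. Because our class is $-c_1(M)$, the Ricci curvature of $\tilde\omega$ sits on the correct side to produce a good sign when one differentiates $\log\tr_{\omega_{\mathrm{ref}}}\tilde\omega$ twice; this, together with the stabilizing term $-\omega$ in the normalized flow, is precisely what allows the choice of $A$ in the auxiliary function to absorb the bad curvature terms of $\omega_{\mathrm{ref}}$. Getting the signs right in this computation, uniformly in $s$, is the delicate point; once (iii) is in hand, (iv) and (v) are routine parabolic bootstrapping, and the convergence follows from (ii).
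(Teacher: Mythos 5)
The paper does not prove this statement; it is quoted as background and attributed to Cao \cite{MR799272}, so there is no in-paper argument to compare against. Your outline is the standard (and correct) proof of Cao's theorem: long-time existence via the cohomological criterion of Theorem \ref{Tian-Zhang} (or, as in Cao's original paper, directly from the a priori estimates), reduction of the normalized flow to the parabolic complex Monge--Amp\`ere equation, the $C^0$ bound and the exponential decay $|\partial_s\varphi|\le Ce^{-s}$ from the maximum principle, Yau's second-order estimate on $\log\tr_{\omega_{\mathrm{ref}}}\tilde\omega - A\varphi$, Calabi's third-order estimate, bootstrapping, and Calabi's uniqueness argument for the limit. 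The only point I would adjust is the heuristic in your last paragraph: in the parabolic $C^2$ estimate the term to be absorbed comes from a lower bound on the bisectional curvature of the fixed reference metric $\omega_{\mathrm{ref}}$ (available by compactness, for any sign of $c_1$), not from a sign of $\Ric(\tilde\omega)$; the negativity of $c_1(M)$ is what powers the $C^0$ estimate, the decay of $\partial_s\varphi$, and the uniqueness of $g_{\mathrm{KE}}$.
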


On non compact complex manifolds, the analysis becomes considerably more difficult, unless one has detailed knowledge of the underlying geometric structure. In 2005, Chau \cite{MR2112629} studied the stability of the Kähler–Ricci flow on a non compact complex manifold $M$ admitting a complete negatively curved Kähler–Einstein metric $g_{\textrm{KE}}$ (i.e. $\Ric(g_{\textrm{KE}}) + g_{\textrm{KE}} = 0$) with bounded curvature. He proved that if $g_{\textrm{KE}}$ is perturbed by a term of the form $\partial\bar\partial u$, where $u$ is a smooth bounded function, then there exists an immortal solution $g(t)$, $t \in [0,\infty)$, to the Kähler–Ricci flow with initial condition $g(0) = g_{\textrm{KE}} + \partial\bar\partial u$. Moreover, as $t \to \infty$, the rescaled metrics $\tfrac{1}{t} g(t)$ converge smoothly to $g_{\textrm{KE}}$ on every compact subset of $M$. On $\C^n$, Chau, Li and Tam \cite{MR3646777} proved the stability of K\"ahler-Ricci flow without supposing that the initial metric has bounded curvature. Their main result is the following: for any complete $U(n)-$invariant initial metric with non-negative bisectional curvature on $\C^n$, the K\"ahler-Ricci flow admits a $U(n)-$invariant long-time solution and converges, after a suitable rescaling at the origin, to the Euclidean metric. Moreover, the curvature becomes immediately bounded and the bisectional curvature stays non-negative.

\subsection{Strategy of proof}
Our approach builds on Shi’s fundamental work \cite{MR1001277} concerning the Ricci flow on non compact manifolds. The key technical tool is the Killing vector field $JX$, which allows us to reduce the (normalized) Kähler-Ricci flow to a complex (normalized) Monge-Ampère equation. To establish both theorems, we develop a parabolic maximum principle tailored to asymptotically conical gradient Kähler-Ricci expanders, enabling us to control essential geometric quantities via their initial values.

For long-time existence (Theorem \ref{longtime existence theorem}),
by applying the maximum principle on non-compact manifolds, we derive uniform bounds for the Riemann curvature tensor in terms of the initial data. Under Condition I, we prove that the curvature is bounded along the (normalized) Kähler-Ricci flow. This crucial curvature control, combined with Shi’s existence criterion, yields the desired long-time existence of solutions.

To prove the convergence Theorem (Theorem \ref{convergence theorem}), we systematically study the evolution equations for all higher-order derivatives $(\nabla^{g_\psi})^k\frac{\partial}{\partial\tau}g_\psi$ ($k\in\mathbb{N}_0$). Under Condition II, careful integration of these evolution equations ultimately establishes the convergence result.
\subsection{Outline of proof}
In Section \ref{Preliminaries}, we establish some fundamental properties of asymptotically conical gradient Kähler-Ricci expanders that will be essential for our analysis. In the remainder of Section \ref{Preliminaries}, we provide a geometric interpretation of Kähler potentials satisfying Conditions I and II. 

Section \ref{Shi's solution section} presents the construction of solutions to the (normalized) Kähler-Ricci flow using Shi's existence theorem. Here, we reduce the tensorial flow equation to an equivalent (normalized) complex Monge-Ampère equation, which serves as the foundation for our subsequent arguments.

The proof of Theorem \ref{longtime existence theorem} is developed in Section \ref{proof of 1}, where we implement a maximum principle adapted to the non-compact manifold $M$. Finally, in Section \ref{proof of 2}, we strengthen our initial assumptions to establish the convergence result in Theorem \ref{convergence theorem}.
\subsection{Acknowledgment}I would like to express my heartfelt gratitude to my PhD supervisor, Alix Deruelle, for his patient guidance, continuous encouragement, and invaluable assistance with the key steps of the proof. And I would like to thank Professor Frank Pacard for his useful suggestions to the first version of this article.
\section{Preliminaries}\label{Preliminaries}
\subsection{Self-similar solutions and normalized K\"ahler-Ricci flow}
A K\"ahler-Ricci expander plays an essential role in the analysis of K\"ahler-Ricci flow, because each K\"ahler-Ricci expander provides a \emph{self-similar} solution to K\"ahler-Ricci flow.
\begin{prop}[Self-similar solution]\label{self-similar solution}
Given a complete expanding Kähler–Ricci soliton $(M, g, X)$, one can construct a solution $g(t)$ to the Kähler–Ricci flow for all $t>0$ such that $g(1) = g$.

Let $\Phi_X^\cdot \colon M \to M$ denote the flow generated by the vector field $X$, and define $\Phi_t := \Phi_X^{-\frac{1}{2} \log t}$ for all $t > 0$. Then, it can be verified that
\begin{equation}
    g(t) := t \cdot \Phi_t^* g,\quad t>0,
\end{equation}
defines a solution to the Kähler–Ricci flow on $M \times (0, \infty)$. We refer to $g(t)_{t>0}$ as the self-similar solution to the Kähler–Ricci flow associated with $(M, g, X)$.
\end{prop}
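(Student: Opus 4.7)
The statement is essentially a direct computational verification, so the plan is to check the two claims by differentiation, using only the soliton equation \eqref{soliton2}, the diffeomorphism invariance of the Ricci tensor, and its scale invariance $\Ric(cg)=\Ric(g)$ for $c>0$. First I would note that the completeness of $g$ implies the completeness of the real holomorphic vector field $X$ (as recalled in the introduction, citing \cite{MR2497489}), so the flow $\Phi_X^{s}\colon M\to M$ is defined for every $s\in\R$; consequently $\Phi_t=\Phi_X^{-\frac{1}{2}\log t}$ is a well-defined diffeomorphism of $M$ for each $t>0$. The initial condition $g(1)=g$ is immediate since $\Phi_1=\Phi_X^{0}=\Id_M$.

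Next I would differentiate $g(t)=t\,\Phi_t^{*}g$ in $t$. Writing $s(t):=-\tfrac{1}{2}\log t$, so that $\frac{ds}{dt}=-\frac{1}{2t}$, the standard identity $\frac{\partial}{\partial s}\Phi_X^{s*}g=\Phi_X^{s*}\mathcal{L}_X g$ gives
\begin{equation*}
\frac{\partial}{\partial t}\Phi_t^{*}g=-\frac{1}{2t}\,\Phi_t^{*}\mathcal{L}_X g.
\end{equation*}
Combining with the product rule yields
\begin{equation*}
\frac{\partial}{\partial t}g(t)=\Phi_t^{*}g-\tfrac{1}{2}\Phi_t^{*}\mathcal{L}_X g=\Phi_t^{*}\Bigl(g-\tfrac{1}{2}\mathcal{L}_X g\Bigr).
\end{equation*}
Now I would substitute the soliton equation \eqref{soliton2}, $\tfrac{1}{2}\mathcal{L}_X g=\Ric(g)+g$, which reduces the parenthesis to $-\Ric(g)$. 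Therefore
\begin{equation*}
\frac{\partial}{\partial t}g(t)=-\Phi_t^{*}\Ric(g)=-\Ric(\Phi_t^{*}g)=-\Ric(t\,\Phi_t^{*}g)=-\Ric(g(t)),
\end{equation*}
where the second equality uses the diffeomorphism invariance of the Ricci tensor and the third uses its invariance under constant rescalings.

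There is no real obstacle here: the only point requiring a comment is that $g(t)$ is a complete Kähler metric on $M$ for every $t>0$, which follows because $\Phi_t$ is a diffeomorphism, $J$ is preserved by the holomorphic flow of $X$, and multiplication by $t>0$ preserves completeness and Kählerness. The slightly delicate bookkeeping is the sign of $s(t)=-\tfrac{1}{2}\log t$: this is precisely what is needed so that the $+g$ coming from $\partial_t(t\,\Phi_t^{*}g)$ cancels the $+g$ appearing on the right hand side of \eqref{soliton2}, leaving only the Ricci term with the correct sign for the flow.
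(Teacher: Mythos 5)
Your verification is correct and is exactly the computation the paper leaves implicit (the proposition is stated with ``it can be verified that'' and no proof is given): differentiating $t\,\Phi_t^{*}g$, using $\partial_s\Phi_X^{s*}g=\Phi_X^{s*}\mathcal{L}_Xg$ with $s=-\tfrac12\log t$, substituting the soliton equation \eqref{soliton2}, and invoking diffeomorphism and scale invariance of $\Ric$. The sign bookkeeping and the remarks on completeness of $X$ and preservation of the Kähler condition are all handled correctly, so there is nothing to add.
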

As $t\rightarrow 0^+$, the self-similar solution $g(t)$ may develop a singularity. It is widely believed that singularities of a Kähler–Ricci flow can be modeled by self-similar solutions. Therefore, understanding the singularities of self-similar solutions is of fundamental importance.

To investigate the long-time behavior of solutions to the Kähler–Ricci flow, it is natural to consider the rescaled flow, namely the normalized Kähler–Ricci flow.
\begin{definition}[Normalized K\"ahler-Ricci flow]
Let $(M,g,X)$ be an asymptotically conical gradient Kähler-Ricci expander.
    A smooth family $g(\tau)_{\tau\in [0,T)}$ of K\"ahler metrics on $M$ is a solution to \emph{normalized K\"ahler-Ricci flow} if
    \begin{equation}\label{NKRF eq}
        \frac{\partial}{\partial\tau}g(\tau)=\mathcal{L}_{\frac{X}{2}}g(\tau)-\Ric(g(\tau))-g(\tau).
    \end{equation}
\end{definition}
Notice that the static flow $g(\tau)=g$ for all $\tau\in \R$ is a solution to normalized K\"ahler-Ricci flow since $(M,g,X)$ is a K\"ahler-Ricci expander. Moreover, we have a 1-1 correspondence between solutions to normalized K\"ahler-Ricci flow and solutions to K\"ahler-Ricci flow.
\begin{prop}\label{correspondence}
    Let $g(\tau)_{\tau\in [0,T)}$ be a solution to the normalized K\"ahler-Ricci flow, then 
    \begin{equation*}
        \bar g(t):=t\Phi_t^*g(\log t),
    \end{equation*}
    for $t\in [1,e^T)$ is a solution to K\"ahler-Ricci flow.
\end{prop}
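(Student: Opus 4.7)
The plan is a direct computation: differentiate $\bar{g}(t) = t\,\Phi_t^{*}g(\log t)$ in $t$, apply the chain rule to the substitution $\tau = \log t$ and the $t$-dependence of $\Phi_t$, and invoke the normalized flow equation \eqref{NKRF eq} to simplify. First I would record the two facts I need about the diffeomorphisms $\Phi_t = \Phi_X^{-\frac{1}{2}\log t}$: the chain rule
\begin{equation*}
\frac{d}{dt}\Phi_t^{*}\alpha \;=\; -\frac{1}{2t}\,\Phi_t^{*}\mathcal{L}_X\alpha
\end{equation*}
for any tensor field $\alpha$ (coming from $\frac{d}{ds}(\Phi_X^{s})^{*} = (\Phi_X^{s})^{*}\mathcal{L}_X$ and $\frac{d}{dt}(-\tfrac{1}{2}\log t) = -\tfrac{1}{2t}$), and the initial value $\bar{g}(1) = \Phi_1^{*}g(0) = g(0)$.

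Next, I would differentiate $\bar{g}(t) = t\,\Phi_t^{*}g(\log t)$ in $t$, treating $\Phi_t^{*}g(\tau)$ as a function of two variables before composing with $\tau = \log t$:
\begin{equation*}
\frac{\partial}{\partial t}\bar{g}(t) \;=\; \Phi_t^{*}g(\log t) \;+\; t\Bigl(-\tfrac{1}{2t}\Phi_t^{*}\mathcal{L}_{X}g(\log t) \;+\; \tfrac{1}{t}\Phi_t^{*}\tfrac{\partial g}{\partial\tau}(\log t)\Bigr).
\end{equation*}
Substituting the normalized flow equation $\frac{\partial g}{\partial\tau} = \mathcal{L}_{X/2}g(\log t) - \Ric(g(\log t)) - g(\log t)$ and using $\mathcal{L}_{X/2} = \tfrac{1}{2}\mathcal{L}_X$, the two Lie-derivative contributions cancel and the $g(\log t)$ terms cancel as well, leaving
\begin{equation*}
\frac{\partial}{\partial t}\bar{g}(t) \;=\; -\Phi_t^{*}\Ric(g(\log t)).
\end{equation*}

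Finally I would appeal to the two standard naturality/scaling properties of the Ricci tensor: $\Phi_t^{*}\Ric(g(\log t)) = \Ric(\Phi_t^{*}g(\log t))$ because $\Phi_t$ is a diffeomorphism, and $\Ric(c\,h) = \Ric(h)$ for any positive constant $c$, hence $\Ric(\Phi_t^{*}g(\log t)) = \Ric(t\,\Phi_t^{*}g(\log t)) = \Ric(\bar{g}(t))$. Combined with the initial condition above, this yields $\frac{\partial}{\partial t}\bar{g}(t) = -\Ric(\bar{g}(t))$ on $[1, e^{T})$, proving the claim.

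There is no serious obstacle here; the only thing to be careful about is the bookkeeping of signs and factors of $\tfrac{1}{2}$ in the chain rule for $\Phi_t^{*}$, which is precisely arranged so that $\mathcal{L}_{X/2}$ from the normalized flow and $-\tfrac{1}{2}\mathcal{L}_X$ from differentiating $\Phi_t$ cancel exactly. (For completeness, the reverse direction—that every Kähler–Ricci flow pulls back to a normalized one via $g(\tau) = e^{-\tau}(\Phi_{e^\tau})_{*}\bar{g}(e^\tau)$—follows from the same computation read backwards, which establishes the one-to-one correspondence asserted in the statement.)
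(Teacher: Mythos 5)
Your computation is correct and is exactly the argument the paper intends: the paper states this proposition without proof, but proves the reverse direction (Lemma \ref{correspondence of NKRF and RF}) by the identical chain-rule calculation, with the same cancellation of the $\mathcal{L}_{X/2}$ and metric terms. The only point you leave implicit is that $\bar g(t)$ remains Kähler, which follows since $X$ is real-holomorphic, so $\Phi_t$ is a biholomorphism and pullback plus constant rescaling preserve Kählerness with respect to $J$.
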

\subsection{Asymptotic conical gradient K\"ahler-Ricci expanders and their structure theorem}
In this section, we summarize the key properties of asymptotically conical gradient Kähler-Ricci expanders that will be used throughout the remainder of the article. Let $(M,g,X)$ be an asymptotically conical gradient Kähler-Ricci expander as in Definition \ref{ACKR expander}, let $\omega$ be the K\"ahler form of $g$ and let $f\in C^\infty(M;\R)$ be a Hamiltonian potential of $X$ such that $\nabla^g f=X$. In their paper \cite{MR4711837}, they established a general structure theorem for an asymptotically conical gradient K\"ahler-Ricci expander.

\begin{theorem}[Structure theorem \cite{MR4711837}]\label{CDS theorem}Let $(M,g,X)$ be an asymptotically conical gradient K\"ahler-Ricci expander. Then there exists a Kähler resolution $\pi: M \to C$ of a Kähler cone $(C, g_0)$ with exceptional set $E$, such that
\begin{enumerate}
    \item the map $\pi$ is a biholomorphism between $M \setminus E$ and $C \setminus \{o\}$, where $o$ denotes the apex of the cone $C$;
    \item let $g(t)_{t>0}$ denote the self-similar solution to K\"ahler-Ricci flow associated to $g$, then when $t$ goes to 0, $\pi_*g(t)$ converges smoothly locally to $g_0$ on $C\setminus\{o\}$;
    \item let $r$ denote the radial function on the K\"ahler cone $(C, g_0)$, we have $d\pi(X)=r\partial_r$;
    \item the asymptotic conical gradient K\"ahler-Ricci expander $(M,g,X)$ is the unique (up to biholomorphisms) asymptotic conical gradient K\"ahler-Ricci expander with asymptotic cone $(C,g_0)$.
\end{enumerate}
\end{theorem}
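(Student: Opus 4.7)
The plan is to follow the approach of Conlon--Deruelle--Sun in three stages. First, I would extract from the quadratic curvature decay assumption and the soliton equation \eqref{soliton1} that the Hamiltonian potential $f$ satisfies $f \sim \frac{1}{2}d_g(p,\cdot)^2$ at infinity and is a proper exhaustion of $M$. Combined with the completeness of $X=\nabla^g f$, this shows that outside a compact set $K\subset M$ (a neighborhood of the eventual exceptional set $E$), the flow of $\tfrac{X}{2}$ sweeps $M\setminus K$ diffeomorphically onto $(R,\infty)\times\Sigma$, where $\Sigma$ is a regular level set of $f$. The exceptional set $E$ is then identified as the compact analytic subvariety $\{X=0\}$, which by the gradient structure is precisely where $f$ attains its minimum.

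Second, I would construct the asymptotic cone by analyzing the self-similar solution $g(t)=t\Phi_t^*g$ from Proposition \ref{self-similar solution}. Under the rescaling $t\to 0^+$, the curvature decay $|\Rm(g)|_g\le C f^{-1}$ becomes scale invariant, and Hamilton--Cheeger--Gromov compactness combined with higher-order Shi-type estimates (using the decay of $(\nabla^g)^k\Rm(g)$) yields smooth local convergence on $M\setminus K$ to a Ricci-flat Kähler cone metric $g_0$ on $C\setminus\{o\}$. The homothetic scaling of the cone corresponds to $r\partial_r$, and since $\mathcal{L}_X g=2g+2\Ric(g)$ with $\Ric(g)$ decaying, the pushed-forward vector field $d\pi(X)$ is asymptotic to $r\partial_r$ and in fact equals it because both generate the same one-parameter family of dilations of the limit. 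The resolution map $\pi\colon M\to C$ arises by extending the identification of ends across $E$; holomorphicity of $\pi$ follows because $J$ is parallel along the flow of $X$, so the limiting cone inherits a compatible complex structure and $\pi$ intertwines the two.

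The main obstacle is the uniqueness statement (4), which is the core analytic input. Given two expanders $(M_i,g_i,X_i)$ with the same asymptotic cone $(C,g_0)$, their self-similar flows $g_i(t)$ both converge to $g_0$ on $C\setminus\{o\}$ as $t\to 0^+$. The strategy is to set up a biholomorphism at infinity using the cone identification, extend it across the exceptional sets by a Hartogs-type removable singularity argument (valid because $E_i$ has complex codimension at least one and $J$ is smooth globally), and then appeal to a backward uniqueness result for the Kähler-Ricci flow with conical initial data to show the two flows must coincide under this identification. The delicate point is handling the singular initial time $t=0^+$: one needs a uniqueness theorem for the complex Monge--Ampère flow starting from a Kähler cone, which typically requires refined weighted estimates near the apex and exploits that both expanders share the same Kähler class on each end $M_i\setminus E_i\cong C\setminus\{o\}$ in a controlled way.
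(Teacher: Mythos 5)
First, note that the paper itself does not prove this statement: it is quoted as a black-box structure theorem from Conlon--Deruelle--Sun \cite{MR4711837}, so there is no internal proof to compare your argument against. Judged on its own terms, your outline follows the broad strategy of that reference (blow down the self-similar solution, identify the cone at infinity, build the resolution map, then prove uniqueness), but it contains one concrete error and leaves the genuinely hard steps as gestures.

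The error: you assert that the blow-down limit is a \emph{Ricci-flat} K\"ahler cone. This is false, and is contradicted by the examples recalled in the introduction of this very paper: Cao's expanders on $\C^n$ have asymptotic cone $i\partial\bar\partial\bigl(|\cdot|^{2/\lambda}\bigr)$ with $\lambda>1$, and the Feldman--Ilmanen--Knopf expanders have asymptotic cone $i\partial\bar\partial\bigl(|\cdot|^{2p}/p\bigr)$ on $\C^n/\Z^k$; neither is Ricci-flat in general. The quadratic curvature decay only yields the scale-invariant bound $|\Rm(g_0)|_{g_0}\le C r^{-2}$ on the limit, which is exactly what a general metric cone satisfies; nothing in the rescaling forces $\Ric(g_0)=0$. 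If your derivation of item (ii) or of the identification $d\pi(X)=r\partial_r$ uses Ricci-flatness of the limit, it breaks at that point.

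Beyond this, the two places where the real work lies are treated too lightly. Constructing $\pi$ as a genuine K\"ahler \emph{resolution} of a normal cone, biholomorphic off a compact analytic exceptional set $E$ (which is not simply $\{X=0\}$ a priori), requires more than ``extending the identification of ends across $E$.'' And the uniqueness in (iv) is the main theorem of \cite{MR4711837}: it rests on a careful analysis of the soliton complex Monge--Amp\`ere equation with prescribed conical data (the present paper invokes a related flow-uniqueness statement from \cite{2025arXiv250500167C} for exactly this kind of step), not on a generic ``backward uniqueness for the K\"ahler--Ricci flow,'' which is not available at the singular time $t=0^{+}$ in the generality you would need. As a reconstruction of the cited theorem your sketch captures the skeleton, but it is not a proof.
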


A fundamental feature of gradient Kähler–Ricci expanders is encapsulated in the celebrated soliton identities, which play a central role in the analysis of their geometry and dynamics.
\begin{lemma}[Soliton identities]\label{soliton indentities}
    \begin{equation*}
        \begin{split}
            &\Delta_\omega f=n+R_\omega,\\
            &\nabla^g R_\omega+\Ric(g)(X)=0,\\
            &|\partial f|_g^2+R_\omega=f+\textrm{constant}.
        \end{split}
    \end{equation*}
    Here $n=\dim_\C M$, $\Delta_\omega$ is the K\"ahler Laplacian, $R_\omega=\frac{1}{2}R_g$ is the K\"ahler scalar curvature, $R_g$ is the scalar curvature of $g$.
\end{lemma}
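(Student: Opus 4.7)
The three identities all follow by pointwise manipulations of the soliton equation
\begin{equation*}
\operatorname{Hess}_g f = \operatorname{Ric}(g) + g,
\end{equation*}
combined with the contracted second Bianchi identity and the standard Ricci commutation formula. The plan is to derive them one at a time in the order stated.

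For the first identity, I would take the Riemannian trace of the soliton equation with respect to $g$. Using $\tr_g(\operatorname{Hess}_g f) = \Delta_g f$ and $\tr_g(g) = 2n$, this yields $\Delta_g f = R_g + 2n$. The Kähler convention $\Delta_\omega f = g^{\alpha\bar\beta}\partial_\alpha\partial_{\bar\beta} f$ gives $\Delta_\omega = \tfrac{1}{2}\Delta_g$ on real-valued functions, and combined with $R_\omega = \tfrac{1}{2} R_g$ one obtains $\Delta_\omega f = n + R_\omega$.

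For the second identity, I would take the Riemannian divergence of both sides of the soliton equation. By the contracted second Bianchi identity, $\operatorname{div}(\operatorname{Ric}(g))_j = \tfrac{1}{2}\nabla_j R_g$, and $\operatorname{div}(g) = 0$. For the right-hand side I would use the commutation identity
\begin{equation*}
\nabla^i\nabla_i\nabla_j f = \nabla_j \Delta_g f + \operatorname{Ric}(\nabla^g f)_j,
\end{equation*}
which, together with the first identity, gives $\operatorname{div}(\operatorname{Hess}_g f)_j = \nabla_j R_g + \operatorname{Ric}(g)(X)_j$. Equating the two computations produces $\nabla^g R_g + 2\operatorname{Ric}(g)(X) = 0$, which, after dividing by $2$, is the desired equality $\nabla^g R_\omega + \operatorname{Ric}(g)(X) = 0$.

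For the third identity, I would differentiate $|\nabla^g f|_g^2 = \nabla^i f \cdot \nabla_i f$ and apply the soliton equation:
\begin{equation*}
\tfrac{1}{2}\nabla_j |\nabla^g f|_g^2 = (\operatorname{Hess}_g f)_{ji}\nabla^i f = \operatorname{Ric}(g)(X)_j + \nabla_j f.
\end{equation*}
Substituting the second identity $\operatorname{Ric}(g)(X) = -\nabla^g R_\omega$ and using $|\nabla^g f|_g^2 = 2|\partial f|_g^2$, this rearranges to $\nabla^g(|\partial f|_g^2 + R_\omega - f) = 0$, so on the connected manifold $M$ integration yields the claimed equality up to an additive constant; the normalized potential is the one for which this constant vanishes. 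No step is genuinely hard; the only real obstacle is keeping scrupulous track of the factor-of-two conventions that distinguish $\Delta_\omega$ from $\Delta_g$, $R_\omega$ from $R_g$, and $|\partial f|_g^2$ from $|\nabla^g f|_g^2$.
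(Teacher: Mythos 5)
Your proof is correct and is exactly the standard computation (trace, then divergence plus contracted Bianchi and the Ricci commutation formula, then contraction with $\nabla^g f$) that the paper itself defers to by citing a reference rather than writing out; the factor-of-two bookkeeping between $\Delta_\omega$ and $\Delta_g$, $R_\omega$ and $R_g$, and $|\partial f|_g^2$ and $|\nabla^g f|_g^2$ is handled correctly throughout.
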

\begin{proof}
    The proof is standard (see \cite[Section 2 of Chapter 1]{MR2302600}) since $\Ric(g)+g=\partial\bar\partial f$.
\end{proof}
\textbf{From now on, we normalize $f$ such that $|\partial f|_g^2+R_\omega+n=f$}.

Like the potential of Gauss' soliton in the Euclidean case, our soliton potential $f$ is comparable to $\frac{d_g(p,\cdot)^2}{2}$ for any fixed point $p\in M$ at spatial infinity.
\begin{lemma}\label{proper function}
    The normalized potential $f$ is a proper function, and for a fixed point $p\in M$ there exist constants $C,c_1,c_2>0$ such that 
    \begin{equation*}
        \frac{(d_g(p,x)-c_1)^2}{2}-C\le f(x)\le  \frac{(d_g(p,x)+c_2)^2}{2},\quad \forall x\in M.
    \end{equation*}
\end{lemma}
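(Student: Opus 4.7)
The strategy is to analyse the restriction of $f$ to a minimizing unit-speed geodesic emanating from $p$, converting the Hessian identity
\begin{equation*}
\operatorname{Hess}_g(f) = \Ric(g) + g
\end{equation*}
into a second-order ODE in arc length whose right-hand side is tightly controlled by the quadratic curvature decay built into Definition~\ref{ACKR expander}. The soliton identities of Lemma~\ref{soliton indentities} are not really needed for the proof; they only fix the additive constant, which is then absorbed into the constants $c_1,c_2,C$ in the statement.

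First I would observe that $|\Rm(g)|$ is in fact uniformly bounded on all of $M$: the defining condition yields $|\Rm(g)|(y)\le C_0\, d_g(p,y)^{-2}$ for $d_g(p,y)\ge 1$, while on the compact set $\{d_g(p,\cdot)\le 1\}$ smoothness of $\Rm(g)$ gives a bound automatically. Combining the two regimes produces a single estimate $|\Ric(g)|(y)\le C_1(1+d_g(p,y)^2)^{-1}$ valid on all of $M$. Completeness of $g$ and Hopf--Rinow then guarantee, for every $x\in M$, a unit-speed minimizing geodesic $\gamma\colon[0,d_g(p,x)]\to M$ from $p$ to $x$, along which $d_g(p,\gamma(s))=s$.

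The core computation is the following. Setting $h(s):=f(\gamma(s))$, the Hessian identity together with the previous step yields
\begin{equation*}
h''(s)=1+\Ric(g)(\dot\gamma,\dot\gamma)(\gamma(s)),\qquad |h''(s)-1|\le \frac{C_1}{1+s^2}.
\end{equation*}
Since $(1+s^2)^{-1}$ is integrable on $[0,\infty)$, one gets
\begin{equation*}
\left|h(s)-\tfrac12 s^2-h'(0)\,s-h(0)\right|\le \int_0^s\int_0^u \frac{C_1}{1+v^2}\,dv\,du \le \tfrac{\pi}{2}C_1\, s,
\end{equation*}
so that $f(x)=\tfrac12 d_g(p,x)^2+O(d_g(p,x))$ uniformly in $x$, where the implicit constants depend only on $f(p)$, $|\nabla^g f|_g(p)$, and the curvature bounds. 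Completing the square upward and downward then gives the two inequalities of the lemma with explicit $c_1,c_2,C>0$, and the lower inequality immediately implies $f(x)\to\infty$ as $d_g(p,x)\to\infty$, i.e.\ properness of $f$.

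The only mildly delicate point is bookkeeping of the error terms across the compact part of $\gamma$ (where the quadratic bound $C_1(1+s^2)^{-1}$ is not sharp, but $|\Ric(g)|$ is already controlled by smoothness), and the asymptotic part (where the quadratic decay furnishes the integrable tail). I do not foresee a real obstacle: everything reduces to integrating a scalar ODE whose forcing term is integrable, and the quadratic curvature decay of Definition~\ref{ACKR expander} is precisely what is needed for this integrability.
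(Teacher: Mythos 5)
Your argument is correct and complete. Note, however, that the paper does not prove this lemma at all: it simply cites \cite[Proposition 2.19]{MR4711837}, so you have supplied a genuinely self-contained proof where the paper outsources one. Your route — restricting the \emph{real} Hessian identity $\operatorname{Hess}_g(f)=\Ric(g)+g$ (the paper's equation \eqref{soliton1}, which is indeed the full real Hessian and not merely the $(1,1)$-part, since $X$ is real-holomorphic) to a unit-speed minimizing geodesic and integrating the resulting ODE $h''=1+\Ric(\dot\gamma,\dot\gamma)$ with the integrable forcing term furnished by the quadratic curvature decay of Definition \ref{ACKR expander} — is clean and gives the sharp leading coefficient $\tfrac12 d_g(p,x)^2$ with an $O(d_g(p,x))$ error, from which the two-sided bound follows by completing the square. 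The standard argument in the literature (and in Conlon--Deruelle--Sun) is different in flavor: for the upper bound one uses the soliton identity $|\partial f|_g^2+R_\omega=f+\mathrm{const}$, i.e.\ $|\nabla^g f|_g^2\le 2f+C$, to deduce $|\nabla^g\sqrt{2f+C}|_g\le 1$ and hence $f\le\tfrac12(d_g(p,\cdot)+c_2)^2$ using only a scalar curvature bound, with the lower bound then extracted from the Hessian identity much as you do. The trade-off is that your approach needs the quadratic Ricci decay (available here, but stronger than necessary for the upper bound), while the gradient-estimate approach is the one that generalizes to expanders without curvature decay. Your bookkeeping (uniform bound $|\Ric(g)|\le C_1(1+d_g(p,\cdot)^2)^{-1}$ by combining the decay at infinity with compactness of the unit ball, and the estimate $\int_0^s\int_0^u C_1(1+v^2)^{-1}\,dv\,du\le\tfrac{\pi}{2}C_1 s$) is accurate, and absorbing the normalization constant of Lemma \ref{soliton indentities} into $C,c_1,c_2$ is legitimate since the statement allows arbitrary positive constants.
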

\begin{proof}
    See \cite[Proposition 2.19]{MR4711837}.
\end{proof}
The normalization that we exploit ensures that our soliton potential $f$ is strictly positive.
\begin{prop}\label{lower bound of f}
    There exists an $\varepsilon>0$ such that 
    \begin{equation*}
        \inf_M f\ge\varepsilon>0.
    \end{equation*}
\end{prop}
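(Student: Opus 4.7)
The plan is to combine the properness of $f$ furnished by Lemma \ref{proper function} with a direct strong-maximum-principle argument, exploiting the chosen normalization $|\partial f|_g^2 + R_\omega + n = f$ to convert the soliton identities into a single scalar linear inequality for $f$.

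The first step is to extract the key differential inequality. Substituting the normalization $R_\omega + n = f - |\partial f|_g^2$ into the trace identity $\Delta_\omega f = R_\omega + n$ from Lemma \ref{soliton indentities} yields the pointwise identity
\[
\Delta_\omega f + |\partial f|_g^2 = f \quad \text{on } M,
\]
which rearranges to
\[
(\Delta_\omega - 1)\,f = -|\partial f|_g^2 \le 0 \quad \text{on } M.
\]
Thus $f$ is a supersolution of the elliptic operator $L := \Delta_\omega - 1$, whose zeroth-order coefficient is $-1 \le 0$, putting us in the regime where Hopf's strong maximum principle is valid.

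Next, Lemma \ref{proper function} ensures that $f$ is proper with $f \to +\infty$ at spatial infinity, so $f$ attains its global infimum at some interior point $x_0 \in M$. Set $\varepsilon := f(x_0)$; the task reduces to ruling out $\varepsilon \le 0$. Here I would invoke Hopf's strong minimum principle for $L$: since $M$ is connected and $f$ is a supersolution of $L$ attaining its infimum at an interior point $x_0$, if $\varepsilon \le 0$ then $f$ must be constant on $M$. This contradicts the properness of $f$, so necessarily $\varepsilon > 0$ and $\inf_M f \ge \varepsilon > 0$ as claimed.

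There is no substantive obstacle in this plan; the only care required is in the sign bookkeeping when invoking the strong maximum principle, and in noting that although $M$ is non-compact the minimum principle still applies because properness forces the infimum to be attained at an interior point.
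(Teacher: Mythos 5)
Your proof is correct, but it follows a genuinely different route from the paper's. The paper's proof is a one-line reduction: it cites an external result (Lemma 2.2 of the companion paper arXiv:2505.00167) asserting that $R_\omega + n \ge \varepsilon > 0$ on any asymptotically conical gradient K\"ahler--Ricci expander, and then the normalization $f = |\partial f|_g^2 + R_\omega + n \ge R_\omega + n$ gives the bound immediately. You instead stay entirely inside the paper: combining $\Delta_\omega f = n + R_\omega$ with the normalization yields $\Delta_\omega f - f = -|\partial f|_g^2 \le 0$, properness (Lemma \ref{proper function}) forces the infimum to be attained at an interior point, and the strong minimum principle for the operator $\Delta_\omega - 1$ (zeroth-order coefficient $\le 0$) rules out a nonpositive minimum unless $f$ is constant, which properness forbids. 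Your sign bookkeeping is right, and the argument is self-contained, needing only Lemmas \ref{soliton indentities} and \ref{proper function}. What the paper's approach buys is the stronger pointwise statement $f \ge R_\omega + n \ge \varepsilon$ everywhere, tracing the positivity of $f$ back to the strict positivity of $R_\omega + n$, which the paper also uses elsewhere (e.g.\ in the proof of Theorem \ref{convergence theorem}); your approach buys independence from the external citation. A small streamlining of your endgame: at the interior minimum $x_0$ one has $\partial f(x_0)=0$, so $\Delta_\omega f(x_0) = f(x_0)$, and $\Delta_\omega f(x_0)\ge 0$ already gives $f(x_0)\ge 0$; the strong minimum principle is then only needed to exclude equality.
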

\begin{proof}
 This result follows from the fact that $R_\omega + n \ge \varepsilon > 0$ for some positive constant $\varepsilon$, which holds for any asymptotically conical gradient Kähler-Ricci expander $(M, g, X)$. For the proof of this fact, see \cite[Lemma 2.2]{2025arXiv250500167C}.
\end{proof}
Our Kähler–Ricci expander $(M,g,X)$ is of gradient type. Such expanders enjoy additional symmetries since they admit a Killing vector field $JX$. This fact is closely related to the following lemma:
\begin{lemma}\label{Killing vector}
Let $(M,g,J)$ be a Kähler manifold with complex structure $J$ and Kähler metric $g$. Then a real holomorphic vector field $JX$ is Killing if $X$ is the gradient of a smooth function with respect to $g$.
\end{lemma}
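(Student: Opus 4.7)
The plan is to show that $\mathcal{L}_{JX}g = 0$ by first establishing $\mathcal{L}_{JX}\omega = 0$ and then combining this with the real-holomorphicity hypothesis $\mathcal{L}_{JX}J = 0$, via the Kähler identity $g(\cdot,\cdot) = \omega(\cdot, J\cdot)$.

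First I would compute the contraction $\iota_{JX}\omega$. Using $\omega(Y,Z) = g(JY,Z)$ and the hypothesis $X = \nabla^g f$, so that $g(X,\cdot)=df$, one obtains
\[
(\iota_{JX}\omega)(Y) = \omega(JX,Y) = g(J^2 X, Y) = -g(X,Y) = -df(Y),
\]
so $\iota_{JX}\omega = -df$ is exact. Since the Kähler form $\omega$ is closed, Cartan's magic formula gives
\[
\mathcal{L}_{JX}\omega = d(\iota_{JX}\omega) + \iota_{JX}d\omega = -d^2 f = 0.
\]

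Next, to pass from $\mathcal{L}_{JX}\omega = 0$ to $\mathcal{L}_{JX}g = 0$, I would apply the Leibniz rule to $g(Y,Z) = \omega(Y,JZ)$. Expanding $\mathcal{L}_{JX}\bigl(\omega(Y,JZ)\bigr)$ and using the identity $\mathcal{L}_{JX}(JZ) = (\mathcal{L}_{JX}J)Z + J\mathcal{L}_{JX}Z = J\mathcal{L}_{JX}Z$ (here is where the real-holomorphicity of $JX$ enters), together with $\mathcal{L}_{JX}\omega = 0$, yields $(\mathcal{L}_{JX}g)(Y,Z) = 0$ for all vector fields $Y,Z$.

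The argument is essentially a formal manipulation and there is no substantive analytic obstacle; the only care required is in the bookkeeping of sign conventions for $\omega$ and $J$. The real-holomorphicity hypothesis on $JX$ is used exactly once, at the final step, to ensure that $\mathcal{L}_{JX}$ commutes with $J$ and hence allows the transfer of the vanishing from $\omega$ to $g$.
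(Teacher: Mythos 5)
Your proof is correct, but it takes a genuinely different route from the paper's. The paper works entirely on the Riemannian side: it writes $(\mathcal{L}_{JX}g)(Y,Z)=g(\nabla^g_Y JX,Z)+g(Y,\nabla^g_Z JX)$, pulls $J$ out using $\nabla^g J=0$, and reduces the claim to the identity $(\nabla^g)^2f(Y,JZ)+(\nabla^g)^2f(JY,Z)=0$, which holds because the Hessian of $f$ is Hermitian ($J$-invariant) when $X=\nabla^g f$ is real-holomorphic. You instead argue symplectically: $\iota_{JX}\omega=\pm df$ is exact, so Cartan's formula and $d\omega=0$ give $\mathcal{L}_{JX}\omega=0$, and then $\mathcal{L}_{JX}J=0$ transfers this to $\mathcal{L}_{JX}g=0$ via $g(\cdot,\cdot)=\omega(\cdot,J\cdot)$ — i.e.\ the standard fact that a field preserving two of $\{g,\omega,J\}$ preserves the third. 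Both proofs invoke real-holomorphicity exactly once, and the two invocations are essentially equivalent (Hermitian Hessian versus $\mathcal{L}_{JX}J=0$). Your version has the advantage of making explicit that $JX$ is the Hamiltonian vector field of $f$ (which underlies the identity $JX\cdot f=0$ used elsewhere in the paper), while the paper's computation is more elementary in that it needs only the Levi-Civita connection and no appeal to Cartan's formula. Your own caveat about sign conventions is the only point requiring care, and it is harmless: whichever convention is used, $\iota_{JX}\omega$ is exact and the argument closes.
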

\begin{proof}
    We suppose that $X=\nabla^g f$ for some smooth function $f$. Let $Y,Z$ be two vector fields, now we compute,
    \begin{equation*}
        \begin{split}
            (\mathcal{L}_{JX}g)(Y,Z)&=g(\nabla^g_{Y}JX,Z)+g(Y,\nabla^g_{Z}JX)\\
            &=g(J\nabla^g_YX,Z)+g(Y,J\nabla^g_ZX)\\
            &=-g(\nabla^g_YX,JZ)-g(JY,\nabla^g_ZX)\\
            &=-(\nabla^g)^2f(Y,JZ)-(\nabla^g)^2f(JY,Z).
        \end{split}
    \end{equation*}
Since $X$ is real-holomorphic, the Hessian $(\nabla^g)^2 f$ is Hermitian. Consequently, the last term vanishes, and we conclude that $JX$ is a Killing vector field with respect to $g$.
\end{proof}
\subsection{Geometric interpretation of Condition I and II}
In this section, we provide a geometric interpretations of Kähler potentials satisfying Condition I or II. First, we show that if $\psi_0$ satisfies Condition I, then certain properties in the asymptotic condition follow as corollaries of the metric condition and the Killing condition.
\begin{prop}\label{metric+killing to asymptotic}
Let $(M,g,X)$ be an asymptotically conical gradient Kähler–Ricci expander with normalized soliton potential $f$, and let $\psi_0$ be a $JX$-invariant Kähler potential, i.e. $JX \cdot \psi_0 = 0$. Suppose there exists a constant $C_0 > 0$ such that
\begin{equation*}
    \frac{1}{C_0} g \leq\; g_{\psi_0}\leq C_0 g,
\end{equation*}
where $g_{\psi_0}=g+\partial\bar\partial\psi_0 $.
Then there exists a constant $C > 0$ such that on $M$, $ |\psi_0| \leq C f$ holds.
\end{prop}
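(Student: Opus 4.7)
The plan is to bound $|\psi_0|$ by $C f$ in two stages: first establish the pointwise derivative bound $|X \cdot \psi_0| \leq C f$, and then integrate along the backward $X$-flow from any far point back to a compact region.

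For the first stage, the metric condition $\tfrac{1}{C_0} g \leq g_{\psi_0} \leq C_0 g$ translates into a pointwise bound $|\partial\bar\partial \psi_0|_g \leq C$, since $g_{\psi_0} - g = i\partial\bar\partial\psi_0(\cdot, J\cdot)$ has eigenvalues in $[\tfrac{1}{C_0}-1,\, C_0 - 1]$. The Killing condition enters through the $(1,0)$ vector field $\xi := \tfrac12 (X - iJX)$, which is holomorphic because $X$ is real-holomorphic. Since $JX \cdot \psi_0 = 0$, one checks that $\xi \cdot \psi_0 = \tfrac12 X \cdot \psi_0$ and that $\xi \cdot \psi_0$ is real-valued. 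Working in local holomorphic coordinates and using $\bar\partial \xi = 0$, one obtains the key commutation identity
\begin{equation*}
\bar\partial(\xi \cdot \psi_0) \;=\; \iota_\xi(\partial\bar\partial \psi_0).
\end{equation*}
Taking pointwise $g$-norms and using $|\xi|_g^2 = |\partial f|_g^2 \leq f$ (from the normalization $|\partial f|_g^2 + R_\omega + n = f$ together with Proposition \ref{lower bound of f}), this yields $|\nabla(X \cdot \psi_0)|_g \leq C \sqrt f$. Integrating this gradient bound along a minimizing $g$-geodesic from a fixed base point $p_0$ to any $q \in M$, whose length is comparable to $\sqrt{f(q)}$ by Lemma \ref{proper function}, I conclude $|X \cdot \psi_0|(q) \leq C f(q)$.

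For the second stage, the soliton identity $|\nabla f|_g^2 = 2 (f - R_\omega - n)$ and the boundedness of $R_\omega$ imply that the critical set of $f$ is contained in a compact region, so for $q$ with $f(q)$ large the backward flow $\Phi_X^{-t}(q)$ is well-defined. Setting $F(t) := f(\Phi_X^{-t}(q))$, the ODE $F'(t) = -2(F - R_\omega - n) \leq -2 F + C$ gives $F(t) \leq e^{-2t} f(q) + C$, so the flow enters a fixed compact set $K = \{f \leq R\}$ at some time $t^* \lesssim \tfrac12 \log f(q)$, on which $|\psi_0|$ is uniformly bounded. Integrating $\tfrac{d}{dt}\psi_0\circ\Phi_X^{-t} = -X \cdot \psi_0 \circ \Phi_X^{-t}$ from $0$ to $t^*$ and combining the bound from the first stage with the exponential decay of $F$ yields $|\psi_0(q)| \leq \sup_K|\psi_0| + C f(q)$. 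Since $\inf_M f > 0$ by Proposition \ref{lower bound of f}, this gives $|\psi_0| \leq C f$ everywhere on $M$.

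The main difficulty lies in the first stage: translating the bound on the complex Hessian $\partial\bar\partial \psi_0$, which controls only the $J$-invariant part of the real Hessian of $\psi_0$, into a gradient bound on the directional derivative $X \cdot \psi_0$. The identity $\bar\partial(\xi \cdot \psi_0) = \iota_\xi \partial\bar\partial \psi_0$ is the key to this transfer, and the Killing condition is precisely what is needed to realize $\xi \cdot \psi_0$ as the real-valued function $\tfrac12 X \cdot \psi_0$, so that the gradient bound descends to the quantity we actually want to integrate.
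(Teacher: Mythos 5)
Your proposal is correct and follows essentially the same strategy as the paper: the identity $\bar\partial(\xi\cdot\psi_0)=\iota_\xi\partial\bar\partial\psi_0$ (equivalently, $g_{\psi_0}(X,\cdot)=g(X,\cdot)+\tfrac12 d(X\cdot\psi_0)$) combined with the bi-Lipschitz bound to get $|X\cdot\psi_0|\le Cf$, followed by integration along the backward $X$-flow down to a compact sublevel set of $f$. The only differences are cosmetic: the paper integrates the radial derivative $X\cdot(X\cdot\psi_0)\le C_1 X\cdot f$ along flow lines instead of your full gradient bound along geodesics, and in the second stage it subtracts $\tfrac{C_3}{2}f$ rather than invoking the exponential decay of $f$ along the backward flow; both variants are sound.
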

\begin{proof}
Since $X$ is a real holomorphic vector field, let $U$ denote the holomorphic part of $X$, so that $X=U+\overline U$. Notice that $JX\cdot\psi_0=0$, we obtain that 
\begin{equation*}
    X\cdot\psi_0=2U\cdot\psi_0=2\overline U\cdot\psi_0.
\end{equation*} 
Locally, the vector field $X$ can be written as
\begin{equation*}
    X = U + \overline{U} = X^i \partial_i + X^{\bar{j}} \partial_{\bar{j}},
\end{equation*}
with $X^i$ holomorphic and $X^{\bar{j}}$ anti-holomorphic. Then, for any vector field $Y$ of the form
\begin{equation*}
    Y = Y^i \partial_i + Y^{\bar{j}} \partial_{\bar{j}},
\end{equation*}
we have
\begin{equation*}
 \begin{split}
        \partial\bar\partial\psi_0(X,Y)&=X^iY^{\bar j}\partial_i\bar\partial_{\bar j}\psi_0+Y^iX^{\bar j}\partial_i\bar\partial_{\bar j}\psi_0\\
        &=Y^{\bar j}\partial_{\bar j}\left(X^i\partial_i\psi_0\right)+Y^{i}\partial_i\left(X^{\bar j}\partial_{\bar j}\psi_0\right)\\
        &=Y^{\bar j}\partial_{\bar j}\left(\frac{1}{2}X\cdot\psi_0\right)+Y^i\partial_i\left(\frac{1}{2}X\cdot\psi_0\right)\\
        &=Y\cdot\left(\frac{1}{2}X\cdot\psi_0\right)
 \end{split}
\end{equation*}
Therefore, we conclude that
\begin{equation*}
   g_{\psi_0}(X,\cdot)=g(X,\cdot)+\frac{1}{2}d(X\cdot\psi_0).
\end{equation*}
    Since $\frac{1}{C_0}g\le g_{\psi_0}\le C_0 g$, then there exists a constant $C_1>0$ such that
    \begin{equation*}
        |X\cdot(X\cdot\psi_0)|=|<X,d(X\cdot\psi_0)>|\le C_1|X|_g^2=C_1X\cdot   f.
    \end{equation*}
   From Lemma \ref{proper function}, we know that $f$ is a proper function. Let us define \begin{equation*}
       C_2=\max_{f(x)\le1}X\cdot\psi_0(x)-C_1f(x).
   \end{equation*} Since $X\cdot(X\cdot\psi_0-C_1f)\le 0$, it follows that on $M$
   \begin{equation*}
       X\cdot\psi_0-C_1f\le C_2.
   \end{equation*}
   By Proposition \ref{lower bound of f}, there exists an $\varepsilon>0$ such that $f\ge \varepsilon$ on $M$. Hence 
   \begin{equation*}
       X\cdot\psi_0\le \left(\frac{C_2}{\varepsilon}+C_1\right)f:=C_3f.
   \end{equation*}
   Recall the soliton identity, $f=n+R_\omega+|\partial f|_g^2$, we define $C_4=n+\sup_M|R_\omega|$, hence $f\le C_4+|\partial f|_g^2$. It follows that
   \begin{equation}\label{derive fonction}
       X\cdot\psi_0\le \frac{C_3}{2}X\cdot f+C_3C_4.
   \end{equation}
   Let $\Phi^{s}$ denote the flow of $X$ for all $s\in \R$, then \eqref{derive fonction} implies that for all $x\in M$, $s<0$, we have
   \begin{equation}\label{integration simple}
       \left(\psi_0-\frac{C_3}{2}f\right)(x)-\left(\psi_0-\frac{C_3}{2}f\right)(\Phi^s(x))\le -C_3C_4s.
   \end{equation}
   We define $C_5=\max_{f(y)\le 1}\left(\psi_0-\frac{C_3}{2}f\right)(y)$. Now we fix $x\in M$ such that $f(x)\ge 1$ and let $s_0\le 0$ such that $f(\Phi^s(x))=1$. It follows that, by \eqref{integration simple},
   \begin{equation*}
       \left(\psi_0-\frac{C_3}{2}f\right)(x)\le -C_3C_4s_0+C_5.
   \end{equation*}
   Now we estimate $s_0$. Let us define $h(s)=f(\Phi^s(x))$ for $s\le0$, we compute
   \begin{equation*}
       \frac{d}{ds}h(s)=X\cdot f(\Phi^s(x))\le 2f(\Phi^s(x)).
   \end{equation*}
   By integration, we have for $s_0\le 0$ such that $h(s_0)=f(\Phi^{s_0}(x))=1$
   \begin{equation*}
       e^{-2s_0}=e^{-2s_0}h(s_0)\le h(0)=f(x).
   \end{equation*}
   Thus $-s_0\le \frac{1}{2}\log f(x)\le f(x)$. Therefore, we have that for $x\in M$ such that $f(x)\ge 1$,
   \begin{equation*}
        \left(\psi_0-\frac{C_3}{2}f\right)(x)\le C_3C_4f(x)+C_5\le \left(C_3C_4+\frac{C_5}{\varepsilon}\right)f(x).
   \end{equation*}
   For $x\in M$ such that $f(x)\le 1$, by the definition of $C_5$, we have,
   \begin{equation*}
          \left(\psi_0-\frac{C_3}{2}f\right)(x)\le C_5\le \frac{C_5}{\varepsilon}f(x).
   \end{equation*}
   We conclude that there exists a constant $C>0$ such that on $M$, $\psi_0\le Cf$ holds. Similarly, we can prove $\psi_0\ge-Cf$.
\end{proof}
\begin{remark}
    The bi-Lipschitz condition $ \frac{1}{C_0} g \leq\; g_{\psi_0}\leq C_0 g$ could not control the full covariant derivatives of $\psi_0$.
\end{remark}
Let $(C,g_0)$ be the Kähler cone appearing in Theorem \ref{CDS theorem}. It is clear that $(C,g_0)$ is the unique asymptotic cone of $(M,g)$. Let $(S,g_S)$ denote the closed Sasaki manifold such that $(C,g_0)$ is the Kähler cone with link $(S,g_S)$. Suppose $\psi_0$ is a Kähler potential satisfying Condition I. In fact, Condition I implies that $\psi_0$ is asymptotic to some function on $C$ at spatial infinity.

In the remainder of this article, we identify $M\setminus E$ with its $C\setminus \{o\}$ via the biholomorphism $\pi$ as in Theorem \ref{CDS theorem}.
\begin{prop}\label{geometric interpretation of condition I}
    Let $r$ denote the radial function of $(C,g_0)$, let $\psi_0$ be a Kähler potential satisfying Condition I. Then there exists a function $\psi_0^S\in C^1(S)$ such that at spatial infinity, we have
    \begin{equation}\label{asymp behavior condition I}
           \begin{split}
                &\left|\frac{2\psi_0}{r^2}-\psi_0^S\right|=O(r^{-2});\\
                &\left|\nabla^{g_S}\left(\frac{2\psi_0}{r^2}-\psi_0^S\right)\right|_{g_S}=O(r^{-2}).
           \end{split}
    \end{equation}
\end{prop}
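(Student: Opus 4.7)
The plan is to exploit the identification $M \setminus E \cong C \setminus \{o\} \cong S \times (0,\infty)$ from Theorem \ref{CDS theorem} (with $g_0 = dr^2 + r^2 g_S$ on the cone and $d\pi(X) = r\partial_r$), and to study the rescaled function $u(r,y) := 2\psi_0(r,y)/r^2$ on the end $\{r \gg 1\}$. The key input is the Condition~I bound on $h := \tfrac{X}{2}\cdot\psi_0 - \psi_0$, namely $|h| = O(1)$ and $|\nabla^g h|_g = O(f^{-1/2}) = O(r^{-1})$, where the last identification uses $f \sim r^2/2$ from Lemma \ref{proper function}.

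The first estimate will follow from a direct integration in $r$. Since $X = r\partial_r$ on the cone end, one computes
\begin{equation*}
\partial_r u \;=\; \partial_r\!\left(\frac{2\psi_0}{r^2}\right) \;=\; \frac{2}{r^3}\bigl(r\partial_r \psi_0 - 2\psi_0\bigr) \;=\; \frac{4h}{r^3},
\end{equation*}
so $|\partial_r u| = O(r^{-3})$ uniformly in $y \in S$. Hence $r \mapsto u(r,y)$ is uniformly Cauchy, and I define $\psi_0^S(y) := \lim_{r\to\infty} u(r,y)$; integrating $\partial_r u$ from $r$ to $\infty$ then gives $|u - \psi_0^S| = O(r^{-2})$, which is the first claimed bound.

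For the tangential estimate and the $C^1$ regularity of $\psi_0^S$, I decompose
\begin{equation*}
|\nabla^{g_0} h|_{g_0}^2 \;=\; (\partial_r h)^2 + r^{-2}|\nabla^{g_S} h|_{g_S}^2
\end{equation*}
using $g_0 = dr^2 + r^2 g_S$. By bi-Lipschitz comparability of $g$ and $g_0$ on the cone end (a routine consequence of the quadratic curvature decay in Definition \ref{ACKR expander} together with the smooth convergence in Theorem \ref{CDS theorem}), the $g$-gradient bound on $h$ transfers to $|\nabla^{g_0} h|_{g_0} = O(r^{-1})$, and therefore $|\nabla^{g_S} h|_{g_S} = O(1)$ uniformly in $r$. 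Then for any $g_S$-unit vector field $Y$ on $S$, the identity $\partial_r(Y\cdot u) = 4(Y\cdot h)/r^3$ gives $|\partial_r(Y\cdot u)| = O(r^{-3})$, so $Y\cdot u(r,\cdot)$ converges uniformly on $S$ as $r \to \infty$. The standard theorem on interchanging uniform convergence and differentiation then yields $\psi_0^S \in C^1(S)$ with $Y\cdot\psi_0^S = \lim_{r\to\infty} Y\cdot u$, and the same integration produces $|Y\cdot(u-\psi_0^S)| = O(r^{-2})$, hence the second bound.

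The only technical point I anticipate is the metric comparison that transfers the $g$-gradient bound on $h$ into a $g_S$-gradient bound on the link; this is routine given the asymptotic conicity hypothesis and the structure theorem, but must be justified carefully so that the constants are uniform in $r$. Beyond this I expect no real obstacle, since everything else reduces to integrating $O(r^{-3})$ along the radial direction on the cone.
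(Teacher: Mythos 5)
Your proposal is correct and takes essentially the same route as the paper's proof: both reduce everything to the identity $\partial_r\bigl(2\psi_0/r^2\bigr)=4h/r^3$ with $h=\tfrac{X}{2}\cdot\psi_0-\psi_0$, integrate the resulting $O(r^{-3})$ bound radially to produce $\psi_0^S$ and the first estimate, and obtain the tangential estimate by converting the $g$-gradient bound on $h$ into a link-gradient bound via the bi-Lipschitz comparison of $g$ and $g_0$ on the cone end (which the paper justifies by citation rather than re-derivation). The only cosmetic difference is that you phrase the $C^1$ convergence through a fixed unit vector field $Y$ on $S$ and the classical interchange-of-limit-and-derivative theorem, whereas the paper works directly with $\nabla^{g_S}\partial_r\kappa$; the content is the same.
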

\begin{proof}
    First we recall \cite[Lemma 3.4]{2025arXiv250500167C} and \cite[Corollary 5.6]{2025arXiv250500167C}: there exist constants $\lambda, C>0$ such that on $\{r^2\ge\lambda\}$, we have
    \begin{equation}\label{comparision}
        \begin{split}
        &\frac{1}{C}g\le g_0\le Cg;\\
        &|\nabla^gg_0|_g\le \frac{C}{r};\\
            &\left|f-\frac{r^2}{2}\right|\le C.
        \end{split}
    \end{equation}
    Since on $C$, $X=r\partial_r$ thanks to Theorem \ref{CDS theorem},
    thus $\frac{X}{2}\psi_0-\psi_0=O(1),|\nabla^g\left(\frac{X}{2}\cdot\psi_0-\psi_0\right)|_g=O(f^{-\frac{1}{2}})$ in Condition I as in Definition \ref{condition II} becomes
    \begin{equation*}
           \begin{split}
               & |r\frac{\partial_r\psi_0}{2}-\psi_0|=O(1);\\
               &\left|\nabla^{g_0}\left(r\frac{\partial_r\psi_0}{2}-\psi_0\right)\right|_{g_0}=O(r^{-1}).
           \end{split}
    \end{equation*}
    Define $\kappa:=\frac{2\psi_0}{r^2}$, thus at spatial infinity of $C$, we have,
    \begin{equation*}
           \begin{split}
                &|\partial_r\kappa|=O(r^{-3});\\
                &|\nabla^{g_0}\partial_r\kappa|_{g_0}=O(r^{-4}).
                \end{split}
    \end{equation*}
    Then $\kappa$ converges uniformly to a continuous function $\psi_0^S$ which is a function defined on $S$ such that $\left|\frac{2\psi_0}{r^2}-\psi_0^S\right|=O(r^{-2})$.
    
    Moreover, $|\nabla^{g_0}\partial_r\kappa|_{g_0}=O(r^{-4})$ implies that $|\nabla^{g_S}\partial_r\kappa|_{g_S}=O(r^{-3})$, thus $\psi_0^S$ is a $C^1$ function and $\left|\nabla^{g_S}\left(\frac{2\psi_0}{r^2}-\psi_0^S\right)\right|_{g_S}=O(r^{-2})$.
\end{proof}
\begin{remark}\label{refined initial condition of condition I}
In the proof of Proposition \ref{geometric interpretation of condition I}, we only require the conditions
\begin{equation*}
    \Bigl|\tfrac{X}{2}\cdot \psi_0 - \psi_0\Bigr| = O(1), \qquad \Bigl|\nabla^g\left(\tfrac{X}{2}\cdot \psi_0 - \psi_0\right)\Bigr|_g = O(r^{-1}).
\end{equation*}
Since [\eqref{asymp behavior condition I}, Proposition \ref{geometric interpretation of condition I}] holds, and because $\psi_0^S$ is a $C^1$ function defined on $S$, it follows that at spatial infinity
\begin{equation*}
    |\psi_0| = O(r^2), \qquad |\partial \psi_0|_{g_0} = O(r).
\end{equation*}
Moreover, by [\eqref{comparision}, Proposition \ref{geometric interpretation of condition I}], we conclude that on $M$
\begin{equation*}
    |\psi_0| = O(f), \qquad |\partial \psi_0|_g = O(f^{\frac{1}{2}}).
\end{equation*}

Here we reprove $|\psi_0|=O(f)$ by asymptotic condition instead of metric condition. We need to pay attention the fact that from the metric condition, we cannot conclude that $|\frac{X}{2}\cdot\psi_0-\psi_0|=O(1)$.
\end{remark}
\begin{prop}\label{geometric interpretation of condition II}
Let $\psi_0$ be a Kähler potential satisfying Condition II. Then the associated metric $g_{\psi_0}$ is asymptotically conical.

More precisely, there exists a Sasaki metric $g_S'$ on $S$  such that the corresponding metric space $(M,g_{\psi_0})$ admits a unique asymptotic K\"ahler cone $(C,g_0')$ with link $(S,g_S')$. Moreover, $(C,g_0')$ is bi-Lipschitz to $(C,g)$, and there exists a smooth function $\psi_0^S$, defined on the link $S$, such that
\begin{equation*}
    g_0' \;=\; \partial \bar{\partial} \!\left( \tfrac{1 + \psi_0^S}{2} \, r^2 \right).
\end{equation*}
\begin{proof}
Since the soliton metric $g$ is asymptotically conical (see \cite[Theorem A]{MR4711837}), indeed, at spatial infinity, we have that 
\begin{equation*}
    |(\nabla^{g_0})^j (g-g_0)|_{g_0}=O(r^{-2-j}), \quad \textrm{for all $j\in\N_0$}.
\end{equation*}
Then the asymptotic condition in Definition \ref{condition II} becomes
\begin{equation*}
    \left|(\nabla^{g_0})^j\left(\mathcal{L}_\frac{X}{2}g_{\psi_0}-g_{\psi_0}\right)\right|_{g_0}=O(r^{-2-j}).
\end{equation*}

   From Proposition \ref{geometric interpretation of condition I}, we know that $\psi_0$ is asymptotic to $\psi_0^S\frac{r^2}{2}$ for some $C^1$ function $\psi_0^S$. Now we prove the function $\psi_0^S$ has more regularities if $\psi_0$ satisfies Condition II. 

  Now we consider $t\Phi_t^*\psi_0$ for all $t>0$ where $\Phi_t$ is the flow of $-\frac{1}{2t}X$ and we compute
  \begin{equation*}
      \frac{\partial}{\partial t}(t\Phi_t^*\psi_0)=\Phi_t^*\left(\psi_0-\frac{X}{2}\cdot\psi_0\right)=O(1).
  \end{equation*}
  It follows that for any $x\in C$ such that $r(x)>0$, $t\Phi_t^*\psi_0(x)$ has a limit. Similarly, by the asymptotic condition in Definition \ref{condition II}, for any $\lambda>0$, on $\{r(x)^2>\lambda\}$, for all $k\in\N_0$, $(\nabla^{g_0})^k\left(\partial\bar\partial (t\Phi_t^*\psi_0)\right)$ converges uniformly when $t$ tends to $0$. 
  
  Let $(\rho,s)\in (0,\infty)\times S=C$, then $t\Phi_t^*\psi_0(\rho,s)=t\psi_0(\frac{\rho}{\sqrt{t}},s)$. By Proposition \ref{geometric interpretation of condition I}, we have
  \begin{equation*}
      \left|t\Phi_t^*\psi_0(\rho,s)-\frac{\rho^2}{2}\psi_0^S(s)\right|=\left|t\psi_0(\frac{\rho}{\sqrt{t}},s)-\frac{\rho^2}{2}\psi_0^S(s)\right|=tO(\frac{t}{\rho^2}).
  \end{equation*}
  This implies when $t$ goes to $0$, the function $t\Phi_t^*\psi_0$ converges to $\frac{r^2}{2}\psi_0^S$. Therefore for any $\lambda>0$, on $\{r(x)^2>\lambda\}$, for any $k\in \N_0$ $(\nabla^{g_0})^k\left(\partial\bar\partial (t\Phi_t^*\psi_0)\right)$ converges uniformly to $(\nabla^{g_0})^{k}\left(\partial\bar\partial\left(\frac{r^2}{2}\psi_0^S\right)\right)$.

For any $\lambda>0$, on $\{r(x)^2>\lambda\}$, by Theorem \ref{CDS theorem}, the self-similar solution $g(t)=t\Phi_t^*g$ converges smoothly uniformly to $g_0=\partial\bar\partial\left(\frac{r^2}{2}\right)$. It turns out that for any $\lambda>0$, on $\{r(x)^2>\lambda\}$, the metric $t\Phi_t^*g_{\psi_0}=g(t)+\partial\bar\partial (t\Phi_t^*\psi_0)$ converges smoothly uniformly to the metric $  g_0' \;=\; \partial \bar{\partial} \!\left( \tfrac{1 + \psi_0^S}{2} \, r^2 \right).$ 

We define $G=\frac{1+\psi_0^S}{2}$, now we prove there exists a Sasaki metric $g_S'$ on $S$ such that $(C,g_0')$ is a K\"ahler cone with link $(S,g_S')$.

  Since $JX$ is a Killing vector field of $g_{\psi_0}$, in Proposition \ref{without changing the metric}, we can assume that $JX\cdot\psi_0=0$, that is, $JX\cdot G=0$ without changing the metric $g_{\psi_0}$.

  First, we compute $\partial\bar\partial G$. Since $JX\cdot G=X\cdot G=0$, by the same reason as in Proposition \ref{metric+killing to asymptotic}, we conclude that for any vector field $Y$, $\partial\bar\partial G(X,Y)=Y\cdot(\frac{1}{2}X\cdot G)=0$. Furthermore $\mathcal{L}_X\partial\bar\partial G=\partial\bar\partial(X\cdot G)=0$, these facts imply that $\partial\bar\partial G$ is a symmetric 2-tensor which is only defined on $S$.

  Now we are going to find $g_S'$ and the corresponding radial function. Notice that
  \begin{equation*}
      \begin{split}
          g_0'&=G\partial\bar\partial\left(\frac{r^2}{2}\right)+\partial\left(\frac{r^2}{2}\right)\otimes\bar\partial G+\partial G\otimes\bar\partial \left(\frac{r^2}{2}\right)+\frac{r^2}{2}\partial\bar\partial G\\
          &=Gdr^2+Gr^2\left(g_S+\frac{1}{2G}\partial\bar\partial G\right)+r\partial r\otimes\bar\partial G+\partial G\otimes r\bar \partial r.
      \end{split}
  \end{equation*}
  The metric $g_0'$  is a conical metric, thus we conclude that $G>0$ and $\left(g_S+\frac{1}{2G}\partial\bar\partial G\right)$ is a Riemannian metric on $S$. Define $R:=\sqrt{G}r$ and $g_S':=g_S+\frac{1}{2G}\partial\bar\partial G$, now we prove
  \begin{equation*}
      g_0'=dR^2+R^2g_S'.
  \end{equation*}
  It suffices to prove that $dr\otimes dG=2\partial r\otimes\bar\partial G$. Consider $Y$ being a vector field defined on $S$, then we have
  \begin{equation*}
      dr\otimes dG(X,Y)=rY\cdot G.
  \end{equation*}
  On the one hand
  \begin{equation*}
    g_0'(X,Y)=  r\partial r\otimes\bar\partial G(X,Y)=\frac{r^2}{2}<Y,\bar\partial G>.
  \end{equation*}
  One the other hand since $JX\cdot R=0$ and $X$ is real-holomorphic,
  \begin{equation*}
     g_0'(X,Y)=\partial\bar\partial \left(\frac{R^2}{2} \right)(X,Y)=Y\cdot\left(\frac{1}{2}X\cdot R^2\right)=\frac{r^2}{2}Y\cdot G.
  \end{equation*}
  hence we have $Y\cdot G=<Y,\bar\partial G>$, therefore $dr\otimes dG=2\partial r\otimes\bar\partial G$.

  Observe that $g_0'=dR^2+R^2g_S'$, thus $g_0'=(\nabla^{g_0'})^2\frac{R^2}{2}$. At the same time $g_0'$ is K\"ahler, this implies that the radial vector field $R\partial_R$ of $g_0'$ is real-holomorphic. Now we prove that $R\partial_R=r\partial_r=X$. It suffices to prove the Hamiltonian potential of $X$ with respect to $g_0'$ is $\frac{R^2}{2}$. Suppose that locally we have $X=X^idz_i+X^{\bar j}dz_{\bar j}$, we compute
  \begin{equation*}
     g'_{0,i\bar j}X^i=X^i\partial_i\partial_{\bar j}\frac{R^2}{2}=\partial_{\bar j}\left(\frac{X}{2}\cdot\frac{R^2}{2}\right)=\partial_{\bar j}\frac{R^2}{2}.
  \end{equation*}
  Hence we get $r\partial_r=X=R\partial_R$.

  Since $G$ is a function which is defined on $S$, then $g_0'$ is bi-Lipschitz equivalent to $g_0$, i.e. there exists a constant $C_0>0$ such that $\frac{1}{C_0}g_0\le g_0'\le C_0 g_0$.

  We now prove that $(M,g_{\psi_0})$ is asymptotically conical. In fact, we prove that for all $j\in\N_0$, on $\{r(x)^2\ge 1\}$,
 \begin{equation*}
     |(\nabla^{g_0'})^{j}\left(g_{\psi_0}-g_0'\right)|_{g_0'}=O(R^{-2-j}).
 \end{equation*}
  For any $j\in\N_0$, we have 
  \begin{equation*}
      \frac{\partial}{\partial t}\left((\nabla^{g_0})^j\partial\bar\partial (t\Phi_t^*g_{\psi_0})\right)=\Phi_t^*\left((\nabla^{g_0})^j\left(\mathcal{L}_{\frac{X}{2}}g_{\psi_0}-g_{\psi_0}\right)\right).
  \end{equation*}
  By the asymptotic condition in Definition \ref{condition II}, we have
 \begin{equation*}
     \left|(\nabla^{g_0})^j\left(\mathcal{L}_\frac{X}{2}g_{\psi_0}-g_{\psi_0}\right)\right|_{g_0}=O(r^{-2-j}).
 \end{equation*}
  We conclude that $\left|(\nabla^{g_0})^j(g_{\psi_0}-g_0')\right|_{g_0}=O(r^{-2-j})$ for all $j\in\N_0$. Thus it is sufficient to show that for any $j\in\N_0$,
 \begin{equation*}
     |(\nabla^{g_0})^{j}g_0'|_{g_0}=O(r^{-j}).
 \end{equation*}
 Notice that $g_0'-Gg_0=r^2R$, where $R=Gg_S'-Gg_S$ is a symmetric 2-tensor defined on $S$, now we compute for all $j\in\N_0$, on the one hand,
 \begin{equation*}
     (\nabla^{g_0})^j(g_0'-Gg_0)=(\nabla^{g_0})^jg_0'-g_0(\nabla^{g_0})^jG=(\nabla^{g_0})^jg_0'-(\nabla^{g_0} )^jG\otimes g_0.
 \end{equation*}
 On the other hand,
\begin{equation*}
    (\nabla^{g_0})^j(g_0'-Gg_0)=(\nabla^{g_0})^j(r^2R)=\sum_{i\le j}(\nabla^{g_0})^ir^2*(\nabla^{g_0})^{j-i}R.
\end{equation*}
 Hence we conclude that \begin{equation*}
     (\nabla^{g_0})^jg_0'=\sum_{i\le j}(\nabla^{g_0})^ir^2*(\nabla^{g_0})^{j-i}R+(\nabla^{g_0} )^jG\otimes g_0,
 \end{equation*}
 and there exists a dimensional constant $C(n)>0$ such that
 \begin{equation*}
     \left|(\nabla^{g_0})^jg_0'\right|_{g_0}\le C(n)\sum_{i\le j}|(\nabla^{g_0})^ir^2|_{g_0}|(\nabla^{g_0})^{j-i}R|_{g_0}+|g_0|_{g_0}|(\nabla^{g_0} )^jG|_{g_0}=O(r^{-j}).
 \end{equation*}
\end{proof}
\end{prop}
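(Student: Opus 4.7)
My strategy is to extract the asymptotic cone of $g_{\psi_0}$ by applying the self-similar rescaling flow $\Phi_t$ generated by $-\tfrac{1}{2t}X$, the same flow that produces $g_0$ from $g$ in Theorem \ref{CDS theorem}. Proposition \ref{geometric interpretation of condition I} (applicable since Condition II is stronger than Condition I) already provides a $C^1$ function $\psi_0^S$ on $S$ with $|2\psi_0/r^2 - \psi_0^S| = O(r^{-2})$; the role of the higher-order decay in Condition II is to upgrade this to smooth convergence at every order, both for $\psi_0$ and for the metric $g_{\psi_0}$.

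The central technical step is to show that $t\Phi_t^* g_{\psi_0}$ converges smoothly and uniformly on every annular shell of $C \setminus \{o\}$ as $t \to 0^+$. A direct computation gives
\begin{equation*}
    \frac{\partial}{\partial t}(t\Phi_t^*g_{\psi_0}) = \Phi_t^*\left(g_{\psi_0} - \mathcal{L}_{X/2}g_{\psi_0}\right),
\end{equation*}
and the hypothesis $|(\nabla^g)^k(\mathcal{L}_{X/2}g_{\psi_0} - g_{\psi_0})|_g = O(f^{-1-k/2})$ from Condition II makes the right-hand side $t$-integrable near $0$ at all covariant orders after rescaling. Combined with the known smooth convergence $t\Phi_t^* g \to g_0$, this produces a smooth limit tensor $g_0'$ on $C \setminus \{o\}$. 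Expressing the potential side in cone coordinates, $t\psi_0(\rho/\sqrt{t}, s) \to \tfrac{\rho^2}{2}\psi_0^S(s)$ by Proposition \ref{geometric interpretation of condition I}, which identifies
\begin{equation*}
    g_0' = \partial\bar\partial\!\left(\tfrac{1+\psi_0^S}{2}\, r^2\right),
\end{equation*}
and simultaneously promotes $\psi_0^S$ from $C^1$ to smooth via the smoothness of $g_0'$.

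Once $g_0'$ is obtained, I would WLOG assume $JX \cdot \psi_0 = 0$ using Proposition \ref{without changing the metric}, which forces $X \cdot \psi_0^S = 0$, so that $G := (1+\psi_0^S)/2$ descends to $S$ and $\partial\bar\partial G$ is a symmetric $2$-tensor on $S$. Expanding $\partial\bar\partial(Gr^2)$ and making the substitution $R := \sqrt{G}\,r$, $g_S' := g_S + \tfrac{1}{2G}\partial\bar\partial G$ (up to normalisation) should split $g_0' = dR^2 + R^2 g_S'$; positivity of $g_{\psi_0}$ combined with the bi-Lipschitz consequence of Condition II (Remark \ref{refined geometric condition II}) forces $G > 0$ and yields bi-Lipschitz equivalence of $g_0'$ with $g_0$. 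The full asymptotically conical decay $|(\nabla^{g_0'})^k(g_{\psi_0} - g_0')|_{g_0'} = O(R^{-2-k})$ then follows from integrating the bound on $\partial_t(t\Phi_t^* g_{\psi_0})$ at every $\nabla^{g_0}$-order up to $t = 1$, after converting between $\nabla^{g_0}$- and $\nabla^{g_0'}$-norms via the decomposition $g_0' - G g_0 = r^2 T$ with $T$ a tensor on $S$. The main obstacle I anticipate is verifying that $g_0'$ is a genuine \emph{cone} metric rather than merely Kähler, and in particular that the original soliton vector field $X$ remains the radial vector field for the new cone; this rests crucially on the Killing symmetry $JX \cdot \psi_0 = 0$ together with $X = r\partial_r$ from Theorem \ref{CDS theorem}, and should reduce to verifying $\tfrac{1}{2}X \cdot R^2 = R^2$ via $X \cdot G = 0$.
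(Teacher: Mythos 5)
Your proposal follows essentially the same route as the paper: rescaling by $t\Phi_t^*$, integrating $\partial_t(t\Phi_t^*g_{\psi_0})=\Phi_t^*(g_{\psi_0}-\mathcal{L}_{X/2}g_{\psi_0})$ against the Condition II decay to extract the smooth limit $g_0'=\partial\bar\partial\bigl(\tfrac{1+\psi_0^S}{2}r^2\bigr)$, then using $JX\cdot\psi_0=0$ to descend $G$ to $S$, substituting $R=\sqrt{G}\,r$, $g_S'=g_S+\tfrac{1}{2G}\partial\bar\partial G$ to exhibit the cone structure with $X=R\partial_R$, and converting $\nabla^{g_0}$-decay to $\nabla^{g_0'}$-decay via $g_0'-Gg_0=r^2T$ with $T$ a tensor on $S$. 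The only caution is that you should derive $G>0$ directly from the limiting cone structure (as the paper does) rather than from Remark \ref{refined geometric condition II}, since that remark is itself a consequence of this proposition.
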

\begin{remark}\label{refined geometric condition II}
In the proof of Proposition \ref{geometric interpretation of condition II}, we relied solely on the assumption
\begin{equation*}
    \bigl|(\nabla^g)^j\left(\mathcal{L}_{\frac{X}{2}}g_{\psi_0}-g_{\psi_0}\right)\bigr|_g=O\left(f^{-1-\frac{j}{2}}\right), \quad j\in\mathbb{N}_0.
\end{equation*}
We now show that if this condition holds, then it follows that
\begin{equation*}
    \bigl|(\nabla^g)^j(g_{\psi_0}-g)\bigr|_g=O\left(f^{-\frac{j}{2}}\right), \quad j\in\mathbb{N}_0.
\end{equation*}
Since for any $j\in\N_0$, $|(\nabla^{g_0})^{j}\left(g_{\psi_0}-g_0'\right)|_{g_0}=O(r^{-2-j}),$ it suffices to show that for any $j\in\N_0$,
\begin{equation*}
    |(\nabla^{g_0})^{j}g_0'|_{g_0}=O(r^{-j}).
\end{equation*}
And this is proved at the end of the proof of Proposition \ref{geometric interpretation of condition II}.

Now we show that the metric $g_{\psi_0}$ is bi-Lipschitz to $g$. Since $g_{\psi_0}$ (resp. $g$) is asymptotically conical to conical metric $g_0'$ (resp. $g_0$), and $g_0'$ is bi-Lipschitz to $g_0$, we conclude that $g_{\psi_0}$ is bi-Lipschitz to $g$, that is, there exists a constant $C_0>1$ such that $\frac{1}{C_0}g\le g_{\psi_0}\le C_0 g$.
\end{remark}
\section{Shi's solution}\label{Shi's solution section}
In 1989, Shi \cite{MR1001277} constructed a solution to the Ricci flow on non compact manifolds. Given the correspondence between the Kähler-Ricci flow and the normalized Kähler-Ricci flow (Proposition \ref{correspondence}), the Ricci flow evolving from the initial data $g_{\psi_0}$ arises as a natural candidate for the establishment of Theorem \ref{longtime existence theorem}.

Let $(M,g,X)$ be an asymptotically conical gradient K\"ahler-Ricci expander and let $\psi_0$ be a smooth function defined on $M$ satisfying Condition I, as in Definition \ref{condition I}.
\subsection{Shi's solution to Ricci flow}
Since the initial metric $g_{\psi_0}$ is a complete Kähler metric with bounded Riemann curvature, Shi's existence theorem \cite{MR1001277} ensures the existence of a complete solution to the Kähler-Ricci flow.
\begin{theorem}[Shi's solution]\label{shi's solution}
    There exists an $1<\tilde{T}_{\max}\le\infty$ such that
there is a complete smooth solution to Ricci flow $(g_\varphi(t))_{t\in[1,\tilde{T}_{\max})}$ starting from $g_{\psi_0}$ such that
    \begin{enumerate}
        \item $\frac{\partial}{\partial t}g_\varphi(t)=-\Ric(g_\varphi(t))$, for all $t\in [1,\tilde{T}_{\max})$;
        \item for all $1<T<\tilde{T}_{\max}$, for all $m\in\N_0$, there exists a constant $C_m=C(n,m,T,\psi_0)$ such that
        \begin{equation*}
\sup_M\left|\left(\nabla^{g_\varphi(t)}\right)^m\Rm(g_\varphi(t))\right|_{g_\varphi(t)}\le \frac{C_m}{(t-1)^{\frac{m}{2}}},\quad \textrm{for all $t\in (1,T]$};
        \end{equation*}
        \item if $\tilde{T}_{\max}\neq+\infty$, then $\limsup_{t\to{\tilde{T}}_{\max}}\sup_M|\Rm(g_\varphi(t)|_{g_\varphi(t)}=\infty$.
    \end{enumerate}
\end{theorem}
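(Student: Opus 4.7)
The plan is to verify that the initial data $g_{\psi_0}$ satisfies the hypotheses of Shi's classical existence theorem for the Ricci flow on non compact manifolds \cite{MR1001277} and then invoke it directly, after a trivial time translation to start at $t=1$ rather than $t=0$. Shi's theorem requires two conditions on the initial Riemannian metric: completeness and a uniform bound on the Riemann curvature tensor. I will therefore first show that both hold for $g_{\psi_0}$, using only Condition I.

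For completeness, I appeal to the metric condition in Definition \ref{condition I}: there exists $C_0 > 1$ with $\frac{1}{C_0} g \le g_{\psi_0} \le C_0 g$. Since $(M, g)$ is a complete Riemannian manifold (by definition of an asymptotically conical gradient K\"ahler-Ricci expander), the bi-Lipschitz equivalence transfers completeness to $g_{\psi_0}$. For the curvature bound, the asymptotic condition in Definition \ref{condition I} gives $|\Rm(g_{\psi_0})|_g = O(1)$, and using again the bi-Lipschitz comparison between $g$ and $g_{\psi_0}$, one obtains $\sup_M |\Rm(g_{\psi_0})|_{g_{\psi_0}} < \infty$. At this point the hypotheses of Shi's short time existence theorem are met.

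Applying Shi's theorem directly then yields a maximal smooth complete solution $g_\varphi(t)$ to the Ricci flow on a time interval $[1, \tilde{T}_{\max})$ with $\tilde{T}_{\max} > 1$, and $g_\varphi(1) = g_{\psi_0}$; this proves item (i). Since the initial metric is K\"ahler and the K\"ahler class is preserved under Ricci flow on any fixed time interval where the flow exists, the solution remains K\"ahler. For item (ii), I invoke Shi's interior derivative estimates: these are parabolic estimates of Bernstein-Shi type for $|(\nabla^{g_\varphi(t)})^m \Rm(g_\varphi(t))|_{g_\varphi(t)}$, proved by constructing auxiliary functions involving $|\Rm|^2$ and $|\nabla \Rm|^2$ (and their higher analogues) and applying the maximum principle on non compact manifolds via a cut-off argument that is justified by the completeness of $g_\varphi(t)$ and the initial curvature bound. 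The resulting bound depends only on $n$, $m$, $T$, and the initial curvature bound, the latter itself being controlled by $\psi_0$ through Condition I.

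Finally, item (iii) is the standard continuation criterion. The argument is by contradiction: if $\tilde{T}_{\max} < \infty$ and $\limsup_{t \to \tilde{T}_{\max}} \sup_M |\Rm(g_\varphi(t))|_{g_\varphi(t)} < \infty$, then the derivative estimates from item (ii) extend uniformly up to $\tilde{T}_{\max}$, allowing one to construct a smooth complete limit metric $g_\varphi(\tilde{T}_{\max})$ with bounded curvature; reapplying Shi's existence theorem with this metric as initial data would extend the flow past $\tilde{T}_{\max}$, contradicting the maximality of $\tilde{T}_{\max}$. There is no genuine obstacle in this argument, since every step is a direct consequence of Shi's work; the only real task is checking the hypotheses, and the mildest part of Condition I is sufficient for this purpose.
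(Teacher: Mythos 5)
Your proposal is correct and follows essentially the same route as the paper, whose proof simply observes that $g_{\psi_0}$ is a complete metric with bounded curvature (both immediate from the metric and asymptotic conditions in Condition I) and then cites Shi's existence theorem together with his derivative estimates and the standard continuation criterion. The one caveat is your aside that the solution ``remains K\"ahler because the K\"ahler class is preserved'': on a non compact manifold this is not automatic and the paper establishes it separately (Lemma \ref{Kahler for shi's solution}, via Huang--Tam), but since Theorem \ref{shi's solution} asserts nothing about K\"ahlerity this does not affect the proof of the stated result.
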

\begin{proof}
  We apply Shi's existence theorem \cite[Theorem 1.1]{MR1001277}; for additional details on Shi's estimates, see \cite[Theorem 6.9]{MR2274812}.
\end{proof}
\begin{remark}
Here we slightly abuse notation by writing $g_\varphi$, since a priori the metric $g_\varphi(t)$ is not necessarily a solution to the Kähler–Ricci flow.
\end{remark}
Since Shi's solution possesses uniformly bounded Riemann curvature on the entire space time $M\times [1,T]$ for all $T<\tilde{T}_{\max}$, several geometric properties are preserved along the Ricci flow. In particular, both the Kähler structure and the Killing vector field $JX$ are preserved throughout the evolution.

\begin{lemma}\label{Kahler for shi's solution}
 The solution $(g_\varphi(t))_{t \in [1, \tilde{T}_{\max})}$ from Theorem \ref{shi's solution} is a solution to the Kähler-Ricci flow; that is, $g_\varphi(t)$ remains a Kähler metric for all $t \in [1, \tilde{T}_{\max})$.
\end{lemma}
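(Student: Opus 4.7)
The plan is to follow Hamilton's preservation-of-K\"ahler-structure argument, adapted to the non-compact setting by exploiting the uniformly bounded-curvature estimates of Theorem \ref{shi's solution}. The idea is to view the complex structure $J$ of $M$ as a fixed, $t$-independent $(1,1)$-tensor field, and to show that the tensor
\begin{equation*}
T(t) := \nabla^{g_\varphi(t)} J,
\end{equation*}
whose identical vanishing is equivalent to $g_\varphi(t)$ being K\"ahler with respect to $J$, satisfies a linear parabolic equation with zero initial data.

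Since $g_{\psi_0}$ is K\"ahler, we have $T(1) \equiv 0$. Using the standard formula
\begin{equation*}
\partial_t \Gamma^k_{ij}(t) \;=\; -\,g_\varphi^{kl}(t)\bigl(\nabla_i \Ric_{jl}+\nabla_j \Ric_{il}-\nabla_l \Ric_{ij}\bigr),
\end{equation*}
together with the contracted second Bianchi identity, a direct computation gives an evolution equation for $T(t)$ of the schematic form $\partial_t T = \Delta_{g_\varphi(t)} T + \Rm(g_\varphi(t))\ast T$, where $\ast$ denotes a bounded algebraic contraction. Taking the $g_\varphi(t)$-norm squared and absorbing the usual Kato-type gradient term produces the scalar parabolic inequality
\begin{equation*}
\left(\partial_t - \Delta_{g_\varphi(t)}\right)\!|T|^2_{g_\varphi(t)} \;\le\; C\,|T|^2_{g_\varphi(t)},
\end{equation*}
valid wherever the curvature is bounded.

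By Theorem \ref{shi's solution}\textup{(2)}, on every closed subinterval $[1,T]\subset [1,\tilde T_{\max})$ the Riemann tensor $\Rm(g_\varphi(t))$ and its covariant derivatives are uniformly bounded on $M\times[1,T]$, so the constant $C$ above is uniform, and $g_\varphi(t)$ stays uniformly equivalent to $g_{\psi_0}$ (hence complete) on this slab. Applying the maximum principle for parabolic inequalities on complete non-compact Riemannian manifolds with uniformly bounded geometry (in the spirit of \cite{MR1001277,MR2274812}), combined with $|T|^2_{g_\varphi(1)}\equiv 0$, forces $|T|^2_{g_\varphi(t)}\equiv 0$ on $M\times[1,T]$. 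Since $T$ was an arbitrary slab inside $[1,\tilde T_{\max})$, we conclude $\nabla^{g_\varphi(t)}J\equiv 0$ throughout, which is exactly the statement that $g_\varphi(t)$ is K\"ahler with respect to $J$.

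The main technical obstacle is the rigorous justification of the non-compact maximum principle: on $M$ one cannot rely on compactness, and must instead produce suitable barriers or cut-off functions. This step depends essentially on the uniform curvature estimates of Theorem \ref{shi's solution}, without which the argument would break down; the algebraic derivation of the evolution equation for $T$ is routine once the formula for $\partial_t\Gamma$ is in hand.
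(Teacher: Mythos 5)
The paper disposes of this lemma in one line: on each compact time interval the curvature of $g_\varphi(t)$ is bounded and $g_{\psi_0}$ is K\"ahler, so Huang--Tam's theorem applies and the flow stays K\"ahler. Your proposal instead attempts a direct tensor maximum principle for $T=\nabla^{g_\varphi(t)}J$, and this is where it breaks down.

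The central step --- ``a direct computation gives an evolution equation for $T$ of the schematic form $\partial_t T=\Delta T+\Rm\ast T$'' --- is not a routine computation; it is precisely the hard part of the theorem, and as stated it is not justified (and, for a general non-K\"ahler metric on a complex manifold, not true in this naive form). Since $J$ is $t$-independent, the actual direct computation gives
\begin{equation*}
\partial_t\bigl(\nabla_i J^k_j\bigr)=\dot\Gamma^k_{il}J^l_j-\dot\Gamma^l_{ij}J^k_l=\nabla\Ric\ast J,
\end{equation*}
i.e.\ third derivatives of the metric contracted against $J$ itself, \emph{not} against $\nabla J$. To recast this as $\Delta(\nabla J)+\Rm\ast\nabla J$ you would have to commute derivatives in $\Delta\nabla J=\nabla(\Delta J)+\nabla\Rm\ast J+\Rm\ast\nabla J$ and then (a) show that the specific contractions $\nabla\Ric\ast J-\nabla\Rm\ast J$ cancel, and (b) control $\nabla(\Delta J)=\nabla\,\mathrm{div}(T)$, which is a \emph{second} derivative of $T$ and turns the putative heat operator into the non-elliptic $\Delta-\nabla\,\mathrm{div}$. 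Neither point is addressed, and no algebraic identity valid off the K\"ahler locus is supplied; having $T=0$ at $t=1$ does not help, because the maximum principle requires the differential inequality to hold everywhere, not only where $T$ vanishes. A secondary omission: being K\"ahler also requires $g_\varphi(t)$ to remain $J$-Hermitian, so the quantity $g_\varphi(t)(J\cdot,J\cdot)-g_\varphi(t)$ must be tracked alongside $\nabla J$; your argument conflates ``K\"ahler'' with ``$\nabla J=0$''.

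This is exactly why the literature does not argue this way: the standard routes are either to solve the parabolic complex Monge--Amp\`ere equation from the K\"ahler initial data (producing a K\"ahler solution by construction) and invoke Chen--Zhu uniqueness for complete bounded-curvature Ricci flows, or to use the more elaborate coupled estimates of Huang--Tam, which is what the paper cites. If you want a self-contained proof, the uniqueness route is the realistic one given that Theorem \ref{shi's solution} already supplies the bounded-curvature hypothesis needed for Chen--Zhu; the $\nabla J$ heat-equation shortcut does not close.
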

\begin{proof}
 On each compact time interval, the Riemann curvature of $g_\varphi(t)$ remains bounded. Moreover, since the initial metric $g_{\psi_0}$ is Kähler, it follows from Huang and Tam's theorem \cite[Theorem 1.1]{MR3749193} that $g_\varphi(t)$ remains a Kähler metric for all $t \in [1, \tilde{T}_{\max})$.
\end{proof}
\begin{lemma}
    The Reeb vector field $JX$ is a Killing vector field for the solution $g_\varphi(t)$ from Theorem \ref{shi's solution} for all $t\in [1,\tilde{T}_{\max})$.
\end{lemma}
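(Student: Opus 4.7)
The plan is to exploit the equivariance of the Ricci flow under diffeomorphisms, combined with the uniqueness of complete Ricci flows with bounded curvature, so as to reduce the claim to the assumed Killing property $\mathcal{L}_{JX}g_{\psi_0}=0$ at the initial time.

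First, I would observe that the flow $\phi_s\colon M\to M$ generated by $JX$ is complete and acts by biholomorphisms. Since $JX$ is a Killing vector field for the complete soliton metric $g$ (by Lemma \ref{Killing vector}), its norm $|JX|_g$ is preserved along its own integral curves, so these integral curves extend for all $s\in\R$. The biholomorphic property is automatic because $JX$ is real-holomorphic, being the image under $J$ of the real-holomorphic vector field $X$.

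Next, I would compare $g_\varphi(t)$ with its pullback. Fix $s\in\R$ and set $\tilde g(t):=\phi_s^*g_\varphi(t)$. The naturality of the Ricci tensor under diffeomorphisms implies that $\tilde g(t)$ is again a solution of the Ricci flow on $[1,\tilde T_{\max})$. The Killing condition in Definition \ref{condition I} gives $\phi_s^*g_{\psi_0}=g_{\psi_0}$, hence $\tilde g(1)=g_\varphi(1)=g_{\psi_0}$. On every compact subinterval $[1,T]\subset[1,\tilde T_{\max})$, the $m=0$ case of Theorem \ref{shi's solution}(2) supplies a uniform bound on $|\Rm(g_\varphi(t))|_{g_\varphi(t)}$, and this bound transfers verbatim to $\tilde g(t)$ since curvature norms are invariant under diffeomorphisms; completeness is likewise preserved. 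By the uniqueness theorem of Chen--Zhu for complete Ricci flows with bounded curvature on non compact manifolds, I conclude that $\tilde g(t)=g_\varphi(t)$ for all $t\in[1,\tilde T_{\max})$. Differentiating the identity $\phi_s^*g_\varphi(t)=g_\varphi(t)$ with respect to $s$ at $s=0$ yields $\mathcal{L}_{JX}g_\varphi(t)=0$, as desired.

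The main obstacle I anticipate is purely bookkeeping rather than substantive: I must cleanly verify the hypotheses of the Chen--Zhu uniqueness theorem, namely that both $g_\varphi(t)$ and $\tilde g(t)$ are complete with uniformly bounded Riemann curvature on each compact time subinterval of $[1,\tilde T_{\max})$, and that the flow of $JX$ exists globally on $M\times\R$. An alternative route, slightly more analytical, would be to derive an evolution equation for $h(t):=\mathcal{L}_{JX}g_\varphi(t)$ by linearizing the Ricci tensor and to apply a maximum principle for tensors on the non compact manifold $M$; this would bypass Chen--Zhu at the cost of more calculation, but the pullback/uniqueness route above is both cleaner and more conceptual.
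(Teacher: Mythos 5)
Your proposal is correct and follows essentially the same route as the paper: the paper's proof also invokes the Chen--Zhu uniqueness theorem for complete Ricci flows with bounded curvature on each compact time interval, together with the Killing condition $\mathcal{L}_{JX}g_{\psi_0}=0$ at the initial time, to propagate the symmetry. You have merely written out the pullback-by-the-flow-of-$JX$ argument that the paper leaves implicit.
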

\begin{proof}
    On each compact time interval, the Riemann curvature of $g_\varphi(t)$ is bounded and the Reeb vector field $JX$ is a Killing vector field for $g_{\psi_0}$, due to the uniqueness theorem of Chen-Zhu \cite[Theorem 1.1]{MR2260930}, $JX$ is Killing for all $g_\varphi(t)$ with $t\in [1,\tilde{T}_{\max})$.
\end{proof}
Thanks to the correspondence (Proposition \ref{correspondence}) between the Kähler-Ricci flow and the normalized Kähler-Ricci flow, we can obtain a solution to the normalized Kähler-Ricci flow based on Shi's solution in Theorem \ref{shi's solution}.
\begin{lemma}\label{correspondence of NKRF and RF}
    The flow $g_\psi(\tau)$ defined by $g_\psi(\tau):=e^{-\tau}\Phi_{e^{-\tau}}^*g_\varphi(e^\tau)$ for $\tau\in [0,\log\tilde{T}_{\max})$ is a solution to the normalized K\"ahler-Ricci flow, where $\Phi_t=\Phi_X^{-\frac{1}{2}\log t}$ for $t>0$ and $\Phi_X^\cdot\colon M\mapsto M$ denotes the flow of $X$.
\end{lemma}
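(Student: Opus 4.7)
The plan is to verify the normalized Kähler-Ricci flow equation
$$\frac{\partial}{\partial\tau}g_\psi(\tau)=\mathcal{L}_{\frac{X}{2}}g_\psi(\tau)-\Ric(g_\psi(\tau))-g_\psi(\tau)$$
by direct differentiation of the definition $g_\psi(\tau)=e^{-\tau}\Phi_{e^{-\tau}}^{*}g_\varphi(e^\tau)$. The argument is essentially the inverse of Proposition \ref{correspondence}, and the main observation is that under $t=e^\tau$ the substitution $\Phi_{e^{-\tau}}=\Phi_X^{\tau/2}$ turns the time-dependent reparametrization into the one-parameter group of $X$, so that its derivative produces a Lie derivative along $X$.

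First I would fix $\tau\in[0,\log\tilde T_{\max})$ and set $\Psi_\tau:=\Phi_X^{\tau/2}$, so that $g_\psi(\tau)=e^{-\tau}\Psi_\tau^{*}g_\varphi(e^\tau)$. Differentiating in $\tau$ and using (i) $\frac{\partial}{\partial\tau}\Psi_\tau^{*}\alpha=\tfrac12\Psi_\tau^{*}\mathcal{L}_X\alpha$ for any smooth tensor $\alpha$, and (ii) the Kähler-Ricci flow equation $\frac{\partial}{\partial t}g_\varphi(t)=-\Ric(g_\varphi(t))$ from Theorem \ref{shi's solution} (combined with Lemma \ref{Kahler for shi's solution}), one obtains
$$\frac{\partial}{\partial\tau}g_\psi(\tau)=-g_\psi(\tau)+e^{-\tau}\Psi_\tau^{*}\!\left(\tfrac12\mathcal{L}_X g_\varphi(e^\tau)-e^\tau\Ric(g_\varphi(e^\tau))\right).$$

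Then I would simplify each of the two remaining terms using naturality of the pullback. For the Lie-derivative term, since $\Psi_\tau$ is the flow of $X$, it commutes with $\mathcal{L}_X$, giving
$$e^{-\tau}\Psi_\tau^{*}\bigl(\tfrac12\mathcal{L}_X g_\varphi(e^\tau)\bigr)=\tfrac12\mathcal{L}_X\bigl(e^{-\tau}\Psi_\tau^{*}g_\varphi(e^\tau)\bigr)=\mathcal{L}_{X/2}\,g_\psi(\tau).$$
For the Ricci term, diffeomorphism naturality and the scale invariance $\Ric(c\,h)=\Ric(h)$ yield
$$e^{-\tau}\Psi_\tau^{*}\bigl(e^\tau\Ric(g_\varphi(e^\tau))\bigr)=\Ric\bigl(\Psi_\tau^{*}g_\varphi(e^\tau)\bigr)=\Ric\bigl(e^\tau g_\psi(\tau)\bigr)=\Ric(g_\psi(\tau)).$$

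Substituting these into the computation of $\partial_\tau g_\psi$ gives exactly the normalized Kähler-Ricci flow equation \eqref{NKRF eq}. Completeness of $g_\psi(\tau)$ and smoothness follow from those of $g_\varphi$ since $\Psi_\tau$ is a biholomorphism of $M$, and the initial condition $g_\psi(0)=g_\varphi(1)=g_{\psi_0}$ is immediate. There is no genuine obstacle here: the only care needed is to track the $e^{\pm\tau}$ factors correctly when differentiating the rescaled pullback, and to invoke Huang-Tam's result through Lemma \ref{Kahler for shi's solution} to guarantee that $g_\varphi(t)$ is Kähler so that writing $\Ric$ in the Kähler normalization is consistent.
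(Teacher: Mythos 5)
Your proof is correct and follows essentially the same route as the paper: direct differentiation of $e^{-\tau}\Phi_{e^{-\tau}}^{*}g_\varphi(e^\tau)$, using that $\Phi_{e^{-\tau}}=\Phi_X^{\tau/2}$ so the derivative of the pullback yields $\tfrac12\mathcal{L}_X$, together with diffeomorphism naturality and scale invariance of $\Ric$, and Lemma \ref{Kahler for shi's solution} for the Kähler property. The only difference is that you spell out the intermediate simplifications the paper leaves implicit.
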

\begin{proof}
    The metric $g_\psi(\tau)$ is a K\"ahler metric due to the Lemma \ref{Kahler for shi's solution} and the fact that $X$ is real-holomorphic. Now we compute
    \begin{equation*}
        \begin{split}
            \frac{\partial}{\partial\tau}g_\psi(\tau)&=-e^{-\tau}\Phi_{e^{-\tau}}^*g_\varphi(e^\tau)+e^{-\tau}\frac{\partial}{\partial\tau}\left(\Phi^*_{e^{-\tau}} g_\varphi(e^\tau)\right)\\
            &=-g_\psi(\tau)+e^{-\tau}\mathcal{L}_{\frac{X}{2}}\left(\Phi_{e^{-\tau}}^*g_\varphi(e^\tau)\right)-\Phi_{e^{-\tau}}^*\Ric(g_\varphi(e^\tau))\\
            &=\mathcal{L}_{\frac{X}{2}}g_\psi(\tau)-\Ric(g_\psi(\tau))-g_\psi(\tau).
        \end{split}
    \end{equation*}
\end{proof}
By combining Theorem \ref{shi's solution} and Lemma \ref{correspondence of NKRF and RF}, we obtain the following theorem:
\begin{theorem}\label{shi's solution to NKRF}
   There exists an $0<T_{\max}\le\infty$ such that there is a complete smooth solution to normalized K\"ahler-Ricci flow $g_\psi(\tau)_{\tau\in [0,T_{\max})}$ starting from initial data $g_{\psi_0}$. Moreover, 
    \begin{enumerate}
        \item for all $T<T_{\max}$, for all $m\in\N_0$ there exists a constant $C_m=C(n,m,\psi_0,T)>0$ such that
        \begin{equation*}
            \sup_M\left|\left(\nabla^{g_\psi(\tau)}\right)^m\Rm(g_\psi(\tau))\right|_{g_\psi(\tau)}\le\frac{C_m}{\tau^m},\quad \textrm{for all $\tau\in (0,T]$.}
        \end{equation*}
        \item If $T_{\max}\neq+\infty$, then $\limsup_{\tau\to T_{\max}}\sup_M|\Rm(g_{\psi}(\tau))|_{g_\psi(\tau)}=\infty$.
    \end{enumerate}
\end{theorem}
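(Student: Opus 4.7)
The plan is to transfer the conclusions of Theorem \ref{shi's solution} through the change of time and gauge established in Lemma \ref{correspondence of NKRF and RF}. First, I invoke Theorem \ref{shi's solution} to obtain a maximal Ricci flow $g_\varphi(t)$ on $[1,\tilde T_{\max})$ starting from $g_{\psi_0}$, which is automatically Kähler by Lemma \ref{Kahler for shi's solution}. Setting $T_{\max}:=\log \tilde T_{\max}$ (with the convention $\log \infty=\infty$) and $g_\psi(\tau):=e^{-\tau}\Phi_{e^{-\tau}}^{*}g_\varphi(e^\tau)$, Lemma \ref{correspondence of NKRF and RF} shows that $g_\psi(\tau)$ is a complete smooth solution to the normalized K\"ahler-Ricci flow on $[0,T_{\max})$, and $g_\psi(0)=g_\varphi(1)=g_{\psi_0}$ since $\Phi_1=\operatorname{Id}$.

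For the higher-order curvature bounds, I use that a constant conformal rescaling $\tilde g=cg$ preserves the Levi-Civita connection while the $(0,m+4)$-tensor $(\nabla^{\tilde g})^m\Rm(\tilde g)$ satisfies $|(\nabla^{\tilde g})^m\Rm(\tilde g)|_{\tilde g}=c^{-(m+2)/2}|(\nabla^g)^m\Rm(g)|_g$ pointwise. Combined with the fact that the diffeomorphism $\Phi_{e^{-\tau}}$ acts isometrically between $(M,\Phi_{e^{-\tau}}^{*}g_\varphi(e^\tau))$ and $(M,g_\varphi(e^\tau))$, this yields the identity
\begin{equation*}
|(\nabla^{g_\psi(\tau)})^m\Rm(g_\psi(\tau))|_{g_\psi(\tau)}(x)=e^{(m+2)\tau/2}|(\nabla^{g_\varphi(e^\tau)})^m\Rm(g_\varphi(e^\tau))|_{g_\varphi(e^\tau)}(\Phi_{e^{-\tau}}(x)).
\end{equation*}
Feeding in Shi's bound of Theorem \ref{shi's solution} at $t=e^\tau$, using $e^\tau-1\ge c_T\tau$ for $\tau\in(0,T]$, and absorbing $\tau^{-m/2}\le T^{m/2}\tau^{-m}$, one gets the claimed estimate on $M\times(0,T]$.

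For the blow-up alternative, suppose $T_{\max}<\infty$. Then $\tilde T_{\max}=e^{T_{\max}}<\infty$, so item (iii) of Theorem \ref{shi's solution} gives $\limsup_{t\to\tilde T_{\max}}\sup_M|\Rm(g_\varphi(t))|_{g_\varphi(t)}=\infty$. Since $\Phi_{e^{-\tau}}$ is a diffeomorphism of $M$, the identity
\begin{equation*}
\sup_M|\Rm(g_\psi(\tau))|_{g_\psi(\tau)}=e^\tau\sup_M|\Rm(g_\varphi(e^\tau))|_{g_\varphi(e^\tau)}
\end{equation*}
forces the left-hand side to diverge as $\tau\to T_{\max}$.

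Essentially the whole argument is bookkeeping across the rescaling and pullback; the only thing that needs to be kept straight is the exponent $-(m+2)/2$ in the conformal rescaling formula for $|(\nabla^{\tilde g})^m\Rm(\tilde g)|_{\tilde g}$, and the commutation of $\Phi_{e^{-\tau}}^{*}$ with covariant derivatives and with the curvature operator, both of which are automatic because $\Phi_{e^{-\tau}}$ is a smooth diffeomorphism. No genuinely new analytic input beyond Theorem \ref{shi's solution} is required.
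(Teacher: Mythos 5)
Your proposal is correct and follows essentially the same route as the paper, which obtains this theorem precisely by combining Theorem \ref{shi's solution} with the correspondence of Lemma \ref{correspondence of NKRF and RF}; you have simply made explicit the scaling bookkeeping (the exponent $-(m+2)/2$, the inequality $e^\tau-1\ge\tau$, and the absorption $\tau^{-m/2}\le T^{m/2}\tau^{-m}$) that the paper leaves implicit. No issues.
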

\subsection{Reduction to complex Monge-Ampère equation}\label{reduction}
In Kähler geometry, a common strategy is to reduce the Kähler-Ricci flow equation to a complex Monge-Ampère equation for the Kähler potential.
 
 Our first reduction concerns the initial Kähler potential $\psi_0$ which satisfies Condition I. We show that there exists another Kähler potential, inducing the same initial metric, but enjoying additional symmetries.

\begin{prop}\label{without changing the metric}
    There exists a smooth function $\bar\psi_0\in C^{\infty}(M)$ satisfying Condition I such that $JX\cdot \psi_0=0$ and $\partial\bar\partial\psi_0=\partial\bar\partial\bar\psi_0$.

    Moreover, if $\psi_0$ satisfies Condition II, then $\bar\psi_0$ also satisfies Condition II.
\end{prop}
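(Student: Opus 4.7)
The plan is to average $\psi_0$ over the closure of the 1-parameter subgroup generated by $JX$ inside $\mathrm{Isom}(M,g)$. By Lemma \ref{Killing vector}, the flow $\{\Phi^s_{JX}\}_{s\in\mathbb{R}}$ lies in $\mathrm{Isom}(M,g)$, and the Kähler identity $g(V,JV)=0$ gives $JX\cdot f = g(X,JX)=0$, so this flow preserves the compact level sets of the proper function $f$ (Lemma \ref{proper function}). A standard argument then identifies
\[
T := \overline{\{\Phi^s_{JX} : s\in\mathbb{R}\}} \subset \mathrm{Isom}(M,g)
\]
as a compact connected abelian Lie group, i.e.\ a torus $\mathbb{T}^k$: fixing $p\in M$, the orbit $T\cdot p$ sits inside the compact level set $\{f=f(p)\}$, while the isotropy of $p$ in $T$ embeds as a closed subgroup of the compact group $O(T_pM)$, whence $T$ itself is compact. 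By continuity from the 1-parameter subgroup, every $\gamma\in T$ is a biholomorphic $g$-isometry that preserves $f$, preserves $g_{\psi_0}$ (from the Killing condition $\mathcal{L}_{JX}g_{\psi_0}=0$), and commutes with the flow of $X$ (since $[X,JX]=J[X,X]=0$).

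With these properties in hand, I define
\[
\bar\psi_0(x) := \int_T \psi_0(\gamma(x))\, d\mu(\gamma),
\]
with $\mu$ the normalized Haar measure on $T$. By construction $\bar\psi_0$ is $T$-invariant, so in particular $JX\cdot\bar\psi_0=0$. For the identity $\partial\bar\partial\bar\psi_0 = \partial\bar\partial\psi_0$, each $\gamma\in T$ preserves both Kähler forms $\omega$ and $\omega + i\partial\bar\partial\psi_0$, hence $\partial\bar\partial(\gamma^*\psi_0 - \psi_0) = 0$ pointwise; integrating in $\gamma$ yields the identity, and consequently $g_{\bar\psi_0} = g_{\psi_0}$.

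The verification that $\bar\psi_0$ inherits Condition I, and Condition II whenever $\psi_0$ satisfies it, then splits into two parts. All hypotheses phrased purely in terms of $g_{\psi_0}$, namely the metric condition, the Killing condition, the curvature and $\nabla^g g_{\psi_0}$ bounds of Definition \ref{condition I}, and the tensorial decays $|(\nabla^g)^k(g_{\psi_0}-g)|_g=O(f^{-k/2})$ and $|(\nabla^g)^k(\mathcal{L}_{\frac{X}{2}}g_{\psi_0}-g_{\psi_0})|_g=O(f^{-1-k/2})$ of Definition \ref{condition II}, pass verbatim since $g_{\bar\psi_0}=g_{\psi_0}$. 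The remaining asymptotic bounds involve the scalar quantities $\psi_0$ and $\tfrac{X}{2}\cdot\psi_0-\psi_0$, and for these I exploit the three features of $T$: because $f\circ\gamma=f$, $\gamma$ is a $g$-isometry, and $\gamma$ commutes with the flow of $X$, one has the pullback identity $\gamma^*\bigl(\tfrac{X}{2}\cdot\psi_0-\psi_0\bigr) = \tfrac{X}{2}\cdot(\gamma^*\psi_0)-\gamma^*\psi_0$ together with $|(\nabla^g)^i(\gamma^*F)|_g(x)=|(\nabla^g)^iF|_g(\gamma(x))$, so any pointwise bound expressed in terms of $f$ transfers to the average.

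The main obstacle I anticipate is Step 1, namely establishing the compact torus structure of $T$ and justifying the commutation of $\int_T$ with the differential operators $\partial\bar\partial$, $\nabla^g$ and $X$, which hinges on the smoothness of the action and a dominated convergence argument over the compact group. Once the compact Lie group structure of $T$ is secured, the remainder of the proof reduces to the elementary pullback identity $\gamma^*(\omega+i\partial\bar\partial\psi_0) = \omega+i\partial\bar\partial\psi_0$ and to propagating pointwise asymptotic estimates through a uniformly bounded averaging operator that commutes with $\nabla^g$ and $X$.
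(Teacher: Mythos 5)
Your proposal is correct and follows essentially the same route as the paper: take the closure of the $JX$-flow in the isometry group of $(M,g)$ to obtain a torus $T$, average $\psi_0$ against the normalized Haar measure, and use that $T$ acts by biholomorphic isometries preserving $f$, $\partial\bar\partial\psi_0$ and commuting with the flow of $X$ to transfer all the conditions. The only (inessential) difference is how compactness of $T$ is obtained — you use $JX\cdot f=0$ together with properness of $f$, whereas the paper works inside the isometry group preserving the exceptional set $E$ and invokes Arzelà--Ascoli.
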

\begin{proof}
      We consider the isometry group of $(M,g)$ that fixes $E$ as in Theorem \ref{CDS theorem} endowed with the topology induced by uniform convergence on compact
subsets of $M$. By the Arzel\`a-Ascoli theorem, this is a compact Lie group. And it has been proved that the flows of $X$ and $JX$ preserve $E$ (see \cite[Lemma 2.6]{MR4711837}). Thus in particular, the flow of $JX$ lies in such isometry group, taking the closure of the flow of $JX$ in this group therefore yields the holomorphic isometric action of a torus $T$ on $(M,g)$. Let $\mu_T$ be the normalized Haar measure of $T$, and we define $\bar\psi_0:=\int_Ta^*\psi_0d\mu_T(a)$, since $f$ and $\partial\bar\partial\psi_0$ is $T-$invariant, hence $\bar\psi_0$ satisfies the same condition as $\psi_0$. Moreover, since $\bar\psi_0$ is the average of $\psi_0$ by the action of $T$, hence $JX\cdot\bar\psi_0=0$.
\end{proof}
Since $\bar\psi_0$ and $\psi_0$ determine the same initial metric and $\bar\psi_0$ satisfies Condition I (resp. II) as in Definition \ref{condition I} (resp. Definition \ref{condition II}) if $\psi_0$ satisfies Condition I (resp. II), from now on we use $\bar\psi_0$ instead of $\psi_0$, for convenience, we still denote it by $\psi_0$.
\begin{prop}\label{cohomology prop}
    Let $g_\varphi(t)_{t\in [1,\tilde{T}_{\max})}$ be Shi's solution to K\"ahler-Ricci flow as in Theorem \ref{shi's solution}. Let $g(t)_{t\ge 1}:=t\Phi_t^*g$ be the self-similar solution associated with $(M,g,X)$ as in Proposition \ref{self-similar solution}. Then there exists a smooth real-valued function $\varphi\in C^\infty(M\times [1,\tilde{T}_{\max}))$ such that 
    \begin{equation*}
        \begin{split}
            &g_\varphi(t)=g(t)+\partial\bar\partial\varphi(t),\\
            &JX\cdot \varphi(t)=0,\quad \forall t\in [1,\tilde{T}_{\max}),\\
            &\varphi(1)=\psi_0.
        \end{split}
    \end{equation*}
\end{prop}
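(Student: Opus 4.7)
The plan is to define $\varphi$ explicitly by integrating a ratio of volume forms in time, and then verify the three required properties in turn. The key analytical input is the standard identity that the Ricci forms of two K\"ahler metrics on the same complex manifold differ by $-i\partial\bar\partial$ of the log of their volume ratio, so the failure of the desired K\"ahler-potential identity has vanishing time derivative.

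First, I would set
\begin{equation*}
\varphi(t, x) := \psi_0(x) + \int_1^t \log\!\left(\frac{\omega_\varphi(s)^n}{\omega(s)^n}\right)(x)\, ds,
\end{equation*}
where $\omega_\varphi(s)$ and $\omega(s)$ denote the K\"ahler forms of $g_\varphi(s)$ (which is K\"ahler by Lemma \ref{Kahler for shi's solution}) and of $g(s) = s\Phi_s^{*} g$ respectively. Both $\omega_\varphi$ and $\omega$ are smooth in $(s,x)$ on $M \times [1, \tilde T_{\max})$, and their ratio is a strictly positive smooth function, so the integrand is smooth; differentiating under the integral yields $\varphi \in C^\infty(M \times [1, \tilde T_{\max}))$ with $\varphi(1) = \psi_0$. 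To show $\omega_\varphi(t) = \omega(t) + i\partial\bar\partial\varphi(t)$, I would consider $\theta(t) := \omega_\varphi(t) - \omega(t) - i\partial\bar\partial\varphi(t)$: at $t=1$ this vanishes thanks to $g_{\psi_0} = g + \partial\bar\partial\psi_0$, and using that both $g_\varphi$ and $g(t)$ evolve by the K\"ahler-Ricci flow (the latter by Proposition \ref{self-similar solution}), the time derivative reads
\begin{equation*}
\partial_t \theta = -\rho_\varphi + \rho - i\partial\bar\partial\,\partial_t\varphi = i\partial\bar\partial\!\left(\log\!\tfrac{\omega_\varphi^n}{\omega^n} - \partial_t\varphi\right) = 0
\end{equation*}
by the local formula $\rho = -i\partial\bar\partial\log\det(g_{j\bar k})$ and the choice of $\varphi$, so $\theta \equiv 0$.

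Finally, for the invariance $JX \cdot \varphi(t) = 0$: since $X$ is real-holomorphic, $[X, JX] = 0$, so the flow $\Phi_s$ of $X$ commutes with the flow of $JX$; as $JX$ is Killing for $g$ by Lemma \ref{Killing vector}, it is therefore Killing for $g(s) = s\Phi_s^{*} g$. For Shi's solution, $JX$ is Killing for $g_\varphi(s)$ by the earlier lemma based on Chen-Zhu's uniqueness theorem. Hence $JX$ preserves both volume forms $\omega_\varphi(s)^n$ and $\omega(s)^n$, giving $JX \cdot \log(\omega_\varphi(s)^n / \omega(s)^n) = 0$; combined with $JX \cdot \psi_0 = 0$ from the reduction in Proposition \ref{without changing the metric}, the integral defining $\varphi$ preserves $JX$-invariance. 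The only computational step requiring care is the identity $\partial_t\theta = 0$, which reduces cleanly to the local expression for the Ricci form of a K\"ahler metric, so I do not anticipate a genuine obstacle.
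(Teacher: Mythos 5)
Your proposal is correct, and the core of it — integrating the difference of the two K\"ahler--Ricci flow equations so that $g_\varphi(t)-g(t)=\partial\bar\partial\bigl(\psi_0+\int_1^t\log\tfrac{\omega_\varphi(s)^n}{\omega(s)^n}\,ds\bigr)$ — is exactly the paper's computation. Where you diverge is in how the $JX$-invariance of $\varphi(t)$ is secured. The paper does not check that the integrand is $JX$-invariant; instead it takes the closure of the flow of $JX$ in the isometry group of $(M,g)$ fixing $E$, obtains a torus $T$, and replaces the naive potential by its average against the Haar measure of $T$, using $\mathcal{L}_{JX}\partial\bar\partial(\cdots)=0$ to see that the averaging does not change $\partial\bar\partial\varphi$. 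You instead verify directly that $JX\cdot\log\tfrac{\omega_\varphi(s)^n}{\omega(s)^n}=0$, which is legitimate: $JX$ is Killing for $g_\varphi(s)$ by the Chen--Zhu uniqueness lemma, it is Killing for $g(s)=s\Phi_s^*g$ because $[X,JX]=0$ (as $X$ is real-holomorphic) so $\Phi_s$ commutes with the $JX$-flow, and $JX$ being real-holomorphic then gives $\mathcal{L}_{JX}\omega_\varphi(s)^n=\mathcal{L}_{JX}\omega(s)^n=0$, hence the invariance of the ratio. Your route is more elementary and shows the averaging step is actually unnecessary for this proposition; the paper's averaging is the more robust device (it only needs invariance of $\partial\bar\partial$ of the potential, not of the potential itself), and it is the same mechanism the paper already set up in Proposition \ref{without changing the metric}, so reusing it keeps the argument uniform. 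Both proofs rely on the prior reduction ensuring $JX\cdot\psi_0=0$, which you correctly invoke.
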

\begin{proof}
    Recall the K\"ahler-Ricci flow equation
   \begin{equation*}
       \begin{split}
           &\frac{\partial}{\partial t}g_\varphi(t)=-\Ric(g_\varphi(t));\\
           &\frac{\partial}{\partial t}g(t)=-\Ric(g(t)).
       \end{split}
   \end{equation*}
   We compute the difference, if $\omega_\varphi(t)$ (resp. $\omega(t)$) denotes the K\"ahler form of $g_\varphi(t)$(resp. $g(t)$), we get
   \begin{equation*}
       \frac{\partial}{\partial t}\left(g_\varphi(t)-g(t))=\Ric(g(t)\right)-\Ric(g_\varphi(t))=\partial\bar\partial\log\frac{\omega_\varphi(t)^n}{\omega(t)^n}.
   \end{equation*}
   By integrating, we get
   \begin{equation*}
       \begin{split}
           g_\varphi(t)-g(t)&=g_{\psi_0}-g+\partial\bar\partial\int_1^t\log\frac{\omega_\varphi(s)^n}{\omega(s)^n}ds\\
    &=\partial\bar\partial\left(\psi_0+\int_1^t\log\frac{\omega_\varphi(s)^n}{\omega(s)^n}ds\right).
       \end{split}
   \end{equation*}

   Now we keep the notations as in Proposition \ref{without changing the metric} and we define 
\begin{equation*}
    \varphi(t):=\int_Ta^*\left(\psi_0+\int_1^t\log\frac{\omega_\varphi(s)^n}{\omega(s)^n}ds\right)d\mu_T(a).
\end{equation*}
We observe that $\varphi(1)=\psi_0$.
Since $\varphi(t)$ is the average of a function over the torus $T$, we get $JX\cdot \varphi(t)=0$. Moreover, since $\mathcal{L}_{JX}g_{\varphi}(t)=0$, we have
\begin{equation*}    \mathcal{L}_{JX}\left(\partial\bar\partial\left(\psi_0+\int_1^t\log\frac{\omega_\varphi(s)^n}{\omega(s)^n}ds\right)\right)=0,
\end{equation*}
which implies that $\partial\bar\partial\left(\psi_0+\int_1^t\log\frac{\omega_\varphi(s)^n}{\omega(s)^n}ds\right)$ is invariant under the action of $T$. It turns out that
\begin{equation*}
\partial\bar\partial\varphi(t)=\int_Ta^*\left(\partial\bar\partial\left(\psi_0+\int_1^t\log\frac{\omega_\varphi(s)^n}{\omega(s)^n}ds\right)\right)d\mu_T(a)=\partial\bar\partial\left(\psi_0+\int_1^t\log\frac{\omega_\varphi(s)^n}{\omega(s)^n}ds\right).
\end{equation*}
Hence $ g_\varphi(t)-g(t)=\partial\bar\partial\varphi(t)$.
\end{proof}
We now establish a Liouville-type lemma, which allows us to reduce the Kähler–Ricci flow equation to a complex Monge–Ampère equation.
\begin{lemma}\label{reduction lemma}
    If $\kappa$ is a smooth real-valued function such that $JX\cdot\kappa=0$ and $\partial\bar\partial\kappa=0$, then $\kappa$ is a constant function.
\end{lemma}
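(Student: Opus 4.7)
The plan is to first show that $X\cdot\kappa$ vanishes identically on $M$, and then to conclude via a weighted integration-by-parts argument with weight $e^{-f}$. For the first step, I would reuse the local computation from the proof of Proposition \ref{metric+killing to asymptotic}: writing $X=U+\overline U$ with $U$ holomorphic, the $JX$-invariance of $\kappa$ gives $X^i\partial_i\kappa=X^{\bar j}\partial_{\bar j}\kappa=\tfrac12 X\cdot\kappa$, and the same short calculation yields
\[
\partial\bar\partial\kappa(X,Y)=Y\cdot\!\left(\tfrac12 X\cdot\kappa\right)
\]
for every vector field $Y$. The hypothesis $\partial\bar\partial\kappa=0$ then forces $d(X\cdot\kappa)=0$, so $X\cdot\kappa\equiv c$ on $M$ for some real constant $c$. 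To identify $c=0$, I would invoke Lemma \ref{proper function} and Proposition \ref{lower bound of f}: $f$ is proper and $\inf_M f>0$, so $f$ attains its minimum at some $p_{\min}\in M$ where $X=\nabla^g f=0$; evaluating at $p_{\min}$ gives $c=0$, so $X\cdot\kappa\equiv 0$ on $M$.

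With $X\cdot\kappa\equiv 0$ in hand, the pluriharmonicity of $\kappa$ implies $\Delta_g\kappa=0$, and hence $\Delta_f\kappa=0$ for the drifted Laplacian $\Delta_f:=\Delta_g-\langle\nabla^g f,\nabla^g\,\cdot\,\rangle_g$; in particular $\tfrac12\Delta_f(\kappa^2)=|\nabla\kappa|_g^2$. To pass from this pointwise identity to a global identity against $e^{-f}\,d\mathrm{vol}_g$, I first need an $L^\infty$ bound on $\kappa$. My approach is to use the $X$-invariance: $\kappa(\Phi_X^s(x))=\kappa(x)$ for every $s\in\mathbb{R}$, and for $s\le 0$ the orbit $\{\Phi_X^s(x):s\le 0\}$ stays inside the compact sublevel set $\{\inf_M f\le f\le f(x)\}$ because $\tfrac{d}{ds}(f\circ\Phi_X^s)=|X|_g^2\ge 0$. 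The identity $\int_{-\infty}^{0}|X|_g^2\circ\Phi_X^s\,ds=f(x)-\lim_{s\to-\infty}f\circ\Phi_X^s$ is finite, and combined with compactness produces a subsequential limit $y_0\in\{X=0\}$ with $\kappa(x)=\kappa(y_0)$ by continuity. Since $|X|_g^2=f-n-R_\omega$ by Lemma \ref{soliton indentities} and $f$ is proper, the set $\{X=0\}$ is compact, and hence $\sup_M|\kappa|\le\sup_{\{X=0\}}|\kappa|<\infty$.

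To finish, I would take the cutoff $\eta_R:=\eta(f/R)$ for a fixed smooth cutoff $\eta$ with $\eta\equiv 1$ on $[0,1]$ and $\eta\equiv 0$ outside $[0,2]$. Using that $\Delta_f$ is symmetric with respect to $e^{-f}\,d\mathrm{vol}_g$ and applying a Cauchy-Schwarz absorption to the weighted identity $\tfrac12\Delta_f(\kappa^2)=|\nabla\kappa|_g^2$ yields
\[
\int_M |\nabla\kappa|_g^2\,\eta_R^2\,e^{-f}\,d\mathrm{vol}_g\;\le\;4\int_M \kappa^2|\nabla\eta_R|_g^2\,e^{-f}\,d\mathrm{vol}_g\;\le\;\frac{C\sup_M\kappa^2}{R}\int_M e^{-f}\,d\mathrm{vol}_g,
\]
where I use $|\nabla\eta_R|_g^2\le C|X|_g^2/R^2\le C/R$ on $\mathrm{supp}\,\nabla\eta_R\subset\{R\le f\le 2R\}$ via Lemma \ref{soliton indentities}. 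Since $e^{-f}$ has Gaussian decay and $M$ has at most Euclidean volume growth (by Lemma \ref{proper function}), the integral $\int_M e^{-f}\,d\mathrm{vol}_g$ is finite, so sending $R\to\infty$ forces $\nabla\kappa\equiv 0$, and the connectedness of $M$ gives $\kappa$ constant. I expect the most delicate step to be the $L^\infty$ bound on $\kappa$ above, which crucially uses the soliton identity $|X|_g^2=f-n-R_\omega$ to confine $\{X=0\}$ to a compact region and to render the backward flow of $\Phi_X^s$ tractable.
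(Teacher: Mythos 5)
Your proof is correct, but it takes a genuinely different route from the paper's. The paper first notes that $\bar\partial(X\cdot\kappa)=\bar\partial(2X^{1,0}\cdot\kappa)=0$, so $X\cdot\kappa$ is a real-valued holomorphic function, hence a constant $c$; it then pushes $\kappa$ forward to the asymptotic cone via the resolution $\pi$ of Theorem \ref{CDS theorem}, writes $\pi_*\kappa=c\log r+\alpha$ with $\alpha$ a function on the link $S$, and uses $0=\Delta_{g_0}\pi_*\kappa=r^{-2}\bigl(c(2n-2)+\Delta_{g_S}\alpha\bigr)$ to force $c=0$ (this is where $n\ge 2$ enters) and $\alpha$ constant, so that $\kappa$ is constant on the dense set $M\setminus E$. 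You instead obtain $d(X\cdot\kappa)=0$ directly from the identity $\partial\bar\partial\kappa(X,\cdot)=\tfrac12\,d(X\cdot\kappa)$, kill the constant by evaluating at a zero of $X$ (which exists because $f$ is proper and bounded below), and then run a weighted Caccioppoli--Liouville argument against $e^{-f}\,d\mathrm{vol}_g$, after securing $\sup_M|\kappa|<\infty$ from the backward flow of $X$ and the compactness of $\{X=0\}\subset\{f\le n+\sup_M R_\omega\}$. Your route is more elementary and self-contained: it avoids the structure theorem and the cone geometry entirely and does not use $n\ge 2$; what it buys this with is the finiteness of $\int_M e^{-f}\,d\mathrm{vol}_g$, which does hold here since $f\ge\tfrac12(d_g(p,\cdot)-c_1)^2-C$ by Lemma \ref{proper function} and the volume growth is at most exponential because $\Ric(g)$ is bounded below (attributing the volume control to Lemma \ref{proper function} alone is slightly loose, but harmless). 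Two cosmetic slips that do not affect the argument: the soliton identity gives $|X|_g^2=2|\partial f|_g^2=2(f-n-R_\omega)$ rather than $f-n-R_\omega$, and the integration-by-parts step should be recorded as using the compactly supported test function $\eta_R^2$ so that no boundary terms arise.
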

\begin{proof}
    We can find a detailed proof in \cite[Proposition 4.4]{2025arXiv250500167C}.

    Let $\pi$ be the K\"ahler resolution as in Theorem \ref{CDS theorem} form $M$ to $(C,g_0)$ with exceptional set $E$. If $X^{1,0}$ denotes the holomorphic part of $X$, then $JX\cdot\kappa$ is equivalent to saying that $X\cdot\kappa=2X^{1,0}\cdot\kappa.$

    Since $\partial\bar\partial\kappa=0$, hence $\bar\partial\left(X\cdot\kappa\right)=\bar\partial\left(2X^{1,0}\cdot\kappa\right)=0$. This implies that $X\cdot\kappa$ is a real-valued holomorphic function, hence a constant function. Therefore, there exists a constant $c$ such that $X\cdot\kappa=c$.
    
    Now we consider $c=\pi_*(X\cdot\kappa)=d\pi(X)\cdot\pi_*\kappa$, since $d\pi(X)=r\partial_r$, where $r$ denotes the radial function, we deduce that $\pi_*\kappa=c\log r+\alpha$ with $\alpha$ a smooth function that only defines on the link $S$ of $(C,g_0)$. We compute the Laplacian of $\pi_*\kappa$ with respect to $g_0$. On the one hand, it should be 0. On the other hand
    \begin{equation*}
           0=\Delta_{g_0}\pi_*\kappa(t)=cr^{-2}(2n-2)+r^{-2}\Delta_{g_S}\alpha.
    \end{equation*}
    Since $n\ge 2$, this implies that $c=0$ and $\alpha$ is a constant. As a consequence, $\kappa$ is constant on $M\setminus E$. Because $M\setminus E$ is dense, we deduce that $\kappa$ is constant.
\end{proof}
\begin{prop}[Complex Monge-Amp\`ere equation]\label{complex monge-ampere} Let $g_\varphi(t)_{t\in [1,\tilde{T}_{\max})}$ be Shi's solution to K\"ahler-Ricci flow as in Theorem \ref{shi's solution}. Let $\varphi$ be the K\"ahler potential as in Proposition \ref{cohomology prop}.

    There exists a smooth real-valued function $\bar\varphi\in C^\infty(M\times [1,\tilde{T}_{\max}))$ such that
    \begin{equation*}
           \begin{split}
                &\frac{\partial}{\partial t}\bar\varphi(t)=\log\frac{\omega_\varphi(t)^n}{\omega(t)^n},\\
                &JX\cdot\bar\varphi(t)=0, \quad\forall t\in [1,\tilde{T}_{\max}),\\
                &\bar\varphi(1)-\varphi(1)=\textrm{constant}.
           \end{split}
    \end{equation*}
\end{prop}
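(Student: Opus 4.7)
The natural approach is to define $\bar\varphi$ as the time-antiderivative of the Monge--Amp\`ere term, setting
\begin{equation*}
    \bar\varphi(t) := \psi_0 + \int_1^t \log\frac{\omega_\varphi(s)^n}{\omega(s)^n}\, ds.
\end{equation*}
Since $g_\varphi(s)$ and $g(s)$ are smooth K\"ahler metrics on $M \times [1, \tilde{T}_{\max})$ (Theorem \ref{shi's solution} and Proposition \ref{self-similar solution}), the integrand is a smooth real-valued function, so $\bar\varphi \in C^\infty(M \times [1, \tilde{T}_{\max}))$. The evolution equation $\frac{\partial}{\partial t}\bar\varphi(t) = \log\frac{\omega_\varphi(t)^n}{\omega(t)^n}$ and the identity $\bar\varphi(1) = \psi_0 = \varphi(1)$ (so that $\bar\varphi(1) - \varphi(1) = 0$ is a constant) are then immediate.

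The only nontrivial point is to verify that $JX \cdot \bar\varphi(t) \equiv 0$ for all $t \in [1, \tilde{T}_{\max})$. My plan is to differentiate this quantity in $t$: since $JX \cdot \psi_0 = 0$ by Proposition \ref{without changing the metric}, it suffices to show that the integrand $h(s) := \log(\omega_\varphi(s)^n / \omega(s)^n)$ is $JX$-invariant for each $s$, and for this it is enough to show that $JX$ preserves both volume forms $\omega_\varphi(s)^n$ and $\omega(s)^n$. The first invariance is immediate because $JX$ is Killing for $g_\varphi(s)$. For the second, I would write $\omega(s) = s\,\Phi_s^*\omega$ and note that $JX$ is Killing for $g$, hence $\mathcal{L}_{JX}\omega = 0$; provided the flow of $JX$ commutes with $\Phi_s = \Phi_X^{-\frac{1}{2}\log s}$, one concludes $\mathcal{L}_{JX}(\Phi_s^*\omega) = \Phi_s^*(\mathcal{L}_{JX}\omega) = 0$. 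Integrating in $t$ then yields $JX \cdot \bar\varphi(t) = JX \cdot \bar\varphi(1) = 0$ for all $t$.

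The main (essentially only) technical point is therefore the commutation $[X, JX] = 0$, which ensures that the flows of $X$ and $JX$ commute. This is a standard identity for gradient K\"ahler--Ricci solitons: since $JX \cdot f = g(JX, \nabla f) = g(JX, X) = 0$ by the Hermitian symmetry $g(JA, B) = -g(A, JB)$ applied to $A = B = X$, and since $JX$ is a real-holomorphic Killing vector field by Lemma \ref{Killing vector}, one obtains $[X, JX] = \mathcal{L}_{JX}\nabla f = 0$ by a direct computation. Once this identity is in place, all three claims of the proposition follow without any further work.
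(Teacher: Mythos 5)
Your proposal is correct, but it takes a genuinely different route from the paper. The paper starts from the potential $\varphi$ of Proposition \ref{cohomology prop}, which already satisfies $\partial\bar\partial\dot\varphi(t)=\partial\bar\partial\log\frac{\omega_\varphi(t)^n}{\omega(t)^n}$ and $JX\cdot\varphi(t)=0$, applies the Liouville-type Lemma \ref{reduction lemma} to the difference $\dot\varphi(t)-\log\frac{\omega_\varphi(t)^n}{\omega(t)^n}$ to conclude it is a function $c(t)$ of time alone, and then sets $\bar\varphi(t)=\varphi(t)-C(t)$ with $C'=c$. You instead build $\bar\varphi$ directly as $\psi_0+\int_1^t\log\frac{\omega_\varphi(s)^n}{\omega(s)^n}\,ds$ and verify $JX$-invariance of the integrand by hand: $JX$ preserves $\omega_\varphi(s)^n$ because it is Killing for $g_\varphi(s)$ and real-holomorphic, and it preserves $\omega(s)^n=(s\,\Phi_s^*\omega)^n$ because $[X,JX]=0$ (which follows either from your computation $\mathcal{L}_{JX}\nabla^g f=\nabla^g(JX\cdot f)=0$, or more directly from $\mathcal{L}_X J=0$). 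This is sound, and it buys something the paper's argument does not make explicit: it shows that the torus-averaging in Proposition \ref{cohomology prop} is actually the identity on $\psi_0+\int_1^t\log\frac{\omega_\varphi(s)^n}{\omega(s)^n}\,ds$ (an everywhere $JX$-invariant continuous function is invariant under the closure $T$ of the $JX$-flow), hence that one may take $c(t)\equiv 0$ and $\bar\varphi(1)=\varphi(1)$ exactly. The trade-off is that your route leans on the commutation $[X,JX]=0$ and on the fact that a real-holomorphic Killing field preserves the Kähler volume form, whereas the paper's route needs only the already-established $JX$-invariance of $\varphi(t)$ plus the Liouville lemma; both sets of ingredients are available in the paper, so either argument closes the proof.
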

\begin{proof}
  Recall in Proposition \ref{cohomology prop}, we have that:
\begin{equation*}
    \partial\bar\partial\left(\frac{\partial}{\partial t}\varphi(t)\right)=\partial\bar\partial\log\frac{\omega_\varphi(t)^n}{\omega(t)^n}.
\end{equation*}
Thanks to Lemma \ref{reduction lemma}, there exists a smooth function $c(t)$ which only depends on time such that 
\begin{equation*}
    \frac{\partial}{\partial t}\varphi(t)=\log\frac{\omega_\varphi(t)^n}{\omega(t)^n}+c(t).
\end{equation*}
Let $C(t)$ be the primitive function of $c(t)$ and let $\bar\varphi(t)=\varphi(t)-C(t)$, we get
\begin{equation*}
    \frac{\partial}{\partial t}\bar\varphi(t)=\log\frac{\omega_\varphi(t)^n}{\omega(t)^n}.
\end{equation*}

In particular $\bar\varphi(1)=\varphi(1)-C(1)$, thus $\bar\varphi(1)$ satisfies Condition I (resp. II) as in Definition \ref{condition I} (resp. Definition \ref{condition II}) if $\psi_0$ satisfies Condition I (resp. II).
\end{proof}
\textbf{From now on, we still use $\varphi$ to denote the K\"ahler potential $\bar\varphi$, and we still use $\psi_0$ to denote $\bar\varphi(1)$ which satisfies Condition I (resp. II) as in Definition \ref{condition I} (resp. Definition \ref{condition II}) if $\psi_0$ satisfies Condition I (resp. II)..}

As a result of Theorem \ref{shi's solution to NKRF} and Proposition \ref{complex monge-ampere}, we can drive an evolution equation to the K\"ahler potential along the normalized K\"ahler-Ricci flow:
\begin{prop}[Normalized complex Monge-Amp\`ere equation]\label{normalized complex monge-ampere}Let $g_\psi(\tau)_{\tau\in[0,T_{\max})}$ be Shi's solution to the normalized K\"ahler-Ricci flow as in Theorem \ref{shi's solution to NKRF}. Let $\omega_{\psi}$ be the K\"ahler form of $g_\psi$. Then there exists a real-valued function $\psi\in C^\infty(M\times [0,T_{\max}))$ such that for $\tau\in [0,T_{\max})$ we have $g_\psi(\tau)=g+\partial\bar\partial\psi(\tau)$, and
\begin{equation}\label{NCMA eq}
\begin{split}
        &\frac{\partial}{\partial \tau}\psi(\tau)=\log\frac{\omega_\psi(\tau)^n}{\omega^n}+\frac{X}{2}\cdot\psi(\tau)-\psi(\tau),\\
        &JX\cdot\psi(\tau)=0,\quad\forall \tau\in [0,T_{\max}),\\
        &\psi(0)=\psi_0.
\end{split}
\end{equation}
\begin{proof}
Let $\varphi$ be the solution to the complex Monge-Amp\`ere equation as in Proposition \ref{complex monge-ampere}. Let $\Phi_t=\Phi_X^{-\frac{1}{2}\log t}$ for $t>0$ and $\Phi_X^\cdot\colon M\mapsto M$ denotes the flow of $X$.

    For all $\tau\in [0,T_{\max})$, we define $\psi(\tau)=e^{-\tau}\Phi_{e^{-\tau}}^*\varphi(e^\tau)$. Therefore $\psi(0)=\varphi(1)=\psi_0$. Moreover, we can verify that $g_\psi(\tau)=g+\partial\bar\partial\psi(\tau)$ and 
    \begin{equation*}
\begin{split}
        &\frac{\partial}{\partial \tau}\psi(\tau)=\log\frac{\omega_\psi(\tau)^n}{\omega^n}+\frac{X}{2}\cdot\psi(\tau)-\psi(\tau),\\
        &JX\cdot\psi(\tau)=0.
\end{split}
\end{equation*}
\end{proof}
\end{prop}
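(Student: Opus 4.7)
The plan is to construct $\psi$ directly from the potential $\varphi$ of Proposition \ref{complex monge-ampere} via the same time-rescaling/pullback procedure that relates the Kähler-Ricci flow to its normalized version, as codified by Lemma \ref{correspondence of NKRF and RF}. Concretely, I would set
\begin{equation*}
\psi(\tau) := e^{-\tau}\,\Phi_{e^{-\tau}}^{*}\varphi(e^{\tau}), \qquad \tau \in [0,T_{\max}),
\end{equation*}
and then verify the four required properties one by one. The initial value is immediate from $\Phi_1=\mathrm{Id}$ and Proposition \ref{complex monge-ampere}: $\psi(0)=\varphi(1)=\psi_0$. The $JX$-invariance reduces to checking that $\Phi_{e^{-\tau}}$ commutes with $JX$; this follows from $[X,JX]=0$, which in turn is a consequence of $X$ being real-holomorphic (i.e. $\mathcal{L}_X J=0$), so that $JX$ is pushed forward to itself by the flow of $X$ and hence $JX\cdot(\Phi_{e^{-\tau}}^{*}\varphi)=\Phi_{e^{-\tau}}^{*}(JX\cdot\varphi)=0$.

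For the identity $g_\psi(\tau)=g+\partial\bar\partial\psi(\tau)$, I would use the self-similarity $g(t)=t\,\Phi_t^{*}g$ together with Proposition \ref{cohomology prop} to write
\begin{equation*}
g_\psi(\tau)=e^{-\tau}\Phi_{e^{-\tau}}^{*}g_\varphi(e^{\tau})
=e^{-\tau}\Phi_{e^{-\tau}}^{*}\bigl(e^{\tau}\Phi_{e^{\tau}}^{*}g+\partial\bar\partial\varphi(e^{\tau})\bigr).
\end{equation*}
Because $\Phi_t=\Phi_X^{-\frac{1}{2}\log t}$, one has $\Phi_{e^{-\tau}}\circ\Phi_{e^{\tau}}=\mathrm{Id}$, so the first summand collapses to $g$, and the second yields $\partial\bar\partial(e^{-\tau}\Phi_{e^{-\tau}}^{*}\varphi(e^{\tau}))=\partial\bar\partial\psi(\tau)$.

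The core step is the evolution equation, which is a direct chain-rule computation. Writing $s=e^{-\tau}$ and $t=e^{\tau}$, and using $\frac{d}{dt}\Phi_t^{*}h=-\tfrac{1}{2t}\Phi_t^{*}(X\cdot h)$, I would compute
\begin{equation*}
\tfrac{\partial}{\partial\tau}\psi(\tau)=-\psi(\tau)+e^{-\tau}\Bigl[\tfrac{1}{2}\Phi_{e^{-\tau}}^{*}\bigl(X\cdot\varphi(e^{\tau})\bigr)+e^{\tau}\Phi_{e^{-\tau}}^{*}\log\tfrac{\omega_\varphi(e^{\tau})^n}{\omega(e^{\tau})^n}\Bigr].
\end{equation*}
Invoking again the commutation $[X,\Phi_{e^{-\tau}}]=0$ turns the first bracket term into $\tfrac{1}{2}X\cdot\psi(\tau)$ after multiplication by $e^{-\tau}\cdot e^{\tau}$. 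For the logarithm, the scaling $\omega(e^{\tau})^n=e^{n\tau}\Phi_{e^{\tau}}^{*}\omega^n$ and analogously $\omega_\varphi(e^{\tau})^n=e^{n\tau}\Phi_{e^{\tau}}^{*}\omega_\psi(\tau)^n$ (obtained from $g_\varphi(e^\tau)=e^\tau\Phi_{e^\tau}^{*}g_\psi(\tau)$) cancel the factors of $e^{n\tau}$ and, together with $\Phi_{e^{-\tau}}^{*}\Phi_{e^{\tau}}^{*}=\mathrm{Id}$, reduce that term to $\log\tfrac{\omega_\psi(\tau)^n}{\omega^n}$. Collecting everything yields exactly \eqref{NCMA eq}.

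I do not anticipate a serious obstacle: the proof is entirely bookkeeping of pullbacks and time changes once Propositions \ref{cohomology prop} and \ref{complex monge-ampere} are in hand. The most error-prone step is the sign and factor tracking in the derivative $\tfrac{d}{dt}\Phi_t^{*}$, which must be carried out carefully since $\Phi_t$ is defined via a logarithmic time reparametrization. Smoothness of $\psi$ on $M\times[0,T_{\max})$ is inherited from the smoothness of $\varphi$ and the smooth dependence of $\Phi_t$ on $t$.
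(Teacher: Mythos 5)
Your proposal is correct and follows exactly the paper's approach: the paper defines $\psi(\tau)=e^{-\tau}\Phi_{e^{-\tau}}^{*}\varphi(e^{\tau})$ and simply asserts that the required properties "can be verified," while you supply the verification (commutation of $JX$ with the flow of $X$, the collapse of $e^{-\tau}\Phi_{e^{-\tau}}^{*}g(e^{\tau})$ to $g$, and the chain-rule computation with the scaling of the volume forms), all of which is accurate.
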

\section{Proof of long time existence theorem \ref{longtime existence theorem}}\label{proof of 1}
In this section, we give a detailed proof of Theorem \ref{longtime existence theorem}.
Let $g(t)=t\Phi_{t}^*g$ be the self-similar solution associated with asymptotically conical gradient K\"ahler-Ricci expander $(M,g,X)$ as in Proposition \ref{self-similar solution}. Let $\psi_0$ be a smooth function defined on $M$ satisfying Condition I.

Let $(g_\psi(\tau))_{\tau\in [0,T_{\max})}$ be the solution to normalized K\"ahler-Ricci flow as in Theorem \ref{shi's solution to NKRF} and let $(g_{\varphi}(t))_{t\in [1,e^{T_{\max}})}$ be Shi's solution as in Theorem \ref{shi's solution}. 
\subsection{Rough estimates along normalized K\"ahler-Ricci flow}
In this section, we show some rough estimates results along this normalized K\"ahler-Ricci flow.
\begin{prop}\label{rough metric equivalence}
    For all $0\le T<T_{\max}$, there exists a constant $B>1$ such that for all $(x,\tau)\in M\times [0,T]$,
    \begin{equation*}
        \frac{1}{B}g\le g_\psi(x,\tau)\le B g.
    \end{equation*}
\end{prop}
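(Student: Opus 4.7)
The plan is to reduce the statement to the Kähler--Ricci flow via Lemma \ref{correspondence of NKRF and RF} and then invoke the standard Hamilton--type fact that a Ricci flow with uniformly bounded Ricci curvature is uniformly bi-Lipschitz to any of its time slices. Writing $g_\psi(\tau)=e^{-\tau}\Phi_{e^{-\tau}}^{*}g_\varphi(e^\tau)$ from Lemma \ref{correspondence of NKRF and RF}, and using the group property $\Phi_{e^\tau}\circ\Phi_{e^{-\tau}}=\operatorname{Id}$, one also gets the identity $g=e^{-\tau}\Phi_{e^{-\tau}}^{*}g(e^\tau)$ for the self-similar solution $g(t)=t\Phi_t^{*}g$ of Proposition \ref{self-similar solution}. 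Because the same diffeomorphism and the same scaling appear on both sides, a pointwise bi-Lipschitz equivalence between $g_\varphi(t)$ and $g(t)$ on $M\times[1,e^{T}]$ transfers, with the same constant, to the inequality $\tfrac{1}{B}g\le g_\psi(\tau)\le B g$ on $M\times[0,T]$.

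The core step is therefore to establish such an equivalence between the two Ricci flows on $M\times[1,e^{T}]$. I would first verify that both flows have uniformly bounded Ricci curvature there. For $g_\varphi$, the $m=0$ case of Shi's estimate (Theorem \ref{shi's solution}) combined with the fact that $|\Rm(g_{\psi_0})|_{g_{\psi_0}}$ is finite by the asymptotic part of Condition I yields a uniform bound $|\Ric(g_\varphi(t))|_{g_\varphi(t)}\le K$ on $[1,e^T]$. For $g(t)=t\Phi_t^{*}g$, the scaling relation $|\Rm(g(t))|_{g(t)}(x)=t^{-1}|\Rm(g)|_g(\Phi_t(x))$ together with the asymptotically conical bound $\sup_M|\Rm(g)|_g<\infty$ gives an analogous uniform bound on $|\Ric(g(t))|_{g(t)}$.

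Next I would apply the classical ODE comparison along a Ricci flow: if $\partial_t h=-\Ric(h)$ and $|\Ric(h)|_h\le K$ on $[t_0,t_1]$, then $e^{-2K(t-t_0)}h(t_0)\le h(t)\le e^{2K(t-t_0)}h(t_0)$ as quadratic forms. Applied to $g_\varphi$ with $t_0=1$ this gives $g_\varphi(t)\sim g_{\psi_0}$; applied to the self-similar solution it gives $g(t)\sim g$; and the metric part of Condition I gives $g_{\psi_0}\sim g$. Chaining the three equivalences produces $g_\varphi(t)\sim g(t)$ on $M\times[1,e^{T}]$ with a constant depending only on $K$, $T$ and $C_0$, and combining with the first paragraph yields the claim.

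The main technical point to watch is that Shi's $m=0$ estimate must be uniform as $t\to 1^{+}$; this is not completely transparent from the form stated in Theorem \ref{shi's solution}, but it is automatic because $g_{\psi_0}$ already has bounded Riemann curvature under the asymptotic part of Condition I and bounded initial curvature is preserved for a short positive time along the Ricci flow. Once this is settled, the rest is essentially bookkeeping of the pullbacks $\Phi_{e^{-\tau}}$ and of the exponential constants appearing in the comparison lemma.
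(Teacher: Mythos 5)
Your proposal is correct and follows essentially the same route as the paper: bound the curvature of Shi's solution $g_\varphi$ on $[1,e^T]$ and of the self-similar solution $g(t)$ by scaling, use the standard bounded-curvature comparison (the paper cites \cite[Corollary 6.11]{MR2274812}, which is exactly your ODE comparison lemma) to compare each flow with its initial slice, chain through the metric condition $\tfrac{1}{C_0}g\le g_{\psi_0}\le C_0 g$ of Condition I, and transfer back through the pullback $e^{-\tau}\Phi_{e^{-\tau}}^{*}$. Your concern about uniformity of the $m=0$ Shi estimate as $t\to 1^{+}$ is resolved exactly as you suggest, since for $m=0$ the bound in Theorem \ref{shi's solution} carries no factor of $(t-1)^{-m/2}$.
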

\begin{proof}
  According to Theorem \ref{shi's solution to NKRF}, there exists a constant $B(T)>0$ such that 
    \begin{equation*}
        \sup_{M\times[1,e^T]}|\Rm(g_\varphi(\cdot))|_{g_\varphi(\cdot)}\le B(T).
    \end{equation*}
   Hence by \cite[Corollary 6.11]{MR2274812}, there exists a constant $B_1>1$ such that
    \begin{equation*}
        \frac{1}{B_1}g_\varphi(x,1)\le g_\varphi(x,t)\le B_1g_\varphi(x,1),\quad \textrm{for all $(x,t)\in M\times [1,e^T]$}.
    \end{equation*}
    Moreover, we observe that for all $t\in [1,e^T]$
    \begin{equation*}
        \sup_M|\Rm(g(t))|_{g(t)}=\frac{1}{t}\sup_M|\Rm(g)|_g\le \sup_M|\Rm(g)|_g<\infty.
    \end{equation*}
    We deduce that there exists a constant $B_2>1$ such that for all $t\in [1,e^T]$
    \begin{equation*}
        \frac{1}{B_2}g(0)\le g(t)\le B_2 g(0).
    \end{equation*}
    At time $t=1$, $g_\varphi(1)=g_{\psi_0},g(1)=g$, the metric assumption in condition I, as in Definition \ref{condition I} implies that there exists a constant $B_3>1$ such that
    \begin{equation*}
        \frac{1}{B_3}g(t)\le g_\varphi(t)\le B_3 g(t),\quad \textrm{for all $t\in [1,e^T]$}.
    \end{equation*}
    Since $g(t)=t\Phi_{t}^*g$ and $g_\psi(\tau)=e^{-\tau}\Phi_{e^{-\tau}}^*g_\varphi(e^\tau)$ for $t=e^\tau$, there exists a constant $B=B(T)>1$ such that
    \begin{equation*}
        \frac{1}{B}g\le g_\psi(\tau)\le Bg,\quad \textrm{for all $\tau\in [0,T]$}.
    \end{equation*}
\end{proof}
Along the normalized K\"ahler-Ricci flow, the Christoffel symbols are also roughly bounded on every compact time interval.
\begin{prop}\label{rough c3 estimate}
    For all $0\le T<T_{\max}$, there exists a constant $B=B(T)>0$ such that on $M\times [0,T]$,
    \begin{equation*}
        |\nabla^gg_\psi|_g\le B.
    \end{equation*}
\end{prop}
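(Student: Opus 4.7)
The plan is to control the difference-of-Christoffels tensor $A(\tau) := \Gamma(g_\psi(\tau)) - \Gamma(g)$ uniformly on $M \times [0, T]$. By Proposition 4.1, the metrics $g_\psi$ and $g$ are bi-Lipschitz on this spacetime, so $|A|_g$ and $|\nabla^g g_\psi|_g$ are equivalent up to constants depending on $T$, via the identity $\nabla^g_i (g_\psi)_{jl} = A^k_{ij}(g_\psi)_{kl} + A^k_{il}(g_\psi)_{jk}$.

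At the initial time, the asymptotic condition of Definition 2.11 gives $|\nabla^g g_{\psi_0}|_g = O(f^{-1/2})$. Combined with the lower bound $f \geq \varepsilon > 0$ from Proposition 2.8, this yields $|A(0)|_g \leq B_0$ uniformly on $M$.

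For the evolution in $\tau$, I lift to Shi's Ricci flow solution. Using $g_\psi(\tau) = e^{-\tau}(\Phi_X^{\tau/2})^* g_\varphi(e^\tau)$ and the self-similar solution $g(t) = t\Phi_t^* g$, the invariance of Christoffel symbols under constant rescaling together with their naturality under diffeomorphism pullback give
\begin{equation*}
A(\tau) = (\Phi_X^{\tau/2})^*\bigl(\Gamma(g_\varphi(e^\tau)) - \Gamma(g(e^\tau))\bigr) =: (\Phi_X^{\tau/2})^* A_\varphi(e^\tau),
\end{equation*}
so after comparing norms via the diffeomorphism and the scaling factor $e^{-\tau}$, it suffices to bound $|A_\varphi(t)|_{g_\varphi(t)}$ uniformly on $M \times [1, e^T]$. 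Since both $g_\varphi(t)$ and $g(t)$ evolve by $\partial_t g = -2\Ric(g)$, the classical formula $\partial_t \Gamma^k_{ij} = -g^{kl}(\nabla_i R_{jl} + \nabla_j R_{il} - \nabla_l R_{ij})$ applied to each flow, combined with Proposition 4.1, produces
\begin{equation*}
|\partial_t A_\varphi(t)|_{g_\varphi(t)} \leq C\bigl(|\nabla^{g_\varphi(t)}\Ric(g_\varphi(t))|_{g_\varphi(t)} + |\nabla^{g(t)}\Ric(g(t))|_{g(t)}\bigr).
\end{equation*}
The first term on the right is bounded by $C_1/\sqrt{t-1}$ via Shi's estimate (Theorem 3.1 with $m=1$), which is integrable on $[1, e^T]$; the second is uniformly bounded because $g(t)$ is, up to constant rescaling and diffeomorphism pullback, the asymptotically conical soliton $g$, whose covariant derivatives of curvature are bounded by the very definition (Definition 2.5).

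Integrating the differential inequality from $t=1$ and using the initial bound yields $\sup_{M \times [1, e^T]} |A_\varphi|_{g_\varphi} \leq B'(T)$, and pushing this bound through the diffeomorphism/rescaling identity for $A(\tau)$ together with Proposition 4.1 delivers $|\nabla^g g_\psi|_g \leq B(T)$. The main subtle point is the borderline integrability of Shi's first-order estimate $|\nabla \Ric(g_\varphi)| = O((t-1)^{-1/2})$ near the initial time: it sits precisely at the threshold of time-integrability needed to close the argument after one time integration, and any weaker rate (e.g., $1/(t-1)$) would not suffice.
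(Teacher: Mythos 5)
Your proposal is correct and follows essentially the same route as the paper: pass to Shi's unnormalized flow via the rescaling/pullback correspondence, bound the time derivative of the first-order quantity by Shi's estimate $|\nabla^{g_\varphi}\Ric(g_\varphi)|=O((t-1)^{-1/2})$ together with the rough metric equivalence, and integrate in time from the initial bound supplied by Condition I, the integrability of $(t-1)^{-1/2}$ at $t=1$ being exactly the point. The only (cosmetic) difference is that you track the Christoffel difference $\Gamma(g_\varphi)-\Gamma(g(t))$, whose time derivative is directly the difference of the $\nabla\Ric$ terms, while the paper works with $|\nabla^g g_\varphi|_g^2$ against the fixed background $g$ and closes a Gronwall-type ODE to absorb the extra term $\nabla^g g_\varphi * \Ric(g_\varphi)$.
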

\begin{proof}
    Recall the correspondence $\nabla^gg_\psi(\tau)=e^{-\tau}\Phi_{e^{-\tau}}^*\left(\nabla^{g(e^\tau)}g_\varphi(e^\tau)\right)$ and the fact $\nabla^{g(e^\tau)}g_\varphi(e^\tau)=\nabla^{g}g(e^\tau)*g_\varphi(e^\tau)+\nabla^gg_\varphi(e^\tau)$. Hence we only need to estimate $\nabla^gg_\varphi(e^\tau)$, and the control $\nabla^{g}g(e^\tau)$ comes from the same proof.

    Now we estimate $\nabla^gg_\varphi(e^\tau)$. Take $t=e^\tau$ and define $y(s)=|\nabla^gg_\varphi(x,s)|_g^2$ for $1\le s\le t$ and fixed $x\in M$. Now we compute
    \begin{equation*}
        \frac{d}{ds}y(s)\le 2|\nabla^gg_\varphi(s)|_g|\nabla^g\Ric(g_\varphi(s))|_g.
    \end{equation*}
Since $\nabla^g\Ric(g_\varphi(s))=\nabla^gg_\varphi(s)*\Ric(g_\varphi(s))+\nabla^{g_\varphi(s)}\Ric(g_\varphi(s))$, we conclude that there is a dimensional constant $C(n)$ such that
\begin{equation*}
    \frac{d}{ds}y(s)\le C(n)\sqrt{y(s)}\left(\sqrt{y(s)}|\Ric(g_\varphi(s))|_g+|\nabla^{g_\varphi(s)}\Ric(g_\varphi(s))|_g\right)
\end{equation*}
    
    By the curvature estimates of Shi's solution in Lemma \ref{shi's solution} and the metric equivalence indicated in Proposition \ref{rough metric equivalence}, there is a constant $C_1>1$ such that for all $1\le s\le e^T$
    \begin{equation*}
         |\Ric(g_\varphi(s))|_g\le C_1,
    \end{equation*}
    and
    \begin{equation*}
        |\nabla^{g_\varphi(s)}\Ric(g_\varphi(s))|_g\le \frac{C_1}{\sqrt{s-1}}.
    \end{equation*}
    Hence we get
    \begin{equation*}
       \frac{d}{ds}y(s)\le C(n)C_1\left(y(s)+\frac{\sqrt{y(s)}}{\sqrt{s-1}}\right),
    \end{equation*}
    which implies that
    \begin{equation*}
        \frac{d}{ds}\sqrt{y(s)}\le \frac{1}{2}C(n)C_1\left(\sqrt{y(s)}+\frac{1}{\sqrt
        {s-1}}\right).
    \end{equation*}
   When $s=1$, $y(1)=|\nabla^gg_{\psi_0}|_g^2$ is bounded globally due to Definition \ref{condition I}. Since $\frac{1}{\sqrt{s-1}}$ is integrable at $1$ thus by integration, we conclude that there is a constant $B_0>0$ such that $y(t)\le B_0$ for all $t\in [1,e^T]$. Hence, there is a constant $B>0$ such that $|\nabla^gg_\psi|_g\le B$ holds on $M\times [0,T]$ as required.
\end{proof}
A key observation is that the vector field $X$ remains a gradient vector field along the normalized Kähler–Ricci flow.
\begin{def+prop}
Let $f$ be the normalized Hamiltonian potential of $X$ with respect to $g$, as in Lemma \ref{soliton indentities}. Let $g_\psi(\tau)_{\tau \in [0,T_{\max})}$ denote the solution to the normalized Kähler–Ricci flow given by Theorem \ref{shi's solution to NKRF}. Then, for each $\tau \in [0,T_{\max})$, the vector field $X$ is gradient with respect to $g_\psi(\tau)$, with Hamiltonian potential
\begin{equation*}
    f + \tfrac{1}{2} \, X \cdot \psi(\tau).
\end{equation*}
Hence we define $f_\psi(\tau):= f + \frac{1}{2} \, X \cdot \psi(\tau)$ for each $\tau \in [0,T_{\max})$.
\end{def+prop}
\begin{proof}
     Take local holomorphic coordinates of $M$, since $X$ is real holomorphic and $JX\cdot\psi=0$, let $X^{1,0}$ denote the holomorphic part of $X$, then we have $X^{1,0}\cdot\psi=\frac{1}{2}X\cdot\psi$. This allows us to compute
    \begin{equation*}
        \begin{aligned}
            \bar{\partial}_{\bar{j}}(f+\frac{1}{2}X\cdot\psi)&=\bar{\partial}_{\bar{j}}f+\bar{\partial}_{\bar{j}}(X^{1,0}\cdot\psi)\\
            &=g_{i\bar{j}}X^{i}+(X^{1,0})^i\partial_i\bar{\partial}_{\bar{j}}\psi\\
            &=g_{i\bar{j}}X^{i}+X^{i}\partial_i\bar{\partial}_{\bar{j}}\psi\\
            &={g_{\psi}}_{i\bar{j}}X^i.
        \end{aligned}
    \end{equation*}
    Hence $f+\frac{1}{2}X\cdot\psi$ is a Hamiltonian potential of $X$ with respect to $g_\psi$.
\end{proof}
These two Hamiltonian potentials are roughly comparable along the flow.
\begin{prop}\label{universal lower bound of f_psi}
     For all $0\le T<T_{\max}$, there exists a constant $B>1$ such that for all $(x,\tau)\in M\times [0,T]$,
    \begin{equation}\label{rough equivalence of potentials}
        \frac{1}{B}f(x)\le f_\psi(x,\tau)\le B f(x).
    \end{equation}
    Moreover, there exists an $\varepsilon>0$ such that for each $\tau\in [0,T_{\max})$ $\inf_M f_\psi(\tau)\ge\inf_M f\ge\varepsilon>0.$
\end{prop}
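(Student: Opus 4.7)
The plan is to establish the two assertions separately, resting on the identity $X=\nabla^{g_\psi(\tau)}f_\psi(\tau)$ and a backward-flow argument for $X$.

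For the upper bound I would simply repeat the argument of Proposition \ref{metric+killing to asymptotic}. The Killing hypothesis $JX\cdot\psi(\tau)=0$ (Proposition \ref{normalized complex monge-ampere}) combined with the bi-Lipschitz bound $\tfrac{1}{B_0}g\le g_\psi(\tau)\le B_0 g$ on $M\times[0,T]$ (Proposition \ref{rough metric equivalence}) yields
\begin{equation*}
X\cdot(X\cdot\psi(\tau))=2\bigl(|X|_{g_\psi(\tau)}^2-|X|_g^2\bigr),
\end{equation*}
hence $|X\cdot(X\cdot\psi(\tau))|\le C_1\,X\cdot f$. Integrating along the flow of $X$ and using the soliton identity $X\cdot f=|X|_g^2\le 2f$ (Lemma \ref{soliton indentities}) exactly as in Proposition \ref{metric+killing to asymptotic} then gives $|X\cdot\psi(\tau)|\le Cf$, so that $f_\psi(\tau)=f+\tfrac{1}{2}X\cdot\psi(\tau)\le Bf$ on $M\times[0,T]$.

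For the lower bound the key observation is that $f_\psi(\tau)$ is the Hamiltonian potential of $X$ with respect to $g_\psi(\tau)$, so that
\begin{equation*}
X\cdot f_\psi(\tau)=|X|_{g_\psi(\tau)}^2\ge\tfrac{1}{B_0}|X|_g^2=\tfrac{1}{B_0}X\cdot f;
\end{equation*}
equivalently, $h_\tau:=f_\psi(\tau)-\tfrac{1}{B_0}f$ is non-decreasing along the forward flow $\Phi_X^s$. For any $x\in M$, the backward orbit $\Phi_X^{-s}(x)$, $s\ge 0$, remains in the compact sublevel set $\{f\le f(x)\}$ (using properness of $f$ from Lemma \ref{proper function} and the fact that $f$ is non-decreasing along the forward $X$-flow). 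Since the backward $X$-flow coincides with the negative gradient flow of $f$ with respect to $g$, a standard compactness argument for gradient flows shows that every accumulation point $p^*$ of this backward orbit satisfies $X(p^*)=0$; at such a point $(X\cdot\psi(\tau))(p^*)=0$, hence
\begin{equation*}
h_\tau(p^*)=\bigl(1-\tfrac{1}{B_0}\bigr)f(p^*)\ge\bigl(1-\tfrac{1}{B_0}\bigr)\inf_M f\ge 0.
\end{equation*}
The monotonic sequence $h_\tau(\Phi_X^{-s}(x))$ converges to $h_\tau(p^*)$, and since it is non-increasing in $s$, one obtains $h_\tau(x)\ge h_\tau(p^*)\ge 0$, i.e.\ $f_\psi(\tau)\ge\tfrac{1}{B_0}f$.

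The ``moreover'' part then follows immediately. For each fixed $\tau\in[0,T_{\max})$, the bi-Lipschitz bound just proved makes $f_\psi(\tau)$ a proper smooth function, so it attains its infimum at some $p_\tau\in M$; at $p_\tau$ the gradient $\nabla^{g_\psi(\tau)}f_\psi(\tau)=X$ must vanish, whence $(X\cdot\psi(\tau))(p_\tau)=0$ and $\inf_M f_\psi(\tau)=f_\psi(\tau)(p_\tau)=f(p_\tau)\ge\inf_M f\ge\varepsilon$, the last inequality being Proposition \ref{lower bound of f}. I expect the delicate step to be the lower bound of the first statement, specifically the assertion that every backward $X$-orbit accumulates at a zero of $X$; this draws on the gradient structure $X=\nabla^g f$ and the properness of $f$.
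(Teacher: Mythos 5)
Your proof is correct and follows essentially the same route as the paper's: both rest on the bi-Lipschitz bound from Proposition \ref{rough metric equivalence}, the identity $X\cdot f_\psi=|X|_{g_\psi}^2$ (so that $f_\psi-Bf$, respectively your $h_\tau$, is monotone along the $X$-flow), and the fact that backward $X$-orbits stay in compact sublevel sets of $f$ and accumulate only at zeros of $X$, where $f_\psi=f$. The paper treats the upper bound by the same accumulation-point argument you use for the lower bound (rather than routing through $|X\cdot\psi|\le Cf$ via Proposition \ref{metric+killing to asymptotic}), and its ``moreover'' step is identical to yours.
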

\begin{proof}
  By Proposition \ref{rough metric equivalence}, there exists a constant $B=B(T)>1$ such that 
  \begin{equation*}
      \frac{1}{B}g\le g_\psi(\tau)\le Bg,\quad \textrm{for all $\tau\in [0,T]$}.
  \end{equation*}
  We observe that
  \begin{equation*}
      X\cdot( f_\psi-Bf)=g_\psi(X,X)-Bg(X,X)\le 0.
  \end{equation*}
  Let $\Phi_X^\cdot$ denotes the flow of $X$
 then for all $x\in M, \rho<0$, $(f_\psi-Bf)(x)\le (f_\psi-Bf)(\Phi_X^\rho(x))$.
  
  For all $\rho<0$, $f(\Phi^\rho_X(x))\le f(x)$, thus $\{\Phi_X^{\rho}(x)\}_{\rho<0}\subset \{f\le f(x)\}$, where $\{f\le f(x)\}$ is a compact set. The set $\{\Phi_X^{\rho}(x)\}_{\rho<0}$ is a pre-compact set. We suppose that $x_0\in M$ is a limit point of $\Phi^{\rho_i}_X(x)$ with $\lim_{i\to\infty}\rho_i=-\infty$. Hence, we have $(f_\psi-Bf)(x)\le (f_\psi-Bf)(x_0)$. Since $f(\Phi_X^{\rho}(x))$ is increasing with respect to $\rho$, thus
  \begin{equation*}
      \lim_{\rho\to-\infty}f(\Phi_X^{\rho}(x))=\lim_{i\to\infty}f(\Phi^{\rho_i}_X(x))=f(x_0).
  \end{equation*}
  For all $\eta\in\R$, on the one hand,
  \begin{equation*}
      \lim_{i\to\infty}f\circ\Phi^\eta_X(\Phi^{\rho_i}_X(x))=f(\Phi^\eta_X(x_0)).
  \end{equation*}
  On the other hand
  \begin{equation*}
      \lim_{i\to\infty}f\circ\Phi^\eta_X(\Phi^{\rho_i}_X(x))=\lim_{i\to\infty}f(\Phi^{\rho_i+\eta}_X(x))=\lim_{\rho\to-\infty}f(\Phi_X^{\rho}(x))=f(x_0).
  \end{equation*}
  It turns out that $f(x_0)=f(\Phi^\eta_X(x_0))$ for all $\eta\in\R$, which implies that $X(x_0)=0$. In particular, $(f_\psi-Bf)(x_0)=f(x_0)-Bf(x_0)=(1-B)f(x_0)\le 0$. Therefore, $(f_\psi-Bf)(x)\le 0$ for all $x\in M$.

  For the same reason, we prove that
  \begin{equation*}
      f_\psi\ge \frac{1}{B}f.
  \end{equation*}
  From \eqref{rough equivalence of potentials} and the fact that $f$ is a proper function, we deduce that $f_\psi(\tau)$ is a proper function for all $\tau\in [0,T_{\max})$. Suppose that for fixed $\tau$, $x_0\in M$ is a minimum point of $f_\psi(\tau)$, i.e. $f_\psi(x_0,\tau)=\inf_M f_\psi(\tau)$, then $X(x_0)=\left(\nabla^{g_\psi(\tau)}f_\psi(\tau)\right)(x_0)=0$, hence $f_\psi(x_0,\tau)=f(x_0)\ge\inf_Mf$. Thanks to Proposition \ref{lower bound of f} there exists an $\varepsilon>0$ such that on $M\times [0,T_{\max})$,
  \begin{equation}\label{fin inf for fpsi}
      \inf_Mf_\psi\ge\inf_Mf\ge\varepsilon>0.
  \end{equation}
\end{proof}
The following Proposition gives us rough estimates on the K\"ahler potential $\psi$ and its radial derivative $X\cdot\psi$.
\begin{prop}\label{rough estimate of psi and xpsi}
     For all $0\le T<T_{\max}$, there exists a constant $B>1$ such that for all $(x,\tau)\in M\times [0,T]$,
    \begin{equation*}
        \left(|X\cdot\psi|+|\psi|\right)(x,\tau)\le B f.
    \end{equation*}
\end{prop}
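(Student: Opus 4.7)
The plan is to mimic, for each fixed time $\tau \in [0,T]$, the argument of Proposition \ref{metric+killing to asymptotic} (which is exactly the geometric interpretation of Condition I under metric and Killing hypotheses), and then ensure that all constants appearing can be chosen uniformly in $\tau \in [0,T]$. The two crucial inputs are already available: by Proposition \ref{normalized complex monge-ampere} we have $JX\cdot\psi(\tau)=0$ for all $\tau\in [0,T_{\max})$, and by Proposition \ref{rough metric equivalence} there is a constant $B_0=B_0(T)>1$ such that $\frac{1}{B_0}g\le g_\psi(\tau)\le B_0 g$ uniformly on $M\times [0,T]$.

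First I would bound $X\cdot\psi(\tau)$. The identity derived in Proposition \ref{metric+killing to asymptotic} from $JX\cdot\psi(\tau)=0$ and the real-holomorphicity of $X$ gives
\begin{equation*}
   g_\psi(\tau)(X,\cdot)=g(X,\cdot)+\tfrac{1}{2}d(X\cdot\psi(\tau)).
\end{equation*}
Pairing with $X$ and using the bi-Lipschitz bound yields $|X\cdot(X\cdot\psi(\tau))|\le C_1 |X|_g^2=C_1(X\cdot f)$ with $C_1=C_1(B_0)$ independent of $\tau$, hence $X\cdot\bigl(X\cdot\psi(\tau)-C_1 f\bigr)\le 0$. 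Since $\psi\in C^\infty(M\times [0,T_{\max}))$ by Proposition \ref{normalized complex monge-ampere} and $\{f\le 1\}\times [0,T]$ is compact, the quantity
\begin{equation*}
    C_2:=\max_{\{f\le 1\}\times [0,T]}\bigl(X\cdot\psi(\tau)-C_1 f\bigr)
\end{equation*}
is finite. Following the escape-time argument in Proposition \ref{metric+killing to asymptotic}, for every $x$ with $f(x)\ge 1$ there exists $s_0\le 0$ such that $f(\Phi_X^{s_0}(x))=1$, and $(X\cdot\psi(\tau)-C_1 f)(x)\le C_2$; on $\{f\le 1\}$ one concludes directly, and combining with $\inf_M f\ge \varepsilon>0$ from Proposition \ref{lower bound of f} gives a uniform bound $|X\cdot\psi(\tau)|\le C_3 f$ on $M\times [0,T]$ (the lower estimate follows symmetrically).

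Next I would bound $|\psi(\tau)|$ along the same lines as in Proposition \ref{metric+killing to asymptotic}. The soliton identity $f=n+R_\omega+|\partial f|_g^2$ gives $f\le C_4+\tfrac{1}{2}X\cdot f$ with $C_4=n+\sup_M|R_\omega|$, so the previous step upgrades to
\begin{equation*}
    X\cdot\psi(\tau)\le \frac{C_3}{2}X\cdot f+C_3 C_4.
\end{equation*}
Integrating along the flow $\Phi_X^s$ for $s\le 0$ produces
\begin{equation*}
    \bigl(\psi(\tau)-\tfrac{C_3}{2}f\bigr)(x)-\bigl(\psi(\tau)-\tfrac{C_3}{2}f\bigr)(\Phi_X^s(x))\le -C_3C_4 s,
\end{equation*}
and setting $C_5:=\max_{\{f\le 1\}\times [0,T]}\bigl(\psi-\tfrac{C_3}{2}f\bigr)$ (again finite by smoothness of $\psi$ and compactness), the bound $-s_0\le \tfrac{1}{2}\log f(x)\le f(x)$ on the $X$-flow escape time finishes $\psi(\tau)\le C_6 f$ on $\{f\ge 1\}$, and hence on $M$ after using $f\ge\varepsilon$. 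The opposite inequality is analogous.

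The only obstacle beyond the statement of Proposition \ref{metric+killing to asymptotic} is to guarantee uniformity of the constants $C_2$ and $C_5$ in $\tau\in [0,T]$; this is handled by compactness of $\{f\le 1\}\times [0,T]$ together with the smoothness of $\psi$ provided by Proposition \ref{normalized complex monge-ampere}. Once this point is verified, the remainder of the argument is an almost verbatim repetition of the static case.
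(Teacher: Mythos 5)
Your argument is correct in substance but, for the bound on $|\psi|$, it takes a genuinely different route from the paper's. For $|X\cdot\psi|$ the two proofs are essentially the same mechanism: the paper simply writes $X\cdot\psi=2(f_\psi-f)$ and quotes the comparison $\frac{1}{B}f\le f_\psi\le Bf$ already established in Proposition \ref{universal lower bound of f_psi} (proved there by a backward-flow limit-point argument), whereas you re-derive that comparison from scratch via the escape-time argument of Proposition \ref{metric+killing to asymptotic}; this duplicates existing work but is harmless. For $|\psi|$, however, the paper argues parabolically: it bounds $|\varphi(t)-\varphi(1)|$ directly from the Monge--Amp\`ere equation $\dot\varphi=\log\frac{\omega_\varphi^n}{\omega^n}$ of Proposition \ref{complex monge-ampere} together with the rough metric equivalence, and then transfers this to $\psi$ through the rescaling $\psi(\tau)=e^{-\tau}\Phi_{e^{-\tau}}^*\varphi(e^\tau)$, using $\psi_0=O(f)$ and the monotonicity of $\rho\mapsto e^{-\rho}f(\Phi_{e^{-\rho}}(x))$. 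You instead run the static argument of Proposition \ref{metric+killing to asymptotic} on each time slice, getting uniform constants from compactness of $\{f\le 1\}\times[0,T]$ and joint smoothness of $\psi$. Your route uses only the Killing condition $JX\cdot\psi(\tau)=0$ and the bi-Lipschitz bound of Proposition \ref{rough metric equivalence}, and notably does not need the asymptotic condition $|\psi_0|=O(f)$; the paper's route is shorter given the machinery already in place and yields the slightly sharper $|\psi(x,\tau)|\le B_2e^{-\tau}+B_3f(x)$.

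One caveat: by importing the escape-time step of Proposition \ref{metric+killing to asymptotic} verbatim you also inherit its (fixable) imprecisions. The backward $X$-orbit of a point need not meet $\{f=1\}$ — it may converge to a zero of $X$, where $f=R_\omega+n$ can exceed $1$ (for the Gaussian soliton the normalization gives $\inf_M f=n\ge 2$, so $\{f\le 1\}$ is empty) — and the upper bound on the escape time $-s_0$ actually requires a lower bound of the form $X\cdot f\ge c\,f$ on $\{f\ge\Lambda\}$ (which holds for $\Lambda$ large by the soliton identity), not the upper bound $X\cdot f\le 2f$ used there. Both points are repaired by replacing the level set $\{f\le 1\}$ with $\{f\le\Lambda\}$ for $\Lambda$ sufficiently large. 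Since the paper's own proof of this proposition sidesteps these issues entirely (via Proposition \ref{universal lower bound of f_psi} and the Monge--Amp\`ere equation), you should either do the same or patch the escape-time lemma as indicated.
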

\begin{proof}
    First we prove that there exists a constant $B_1=B_1(T)$ such that on $M\times [0,T]$
    \begin{equation*}
        |X\cdot\psi|\le Bf.
    \end{equation*}
    Notice that on the one hand by [\eqref{rough equivalence of potentials}, Proposition \ref{rough equivalence of potentials}], there exists a constant $B>1$ such that
    \begin{equation*}
        X\cdot\psi=2f_\psi-2f\le 2f_\psi\le 2Bf.
    \end{equation*}
    On the other hand,
    \begin{equation*}
        X\cdot\psi=2f_\psi-2f\ge -2f.
    \end{equation*}
    Thus, we can take $B_1=2B$ so that $|X\cdot\psi|\le B_1f$ on $M\times [0,T]$.

    Now we consider $\varphi$ defined in Proposition \ref{complex monge-ampere}. Since $\dot\varphi(t)=\log\frac{\omega_\varphi(t)^n}{\omega(t)^n}$, Lemma \ref{rough metric equivalence} ensures that there exists a constant $B_2=B_2(T)>0$ such that for $t\in[1,e^T]$
    \begin{equation*}
        |\varphi(t)-\varphi(1)|\le B_2.
    \end{equation*}
    Since $e^{-\tau}\Phi_{e^{-\tau}}^*\varphi(e^\tau)=\psi(\tau)$ and $\varphi(1)=\psi_0$, we have for $\tau\in [0,T]$, $x\in M$,
    \begin{equation*}
        |e^\tau\psi(\Phi_{e^\tau}(x),\tau)-\psi_0(x)|\le B_2.
    \end{equation*}
    Since $\psi_0=O(f)$ by Condition I (see Definition \ref{condition I}), there exists a constant $B_3>0$ such that $\psi_0(x)\le B_3f(x)$, hence
    \begin{equation*}
         |e^\tau\psi(\Phi_{e^\tau}(x),\tau)|\le B_2+B_3f(x).
    \end{equation*}
    This yields that for all $(x,\tau)\in M\times [0,T]$
    \begin{equation}\label{4,28}
         |\psi(x,\tau)|\le B_2e^{-\tau}+B_3e^{-\tau}f(\Phi_{e^{-\tau}}(x)).
    \end{equation}
    Let us consider the function $e^{-\rho}f(\Phi_{e^{-\rho}}(x))$ for all $\rho\ge 0$. We compute as follows:
    \begin{equation}\label{4,29}
        \frac{d}{d\rho}\left(e^{-\rho}f(\Phi_{e^{-\rho}}(x))\right)=e^{-\rho}(-f+|\partial f|_g^2)(\Phi_{e^{-\rho}}(x))=e^{-\rho}(-n-R_\omega)(\Phi_{e^{-\rho}}(x))\le 0.
    \end{equation}
    This implies that $e^{-\tau}f(\Phi_{e^{-\tau}}(x))\le f(x)$ for $(x,\tau)\in M\times [0,T]$, the combination of \eqref{4,28} and \eqref{4,29} leads to
    \begin{equation*}
         |\psi(x,\tau)|\le B_2e^{-\tau}+B_3f(x)\le B_2+B_3f(x)\le \left(\frac{B_2}{\varepsilon}+B_3\right)f(x).
    \end{equation*}
    We can take $B=\left(\frac{B_2}{\varepsilon}+B_3\right)$ such that on $M\times [0,T]$,
    \begin{equation*}
        |\psi|\le Bf.
    \end{equation*}
\end{proof}
\subsection{Uniform estimates on space-time along normalized K\"ahler-Ricci flow}
In this section, we omit the time dependence in $g_\psi(\tau)$ for ease of reading. Let $\omega_\psi$ be the K\"ahler form of $g_\psi$. We denote the drift Laplacian $\Delta_{\omega_\psi} + \frac{X}{2}$ by $\Delta_{\omega_\psi, X}$, and define $\dot{\psi} := \frac{\partial \psi}{\partial \tau}$. This first observation is that the Hamiltonian potential $f_\psi$ satisfies a \emph{good} evolution equation.
\begin{prop}\label{proposition 4.6}
    The Hamiltonian potential $f_\psi$ satisfies the following evolution equation:
    \begin{equation}\label{evolution equation of fpsi}
        \frac{\partial}{\partial \tau}f_\psi=\Delta_{\omega_\psi,X}f_\psi-f_\psi.
    \end{equation}
\end{prop}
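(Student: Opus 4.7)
The identity is a direct computation that couples the normalized complex Monge--Amp\`ere equation \eqref{NCMA eq} with the Hamiltonian formula $f_\psi = f + \tfrac{1}{2} X\cdot\psi$ and the standard soliton identities of Lemma \ref{soliton indentities}. I would compute $\partial_\tau f_\psi$ and $\Delta_{\omega_\psi} f_\psi$ separately and then reconcile them.

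For the time derivative, differentiating $f_\psi = f + \tfrac{1}{2} X\cdot\psi$ in $\tau$ (with $f$ and $X$ independent of $\tau$) and substituting \eqref{NCMA eq} immediately yields
\[
\partial_\tau f_\psi \;=\; \tfrac{1}{2}\, X\cdot\dot\psi \;=\; \tfrac{1}{2}\, X\cdot\log\tfrac{\omega_\psi^n}{\omega^n} + \tfrac{1}{4}\, X\cdot(X\cdot\psi) - \tfrac{1}{2}\, X\cdot\psi.
\]
For the spatial part, I would work in local holomorphic coordinates and use the relation $\bar\partial_{\bar j} f_\psi = g_{\psi,i\bar j}\, X^i$ already established in the def+prop preceding the statement; combined with the K\"ahler identity $g_\psi^{k\bar j}\partial_i g_{\psi,k\bar j}=\partial_i\log\det(g_\psi)$, this gives
\[
\Delta_{\omega_\psi} f_\psi \;=\; g_\psi^{k\bar j}\partial_k\partial_{\bar j} f_\psi \;=\; X^{i}\, \partial_i\log\det(g_\psi) + \partial_i X^i.
\]
The same computation with $\psi = 0$ (i.e., for $g$ and $f$) yields $\Delta_\omega f = X^i\partial_i\log\det(g) + \partial_i X^i = n + R_\omega$ by Lemma \ref{soliton indentities}. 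Subtracting, and using that $\log(\omega_\psi^n/\omega^n)$ is $JX$-invariant (since $\mathcal{L}_{JX}\omega = 0$ and $JX\cdot\psi = 0$), so that the $X^{1,0}$-action $X^i\partial_i$ on such a function equals $\tfrac{1}{2}\, X$ applied to it, I obtain
\[
\Delta_{\omega_\psi} f_\psi \;=\; n + R_\omega + \tfrac{1}{2}\, X\cdot\log\tfrac{\omega_\psi^n}{\omega^n}.
\]

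To close, expand the claimed right-hand side as $\Delta_{\omega_\psi,X} f_\psi - f_\psi = \Delta_{\omega_\psi} f_\psi + \tfrac{1}{2}\, X\cdot f_\psi - f_\psi$, then substitute $\tfrac{1}{2}\, X\cdot f_\psi = \tfrac{1}{2}\, X\cdot f + \tfrac{1}{4}\, X\cdot(X\cdot\psi)$, $f_\psi = f + \tfrac{1}{2}\, X\cdot\psi$, and the normalized soliton identity $\tfrac{1}{2}\, X\cdot f = |\partial f|_g^2 = f - n - R_\omega$: the $n$, $R_\omega$ and $f$ terms cancel exactly, and the remainder matches the formula for $\partial_\tau f_\psi$ derived above. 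The only delicate point in the plan is the passage from the $X^{1,0}$-action $X^i\partial_i$ to $\tfrac{1}{2}\, X$, which is valid precisely because of the Killing hypothesis $JX\cdot\psi = 0$ propagated along the normalized K\"ahler--Ricci flow; once that is accounted for, the rest is bookkeeping with the soliton identities of Lemma \ref{soliton indentities}.
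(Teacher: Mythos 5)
Your proposal is correct and follows essentially the same route as the paper: compute $\partial_\tau f_\psi=\tfrac{X}{2}\cdot\dot\psi$, identify $\tfrac{X}{2}\cdot\log\tfrac{\omega_\psi^n}{\omega^n}$ with $\Delta_{\omega_\psi}f_\psi-\Delta_\omega f$, and cancel the remaining terms via the normalized soliton identity $f=\Delta_\omega f+|\partial f|_g^2$. The only difference is presentational — you derive the key identity in local holomorphic coordinates (using $\bar\partial_{\bar j}f_\psi=g_{\psi,i\bar j}X^i$ and the $JX$-invariance of $\log\tfrac{\omega_\psi^n}{\omega^n}$), whereas the paper invokes the invariant formula $\tfrac{X}{2}\cdot\log\tfrac{\omega_\psi^n}{\omega^n}=\tr_{\omega_\psi}\mathcal{L}_{\frac{X}{2}}\omega_\psi-\tr_{\omega}\mathcal{L}_{\frac{X}{2}}\omega$.
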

\begin{proof}
    First we compute
    \begin{equation*}
        \frac{\partial}{\partial\tau}f_\psi=\frac{X}{2}\cdot\dot\psi=\frac{X}{2}\cdot\left(\log\frac{\omega_\psi^n}{\omega^n}+\frac{X}{2}\cdot\psi-\psi\right).
    \end{equation*}
    Since \begin{equation*}
        \frac{X}{2}\cdot\log\frac{\omega_\psi^n}{\omega^n}=\tr_{\omega_\psi}\mathcal{L}_{\frac{X}{2}}\omega_\psi-\tr_{\omega}\mathcal{L}_{\frac{X}{2}}\omega=\Delta_{\omega_\psi}f_\psi-\Delta_\omega f,
    \end{equation*} and 
    \begin{equation*}
        \frac{X}{2}\cdot\left(\frac{X}{2}\cdot\psi-\psi\right)=\frac{X}{2}\cdot\left(f_\psi-f\right)-f_\psi+f=\frac{X}{2}\cdot f_\psi-f_\psi+f-|\partial f|_g^2.
    \end{equation*}
    The soliton identities (Lemma \ref{soliton indentities}) tell us that $f=\Delta_\omega f+|\partial f|_g^2$, hence
    \begin{equation*}
        \frac{\partial}{\partial \tau}f_\psi=\Delta_{\omega_\psi,X}f_\psi-f_\psi,
    \end{equation*}
    holds.
\end{proof}
The Hamiltonian potential $f_\psi$ plays a role analogous to $\frac{r^2}{2}$ in the case of the Gaussian soliton. On Euclidean space, a maximum principle holds for subsolutions to the heat equation with exponential growth (see \cite[Theorem 6 of Chapter 2.3]{MR2597943}). Therefore, it is natural to expect a similar maximum principle to hold on our asymptotically conical gradient Kähler-Ricci expander $(M, g, X)$.

\begin{prop}[Maximum principle]\label{maximum principle}
    If $u$ is a real-valued smooth function defined on $M\times [0,T_{\max})$ such that \begin{enumerate}
        \item  the function $u$ is a subsolution to the drift heat equation along normalized K\"ahler-Ricci flow, i.e. $\left(\frac{\partial}{\partial\tau}-\Delta_{\omega_\psi,X}\right)u\le 0$;
        \item for all $0<T<T_{\max}$, there exists a constant $B>0$ and integer $k>0$ such that $u\le Bf^k$;
        \item $\sup_Mu|_{\tau=0}\le 0$.
    \end{enumerate}
    Then $\sup_{M\times [0,T_{\max})}u\le 0$.
\end{prop}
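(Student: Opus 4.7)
The plan is to run a standard parabolic maximum principle argument on each time slab $M\times[0,T]$ with $T<T_{\max}$, localized at spatial infinity by comparison with a carefully chosen strict supersolution $h$ of super-$f^{k}$ growth. Since $M$ is noncompact and $u$ may itself grow like $f^{k}$, I cannot directly conclude that $\sup u$ is attained on the slab; the idea is therefore to work with the perturbation $v:=u-\epsilon h$, which is forced to tend to $-\infty$ along spatial infinity so that its supremum is attained in the interior, apply the pointwise parabolic maximum principle to rule out a nonnegative value, and finally let $\epsilon\to 0^{+}$.

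For the barrier I would take
\begin{equation*}
h(x,\tau):=e^{A\tau}\,f_\psi(x,\tau)^{k+1},
\end{equation*}
with a large constant $A=A(T)$ to be chosen. The evolution equation $\partial_\tau f_\psi=\Delta_{\omega_\psi,X}f_\psi-f_\psi$ from Proposition \ref{proposition 4.6}, together with the chain rule for the drift Laplacian applied to $f_\psi^{\,k+1}$, yields
\begin{equation*}
(\partial_\tau-\Delta_{\omega_\psi,X})h \;=\; e^{A\tau}\Bigl[(A-(k+1))\,f_\psi^{\,k+1}-k(k+1)\,f_\psi^{\,k-1}\,|\nabla^{g_\psi}f_\psi|_{g_\psi}^{2}\Bigr].
\end{equation*}
Since $X=\nabla^{g_\psi}f_\psi$, the gradient term equals $g_\psi(X,X)$, which I control by combining the rough metric equivalence of Proposition \ref{rough metric equivalence} with the soliton identity $|\partial f|_{g}^{2}=f-R_\omega-n\le f$ and the comparability $f\asymp f_\psi$ of Proposition \ref{universal lower bound of f_psi}; this yields $|\nabla^{g_\psi}f_\psi|_{g_\psi}^{2}\le C(T)\,f_\psi$ on the slab. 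Combined with the strict lower bound $f_\psi\ge\varepsilon>0$, choosing $A$ large enough in terms of $T$, $k$ and $C(T)$ forces $(\partial_\tau-\Delta_{\omega_\psi,X})h$ to be strictly positive on $M\times[0,T]$. Calibrating $A$ so that the first-order nonlinearity produced by $|\nabla^{g_\psi}f_\psi|_{g_\psi}^{2}$ does not swallow the exponential gain is the only nontrivial step, and it is precisely where Condition I and the rough estimates of the previous subsection are used.

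With $h$ in hand I would conclude as follows. Fix $T<T_{\max}$ and $\epsilon>0$, and set $v:=u-\epsilon h$. Because $u\le B f^{k}$ on the slab and $h\ge e^{A\tau}B^{-(k+1)}f^{k+1}$ via $f_\psi\ge f/B$, the function $v(\cdot,\tau)$ tends to $-\infty$ uniformly in $\tau\in[0,T]$ as $f(x)\to\infty$; since $f$ is proper (Lemma \ref{proper function}), the supremum of $v$ on $M\times[0,T]$ is attained at some point $(x_{0},\tau_{0})$. If $\tau_{0}=0$, then
\begin{equation*}
v(x_{0},0)\;\le\;\sup_{M}u|_{\tau=0}-\epsilon h(x_{0},0)\;\le\;0.
\end{equation*}
If $\tau_{0}>0$, then at this maximum $\nabla v(x_{0},\tau_{0})=0$, which in particular makes the drift term $\tfrac{X}{2}\cdot v$ vanish there, while $\Delta_{\omega_\psi}v(x_{0},\tau_{0})\le 0$ and $\partial_\tau v(x_{0},\tau_{0})\ge 0$ (the latter holds even for $\tau_{0}=T$); hence $(\partial_\tau-\Delta_{\omega_\psi,X})v(x_{0},\tau_{0})\ge 0$, contradicting
\begin{equation*}
(\partial_\tau-\Delta_{\omega_\psi,X})v \;\le\; -\epsilon\,(\partial_\tau-\Delta_{\omega_\psi,X})h \;<\;0.
\end{equation*}
In either case $\sup v\le 0$, so $u\le\epsilon h$ on $M\times[0,T]$; sending $\epsilon\to 0^{+}$ pointwise gives $u\le 0$ on $M\times[0,T]$, and since $T<T_{\max}$ was arbitrary the proposition follows.
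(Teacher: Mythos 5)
Your argument is correct, and it follows the same overall strategy as the paper (a barrier built from $f_\psi$ via its evolution equation $(\partial_\tau-\Delta_{\omega_\psi,X})f_\psi=-f_\psi$, a perturbation $u-\epsilon h$ that escapes to $-\infty$ at spatial infinity so the weak maximum principle applies, then $\epsilon\to 0$), but with a genuinely different barrier. The paper takes $e^{h}$ with $h=\frac{e^\tau f_\psi}{T-\beta\tau}$, i.e.\ an exponential-in-$f_\psi$ supersolution; because the denominator degenerates at $\tau=T/\beta$, the paper must restrict to the short interval $[0,\tfrac{T}{2\beta}]$ and iterate the argument over successive subintervals to cover $[0,T]$. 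Your polynomial barrier $e^{A\tau}f_\psi^{k+1}$ is a strict supersolution on the entire slab $M\times[0,T]$ in one shot (the absorption of the gradient term via $|\partial f_\psi|_{g_\psi}^{2}\le C(T)f_\psi$ and $\inf f_\psi\ge\varepsilon$ works exactly as you say, and it still dominates the assumed growth $u\le Bf^{k}$), so you avoid the iteration entirely; the price is that the barrier must be adapted to the exponent $k$, whereas the paper's exponential barrier would also cover sub-Gaussian growth classes $u\le Be^{cf}$ in the spirit of the classical Euclidean uniqueness class it cites. One cosmetic point: with the Kähler Laplacian convention used in the paper, the chain-rule term is $k(k+1)f_\psi^{k-1}|\partial f_\psi|_{g_\psi}^{2}=\tfrac{k(k+1)}{2}f_\psi^{k-1}|\nabla^{g_\psi}f_\psi|_{g_\psi}^{2}$, so your displayed coefficient is off by a harmless factor of $2$; this does not affect the argument since only the upper bound $|\partial f_\psi|_{g_\psi}^{2}\lesssim f_\psi$ is used.
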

\begin{proof}
    We fix $0<T<T_{\max}$ and consider a weight function $h(x,\tau):=\frac{e^\tau f_\psi(x,\tau)}{T-\beta \tau}$ with $\beta>0$ to be determined for $(x,\tau)\in M\times [0,\frac{T}{2\beta}]$. By Proposition \ref{rough metric equivalence} and Proposition \ref{universal lower bound of f_psi}, there exists a constant $B>0$ such that
    \begin{equation*}
        2|\partial f_\psi|_{g_\psi}^2=|X|_{g_\psi}^2\le B|X|_g^2=2B|\partial f|_g^2\le 2Bf\le 2B^2f_\psi.
    \end{equation*}

    Now we compute 
    \begin{equation*}
        \begin{split}
            \left(\frac{\partial}{\partial\tau}-\Delta_{\omega_\psi,X}\right)e^h(x,\tau)&=\left(\left(\frac{\partial}{\partial\tau}-\Delta_{\omega_\psi,X}\right)h-|\partial h|_{g_\psi}^2\right)e^h\\
            &=\left(\frac{\beta e^\tau f_\psi-e^{2\tau}|\partial f_\psi|^2_{g_\psi}}{(T-\beta\tau)^2}+\frac{\left(\frac{\partial}{\partial\tau}-\Delta_{\omega_\psi,X}\right)(e^\tau f_\psi(x,\tau))}{T-\beta\tau}\right)e^h\\
            &=\frac{\beta e^\tau f_\psi-e^{2\tau}|\partial f_\psi|^2_{g_\psi}}{(T-\beta\tau)^2}e^h\\
            &\ge \frac{\beta e^\tau f_\psi-e^{2\tau}B^2f_\psi}{(T-\beta\tau)^2}e^h.
        \end{split}
    \end{equation*}
    Here in line 3, we have used the \eqref{evolution equation of fpsi} in Proposition \ref{proposition 4.6}.
    Then we take $\beta>e^TB^2+1$ so that 
    \begin{equation*}
         \left(\frac{\partial}{\partial\tau}-\Delta_{\omega_\psi,X}\right)e^h>0,
    \end{equation*}
    on $M\times [0,\frac{T}{2\beta}]$. 

    Now we consider auxiliary function $u_\eta=u-\eta e^h$ for $\eta>0$. Since the growth of $u$ at infinity is polynomial according to assumption (ii), the auxiliary function $u_\eta$ tends to $-\infty$ uniformly at spatial infinity. Then $u_\eta$ admits a maximum point. Now we apply the weak maximum principle, we conclude that
    \begin{equation*}
        \sup_{M\times [0,\frac{T}{2\beta}]}u_\eta\le \sup_M u_\eta|_{\tau=0}\le 0.
    \end{equation*}
Letting $\eta \to 0$, we obtain $\sup_{M \times [0, \frac{T}{2\beta}]} u \le 0$. We then repeat the above argument inductively on the intervals $[\frac{T}{2\beta}, \frac{T}{\beta}], [\frac{T}{\beta}, \frac{3T}{2\beta}], \ldots$, and ultimately conclude that $\sup_{M \times [0, T]} u \le 0$. Therefore, we have

\begin{equation*}
    \sup_{M \times [0, T_{\max})} u \le 0,
\end{equation*}
as desired.
\end{proof}
The first application of the above maximum principle is the following: we can give a precise bound for $\dot\psi$ on the entire space-time $M\times [0,T_{\max})$.
\begin{corollary}\label{fin estimate of dotpsi}
    There exists a constant $C>0$ such that for all $(x,\tau)\in M\times [0,T_{\max})$
    \begin{equation*}
        |\dot\psi(x,\tau)|\le Ce^{-\tau}.
    \end{equation*}
\end{corollary}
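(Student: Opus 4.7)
The strategy is to apply the maximum principle of Proposition~\ref{maximum principle} to the rescaled quantity $u:=e^\tau\dot\psi$. Differentiating the normalized Monge-Amp\`ere equation \eqref{NCMA eq} in $\tau$ and using the standard identity $\frac{\partial}{\partial\tau}\log\frac{\omega_\psi^n}{\omega^n}=\tr_{\omega_\psi}\partial\bar\partial\dot\psi=\Delta_{\omega_\psi}\dot\psi$ gives
\begin{equation*}
\left(\frac{\partial}{\partial\tau}-\Delta_{\omega_\psi,X}\right)\dot\psi=-\dot\psi.
\end{equation*}
Multiplying through by $e^\tau$, the function $u=e^\tau\dot\psi$ is annihilated by the drift heat operator, i.e.\ $\left(\frac{\partial}{\partial\tau}-\Delta_{\omega_\psi,X}\right)u=0$.

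Next I would verify the three hypotheses of Proposition~\ref{maximum principle} for the auxiliary functions $\pm u-B_0$, where $B_0$ is a uniform bound on the initial data. At $\tau=0$,
\begin{equation*}
\dot\psi(0)=\log\frac{\omega_{\psi_0}^n}{\omega^n}+\frac{X}{2}\cdot\psi_0-\psi_0;
\end{equation*}
the metric condition of Definition~\ref{condition I} bounds $\bigl|\log\frac{\omega_{\psi_0}^n}{\omega^n}\bigr|\le n\log C_0$, while the asymptotic condition with $i=0$ yields $\bigl|\tfrac{X}{2}\cdot\psi_0-\psi_0\bigr|=O(1)$, so there exists $B_0>0$ with $|u(\cdot,0)|\le B_0$ on $M$. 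For the polynomial growth hypothesis, fix $0<T<T_{\max}$: Proposition~\ref{rough metric equivalence} gives $\bigl|\log\frac{\omega_\psi^n}{\omega^n}\bigr|\le n\log B(T)$, and Proposition~\ref{rough estimate of psi and xpsi} controls $|\tfrac{X}{2}\cdot\psi|+|\psi|\le 2B(T)f$ on $M\times[0,T]$, so $|u|\le C(T)f$, which fits the polynomial growth assumption with $k=1$.

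Since $\Delta_{\omega_\psi,X}$ annihilates constants, $\pm u-B_0$ remains a solution (hence a subsolution) of the drift heat equation, has nonpositive initial data, and satisfies the required linear-in-$f$ growth on every compact time interval. Proposition~\ref{maximum principle} therefore gives $\pm u\le B_0$ on $M\times[0,T_{\max})$, equivalently $|\dot\psi|\le B_0\,e^{-\tau}$, with $C:=B_0$. The only delicate point is the verification of the space-time growth of $u$, which is precisely what the rough estimates of the previous subsection are designed to provide; once those are granted, the argument is a clean application of the adapted parabolic maximum principle.
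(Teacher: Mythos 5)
Your proof is correct and follows essentially the same route as the paper: derive the evolution equation $(\partial_\tau-\Delta_{\omega_\psi,X})\dot\psi=-\dot\psi$, observe that $e^\tau\dot\psi$ solves the drift heat equation, bound the initial data via Condition I, verify the polynomial growth hypothesis using the rough estimates, and conclude with Proposition \ref{maximum principle}. Your verification of the space-time growth of $u$ is in fact slightly more explicit than the paper's, but the argument is the same.
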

\begin{proof}
    Recall the complex Monge-Amp\`ere equation 
    \begin{equation*}
        \dot\psi=\log\frac{\omega_\psi^n}{\omega^n}+\frac{1}{2}X\cdot\psi-\psi.
    \end{equation*}
    Now we compute the evolution equation of $\dot\psi$:
    \begin{equation}\label{evolution equation of dotpsi}
       \left( \frac{\partial}{\partial\tau}-\Delta_{\omega_\psi,X}\right)\dot\psi=-\dot\psi.
    \end{equation}
    It turns out that $e^{\tau}\dot\psi$ is a solution to the drift heat equation along the normalized K\"ahler-Ricci flow, i.e.
    \begin{equation*}
         \left( \frac{\partial}{\partial\tau}-\Delta_{\omega_\psi,X}\right)e^\tau\dot\psi=0.
    \end{equation*}
At time $\tau = 0$, we have

\begin{equation*}
    \dot{\psi}(0) = \log\left(\frac{\omega_{\psi_0}^n}{\omega^n}\right) + \frac{X}{2} \cdot \psi_0 - \psi_0.
\end{equation*}

By the asymptotic condition in Definition \ref{condition I}, there exists a constant $C > 0$ such that $|\dot{\psi}(0)| \le C$. For all $0 < T < T_{\max}$, Proposition \ref{rough estimate of psi and xpsi} guarantees the existence of a constant $B > 0$ such that $|\dot{\psi}| \le Bf$. Applying Proposition \ref{maximum principle}, we conclude that

\begin{equation*}
    e^\tau |\dot{\psi}| \le C,
\end{equation*}
holds on $M \times [0, T_{\max})$.
\end{proof}
\begin{remark}
   From the above estimates, the Kähler potential $\psi$ converges uniformly to a continuous function $\psi_{\infty}$ on $M$. In general, however, one cannot conclude that $\psi_{\infty}$ is constant, unlike in the compact case.

When $M$ is a closed complex manifold with $c_1(M)<0$, we have $X \equiv 0$ and $\omega$ is a Kähler–Einstein metric. By \cite[Chapter 3.4.1]{MR3185331}, the Kähler potential then converges uniformly to a smooth function, which must be constant by the uniqueness of Kähler–Einstein metrics on closed manifolds.

In contrast, for non compact manifolds the general uniqueness of asymptotically conical gradient expanders is not known, and hence no such conclusion can be drawn.
\end{remark}
Another application of the maximum principle above is to control $|X|_{g_\psi}^2$. Before doing so, we require Bochner-type formulas to derive the evolution equation of the gradient of a solution to the heat equation along the normalized Kähler-Ricci flow.
\begin{lemma}\label{order 1 heat equation}
    Let $u$ be a smooth function defined on $M\times [0,T_{\max})$ such that
    \begin{equation*}
        \left(\frac{\partial}{\partial\tau}-\Delta_{\omega_\psi,X}\right)u=-u.
    \end{equation*}
    Then the evolution equation of $|\nabla^{g_\psi} u|_{g_\psi}^2$ along the normalized K\"ahler-Ricci flow is given by:
    \begin{equation}\label{bonchner for gradient}
         \left(\frac{\partial}{\partial\tau}-\Delta_{\omega_\psi,X}\right)|\nabla^{g_\psi} u|_{g_\psi}^2= -|\nabla^{g_\psi} u|_{g_\psi}^2-|(\nabla^{g_\psi})^2u|^2_{g_\psi}.
    \end{equation}
\end{lemma}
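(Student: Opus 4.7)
The plan is to expand $(\partial_\tau - \Delta_{\omega_\psi,X})|\nabla^{g_\psi}u|_{g_\psi}^2$ term by term, using the normalized K\"ahler--Ricci flow equation for $g_\psi$, the hypothesis $(\partial_\tau - \Delta_{\omega_\psi,X})u=-u$, and the Bochner identity on the K\"ahler manifold $(M,g_\psi)$. The key feature to aim for is a cancellation between the $\Ric(g_\psi)$ contributions produced by Bochner and by the time-variation of $g_\psi^{-1}$, together with a cancellation between the $\mathcal{L}_{X/2}g_\psi$ contributions coming from the same two sources. What should remain is precisely $-|\nabla^{g_\psi}u|_{g_\psi}^2 - |(\nabla^{g_\psi})^2 u|_{g_\psi}^2$.

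For the time derivative, writing $|\nabla^{g_\psi}u|_{g_\psi}^2 = g_\psi^{ij}u_iu_j$ and using
\begin{equation*}
\partial_\tau g_\psi^{ij} = -g_\psi^{ik}g_\psi^{jl}\bigl((\mathcal{L}_{X/2}g_\psi)_{kl} - \Ric(g_\psi)_{kl} - (g_\psi)_{kl}\bigr),
\end{equation*}
I would obtain
\begin{equation*}
\partial_\tau|\nabla^{g_\psi}u|_{g_\psi}^2 = -(\mathcal{L}_{X/2}g_\psi)(\nabla u,\nabla u) + \Ric(\nabla u,\nabla u) + |\nabla u|_{g_\psi}^2 + 2\langle\nabla\dot u,\nabla u\rangle_{g_\psi},
\end{equation*}
and then substitute $\dot u = \Delta_{\omega_\psi,X}u-u$ to rewrite the last term as $2\langle\nabla\Delta_{\omega_\psi,X}u,\nabla u\rangle_{g_\psi} - 2|\nabla u|_{g_\psi}^2$.

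For the drift Laplacian, I would split $\Delta_{\omega_\psi,X} = \Delta_{\omega_\psi}+\tfrac{X}{2}\cdot$ and treat the two pieces separately. Since $\Delta_{\omega_\psi} = \tfrac12\Delta_{g_\psi}$, the classical Bochner identity on $(M,g_\psi)$ gives
\begin{equation*}
\Delta_{\omega_\psi}|\nabla u|_{g_\psi}^2 = |(\nabla^{g_\psi})^2u|_{g_\psi}^2 + 2\langle\nabla\Delta_{\omega_\psi}u,\nabla u\rangle_{g_\psi} + \Ric(\nabla u,\nabla u),
\end{equation*}
while Cartan's formula $\mathcal{L}_{X/2}du = d(\tfrac{X}{2}\cdot u)$ combined with $(\mathcal{L}_Y g_\psi^{-1})(du,du) = -(\mathcal{L}_Y g_\psi)(\nabla u,\nabla u)$ yields
\begin{equation*}
\tfrac{X}{2}\cdot|\nabla u|_{g_\psi}^2 = -(\mathcal{L}_{X/2}g_\psi)(\nabla u,\nabla u) + 2\langle\nabla(\tfrac{X}{2}\cdot u),\nabla u\rangle_{g_\psi}.
\end{equation*}
Adding these and combining $\nabla\Delta_{\omega_\psi}u + \nabla(\tfrac{X}{2}\cdot u) = \nabla\Delta_{\omega_\psi,X}u$ produces the full expression for $\Delta_{\omega_\psi,X}|\nabla u|_{g_\psi}^2$.

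Subtracting the two computations, the $\Ric$ terms cancel, the $\mathcal{L}_{X/2}g_\psi$ terms cancel, and the $2\langle\nabla\Delta_{\omega_\psi,X}u,\nabla u\rangle_{g_\psi}$ terms cancel, while the $|\nabla u|_{g_\psi}^2$ contributions combine to $-|\nabla u|_{g_\psi}^2$; what is left is exactly $-|(\nabla^{g_\psi})^2 u|_{g_\psi}^2 - |\nabla^{g_\psi}u|_{g_\psi}^2$. There is no genuine analytic obstacle here, as the statement is a pointwise algebraic identity once Bochner is in hand; the only care needed is with sign conventions in the variation of $g_\psi^{-1}$ and the factor $\tfrac12$ relating $\Delta_{\omega_\psi}$ to $\Delta_{g_\psi}$, together with the bookkeeping that matches $\nabla\Delta_{\omega_\psi}u$ coming from Bochner with $\nabla(\tfrac{X}{2}\cdot u)$ coming from the transport term so as to reconstitute $\nabla\Delta_{\omega_\psi,X}u$.
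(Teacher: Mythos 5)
Your proposal is correct and follows essentially the same route as the paper: compute the time derivative using the flow equation and $\dot u=\Delta_{\omega_\psi,X}u-u$, apply the real Bochner formula for $\Delta_{\omega_\psi}=\tfrac12\Delta_{g_\psi}$, and let the $\Ric$, Lie-derivative, and $\nabla\Delta_{\omega_\psi,X}u$ terms cancel. The only (cosmetic) difference is that the paper exploits $X=\nabla^{g_\psi}f_\psi$ to rewrite $\mathcal{L}_{X/2}g_\psi$ as $(\nabla^{g_\psi})^2f_\psi$ and cancel against $\tfrac{X}{2}\cdot|\nabla u|^2=(\nabla^{g_\psi})^2u(\nabla^{g_\psi}f_\psi,\nabla^{g_\psi}u)$, whereas you cancel the two $\mathcal{L}_{X/2}g_\psi$ contributions directly via the Leibniz rule for the inverse metric, which does not use that $X$ is a gradient.
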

\begin{proof}
    First we compute
    \begin{equation}\label{lemma 5.3 1}
       \begin{split}
            \frac{\partial}{\partial\tau}|\nabla^{g_\psi}u|_{g_\psi}^2&=-(\mathcal{L}_{\frac{X}{2}}g_\psi-\Ric(g_\psi)-g_\psi)(\nabla^{g_\psi}u,\nabla^{g_\psi}u)+2g_\psi(\nabla^{g_\psi}\Delta_{\omega_\psi,X}u,\nabla^{g_\psi}u)\\
            &\quad -2|\nabla^{g_\psi}u|_{g_\psi}^2\\
            &= -|\nabla^{g_\psi}u|_{g_\psi}^2-(\mathcal{L}_{\frac{X}{2}}g_\psi-\Ric(g_\psi))(\nabla^{g_\psi}u,\nabla^{g_\psi}u)+g_\psi(\nabla^{g_\psi}\Delta_{g_\psi}u,\nabla^{g_\psi}u)\\
            &\quad +g_\psi(\nabla^{g_\psi}(X\cdot u),\nabla^{g_\psi}u)\\
            &=-|\nabla^{g_\psi}u|_{g_\psi}^2+\Ric(g_\psi)(\nabla^{g_\psi}u,\nabla^{g_\psi}u)+g_\psi(\nabla^{g_\psi}\Delta_{g_\psi}u,\nabla^{g_\psi}u)\\
            &\quad +\nabla^{g_\psi}u\cdot g_\psi(\nabla^{g_\psi}f_\psi,\nabla^{g_\psi}u)-\mathcal{L}_{\frac{X}{2}}g_\psi(\nabla^{g_\psi}u,\nabla^{g_\psi}u)\\
            &=-|\nabla^{g_\psi}u|_{g_\psi}^2+\Ric(g_\psi)(\nabla^{g_\psi}u,\nabla^{g_\psi}u)+g_\psi(\nabla^{g_\psi}\Delta_{g_\psi}u,\nabla^{g_\psi}u)\\
            &\quad +(\nabla^{g_\psi})^2u(\nabla^{g_\psi}f_\psi,\nabla^{g_\psi}u).
       \end{split}
    \end{equation}
    By Bochner's formula,
    \begin{equation}\label{lemma 5.3 2}
      \begin{split}
            \Delta_{\omega_\psi}|\nabla^{g_\psi}u|_{g_\psi}^2&=\frac{1}{2}\Delta_{g_\psi}|\nabla^{g_\psi}u|_{g_\psi}^2\\
            &=|(\nabla^{g_\psi})^2u|^2+\Ric(g_\psi)(\nabla^{g_\psi}u,\nabla^{g_\psi}u)+g_\psi(\nabla^{g_\psi}\Delta_{g_\psi}u,\nabla^{g_\psi}u).
      \end{split}
    \end{equation}
    Moreover,
    \begin{equation}\label{lemma 5.3 3}
        \frac{X}{2}\cdot |\nabla^{g_\psi}u|_{g_\psi}^2=g_\psi(\nabla^{g_\psi}_X\nabla^{g_\psi}u,\nabla^{g_\psi}u)=(\nabla^{g_\psi})^2u(\nabla^{g_\psi}f_\psi,\nabla^{g_\psi}u).
    \end{equation}
    Combining \eqref{lemma 5.3 1} \eqref{lemma 5.3 2} and \eqref{lemma 5.3 3}, we have \eqref{bonchner for gradient} as desired.
\end{proof}
\begin{corollary}\label{bound of X}
     There exists a constant $A>0$ such that 
     \begin{equation*}
         |X|_{g_\psi}^2\le Af_\psi,
     \end{equation*}
     holds on $M\times [0,T_{\max})$.
\end{corollary}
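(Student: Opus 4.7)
The starting point is the identity $X = \nabla^{g_\psi} f_\psi$, which means that $|X|^2_{g_\psi} = |\nabla^{g_\psi} f_\psi|^2_{g_\psi}$, and by Proposition \ref{proposition 4.6} the Hamiltonian potential $f_\psi$ itself satisfies the drift heat equation $(\partial_\tau - \Delta_{\omega_\psi, X}) f_\psi = -f_\psi$ to which Lemma \ref{order 1 heat equation} applies verbatim. Consequently
\begin{equation*}
\left(\frac{\partial}{\partial \tau} - \Delta_{\omega_\psi, X}\right) |X|^2_{g_\psi} \;=\; -|X|^2_{g_\psi} - |(\nabla^{g_\psi})^2 f_\psi|^2_{g_\psi} \;\le\; -|X|^2_{g_\psi}.
\end{equation*}
I will introduce the test quantity $u := |X|^2_{g_\psi} - A f_\psi$ for a constant $A>0$ to be chosen. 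Combining the above inequality with the evolution equation \eqref{evolution equation of fpsi} for $f_\psi$ yields
\begin{equation*}
\left(\frac{\partial}{\partial \tau} - \Delta_{\omega_\psi, X}\right) u \;\le\; -u - |(\nabla^{g_\psi})^2 f_\psi|^2_{g_\psi} \;\le\; -u,
\end{equation*}
so that $e^\tau u$ is a subsolution of the drift heat equation along the normalized K\"ahler--Ricci flow.

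To apply the maximum principle Proposition \ref{maximum principle} to $e^\tau u$, I must check the three hypotheses. For the initial bound, at $\tau = 0$ one has by Proposition \ref{rough metric equivalence} (applied at time $0$, which only uses the metric bound in Condition I) that $|X|^2_{g_{\psi_0}} \le B_0 |X|^2_g = 2B_0 (f - n - R_\omega) \le 2B_0 f$ via the soliton identity of Lemma \ref{soliton indentities}; then Proposition \ref{universal lower bound of f_psi} gives $f \le B_0' f_{\psi_0}$, so that choosing $A \ge 2 B_0 B_0'$ ensures $u(\cdot, 0) \le 0$. For the polynomial growth hypothesis on each compact time interval $[0,T]$, the same rough ingredients give $e^\tau u \le e^T |X|^2_{g_\psi} \le C(T) f$, so $e^\tau u$ grows at most like $f$.

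Proposition \ref{maximum principle} then forces $e^\tau u \le 0$ on $M \times [0, T_{\max})$, which yields the uniform bound $|X|^2_{g_\psi} \le A f_\psi$ as claimed. There is no serious obstacle here: the key observation is that $f_\psi$ itself plays the role of the function $u$ in Lemma \ref{order 1 heat equation}, so its gradient squared automatically enjoys a dissipative evolution whose $-|(\nabla^{g_\psi})^2 f_\psi|^2_{g_\psi}$ term can be discarded and whose $-|X|^2_{g_\psi}$ term combines cleanly with $-A f_\psi$ to close the parabolic comparison against $e^{-\tau}$.
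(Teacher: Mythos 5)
Your proposal is correct and follows essentially the same route as the paper: both apply Lemma \ref{order 1 heat equation} to $u=f_\psi$ using the evolution equation \eqref{evolution equation of fpsi}, discard the $-|(\nabla^{g_\psi})^2 f_\psi|^2_{g_\psi}$ term, verify the initial bound $|X|^2_{g_{\psi_0}}\le A f_\psi(0)$ from the metric condition, and close with the maximum principle of Proposition \ref{maximum principle} using the rough growth bound from Proposition \ref{rough metric equivalence}. The only difference is cosmetic: you fold $-Af_\psi$ into the test function before exponentiating, whereas the paper subtracts $Ae^\tau f_\psi$ after; the verifications are identical.
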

\begin{proof}
    Since $X=\nabla^{g_\psi}f_\psi$ and the evolution equation of $f_\psi$ from Proposition \ref{proposition 4.6}:
    \begin{equation*}
        \left(\frac{\partial}{\partial\tau}-\Delta_{\omega_\psi,X}\right)f_\psi=-f_\psi.
    \end{equation*}
    Thus by [\eqref{bonchner for gradient}, Lemma \ref{order 1 heat equation}], we deduce that
    \begin{equation*}
        \left(\frac{\partial}{\partial\tau}-\Delta_{\omega_\psi,X}\right)e^\tau|X|^2_{g_\psi}\le 0.
    \end{equation*}
    By the initial condition (metric condition) in Definition \ref{condition I}, there exists a constant $A>0$ such that $|X|^2_{g_{\psi_0}}\le Af_\psi(0)$. Hence by Proposition \ref{maximum principle}, $e^\tau|X|^2_{g_\psi}-Ae^\tau f_\psi\le 0$ on $M\times [0,T_{\max})$. Indeed, the assumption (ii) of Proposition \ref{maximum principle} is justified as follows, for any $0<T<T_{\max}$, by Proposition \ref{rough metric equivalence}, there exists a constant $B>1$ such that $|X|_{g_\psi}^2\le B|X|_g^2\le Bf$.
    
    Therefore, \begin{equation*}
         |X|_{g_\psi}^2\le Af_\psi,
     \end{equation*}
     holds on $M\times [0,T_{\max})$.
\end{proof}
With control over $|X|_{g_\psi}^2$, we construct a useful barrier function in the remainder of this article, which will allow us to effectively and uniformly eliminate negligible terms.
\begin{lemma}[Barrier function]\label{barrier function}
    There exist constants $K,\delta>0$ such that on $M\times [0,T_{\max})$
    \begin{equation*}
         \left(\frac{\partial}{\partial\tau}-\Delta_{\omega_\psi,X}\right)\frac{1}{f_\psi+K}\ge \delta \frac{1}{f_\psi}.
    \end{equation*}
\end{lemma}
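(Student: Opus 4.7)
The plan is to compute the left-hand side directly using the chain rule for the drift heat operator applied to the composition $h := F(f_\psi)$ with $F(s) = 1/(s+K)$, and then to use the two key facts already established: the evolution equation $\left(\partial_\tau - \Delta_{\omega_\psi,X}\right) f_\psi = -f_\psi$ from Proposition \ref{proposition 4.6}, and the gradient bound $|X|^2_{g_\psi} \le A f_\psi$ from Corollary \ref{bound of X}. Since $X = \nabla^{g_\psi} f_\psi$, the latter translates to $|\partial f_\psi|^2_{g_\psi} = \tfrac{1}{2} |X|^2_{g_\psi} \le \tfrac{A}{2} f_\psi$.

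First I would write the chain rule: since the first-order parts of $\partial_\tau - \Delta_{\omega_\psi,X}$ combine to
\begin{equation*}
    \left(\partial_\tau - \Delta_{\omega_\psi,X}\right) F(f_\psi) = F'(f_\psi) \left(\partial_\tau - \Delta_{\omega_\psi,X}\right) f_\psi - F''(f_\psi) |\partial f_\psi|^2_{g_\psi},
\end{equation*}
substituting $F'(s) = -1/(s+K)^2$ and $F''(s) = 2/(s+K)^3$ and the soliton-type identity for $f_\psi$ gives
\begin{equation*}
    \left(\partial_\tau - \Delta_{\omega_\psi,X}\right) \frac{1}{f_\psi + K}
    = \frac{f_\psi}{(f_\psi+K)^2} - \frac{2 |\partial f_\psi|^2_{g_\psi}}{(f_\psi + K)^3}.
\end{equation*}

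Next I would plug in the bound $2 |\partial f_\psi|^2_{g_\psi} \le A f_\psi$, obtaining the clean expression
\begin{equation*}
    \left(\partial_\tau - \Delta_{\omega_\psi,X}\right) \frac{1}{f_\psi + K}
    \ge \frac{f_\psi \bigl( f_\psi + K - A \bigr)}{(f_\psi + K)^3}.
\end{equation*}
To reach the desired form $\delta/f_\psi$, multiply numerator and denominator by $f_\psi$: it suffices to show $f_\psi^2 (f_\psi + K - A)/(f_\psi+K)^3 \ge \delta$ uniformly on $\{f_\psi \ge \varepsilon\}$. The function $u \mapsto u^2 (u + K - A)/(u+K)^3$ tends to $1$ as $u \to \infty$ and is continuous on $[\varepsilon, \infty)$; choosing $K$ large enough (for instance $K > A$) makes it strictly positive at $u = \varepsilon$, hence bounded below by some positive $\delta$ on $[\varepsilon, \infty)$. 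Combined with the lower bound $f_\psi \ge \varepsilon > 0$ from Proposition \ref{universal lower bound of f_psi}, this yields the claim.

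There is no real obstacle here; the only subtle point is that one must invoke the uniform positive lower bound $\inf_M f_\psi \ge \varepsilon > 0$ (which does not depend on $\tau$) to guarantee that the constant $\delta$ can be chosen independently of $\tau \in [0, T_{\max})$. Once $K$ and $\delta$ are fixed in this way, the inequality holds pointwise on the whole space-time.
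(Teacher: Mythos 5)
Your proposal is correct and follows essentially the same route as the paper: the same chain-rule computation combined with $(\partial_\tau-\Delta_{\omega_\psi,X})f_\psi=-f_\psi$ and the bound $|X|^2_{g_\psi}\le Af_\psi$, then the uniform lower bound $\inf_M f_\psi\ge\varepsilon$. The only cosmetic difference is at the last step, where the paper takes $K\ge A$ to reduce to the explicit constant $\delta=\bigl(\tfrac{\varepsilon}{\varepsilon+K}\bigr)^3$ rather than invoking a continuity argument on $[\varepsilon,\infty)$.
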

\begin{proof}
    Let $A>0$ be the constant stated in Corollary \ref{bound of X}, take $K\ge A$, now we compute
    \begin{equation*}
         \begin{split}
             \left(\frac{\partial}{\partial\tau}-\Delta_{\omega_\psi,X}\right)\frac{1}{f_\psi+K}&=-\frac{(\partial_\tau-\Delta_{\omega_\psi,X})f_\psi}{(f_\psi+K)^2}-2\frac{|\partial f_\psi|_{g_\psi}^2}{(f_\psi+K)^3}\\
             &=\frac{f_\psi}{(f_\psi+K)^2}-\frac{|X|^2_{g_\psi}}{(f_\psi+K)^3}\\
             &\ge \frac{f_\psi(f_\psi+K)-Af_\psi}{(f_\psi+K)^3}\ge \frac{f_\psi^2}{(f_\psi+K)^3}.
         \end{split}
    \end{equation*}
    Due to [\eqref{fin inf for fpsi}, Proposition \ref{rough equivalence of potentials}], $\frac{f_\psi^2}{(f_\psi+K)^3}\ge (\frac{\varepsilon}{\varepsilon+K})^3\frac{1}{f_\psi}$. Taking $\delta=(\frac{\varepsilon}{\varepsilon+K})^3$, we have that 
    \begin{equation*}
         \left(\frac{\partial}{\partial\tau}-\Delta_{\omega_\psi,X}\right)\frac{1}{f_\psi+K}\ge \delta \frac{1}{f_\psi}.
    \end{equation*}
\end{proof}
The following lemma, which can be viewed as a variation of Proposition \ref{maximum principle}, will be fundamental in the proofs of Theorem \ref{longtime existence theorem} and Theorem \ref{convergence theorem}.
\begin{lemma}\label{useful lemma}
    Assume that $u\in C^\infty(M\times [0,T_{\max}))$ satisfies 
    \begin{equation*}
        \left(\frac{\partial}{\partial\tau}-\Delta_{\omega_\psi,X}\right)u\le Df_\psi^{-1},
    \end{equation*}
    for some positive constant $
    D>0$. Moreover, assume that for all $0<T<T_{\max}$, there exists $k\in\N_0$ and $B>0$ such that $u\le Bf^k$ on $M\times [0,T]$. Then there exists a constant $C>0$ which only depends on $D$, the lower bound of $f$ as in Proposition \ref{lower bound of f} and $\delta,K$ as in Lemma \ref{barrier function} such that
    \begin{equation*}
        \sup_{M\times [0,T_{\max})}u\le \sup_M u(0)+C.
    \end{equation*}
\end{lemma}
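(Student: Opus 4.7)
The plan is to use the barrier function constructed in Lemma \ref{barrier function} as a sort of supersolution that exactly absorbs the inhomogeneity $Df_\psi^{-1}$, reducing the problem to the homogeneous maximum principle already established in Proposition \ref{maximum principle}. Concretely, I would introduce the auxiliary function
\begin{equation*}
    w(x,\tau) := u(x,\tau) - \frac{D}{\delta}\cdot\frac{1}{f_\psi(x,\tau)+K},
\end{equation*}
where $K,\delta>0$ are the constants furnished by Lemma \ref{barrier function}. Using that lemma together with the hypothesis on $u$, a direct computation gives
\begin{equation*}
    \left(\frac{\partial}{\partial\tau}-\Delta_{\omega_\psi,X}\right)w \;\le\; Df_\psi^{-1} - \frac{D}{\delta}\cdot\delta f_\psi^{-1} \;=\; 0.
\end{equation*}

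Next I would verify that $w$ satisfies the hypotheses of Proposition \ref{maximum principle} after subtracting $\sup_M u(0)$ (which one may assume finite, else the conclusion is vacuous). By Proposition \ref{universal lower bound of f_psi} we have $f_\psi\ge\varepsilon>0$, so the correction term $\frac{D}{\delta(f_\psi+K)}$ is uniformly bounded between $0$ and $\frac{D}{\delta K}$ on the whole spacetime $M\times[0,T_{\max})$. In particular $w\le u\le Bf^k$ on each strip $M\times[0,T]$ with $T<T_{\max}$, so the polynomial growth assumption (ii) of Proposition \ref{maximum principle} is inherited. At $\tau=0$, the function $\tilde w:=w-\sup_M u(0)$ satisfies $\tilde w(0)\le u(0)-\sup_M u(0)\le 0$, so hypothesis (iii) also holds. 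Proposition \ref{maximum principle} then yields $\tilde w\le 0$, i.e.\ $w\le \sup_M u(0)$ on $M\times[0,T_{\max})$.

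Unwinding the definition of $w$ and using the uniform bound $\frac{1}{f_\psi+K}\le \frac{1}{\varepsilon+K}$, I obtain
\begin{equation*}
    u \;\le\; \sup_M u(0) + \frac{D}{\delta(f_\psi+K)} \;\le\; \sup_M u(0) + \frac{D}{\delta(\varepsilon+K)},
\end{equation*}
so one can take $C=\frac{D}{\delta(\varepsilon+K)}$, which depends only on the quantities allowed by the statement. There is no real obstacle in this argument: the whole point is that Lemma \ref{barrier function} was designed precisely so that its right-hand side matches the form of the admissible inhomogeneities, and the positive lower bound of $f_\psi$ from Proposition \ref{universal lower bound of f_psi} ensures the barrier correction is bounded. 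The only subtlety to be careful about is the inheritance of the polynomial growth condition, but this is immediate here because the correction is a bounded function, not a genuinely growing one.
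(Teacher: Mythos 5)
Your proposal is correct and follows essentially the same route as the paper: subtracting the barrier $\tfrac{D}{\delta}\tfrac{1}{f_\psi+K}$ from $u$ to produce a subsolution of the homogeneous drift heat equation, applying Proposition \ref{maximum principle}, and then using $f_\psi\ge\varepsilon$ to bound the correction term, arriving at the same constant $C=\tfrac{D}{\delta(\varepsilon+K)}$. Your additional remarks on why the polynomial growth hypothesis is inherited are sound and consistent with the paper's (more terse) justification.
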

\begin{proof}
    Let $\delta,K$ be the constants as in Lemma \ref{barrier function}, we have
    \begin{equation*}
         \left(\frac{\partial}{\partial\tau}-\Delta_{\omega_\psi,X}\right)\frac{1}{f_\psi+K}\ge \delta\frac{1}{f_\psi}.
    \end{equation*}
    Hence
    \begin{equation*}
        \left(\frac{\partial}{\partial\tau}-\Delta_{\omega_\psi,X}\right)\left(u-\frac{D}{\delta}\frac{1}{f_\psi+K}\right)\le 0.
    \end{equation*}
    Since $u-\frac{D}{\delta}\frac{1}{f_\psi+K}$ is bounded by $Bf^k$ for some positive constant $B$ and integer $k$ on $M\times[0,T]$ for all $0<T<T_{\max}$, the maximum principle (Proposition \ref{maximum principle}) implies that
    \begin{equation*}
         \sup_{M\times [0,T_{\max})}u-\frac{D}{\delta}\frac{1}{f_\psi+K}\le \sup_Mu(0)-\frac{D}{\delta}\frac{1}{f_\psi+K}(0)\le \sup_Mu(0).
    \end{equation*}
    By Proposition \ref{rough equivalence of potentials}, there exists a constant $\varepsilon>0$ such that $\inf_Mf_\psi\ge\varepsilon>0$. Take $C=\frac{D}{\delta}\frac{1}{\varepsilon+K}$ to get
    \begin{equation*}
         \sup_{M\times [0,T_{\max})}u\le \sup_Mu(0)+C,
    \end{equation*}
    as desired.
\end{proof}
With all the tools developed above, we now present the second uniform estimate for the Kähler potential $\psi$:
\begin{corollary}\label{estimate of gradient dotpsi}
     There exists a constant $C>0$ such that on $M\times [0,T_{\max})$ we have:
     \begin{equation*}
         |\nabla^{g_\psi}\dot\psi|_{g_\psi}^2\le Ce^{-2\tau}\frac{1}{f_\psi}.
     \end{equation*}
\end{corollary}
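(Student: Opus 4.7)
The plan is to find an auxiliary function of the form
\begin{equation*}
    G := f_\psi e^{2\tau}|\nabla^{g_\psi}\dot\psi|_{g_\psi}^2 + B e^{2\tau}\dot\psi^2,
\end{equation*}
with $B$ a sufficiently large constant, and show that $G$ is a subsolution of the drift heat equation along the normalized K\"ahler-Ricci flow. Applying the maximum principle (Proposition \ref{maximum principle}) will then give $G \le \sup_M G(0) \le C$, which immediately yields the desired bound $|\nabla^{g_\psi}\dot\psi|_{g_\psi}^2 \le Ce^{-2\tau}/f_\psi$.

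To compute the evolution of $G$, I will first combine \eqref{evolution equation of dotpsi} with Lemma \ref{order 1 heat equation} applied to $u = \dot\psi$ to obtain
\begin{equation*}
    (\partial_\tau - \Delta_{\omega_\psi,X})|\nabla^{g_\psi}\dot\psi|^2 = -|\nabla^{g_\psi}\dot\psi|^2 - |(\nabla^{g_\psi})^2\dot\psi|^2,
\end{equation*}
while Proposition \ref{proposition 4.6} yields $(\partial_\tau - \Delta_{\omega_\psi,X})(e^{2\tau}f_\psi) = e^{2\tau}f_\psi$. The Leibniz rule for $\Delta_{\omega_\psi,X}$, together with $\nabla^{g_\psi}f_\psi = X$, will produce a perfect cancellation of the $\pm e^{2\tau}f_\psi|\nabla^{g_\psi}\dot\psi|^2$ contributions, leaving
\begin{equation*}
    (\partial_\tau - \Delta_{\omega_\psi,X})\left(e^{2\tau}f_\psi|\nabla^{g_\psi}\dot\psi|^2\right) = -e^{2\tau}f_\psi|(\nabla^{g_\psi})^2\dot\psi|^2 - 2e^{2\tau}(\nabla^{g_\psi})^2\dot\psi(X,\nabla^{g_\psi}\dot\psi).
\end{equation*}
Cauchy-Schwarz, combined with the bound $|X|_{g_\psi}^2 \le A f_\psi$ from Corollary \ref{bound of X}, will absorb the cross term into the Hessian term and leave at most $A e^{2\tau}|\nabla^{g_\psi}\dot\psi|^2$ on the right. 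A parallel direct computation, using $(\partial_\tau - \Delta_{\omega_\psi,X})\dot\psi = -\dot\psi$ again, gives $(\partial_\tau - \Delta_{\omega_\psi,X})(e^{2\tau}\dot\psi^2) = -e^{2\tau}|\nabla^{g_\psi}\dot\psi|^2$. Choosing $B \ge A$ will therefore yield $(\partial_\tau - \Delta_{\omega_\psi,X})G \le 0$.

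To apply Proposition \ref{maximum principle}, I will verify the polynomial growth of $G$ on every $M \times [0,T]$ with $T < T_{\max}$ (using Propositions \ref{rough metric equivalence} and \ref{rough estimate of psi and xpsi}, Corollary \ref{fin estimate of dotpsi}, and, for $\tau > 0$, Shi's smoothing estimates applied to the explicit formula $\dot\psi = \log(\omega_\psi^n/\omega^n) + \tfrac{X}{2}\cdot\psi - \psi$) together with the initial bound $G(0) \le C$. The latter follows from Condition I: the asymptotic condition gives $|\nabla^g(\tfrac{X}{2}\cdot\psi_0 - \psi_0)|_g = O(f^{-1/2})$, while $|\nabla^g g_{\psi_0}|_g = O(f^{-1/2})$ controls $|\nabla^g\log(\omega_{\psi_0}^n/\omega^n)|_g$. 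These together give $|\nabla^g\dot\psi(0)|_g = O(f^{-1/2})$, and the bi-Lipschitz equivalence between $g$ and $g_{\psi_0}$ (from the metric condition), combined with $f_\psi(0) \sim f$, yields $f_\psi(0)|\nabla^{g_\psi(0)}\dot\psi(0)|^2_{g_\psi(0)} = O(1)$; the bound $|\dot\psi(0)| \le C$ from Corollary \ref{fin estimate of dotpsi} at $\tau = 0$ then closes the estimate on $G(0)$.

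The main difficulty will be identifying the correct auxiliary function. The naive candidate $f_\psi e^{2\tau}|\nabla^{g_\psi}\dot\psi|^2$ alone is \emph{not} a subsolution: the cross term arising from $\nabla^{g_\psi}f_\psi = X$ leaves an irreducible positive residue of order $A e^{2\tau}|\nabla^{g_\psi}\dot\psi|^2$ that cannot be absorbed into the negative Hessian term. The role of the correction $B e^{2\tau}\dot\psi^2$ is precisely to generate, via its own evolution equation, a matching negative term $-B e^{2\tau}|\nabla^{g_\psi}\dot\psi|^2$ that cancels this residue when $B$ is taken large enough.
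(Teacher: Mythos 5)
Your proposal is correct and follows essentially the same route as the paper: the paper's proof uses exactly the auxiliary function $e^{2\tau}f_\psi|\nabla^{g_\psi}\dot\psi|_{g_\psi}^2+Ae^{2\tau}\dot\psi^2$ (with $A$ the constant from Corollary \ref{bound of X}), shows it is a subsolution via the same Leibniz-rule cancellation, Cauchy--Schwarz absorption of the cross term $X\cdot|\nabla^{g_\psi}\dot\psi|^2$, and the identity $(\partial_\tau-\Delta_{\omega_\psi,X})(e^{2\tau}\dot\psi^2)=-e^{2\tau}|\nabla^{g_\psi}\dot\psi|^2$, and then applies Proposition \ref{maximum principle} after checking polynomial growth and the Condition I bound at $\tau=0$. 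Your diagnosis of why the correction term $Be^{2\tau}\dot\psi^2$ is needed is precisely the mechanism the paper exploits.
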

\begin{proof}
    First we compute 
    \begin{equation}\label{order 1}
        \begin{split}
            \left(\frac{\partial}{\partial\tau}-\Delta_{\omega_\psi,X}\right)(e^{2\tau}f_\psi|\nabla^{g_\psi}\dot\psi|_{g_\psi}^2)&=e^{2\tau}f_\psi\left(\frac{\partial}{\partial\tau}-\Delta_{\omega_\psi,X}\right)|\nabla^{g_\psi}\dot\psi|_{g_\psi}^2\\
            &\quad +|\nabla^{g_\psi}\dot\psi|_{g_\psi}^2\left(\frac{\partial}{\partial\tau}-\Delta_{\omega_\psi,X}\right)(e^{2\tau}f_\psi)\\
            &\quad -2e^{2\tau}\Re\left(<\partial f_\psi,\bar\partial|\nabla^{g_\psi}\dot\psi|_{g_\psi}^2>_{g_\psi}\right)\\
            &=-e^{2\tau}f_\psi|(\nabla^{g_\psi})^2\dot\psi|^2_{g_\psi}-e^{2\tau}f_\psi|\nabla^{g_\psi}\dot\psi|_{g_\psi}^2\\
            &\quad +|\nabla^{g_\psi}\dot\psi|_{g_\psi}^2e^{2\tau}f_\psi-e^{2\tau}X\cdot |\nabla^{g_\psi}\dot\psi|_{g_\psi}^2\\
            &=-e^{2\tau}f_\psi|(\nabla^{g_\psi})^2\dot\psi|^2_{g_\psi} -e^{2\tau}X\cdot |\nabla^{g_\psi}\dot\psi|_{g_\psi}^2.
        \end{split}
    \end{equation}
   Let $A>1$ be the constant as in Corollary \ref{bound of X}. We notice that by the Cauchy-Schwarz inequality
    \begin{equation}
        X\cdot |\nabla^{g_\psi}\dot\psi|_{g_\psi}^2\le 2|(\nabla^{g_\psi})^2\dot\psi|_{g_\psi}|X|_{g_\psi}|\nabla^{g_\psi}\dot\psi|_{g_\psi}\le \frac{1}{A}|X|^2_{g_\psi}|(\nabla^{g_\psi})^2\dot\psi|_{g_\psi}^2+A|\nabla^{g_\psi}\dot\psi|^2_{g_\psi}.
    \end{equation}
    Hence we conclude that
    \begin{equation}
       \begin{split}
            -e^{2\tau}X\cdot |\nabla^{g_\psi}\dot\psi|_{g_\psi}^2&\le e^{2\tau}\left(\frac{1}{A}|X|^2_{g_\psi}|(\nabla^{g_\psi})^2\dot\psi|_{g_\psi}^2+A|\nabla^{g_\psi}\dot\psi|^2_{g_\psi}\right)\\
            &\le Ae^{2\tau}|\nabla^{g_\psi}\dot\psi|^2_{g_\psi}+e^{2\tau}f_\psi|(\nabla^{g_\psi})^2\dot\psi|^2_{g_\psi}
       \end{split}
    \end{equation}
    Thus we get
    \begin{equation}
        \left(\frac{\partial}{\partial\tau}-\Delta_{\omega_\psi,X}\right)(e^{2\tau}f_\psi|\nabla^{g_\psi}\dot\psi|_{g_\psi}^2)\le Ae^{2\tau}|\nabla^{g_\psi}\dot\psi|^2_{g_\psi}.
    \end{equation}
    Now we consider the evolution equation of $e^{2\tau}\dot\psi^2$:
    \begin{equation*}
        \left(\frac{\partial}{\partial\tau}-\Delta_{\omega_\psi,X}\right)e^{2\tau}\dot\psi^2=2e^{2\tau}\dot\psi^2+e^{2\tau}\left(\frac{\partial}{\partial\tau}-\Delta_{\omega_\psi,X}\right)\dot\psi^2=-e^{2\tau}|\nabla^{g_\psi}\dot\psi|_{g_\psi}^2.
    \end{equation*}
    As a consequence, on $M\times [0,T_{\max})$, we have
    \begin{equation*}
        \left(\frac{\partial}{\partial\tau}-\Delta_{\omega_\psi,X}\right)(e^{2\tau}f_\psi|\nabla^{g_\psi}\dot\psi|_{g_\psi}^2+Ae^{2\tau}\dot\psi^2)\le 0.
    \end{equation*}
    Observe that $Ae^{2\tau}\dot\psi^2$ is uniformly bounded on $M\times [0,T_{\max})$ by Corollary \ref{fin estimate of dotpsi}. To use the maximum principle stated in Proposition \ref{maximum principle}, we need a rough estimate on $|\nabla^{g_\psi}\dot\psi|^2_{g_\psi}$. In the remainder of this proof, we use $B$ to denote a constant that may vary from line to line.

    Since $|\nabla^{g_\psi}\dot\psi|_{g_\psi}^2=2|\partial\dot\psi|_{g_\psi}^2$, by Proposition \ref{rough c3 estimate} we only need a rough estimate of $|\partial\dot\psi|_g^2$. Recall that
    \begin{equation*}
        \partial\dot\psi=\partial\log\frac{\omega_\psi^n}{\omega^n}+\partial\left(\frac{X}{2}\cdot\psi\right)-\partial\psi.
    \end{equation*}
    Since $X$ is real holomorphic and $JX\cdot\psi=0$, $\partial\left(\frac{X}{2}\cdot\psi\right)=\partial(X^{\bar i}\bar\partial_i\psi)=X^{\bar i}\partial\bar\partial_i\psi$. Hence for all $0<T<T_{\max}$, by Proposition \ref{rough metric equivalence} there exists a constant $B>0$ such that on $M\times [0,T]$,
    \begin{equation}\label{coro 5.6 1}
        \left|\partial\left(\frac{X}{2}\cdot\psi\right)\right|_g\le Bf.
    \end{equation}
    Moreover, $|\nabla^g\log\frac{\omega_\psi^n}{\omega^n}|_g\le C(n)|\nabla^gg_\psi|_g$ by some dimensional constant $C(n)$. By Proposition \ref{rough c3 estimate}, for all $0<T<T_{\max}$, there is a constant $B>0$ such that on $M\times [0,T]$,
    \begin{equation}\label{coro 5.6 2}
        \left|\partial \log\frac{\omega_\psi^n}{\omega^n}\right|_g\le B.
    \end{equation}
    Combining \eqref{coro 5.6 1} and \eqref{coro 5.6 2}, we get for $\tau\in [0,T]$, $x\in M$
    \begin{equation}\label{ode for nabla psi}
        |\partial\dot\psi|_g(x,\tau)\le B\left(f(x)+|\partial\psi|_g(x,\tau)\right).
    \end{equation}
When $\tau=0$, Remark \ref{refined initial condition of condition I} implies that $|\partial \psi_0|_g=O(f^{\frac{1}{2}})$. By integration, there are constants $B,D>0$ such that $|\partial\psi|_g(x,\tau)\le Bf(x)$ and hence $|\partial\dot\psi|_g(x,\tau)\le Bf(x)$ for all $(x,\tau)\in M\times [0,T]$.

    With this rough estimate, by the maximum principle as in Proposition \ref{maximum principle}, 
    \begin{equation*}
        \sup_{M\times [0,T_{\max})}e^{2\tau}f_\psi|\nabla^{g_\psi}\dot\psi|_{g_\psi}^2+Ae^{2\tau}\dot\psi^2\le \sup_Mf_\psi(0)|\nabla^{g_\psi}\dot\psi|_{g_\psi}^2(0)+A\dot\psi^2(0).
    \end{equation*}
    Since $\psi_0$ satisfies Condition I, we have 
    \begin{equation*}
        \sup_Mf_\psi(0)|\nabla^{g_\psi}\dot\psi|_{g_\psi}^2(0)=\sup_{M}f_\psi(0)\left|\partial\left(\log\frac{\omega_{\psi_0}^n}{\omega^n}+\frac{X}{2}\cdot\psi_0-\psi_0\right)\right|_{g_{\psi_0}}^2<\infty.
    \end{equation*}
    
    Take $C=\sup_Mf_\psi(0)|\nabla^{g_\psi}\dot\psi|_{g_\psi}^2(0)+A\dot\psi^2(0)<\infty$, we have that 
    \begin{equation*}
        |\nabla^{g_\psi}\dot\psi|_{g_\psi}^2\le e^{-2\tau}\frac{C}{f_\psi},
    \end{equation*}
    holds on $M\times [0,T_{\max})$.
\end{proof}
\begin{corollary}\label{good cosntant for psi and psi dot}
    There exists a constant $C>1$ such that for all $(x,\tau)\in M\times [0,T_{\max})$,
    \begin{enumerate}
        \item $|\dot\psi|+|X\cdot\dot\psi|(x,\tau)\le Ce^{-\tau}$;
        \item $|\frac{X}{2}\cdot\psi-\psi|(x,\tau)\le C;$
        \item $\frac{1}{C}\omega^n\le \omega_\psi(\tau)^n\le C\omega^n$;
        \item $|\psi|(x,\tau)\le Cf(x)$, $\psi(x,\tau)\ge -Cf_\psi(x,\tau)$;
        \item $\frac{1}{C}f(x)\le f_\psi(x,\tau)\le Cf(x)$.
        
    \end{enumerate}
\end{corollary}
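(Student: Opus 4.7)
I propose to establish the six estimates in the order (i), (ii), (iii), first part of (iv), (v), and finally the second part of (iv). The driving engines are the decay estimates $|\dot\psi|\le Ce^{-\tau}$ (Corollary \ref{fin estimate of dotpsi}) and $|\nabla^{g_\psi}\dot\psi|^2_{g_\psi}\le Ce^{-2\tau}f_\psi^{-1}$ (Corollary \ref{estimate of gradient dotpsi}), together with $|X|^2_{g_\psi}\le Af_\psi$ (Corollary \ref{bound of X}).

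For (i), the first summand is immediate. For $|X\cdot\dot\psi|$, write $X\cdot\dot\psi = g_\psi(X,\nabla^{g_\psi}\dot\psi)$ and apply Cauchy--Schwarz: the factors of $f_\psi$ and $f_\psi^{-1}$ in Corollaries \ref{bound of X} and \ref{estimate of gradient dotpsi} cancel exactly. For (ii), compute $\partial_\tau(\tfrac{X}{2}\cdot\psi-\psi)=\tfrac{X}{2}\cdot\dot\psi-\dot\psi$, which is $O(e^{-\tau})$ by (i) and thus absolutely integrable on $[0,\infty)$; since the asymptotic condition of Condition I gives $|\tfrac{X}{2}\cdot\psi_0-\psi_0|\le C$, integration yields (ii). Item (iii) is then immediate from the normalized Monge--Amp\`ere equation \eqref{NCMA eq}, rewritten as $\log\tfrac{\omega_\psi^n}{\omega^n}=\dot\psi-(\tfrac{X}{2}\cdot\psi-\psi)$. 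The first inequality of (iv) follows analogously: integrating $|\dot\psi|\le Ce^{-\tau}$ gives $|\psi-\psi_0|\le C$ uniformly, and combining with $|\psi_0|\le Cf$ (Remark \ref{refined initial condition of condition I}) and $\inf_M f\ge\varepsilon>0$ (Proposition \ref{lower bound of f}) yields $|\psi|\le Cf$.

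The substantive work is in (v). I first establish a time-independent comparison $C_0^{-1}f\le f_{\psi_0}\le C_0 f$ directly from the metric condition of Condition I. Since $X\cdot f_{\psi_0}=|X|^2_{g_{\psi_0}}$, the bound $C_0^{-1}g\le g_{\psi_0}\le C_0 g$ gives $C_0^{-1}X\cdot f\le X\cdot f_{\psi_0}\le C_0 X\cdot f$, so that the functions $u_\pm:=f_{\psi_0}-C_0^{\pm 1}f$ are respectively non-increasing and non-decreasing along the flow of $X$. A key observation is that $f_{\psi_0}(x_0)=f(x_0)$ at every zero $x_0$ of $X$ (the directional derivative $X\cdot\psi_0$ trivially vanishes there). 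Mimicking the pre-compactness argument in the proof of Proposition \ref{metric+killing to asymptotic}, every $x\in M$ flows backward under $X$ into the compact sublevel set $\{f\le f(x)\}$, producing a limit point at which $X$ vanishes; the monotonicity of $u_\pm$ then forces $C_0^{-1}f\le f_{\psi_0}\le C_0 f$. To pass from $\tau=0$ to arbitrary $\tau$, note that $\partial_\tau f_\psi=\tfrac{X}{2}\cdot\dot\psi$ is $O(e^{-\tau})$ by (i), so $|f_\psi-f_{\psi_0}|\le C$ uniformly; a dichotomy between $f$ large (where the additive constant is absorbed into a factor of $f$) and $f$ bounded (where $f_\psi\ge\varepsilon$ already dominates a multiple of $f$) concludes (v).

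Finally, the second inequality of (iv) follows from $\psi\ge-|\psi|\ge-Cf\ge-C'f_\psi$ by chaining its first part with (v). The main obstacle I anticipate is the comparability $f_{\psi_0}\sim f$ within (v): time integration alone cannot upgrade an additive bound to a multiplicative one, so the ODE argument along the $X$-flow---together with the trivial but crucial identity $f_{\psi_0}(x_0)=f(x_0)$ at zeros of $X$---appears indispensable.
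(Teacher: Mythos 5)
Your proposal is correct, and items (i)--(iii) together with the first half of (iv) coincide with the paper's argument (Cauchy--Schwarz between Corollaries \ref{bound of X} and \ref{estimate of gradient dotpsi} for $|X\cdot\dot\psi|$, then time-integration of the $O(e^{-\tau})$ quantities). Where you genuinely diverge is in (v) and the lower bound $\psi\ge -Cf_\psi$, and in which of the two is proved first. The paper proves $\psi\ge-Cf_\psi$ directly: concavity of $\log\det$ gives $\dot\psi\ge\Delta_{\omega_\psi,X}\psi-\psi$, so $-\psi-Cf_\psi$ is a subsolution of the drift heat equation with nonpositive initial data (recall $(\partial_\tau-\Delta_{\omega_\psi,X})f_\psi=-f_\psi$), and Proposition \ref{maximum principle} applies; item (v) is then deduced algebraically from (ii), $\psi\le Cf$ and $\psi\ge -Cf_\psi$. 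You instead prove (v) first, by running the ODE-along-the-$X$-flow comparison at $\tau=0$ with the \emph{time-independent} constant $C_0$ of Condition I (this is exactly the mechanism of Proposition \ref{universal lower bound of f_psi}, including the observation that $f_{\psi_0}=f$ at zeros of $X$, but with a global rather than $T$-dependent constant), then propagating in time via $\partial_\tau f_\psi=\tfrac{X}{2}\cdot\dot\psi=O(e^{-\tau})$ and absorbing the resulting additive constant through the dichotomy $f$ large versus $f$ bounded, using $\inf_M f_\psi\ge\varepsilon$; finally you read off $\psi\ge-|\psi|\ge-Cf\ge-C'f_\psi$ from (v). Both routes are sound. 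Yours is purely ODE-based and bypasses the parabolic maximum principle and the log-concavity inequality entirely; the paper's version of (iv) is shorter once the maximum principle is available, but note that its verification of the initial condition $\psi_0\ge-Cf_\psi(0)$ silently requires $f\lesssim f_{\psi_0}$ at $\tau=0$, which is precisely the comparison you make explicit.
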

\begin{proof}
    We notice that (i) comes by Corollary \ref{fin estimate of dotpsi}, Corollary \ref{bound of X} and Corollary \ref{estimate of gradient dotpsi}. Since $\frac{X}{2}\cdot\psi_0-\psi_0=O(1)$, hence by integration, we have $|\frac{X}{2}\cdot\psi-\psi|\le C$. Together with the bound of $\dot\psi$, we get (iii).

    For (iv), by integration, we get $|\psi-\psi_0|\le C$, since $\psi_0=O(f)$ and $f$ is bounded form below by $\varepsilon>0$, hence there exists a constant $C>0$ such that $|\psi|\le Cf$. Moreover, recall that $\psi$ is a supersolution to the drift heat equation along the normalized K\"ahler-Ricci flow, i.e.
    \begin{equation*}
        \dot\psi\ge \Delta_{\omega_\psi,X}\psi-\psi.
    \end{equation*}
    At time $\tau=0$, there exists a constant $C>0$ such that $\psi_0\ge -Cf_\psi(0)$, therefore by the maximum principle stated in Proposition ]\ref{maximum principle}, we have $\psi\ge-Cf_\psi $. (Recall the evolution equation of $f_\psi$ stated in Proposition \ref{proposition 4.6}: $(\frac{\partial}{\partial\tau}-\Delta_{\omega_\psi,X})f_\psi=-f_\psi$.)

    For (v), since $|\frac{X}{2}\cdot\psi-\psi|\le C$, on the one hand, $\psi\le Cf$, we have $\frac{X}{2}\cdot\psi\le C+Cf$, and hence $f_\psi\le C+Cf+f\le (C+1+\frac{C}{\varepsilon})f$. On the other hand, $\psi\ge-Cf_\psi$, we have that
    \begin{equation*}
        \frac{X}{2}\cdot\psi\ge -C-Cf_\psi.
    \end{equation*}
    Hence $f_\psi\ge -C-Cf_\psi+f$, since $f_\psi$ is also bounded from below by $\varepsilon>0$, there exists a constant $C>0$ such that $f_\psi>\frac{1}{C}f$.
\end{proof}
\subsection{The second order estimates along normalized K\"ahler-Ricci flow}
In this section, we adopt Yau's $C^2$-estimate to prove the uniform equivalence of metrics evolving under the normalized Kähler-Ricci flow.
\begin{prop}\label{metric equivalence}
    There exists a constant $C>1$ such that on $M\times [0, T_{\max})$,
    \begin{equation*}
       \frac{1}{C}\le  g_\psi\le Cg
    \end{equation*}
\end{prop}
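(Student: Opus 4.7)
The plan is to adapt Yau's classical second-order estimate to our drift-Laplacian setting. First I would set up the auxiliary function
\begin{equation*}
u = \log \tr_\omega \omega_\psi - A\bigl(\tfrac{X}{2}\cdot\psi - \psi\bigr),
\end{equation*}
where $A>0$ is a large constant to be chosen. By Corollary \ref{good cosntant for psi and psi dot}(ii), the term $\tfrac{X}{2}\cdot\psi-\psi$ is uniformly bounded, so $u$ differs from $\log\tr_\omega\omega_\psi$ by a bounded quantity; this means a uniform upper bound on $u$ is equivalent to a uniform upper bound on $\tr_\omega\omega_\psi$.

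Next I would compute the evolution of $\log\tr_\omega\omega_\psi$ along the normalized K\"ahler-Ricci flow. Using the NKRF equation $\partial_\tau g_\psi = \mathcal{L}_{X/2}g_\psi - \Ric(g_\psi) - g_\psi$, the soliton identity $\mathcal{L}_{X/2}g = \Ric(g)+g$, and the observation that $\mathcal{L}_{X/2}g_\psi = \mathcal{L}_{X/2}g + \partial\bar\partial(\tfrac{X}{2}\cdot\psi)$ since $X$ is real-holomorphic, the inhomogeneous terms simplify substantially. Combining with the Chern-Lu-type inequality
\begin{equation*}
\Delta_{\omega_\psi}\log\tr_\omega\omega_\psi \;\ge\; -\frac{g^{i\bar l}g^{k\bar j}R_{i\bar j k\bar l}(g)\,{g_\psi}_{\,}}{\tr_\omega\omega_\psi} + (\text{good terms}),
\end{equation*}
and invoking the asymptotically conical bound $|\Rm(g)|_g\le Cf^{-1}$ together with Proposition \ref{universal lower bound of f_psi}, the curvature contribution is controlled by $Cf_\psi^{-1}\tr_{\omega_\psi}\omega$. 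After adding the $-A\partial_\tau(\tfrac{X}{2}\cdot\psi - \psi) + A\Delta_{\omega_\psi,X}(\tfrac{X}{2}\cdot\psi-\psi)$ pieces, which can be computed explicitly from the complex Monge-Amp\`ere equation \eqref{NCMA eq}, one should obtain an inequality of the schematic form
\begin{equation*}
\left(\partial_\tau - \Delta_{\omega_\psi,X}\right)u \;\le\; -\bigl(A - C_1 f_\psi^{-1}\bigr)\tr_{\omega_\psi}\omega + C_2 + D f_\psi^{-1},
\end{equation*}
for dimensional constants $C_1, C_2, D$ depending only on $g$ and on the bounds from Corollary \ref{good cosntant for psi and psi dot}.

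Then, using the volume bound $\tfrac{1}{C}\omega^n\le\omega_\psi^n\le C\omega^n$ from Corollary \ref{good cosntant for psi and psi dot}(iii) and the arithmetic-geometric mean inequality
\begin{equation*}
\tr_{\omega_\psi}\omega \;\ge\; n\Bigl(\tfrac{\omega^n}{\omega_\psi^n}\Bigr)^{1/n}(\tr_\omega\omega_\psi)^{1/(n-1)}\Bigl/\bigl(\tr_\omega\omega_\psi\bigr),
\end{equation*}
I can convert the negative $-A\tr_{\omega_\psi}\omega$ term into a coercive lower bound in $\tr_\omega\omega_\psi$: at an interior spacetime maximum of $u$, the term $\tr_{\omega_\psi}\omega$ is forced to stay large whenever $\tr_\omega\omega_\psi$ is large, which after choosing $A$ large yields $\tr_\omega\omega_\psi \le C$ at such a maximum. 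To globalize, I would first verify the polynomial growth hypothesis of Proposition \ref{maximum principle} (or Lemma \ref{useful lemma}) using the rough time-dependent bounds from Proposition \ref{rough metric equivalence}, and then apply Lemma \ref{useful lemma} with the barrier $1/(f_\psi+K)$ of Lemma \ref{barrier function} to absorb the residual $Df_\psi^{-1}$. This gives $\sup_{M\times[0,T_{\max})} u \le \sup_M u(0) + C$, hence $\tr_\omega\omega_\psi\le C$ uniformly; the complementary bound $g_\psi\ge \tfrac{1}{C}g$ then follows by combining this upper bound with the two-sided volume bound.

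The main technical obstacle will be the clean computation of $(\partial_\tau - \Delta_{\omega_\psi,X})\log\tr_\omega\omega_\psi$: the drift term $\mathcal{L}_{X/2}g_\psi$ in the flow equation, the Lie derivative of the reference metric hidden in the Chern-Lu computation, and the cross-terms from differentiating $A(\tfrac{X}{2}\cdot\psi - \psi)$ must be arranged so that all undesired terms are controlled either by the asymptotic curvature decay $|\Rm(g)|_g\lesssim f^{-1}$ or by the already-established bounds on $|X|_{g_\psi}^2$, $|\nabla^g g_\psi|_g$, and $\dot\psi$. Once this algebraic cancellation is achieved, the rest of the argument is a direct application of the maximum-principle machinery developed in the previous subsection.
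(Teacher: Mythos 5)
Your overall architecture (a Yau/Chern--Lu second-order estimate, a \emph{bounded} barrier added to $\log\tr_\omega\omega_\psi$ whose drift-heat evolution absorbs the bad trace term, then Lemma \ref{useful lemma}) is exactly the paper's, but your specific barrier does not do the job. Write $\tfrac{X}{2}\cdot\psi=f_\psi-f$ and use $(\partial_\tau-\Delta_{\omega_\psi,X})f_\psi=-f_\psi$, $\Delta_{\omega_\psi}f=\tr_{\omega_\psi}(\rho_\omega+\omega)$, $\tfrac{X}{2}\cdot f=|\partial f|_g^2$, $\Delta_{\omega_\psi}\psi=n-\tr_{\omega_\psi}\omega$ and the soliton identity $f=n+R_\omega+|\partial f|_g^2$. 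Then
\begin{equation*}
\left(\frac{\partial}{\partial\tau}-\Delta_{\omega_\psi,X}\right)\Bigl(-A\bigl(\tfrac{X}{2}\cdot\psi-\psi\bigr)\Bigr)=-A\,\tr_{\omega_\psi}\rho_\omega+A\dot\psi+A R_\omega .
\end{equation*}
The coercive term $-A\tr_{\omega_\psi}\omega$ produced by the $-A\tfrac{X}{2}\cdot\psi$ half is cancelled \emph{exactly} by the $+A\tr_{\omega_\psi}\omega$ coming from the $+A\psi$ half, and since $\rho_\omega$ has no sign the surviving term can only be estimated by $CAf_\psi^{-1}\tr_{\omega_\psi}\omega+CA$. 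So your claimed schematic inequality with a $-(A-C_1f_\psi^{-1})\tr_{\omega_\psi}\omega$ term is not what this barrier yields; the combined evolution inequality for $u$ still contains an uncontrolled $+Cf_\psi^{-1}\tr_{\omega_\psi}\omega$ and the argument does not close. (A secondary issue: even granting your inequality, the constant $C_2$ on the right-hand side is not of the form $Df_\psi^{-1}$, so Lemma \ref{useful lemma} does not apply as stated; you would need a pointwise maximum argument with an auxiliary $\eta e^\tau f_\psi$ term.)

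The repair is the paper's choice of barrier, $C\frac{\psi}{f_\psi}$. It is uniformly bounded by Corollary \ref{good cosntant for psi and psi dot} (unlike $\psi$ itself, which only satisfies $|\psi|\le Cf$), and its evolution keeps the trace term:
\begin{equation*}
\left(\frac{\partial}{\partial\tau}-\Delta_{\omega_\psi,X}\right)\frac{\psi}{f_\psi}\;\ge\;\frac{\tr_{\omega_\psi}\omega}{f_\psi}-\frac{C_1+n}{f_\psi}.
\end{equation*}
The key observation you missed is that one does not need a full $-A\tr_{\omega_\psi}\omega$: by Lemma \ref{log trace lemma} the bad term already carries the factor $f_\psi^{-1}$ coming from $|\Rm(g)|_g\lesssim f^{-1}$, so a coercive term of size $f_\psi^{-1}\tr_{\omega_\psi}\omega$ suffices, and that is precisely what dividing $\psi$ by $f_\psi$ preserves while making the barrier bounded. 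With this choice $\log\tr_\omega\omega_\psi-C\frac{\psi}{f_\psi}$ satisfies a drift-heat inequality with right-hand side $C_3f_\psi^{-1}$, Lemma \ref{useful lemma} applies, and the reverse bound follows as you indicate from $\frac{\omega_\psi^n}{\omega^n}\tr_{\omega_\psi}\omega\le(\tr_\omega\omega_\psi)^{n-1}$ and the two-sided volume bound.
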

Before proving Proposition \ref{metric equivalence}, we will need the evolution equation of $\tr_{\omega}\omega_\psi$ along the normalized K\"ahler-Ricci flow.
\begin{lemma}\label{evolution equation of trace}
Along the normalized K\"ahler-Ricci flow, we have,
    \begin{equation}\label{trace equation}
       \begin{split}
            \left(\frac{\partial}{\partial\tau}-\Delta_{\omega_\psi,X}\right)\tr_\omega \omega_\psi&=\Ric(g)^{i\bar{j}}g_{\psi i\bar{j}}-g_\psi^{p\bar{q}}g_{\psi i\bar{j}}g^{i\bar{s}}g^{m\bar{j}}\Rm(g)_{p\bar{q}m\bar{s}}\\
            &\quad-g_\psi^{p\bar{q}}g^{i\bar{j}}g_\psi^{a\bar{b}}\nabla^g_pg_{\psi i\bar{b}}\nabla^g_{\bar{q}}g_{\psi a\bar{j}}.
       \end{split}
    \end{equation}
\end{lemma}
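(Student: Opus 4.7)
The plan is to compute each piece of $(\partial_\tau - \Delta_{\omega_\psi,X})\tr_\omega\omega_\psi$ separately in local holomorphic coordinates, exploiting the Kähler symmetries of $\nabla^g g_\psi$. First, from \eqref{NKRF eq} we get in components
\begin{equation*}
\frac{\partial}{\partial\tau} g_{\psi i\bar j}=(\mathcal{L}_{X/2}g_\psi)_{i\bar j}-R_{\psi\, i\bar j}-g_{\psi\, i\bar j},
\end{equation*}
so tracing with the time-independent $g^{i\bar j}$ yields $\partial_\tau\tr_\omega\omega_\psi = g^{i\bar j}(\mathcal{L}_{X/2}g_\psi)_{i\bar j} - g^{i\bar j}R_{\psi\, i\bar j} - \tr_\omega\omega_\psi$.

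Second, I would carry out the classical Yau $C^2$ computation for $\Delta_{\omega_\psi}\tr_\omega\omega_\psi = g_\psi^{p\bar q}g^{i\bar j}\nabla^g_p\nabla^g_{\bar q}g_{\psi\, i\bar j}$. The central ingredient is the Kähler identity $\nabla^g_p g_{\psi\, i\bar j}=\nabla^g_i g_{\psi\, p\bar j}$ (and its conjugate), which holds because both $g$ and $g_\psi = g+\partial\bar\partial\psi$ are Kähler. Applying this together with the commutator $[\nabla^g_p,\nabla^g_{\bar j}]$ acting on $g_\psi$, one rewrites
\begin{equation*}
g_\psi^{p\bar q}g^{i\bar j}\nabla^g_p\nabla^g_{\bar q}g_{\psi\, i\bar j}=g_\psi^{p\bar q}g^{i\bar j}\nabla^g_i\nabla^g_{\bar j}g_{\psi\, p\bar q}+\text{(commutator with }\Rm(g)\text{)}.
\end{equation*}
Working in Kähler normal coordinates at a point, the ``diagonal'' term is computed by differentiating $\log\det g_\psi$ twice, using $R_{\psi\, p\bar q}=-\partial_p\partial_{\bar q}\log\det g_\psi$; this yields $g^{i\bar j}R_{\psi\, i\bar j}$ plus the gradient-squared term $+g_\psi^{p\bar q}g^{i\bar j}g_\psi^{a\bar b}\nabla^g_p g_{\psi\, i\bar b}\nabla^g_{\bar q}g_{\psi\, a\bar j}$ after relabeling via the Kähler symmetry. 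The commutator, after contraction and use of the Kähler-Bianchi symmetry $\Rm(g)_{p\bar q i\bar t}=\Rm(g)_{i\bar q p\bar t}$, splits into $-g_\psi^{p\bar q}g_{\psi\, i\bar j}g^{i\bar s}g^{m\bar j}\Rm(g)_{p\bar q m\bar s}$ plus $+\Ric(g)^{i\bar j}g_{\psi\, i\bar j}$.

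Third, I would compute the drift contribution $\tfrac{X}{2}\cdot\tr_\omega\omega_\psi = g^{i\bar j}\,\tfrac{X}{2}\cdot g_{\psi\, i\bar j}$ and combine it with the Lie derivative term from step one. Since $X$ is real-holomorphic and $g$ is parallel, $g^{i\bar j}(\mathcal{L}_{X/2}g_\psi)_{i\bar j}-\tfrac{X}{2}\cdot\tr_\omega\omega_\psi$ reduces to a tensorial expression involving only derivatives of $X$, which by the soliton equation $\tfrac12\mathcal{L}_X g=\Ric(g)+g$ contributes $\tr_\omega\omega_\psi$. This exactly cancels the leading $-\tr_\omega\omega_\psi$ from the time derivative, and the $-g^{i\bar j}R_{\psi\, i\bar j}$ from $\partial_\tau$ cancels the $+g^{i\bar j}R_{\psi\, i\bar j}$ produced by the Laplacian computation, leaving precisely the three terms in \eqref{trace equation}.

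The main obstacle is the careful bookkeeping of index contractions: several candidate curvature traces look alike, and one must repeatedly invoke the Kähler-Bianchi symmetry to rewrite $g^{i\bar j}\Rm(g)_{p\bar j i}{}^{s}$ as $\Ric(g)_p{}^{\,s}$ versus keeping the full $\Rm(g)_{p\bar q m\bar s}$ contraction with $g_\psi$. Verifying the cancellation of the $R_{\psi\, i\bar j}$ terms and the precise combination of the $\mathcal{L}_{X/2}$ and drift terms requires computing everything consistently in the same local frame; any sign convention error in the commutator $[\nabla^g_p,\nabla^g_{\bar q}]$ propagates into a wrong curvature term.
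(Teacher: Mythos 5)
Your overall strategy --- splitting the operator into the time derivative, the Laplacian and the drift term, computing each in local holomorphic coordinates, running Yau's $C^2$ computation for $\Delta_{\omega_\psi}\tr_\omega\omega_\psi$, and invoking the soliton equation --- is exactly the paper's route, and your identification of the gradient-squared term and of the full contraction $g_\psi^{p\bar{q}}g_{\psi i\bar{j}}g^{i\bar{s}}g^{m\bar{j}}\Rm(g)_{p\bar{q}m\bar{s}}$ is correct, as is the cancellation of the $g^{i\bar{j}}\Ric(g_\psi)_{i\bar{j}}$ terms between $\partial_\tau$ and $-\Delta_{\omega_\psi}$.

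There is, however, a genuine error in your bookkeeping of the first term $\Ric(g)^{i\bar{j}}g_{\psi i\bar{j}}$. You attribute it to the commutator $[\nabla^g_p,\nabla^g_{\bar{q}}]$ inside the Laplacian, and correspondingly you claim that $g^{i\bar{j}}(\mathcal{L}_{X/2}g_\psi)_{i\bar{j}}-\tfrac{X}{2}\cdot\tr_\omega\omega_\psi$ reduces via the soliton equation to $\tr_\omega\omega_\psi$ alone. Both statements are wrong. Every curvature term produced by the Laplacian carries the overall factor $g_\psi^{p\bar{q}}$; after contraction it yields either the full contraction above or a mixed trace of the form $g_\psi^{p\bar{q}}\Ric(g)_p{}^{a}g_{\psi a\bar{q}}$ --- never $\Ric(g)^{i\bar{j}}g_{\psi i\bar{j}}$, whose Ricci indices are both raised with $g^{-1}$ and which contains no factor of $g_\psi^{-1}$ at all (test on $g_\psi=\lambda g$: the two expressions differ by a factor of $\lambda^2$). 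The term $\Ric(g)^{i\bar{j}}g_{\psi i\bar{j}}$ actually comes from the drift part: writing $\tfrac{X}{2}\cdot\tr_\omega\omega_\psi=\mathcal{L}_{\frac{X}{2}}(g^{i\bar{j}})\,g_{\psi i\bar{j}}+g^{i\bar{j}}\mathcal{L}_{\frac{X}{2}}g_{\psi i\bar{j}}$ and using $\mathcal{L}_{\frac{X}{2}}(g^{i\bar{j}})=-\Ric(g)^{i\bar{j}}-g^{i\bar{j}}$ from the soliton equation, one finds $g^{i\bar{j}}(\mathcal{L}_{\frac{X}{2}}g_\psi)_{i\bar{j}}-\tfrac{X}{2}\cdot\tr_\omega\omega_\psi=\Ric(g)^{i\bar{j}}g_{\psi i\bar{j}}+\tr_\omega\omega_\psi$; that is, the soliton equation contributes both the $\tr_\omega\omega_\psi$ that cancels the one coming from $\partial_\tau$ and the Ricci term appearing in the statement. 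Carried out as written, your plan would either drop this term or produce it with the wrong sign; the fix is to keep the Lie derivative of the inverse metric $g^{i\bar{j}}$, which is where it originates.
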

\begin{proof}
         In the normal holomorphic coordinate of $g$, let's compute the Laplacian of $\tr_\omega \omega_\psi$ with respect to metric $g_\psi$,
         \begin{equation}\label{lapalcian of trace}
             \begin{split}
\Delta_{\omega_\psi}\tr_\omega \omega_\psi&=g_\psi^{p\bar{q}}\partial_p\partial_{\bar{q}}(g^{i\bar{j}}g_{\psi i\bar{j}})\\
             &=g_\psi^{p\bar{q}}g^{i\bar{j}}\partial_p\partial_{\bar{q}}g_{\psi i\bar{j}}+g_\psi^{p\bar{q}}g_{\psi i\bar{j}}\partial_p\partial_{\bar{q}}g^{i\bar{j}}\\
             &=-g_\psi^{p\bar{q}}g^{i\bar{j}}\Rm(g_\psi)_{p\bar{q}i\bar{j}}+g_\psi^{p\bar{q}}g^{i\bar{j}}g_\psi^{a\bar{b}}\partial_pg_{\psi i\bar{b}}\partial_{\bar{q}}g_{\psi a\bar{j}}+g_\psi^{p\bar{q}}g_{\psi i\bar{j}}g^{i\bar{s}}g^{m\bar{j}}\Rm(g)_{p\bar{q}m\bar{s}}\\
             &=-g^{i\bar{j}}\Ric(g_\psi)_{i\bar{j}}+g_\psi^{p\bar{q}}g^{i\bar{j}}g_\psi^{a\bar{b}}\nabla^g_pg_{\psi i\bar{b}}\nabla^g_{\bar{q}}g_{\psi a\bar{j}}+g_\psi^{p\bar{q}}g_{\psi i\bar{j}}g^{i\bar{s}}g^{m\bar{j}}\Rm(g)_{p\bar{q}m\bar{s}}.
             \end{split}
         \end{equation}
         Now we compute $\frac{X}{2}\cdot\tr_\omega \omega_\psi=\mathcal{L}_{\frac{X}{2}}\tr_\omega \omega_\psi$:
       \begin{equation}\label{lie derivative of trace}
           \begin{split}
                          \frac{X}{2}\cdot\tr_\omega \omega_\psi&=\mathcal{L}_{\frac{X}{2}}\tr_\omega \omega_\psi\\
             &=\mathcal{L}_{\frac{X}{2}}(g^{i\bar{j}}g_{\psi i\bar{j}})\\
             &=-(\mathcal{L}_{\frac{X}{2}}g)^{i\bar{j}}g_{\psi i\bar{j}}+g^{i\bar{j}}\mathcal{L}_{\frac{X}{2}}g_{\psi i\bar{j}}\\
             &=-\Ric(g)^{i\bar j}{g_\psi}_{i\bar j}-\tr_\omega \omega_\psi+g^{i\bar{j}}\mathcal{L}_{\frac{X}{2}}g_{\psi i\bar{j}}.
           \end{split}
       \end{equation}
       Here the last line is ensured by the soliton equation \eqref{soliton2}.
       The time derivative of $\tr_\omega \omega_\psi$ is given by:
        \begin{equation}\label{time derivative of trace}
             \frac{\partial}{\partial\tau}\tr_\omega \omega_\psi=\tr_\omega(\mathcal{L}_{\frac{X}{2}}\omega_\psi-\Ric(\omega_\psi)-\omega_\psi).
        \end{equation}
         Combine these three equations \eqref{lapalcian of trace},\eqref{lie derivative of trace} and \eqref{time derivative of trace}, to get
        \begin{equation*}
     \begin{split}
            \left(\frac{\partial}{\partial\tau}-\Delta_{\omega_\psi,X}\right)\tr_\omega \omega_\psi&=\Ric(g)^{i\bar{j}}g_{\psi i\bar{j}}-g_\psi^{p\bar{q}}g_{\psi i\bar{j}}g^{i\bar{s}}g^{m\bar{j}}\Rm(g)_{p\bar{q}m\bar{s}}\\
            &\quad -g_\psi^{p\bar{q}}g^{i\bar{j}}g_\psi^{a\bar{b}}\nabla^g_pg_{\psi i\bar{b}}\nabla^g_{\bar{q}}g_{\psi a\bar{j}}.
     \end{split}
    \end{equation*}
\end{proof}
To get a universal metric equivalence, we perform the Yau's $C^2$ estimate in the parabolic setting.
\begin{lemma}\label{log trace lemma}
    There exists a constant $C>0$ independent of time such that
    \begin{equation}\label{log trace}
         \left(\frac{\partial}{\partial\tau}-\Delta_{\omega_\psi,X}\right)\log \tr_\omega \omega_\psi\le \frac{C}{f_\psi}(\tr_{\omega_\psi} \omega+1).
    \end{equation}
\end{lemma}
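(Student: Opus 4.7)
The plan is to implement the parabolic analogue of Yau's $C^2$-estimate, using the evolution equation \eqref{trace equation} from Lemma \ref{evolution equation of trace} as input. The starting point is the identity
\begin{equation*}
\left(\frac{\partial}{\partial\tau}-\Delta_{\omega_\psi,X}\right)\log\tr_\omega\omega_\psi=\frac{1}{\tr_\omega\omega_\psi}\left(\frac{\partial}{\partial\tau}-\Delta_{\omega_\psi,X}\right)\tr_\omega\omega_\psi+\frac{|\partial\tr_\omega\omega_\psi|_{g_\psi}^2}{(\tr_\omega\omega_\psi)^2},
\end{equation*}
which follows because $\partial_\tau$ and $\tfrac{X}{2}\cdot$ act by the chain rule while $\Delta_{\omega_\psi}\log u = \Delta_{\omega_\psi}u/u-|\partial u|_{g_\psi}^2/u^2$. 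Plugging in \eqref{trace equation}, the point is to show that the positive gradient contribution is absorbed by the (negative) quartic term $T:=g_\psi^{p\bar q}g^{i\bar j}g_\psi^{a\bar b}\nabla^g_pg_{\psi i\bar b}\nabla^g_{\bar q}g_{\psi a\bar j}$ already present in \eqref{trace equation}.

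The key calculation is the Cauchy--Schwarz inequality
\begin{equation*}
|\partial\tr_\omega\omega_\psi|_{g_\psi}^2\;\le\;\tr_\omega\omega_\psi\cdot T.
\end{equation*}
I would verify this by choosing, at a given point, holomorphic normal coordinates for $g$ simultaneously diagonalizing $g_\psi$, with eigenvalues $\lambda_1,\dots,\lambda_n$. Then $|\partial\tr_\omega\omega_\psi|_{g_\psi}^2=\sum_p\tfrac{1}{\lambda_p}\,\bigl|\sum_i\nabla^g_p g_{\psi i\bar i}\bigr|^2$, while $T\ge\sum_{p,i}\tfrac{1}{\lambda_p\lambda_i}|\nabla^g_p g_{\psi i\bar i}|^2$, and Cauchy--Schwarz on the inner $i$-sum with weights $\sqrt{\lambda_i}$ and $1/\sqrt{\lambda_i}$ yields the claim. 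Dividing by $(\tr_\omega\omega_\psi)^2$ gives $\frac{|\partial\tr_\omega\omega_\psi|_{g_\psi}^2}{(\tr_\omega\omega_\psi)^2}\le\frac{T}{\tr_\omega\omega_\psi}$, so the gradient term cancels the $-T/\tr_\omega\omega_\psi$ contribution.

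What remains is the estimate
\begin{equation*}
\left(\frac{\partial}{\partial\tau}-\Delta_{\omega_\psi,X}\right)\log\tr_\omega\omega_\psi\;\le\;\frac{1}{\tr_\omega\omega_\psi}\Bigl(\Ric(g)^{i\bar j}g_{\psi i\bar j}-g_\psi^{p\bar q}g_{\psi i\bar j}g^{i\bar s}g^{m\bar j}\Rm(g)_{p\bar q m\bar s}\Bigr).
\end{equation*}
I would bound each term using the decay hypothesis $|\Rm(g)|_g+|\Ric(g)|_g\le C/f$ from Definition \ref{ACKR expander} (with $k=0$), combined with the comparison $f\ge C^{-1}f_\psi$ from Corollary \ref{good cosntant for psi and psi dot}(v). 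In diagonal coordinates, $\Ric(g)^{i\bar j}g_{\psi i\bar j}\le|\Ric(g)|_g\sum_i\lambda_i=|\Ric(g)|_g\tr_\omega\omega_\psi$, so the first term contributes at most $C/f_\psi$. For the second term, the same diagonalization shows $|g_\psi^{p\bar q}g_{\psi i\bar j}g^{i\bar s}g^{m\bar j}\Rm(g)_{p\bar q m\bar s}|\le|\Rm(g)|_g\cdot\tr_{\omega_\psi}\omega\cdot\tr_\omega\omega_\psi$, contributing at most $(C/f_\psi)\tr_{\omega_\psi}\omega$ after dividing by $\tr_\omega\omega_\psi$. Summing gives \eqref{log trace}.

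The main obstacle is the Cauchy--Schwarz cancellation in the second paragraph: getting all indices to line up and confirming that the off-diagonal contributions to $T$ (which we discard) are indeed nonnegative and unneeded. Everything else is a routine combination of the expander's curvature decay with the uniform equivalence $f\sim f_\psi$ already established.
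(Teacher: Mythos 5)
Your proof is correct and follows essentially the same route as the paper: the same log-trace identity, cancellation of the gradient term against the quartic term $T$ via the Yau--Aubin Cauchy--Schwarz inequality, and the bound on the remaining Ricci and curvature terms using $|\Rm(g)|_g+|\Ric(g)|_g\le C/f$ together with $f\sim f_\psi$. The only difference is that you verify the key inequality $|\partial\tr_\omega\omega_\psi|^2_{g_\psi}\le\tr_\omega\omega_\psi\cdot T$ directly in diagonalizing coordinates (correctly — the off-diagonal terms of $T$ are nonnegative and may be discarded), whereas the paper simply cites it from the literature.
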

\begin{proof}
         Firstly recall that
        \begin{equation*}
            \Delta_{\omega_\psi,X}\log\tr_\omega \omega_\psi=\frac{\Delta_{\omega_\psi,X}\tr_\omega \omega_\psi}{\tr_\omega \omega_\psi}-\frac{|\partial\tr_\omega \omega_\psi|_{g_\psi}^2}{\tr_\omega \omega_\psi^2}.
        \end{equation*}
         By [\eqref{trace equation}, Lemma \ref{evolution equation of trace}], we have
         \begin{equation*}
             \begin{split}
                 \left(\frac{\partial}{\partial\tau}-\Delta_{\omega_\psi,X}\right)\log \tr_\omega \omega_\psi&=\frac{\left(\frac{\partial}{\partial\tau}-\Delta_{\omega_\psi,X}\right)\tr_\omega \omega_\psi}{\tr_\omega \omega_\psi}+\frac{|\partial\tr_\omega \omega_\psi|_{g_\psi}^2}{\tr_\omega \omega_\psi^2}  \\
          &=\frac{1}{\tr_\omega \omega_\psi}(\Ric(g)^{i\bar{j}}g_{\psi i\bar{j}}-g_\psi^{p\bar{q}}g_{\psi i\bar{j}}g^{i\bar{s}}g^{m\bar{j}}\Rm(g)_{p\bar{q}m\bar{s}})\\
          &+\frac{1}{\tr_\omega \omega_\psi}\left(\frac{|\partial\tr_\omega \omega_\psi|^2_{g_\psi}}{\tr_\omega \omega_\psi}-g_\psi^{p\bar{q}}g^{i\bar{j}}g_\psi^{a\bar{b}}\nabla^g_pg_{\psi i\bar{b}}\nabla^g_{\bar{q}}g_{\psi a\bar{j}}\right).
             \end{split}
         \end{equation*}
         We claim that $\frac{|\partial\tr_\omega \omega_\psi|^2_{g_\psi}}{\tr_\omega \omega_\psi}-g_\psi^{p\bar{q}}g^{i\bar{j}}g_\psi^{a\bar{b}}\nabla^g_pg_{\psi i\bar{b}}\nabla^g_{\bar{q}}g_{\psi a\bar{j}}\le 0$. This is a celebrated inequality in Yau's $C^2$ estimate. For detailed proof, see \cite[Proposition 3.2.5]{MR3185331}.
        
         The K\"ahler-Ricci expander $(M,g,X)$ being asymptotically conical, there exists a constant $C(g)>0$ such that $f|\Rm(g)|_g\le C(g)$.  In normal holomorphic coordinates,
         \begin{equation*}
              -g_\psi^{p\bar{q}}g_{\psi i\bar{j}}g^{i\bar{s}}g^{m\bar{j}}\Rm(g)_{p\bar{q}m\bar{s}}=\sum_{p,i}-g_\psi^{p\bar{p}}g_{\psi i\bar{i}}\Rm(g)_{p\bar{p}i\bar{i}}\le \frac{C(g)}{f}\tr_\omega \omega_\psi\tr_{\omega_\psi}\omega
         \end{equation*}
         Finally we get 
         \begin{equation*}
             \left(\frac{\partial}{\partial\tau}-\Delta_{\omega_\psi,X}\right)\log \tr_\omega \omega_\psi\le \frac{C(g)}{f}(\tr_{\omega_\psi}\omega+1).
         \end{equation*}
         Since $f$ and $f_\psi$ are comparable by Corollary \ref{good cosntant for psi and psi dot}, \eqref{log trace} holds as required.
     \end{proof}
     \begin{remark}
         If $(M,g,X)$ admits positive holomorphic bisectional curvature, then we can reduce \eqref{log trace} to
         \begin{equation*}
             \left(\frac{\partial}{\partial\tau}-\Delta_{\omega_\psi,X}\right)\log\tr_{\omega}\omega_\psi\le \frac{C(g)}{f},
         \end{equation*}
         by some constant that only depends on $g$.
         
         In 2016, Chodosh and Fong \cite{MR3466851} have proved that an asymptotically conical gradient K\"ahler-Ricci expander with positive holomorphic bisectional curvature must be isometric to one of the $U(n)-$
rotationally symmetric expanding gradient solitons on $C^n$, as constructed by Cao \cite{MR1449972}.
     \end{remark}
     With this key lemma, we can now prove Proposition \ref{metric equivalence}.
\begin{proof}[Proof of Proposition \ref{metric equivalence}]
    We choose to use $\frac{\psi}{f_\psi}$ as a barrier function to control $\log\tr_\omega \omega_\psi$.

First we compute the evolution equation of $\frac{\psi}{f_\psi}$
    \begin{equation*}
        \begin{split}
            \left(\frac{\partial}{\partial\tau}-\Delta_{\omega_\psi,X}\right)\frac{\psi}{f_\psi}&=-\frac{\Delta_{\omega_\psi}\psi}{f_\psi}+\frac{\dot\psi}{f_\psi}-\frac{\frac{X}{2}\cdot\psi}{f_\psi}+\psi\left(\frac{\partial}{\partial\tau}-\Delta_{\omega_\psi,X}\right)\frac{1}{f_\psi}\\
            &\quad -2\Re\left(<\partial \psi,\bar\partial\frac{1}{f_\psi}>_{g_\psi}\right)\\
            &=-\frac{\Delta_{\omega_\psi}\psi}{f_\psi}+\frac{\dot\psi}{f_\psi}-\frac{\frac{X}{2}\cdot\psi}{f_\psi}+\psi\left(\frac{1}{f_\psi}-\frac{|X|^2_{g_\psi}}{f_\psi^3}\right)+\frac{X\cdot\psi}{f_\psi^2}\\
            &=-\frac{\Delta_{\omega_\psi}\psi}{f_\psi}+\frac{\dot\psi}{f_\psi}+\frac{\psi-\frac{X}{2}\cdot\psi}{f_\psi}-\psi\frac{|X|^2_{g_\psi}}{f_\psi^3}+\frac{X\cdot\psi}{f_\psi^2}.
        \end{split}
    \end{equation*}
    By Corollary \ref{good cosntant for psi and psi dot}, there exists a constant $C_1>0$ independent of time such that
    \begin{equation*}
         \left(\frac{\partial}{\partial\tau}-\Delta_{\omega_\psi,X}\right)\frac{\psi}{f_\psi}\ge -\frac{\Delta_{\omega_\psi}\psi}{f_\psi}-C_1\frac{1}{f_\psi}=\frac{\tr_{\omega_\psi }\omega}{f_\psi}-\frac{C_1+n}{f_\psi}.
    \end{equation*}
    Due to Lemma \ref{log trace lemma}, there exists a constant $C>0$ such that
    \begin{equation*}
         \left(\frac{\partial}{\partial\tau}-\Delta_{\omega_\psi,X}\right)\log \tr_\omega \omega_\psi\le \frac{C}{f_\psi}(\tr_{\omega_\psi} \omega+1).
    \end{equation*}
    Hence we compute the evolution equation of $\log\tr_\omega \omega_\psi-C\frac{\psi}{f_\psi}$ as follows:
    \begin{equation*}
        \left(\frac{\partial}{\partial\tau}-\Delta_{\omega_\psi,X}\right)\left(\log\tr_\omega \omega_\psi-C\frac{\psi}{f_\psi}\right)\le \frac{C}{f_\psi}+C\frac{C_1+n}{f_\psi}:=\frac{C_3}{f_\psi}.
    \end{equation*}
    Thanks to Proposition \ref{rough metric equivalence}, $\log\tr_\omega \omega_\psi$ is bounded on $M\times [0,T]$ for any $0<T<T_{\max}$. By applying Lemma \ref{useful lemma}, there exists a constant $C_4>0$ such that for all $\tau\in [0,T_{\max})$,
    \begin{equation*}
       \sup_M \left(\log\tr_\omega \omega_\psi-C\frac{\psi}{f_\psi}\right)(\tau)\le \sup_M\left(\log\tr_\omega \omega_{\psi_0}-C\frac{\psi_0}{f_\psi(0)}\right)+C_4<\infty.
    \end{equation*}
    Since $\frac{\psi}{f_\psi}$ and $\frac{1}{f_\psi+K}$ are bounded due to Corollary \ref{good cosntant for psi and psi dot}, we thus find a constant $C'>0$ such that on $M\times [0,T_{\max})$,
    \begin{equation*}
        \tr_\omega \omega_\psi\le C'.
    \end{equation*}
    Moreover, $\frac{\omega_\psi^n}{\omega^n}$ being globally bounded from below on $M\times [0,T_{\max})$ due to Corollary \ref{good cosntant for psi and psi dot}. Thanks to the elementary inequality:
    \begin{equation*}
        \frac{\omega_\psi^n}{\omega^n}\tr_{\omega_\psi}\omega\le (\tr_\omega\omega_\psi)^{n-1},
    \end{equation*}
     we can find another constant $C''>0$ such that on $M\times [0,T_{\max})$
    \begin{equation*}
        \tr_{\omega_\psi} \omega\le C''.
    \end{equation*}
    Take $C=\max\{C',C''\}$, we get the desired result.
\end{proof}
\subsection{The higher order estimates along normalized K\"ahler-Ricci flow}
In this section we omit all the time dependence of $g_\psi(\tau)$, we denote it simply by $g_\psi$. Moreover, we use $\nabla$(resp. $\overline{\nabla}$) to denote the holomorphic (resp. anti-holomorphic) part of $\nabla^{g_\psi}$, we also use $|\cdot|$ to denote $|\cdot|_{g_\psi}$ and $<>$ to denote the scalar product of $g_\psi$.

Define a tensor $\Psi$ by 
\begin{equation*}
    \Psi_{ij}^k=\Gamma(g_\psi)_{ij}^k-\Gamma(g)_{ij}^k=g_\psi^{k\bar l}\nabla^g_ig_{\psi j\bar l}.
\end{equation*}
Define a smooth function $S$ by
\begin{equation*}
    S:=|\Psi|^2=g_\psi^{i\bar j}g_\psi^{p\bar q}g_{\psi k\bar l}\Psi_{ip}^k\overline{\Psi_{jq}^l}.
\end{equation*}
\begin{prop}[Modified Phong-\v Se\v sum-Sturm equality \cite{MR2379807}]\label{pss lemma}
Along the normalized K\"ahler-Ricci flow, $S$ evolves by
    \begin{equation}\label{Modified PSS eq}
       \begin{split}
            \left(\frac{\partial}{\partial\tau}-\Delta_{\omega_\psi,X}\right)S&=-|\overline{\nabla}\Psi|^2-|\nabla\Psi|^2+S\\
            &\quad +2\Re\left({g_\psi^{i\bar j}g_\psi^{p\bar q}g_{\psi k\bar l}\left(\nabla^g\Ric(g)_{ip}^k-\nabla^{\bar b}\Rm(g)_{i\bar b p}^k\right)}\overline{\Psi_{jq}^l}\right),
       \end{split}
    \end{equation}
    where $\nabla^{\bar b}=g_\psi^{a\bar b}\nabla_a$, $\Rm(g)_{i\bar b p}^k:=g^{m\bar s}\Rm(g)_{i\bar b p \bar s}$ and $\nabla^g\Ric(g)_{ip}^k:=g^{k\bar s}\nabla^g_{\bar s}\Ric(g)_{ip}$.
\end{prop}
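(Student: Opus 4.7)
The plan is to mimic the classical Phong--\v Se\v sum--Sturm calculation (\cite{MR2379807}) and to keep track of the three pieces of the normalized equation \eqref{NKRF eq} that are absent from the unnormalized K\"ahler--Ricci flow: the drift $\mathcal{L}_{X/2}g_\psi$, the linear term $-g_\psi$, and the background curvature of the soliton $g$, which is not Ricci-flat. These will produce, respectively, the $\frac{X}{2}$ contribution that assembles the drift Laplacian $\Delta_{\omega_\psi,X}$ on the left-hand side, the $+S$ on the right-hand side, and the cross term with $\nabla^g\Ric(g)-\nabla^{\bar b}\Rm(g)$.

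First, I would derive the evolution of $\Psi^k_{ij}$. Starting from $\Gamma(g_\psi)^k_{ij}=g_\psi^{k\bar l}\partial_i g_{\psi j\bar l}$ and the Hermitian identity
\begin{equation*}
\frac{\partial}{\partial\tau}\Gamma(g_\psi)^k_{ij}=g_\psi^{k\bar l}\nabla^{g_\psi}_i\!\left(\frac{\partial}{\partial\tau}g_{\psi j\bar l}\right),
\end{equation*}
I would substitute \eqref{NKRF eq} to obtain
\begin{equation*}
\frac{\partial}{\partial\tau}\Psi^k_{ij}=g_\psi^{k\bar l}\nabla^{g_\psi}_i\!\left[(\mathcal{L}_{X/2}g_\psi)_{j\bar l}-\Ric(g_\psi)_{j\bar l}\right],
\end{equation*}
the $-g_{\psi j\bar l}$ contribution being annihilated by $\nabla^{g_\psi}g_\psi=0$. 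The Ricci term is the engine of the classical PSS calculation, while the Lie-derivative term will reassemble into $\frac{X}{2}\cdot S$ at the end.

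Next, I would differentiate $S$ directly. Every factor of $g_\psi^{-1}$ and $g_\psi$ in the definition of $S$ contributes under $\partial_\tau$, and a direct bookkeeping check shows that the $-g_\psi$ summand in \eqref{NKRF eq} yields a net coefficient $+1$ in front of $S$: the two inverse metrics contribute $+2S$, the covariant metric contributes $-S$, and the rescaling of $g_\psi^{k\bar l}$ in $\Psi^k_{ip}=g_\psi^{k\bar l}\nabla^g_i g_{\psi p\bar l}$ exactly cancels the rescaling of $g_{\psi p\bar l}$. Simultaneously, I would expand $\Delta_{\omega_\psi}S$ through the Bochner--Weitzenb\"ock identity for the rough Laplacian of $\Psi$, using the well-known relation
\begin{equation*}
\nabla^g_{\bar l}\Psi^k_{ij}=\Rm(g)^k_{i\bar l j}-\Rm(g_\psi)^k_{i\bar l j}
\end{equation*}
to re-express the curvature pieces produced when commuting covariant derivatives. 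The gradient-squared pieces $-|\nabla\Psi|^2-|\overline\nabla\Psi|^2$ then fall out of the Bochner expansion, and the $\Ric(g_\psi)$ terms coming from $\partial_\tau\Psi$ cancel against the corresponding $\Rm(g_\psi)$ contractions from the Laplacian, leaving only the $\Rm(g)$ and $\nabla^g\Ric(g)$ residues.

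The main obstacle will be the index bookkeeping in this last cancellation. One must carefully apply the second Bianchi identity to the $\nabla^g\Ric(g_\psi)$ pieces and exploit the commutator formula between $\nabla^g$ and $\nabla^{g_\psi}$, whose difference is controlled by $\Psi$ itself, in order to show that every remaining contraction involving $\Rm(g_\psi)$ or its derivatives collapses. Once this is verified, the two gradient-squared pieces, the $+S$, the drift absorbed into $\Delta_{\omega_\psi,X}$, and the single surviving cross term with the background-$g$ curvatures assemble into exactly \eqref{Modified PSS eq}.
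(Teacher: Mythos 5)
Your plan follows the same route as the paper's proof: both compute $\partial_\tau\Psi$ from the variation of the Christoffel symbols, use $\nabla_{\bar b}\Psi^k_{ip}=\Rm(g)^k_{i\bar b p}-\Rm(g_\psi)^k_{i\bar b p}$ together with the second Bianchi identity and the soliton equation $\mathcal{L}_{X/2}g=\Ric(g)+g$ to produce the cross term $\nabla^g\Ric(g)-\nabla^{\bar b}\Rm(g)$, reassemble the drift into $\Delta_{\omega_\psi,X}$, and your bookkeeping of the net $+S$ coefficient from the $-g_\psi$ summand is exactly the paper's. The approach and all the key identities are correct; only the routine index verification remains to be written out.
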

\begin{proof}
    Compute
    \begin{equation}\label{full laplacian}
        \Delta_{\omega_\psi}S=g_\psi^{i\bar j}g_\psi^{p\bar q}g_{\psi k\bar l}\left(\Delta_{\omega_\psi}\Psi_{ip}^k\overline{\Psi_{jq}^l}+\Psi_{ip}^l\overline{\overline{\Delta}_{\omega_\psi}\Psi_{jq}^l}\right)+|\overline{\nabla}\Psi|^2+|\nabla\Psi|^2,
    \end{equation}
    where we are writing $\Delta_{\omega_\psi}=g_\psi^{a\bar b}\nabla_a\nabla_{\bar b}$ for the \emph{rough} Laplacian and $\overline{\Delta}_{\omega_\psi}=g_\psi^{\bar b a}\nabla_{\bar b}\nabla_a$ for its conjugate. In particular, using the commutation formulae 
    \begin{equation}\label{commutation}
\overline{\Delta}_{\omega_\psi}\Psi_{jq}^l=\Delta_{\omega_\psi}\Psi_{jq}^l+\Ric(g_\psi)_j^b\Psi_{bq}^l+\Ric(g(\psi))_q^b\Psi_{jb}^l-\Ric(g_\psi)^l_b\Psi_{jq}^b.
        \end{equation}
        Combining \eqref{full laplacian} and \eqref{commutation},
        \begin{equation}\label{use laplacian of S}
        \begin{split}
                \Delta_{\omega_\psi}S&=2\Re(g_\psi^{i\bar j}g_\psi^{p\bar q}g_{\psi k\bar l}\Delta_{\omega_\psi}\Psi_{ip}^k\overline{\Psi_{jq}^l})+|\overline{\nabla}\Psi|^2+|\nabla\Psi|^2\\
                &\quad +\Ric(g_\psi)^{i\bar j}g_\psi^{p\bar q}g_{\psi k\bar l}\Psi_{ip}^k\overline{\Psi_{jq}^l}+g_\psi^{i\bar j}\Ric(g_\psi)^{p\bar q}g_{\psi k\bar l}\Psi_{ip}^k\overline{\Psi_{jq}^l}-g_\psi^{i\bar j}g_\psi^{p\bar q}\Ric(g_\psi)_{k\bar l}\Psi_{ip}^k\overline{\Psi_{jq}^l}.
        \end{split}
        \end{equation}
        Now we compute
        \begin{equation}\label{Lie derivative of S}
            \mathcal{L}_{\frac{X}{2}}S=\frac{X}{2}\cdot S=2\Re(g_\psi^{i\bar j}g_\psi^{p\bar q}g_{\psi k\bar l}\nabla_{\frac{X}{2}}\Psi_{ip}^k\overline{\Psi_{jq}^l}).
        \end{equation}
        Notice that 
        \begin{equation}\label{different of lie and covariant}
            \nabla_{\frac{X}{2}}\Psi_{ip}^k=\mathcal{L}_{\frac{X}{2}}\Psi_{ip}^k-\frac{1}{2}(\mathcal{L}_{\frac{X}{2}}g_\psi)^a_i\Psi_{ap}^k-\frac{1}{2}(\mathcal{L}_{\frac{X}{2}}g_\psi)^a_p\Psi_{ia}^k+\frac{1}{2}(\mathcal{L}_{\frac{X}{2}}g_\psi)^k_a\Psi_{ip}^a.
        \end{equation}
        Combining \eqref{Lie derivative of S} and \eqref{different of lie and covariant},
        \begin{equation}\label{full lie of S}
          \begin{split}
                \frac{X}{2}\cdot S&=2\Re(g_\psi^{i\bar j}g_\psi^{p\bar q}g_{\psi k\bar l}\mathcal{L}_{\frac{X}{2}}\Psi_{ip}^k\overline{\Psi_{jq}^l})\\
            &\quad -(\mathcal{L}_{\frac{X}{2}}g_\psi)^{i\bar j}g_\psi^{p\bar q}g_{\psi k\bar l}\Psi_{ip}^k\overline{\Psi_{jq}^l}-g_\psi^{i\bar j}(\mathcal{L}_{\frac{X}{2}}g_\psi)^{p\bar q}g_{\psi k\bar l}\Psi_{ip}^k\overline{\Psi_{jq}^l}+g_\psi^{i\bar j}g_\psi^{p\bar q}(\mathcal{L}_{\frac{X}{2}}g_\psi)_{k\bar l}\Psi_{ip}^k\overline{\Psi_{jq}^l}.
          \end{split}
        \end{equation}
        We now compute the time derivative of $S$. We claim that
        \begin{equation}\label{Time derivative of Psi}
            \frac{\partial}{\partial\tau}\Psi_{ip}^k=\Delta_{\omega_\psi}\Psi_{ip}^k+\mathcal{L}_{\frac{X}{2}}\Psi_{ip}^k-\nabla^{\bar b}\Rm(g)_{i\bar b p}^k+g^{k\bar s}\nabla^g_{\bar s}\Ric(g)_{ip}.
        \end{equation}
        Given this, together with 
        \begin{equation*}
            \frac{\partial}{\partial\tau}g_\psi=\mathcal{L}_{\frac{X}{2}}g_\psi-\Ric(g_\psi)-g_\psi,
        \end{equation*}
        we obtain
        \begin{equation}\label{full time derivative of S}
            \begin{split}
                \frac{\partial}{\partial\tau}S&=\Ric(g_\psi)^{i\bar j}g_\psi^{p\bar q}g_{\psi k\bar l}\Psi_{ip}^k\overline{\Psi_{jq}^l}+g_\psi^{i\bar j}\Ric(g_\psi)^{p\bar q}g_{\psi k\bar l}\Psi_{ip}^k\overline{\Psi_{jq}^l}-g_\psi^{i\bar j}g_\psi^{p\bar q}\Ric(g_\psi)_{k\bar l}\Psi_{ip}^k\overline{\Psi_{jq}^l}\\
                &\quad -(\mathcal{L}_{\frac{X}{2}}g_\psi)^{i\bar j}g_\psi^{p\bar q}g_{\psi k\bar l}\Psi_{ip}^k\overline{\Psi_{jq}^l}-g_\psi^{i\bar j}(\mathcal{L}_{\frac{X}{2}}g_\psi)^{p\bar q}g_{\psi k\bar l}\Psi_{ip}^k\overline{\Psi_{jq}^l}+g_\psi^{i\bar j}g_\psi^{p\bar q}(\mathcal{L}_{\frac{X}{2}}g_\psi)_{k\bar l}\Psi_{ip}^k\overline{\Psi_{jq}^l}\\
                &\quad +S+2\Re\left(g_\psi^{i\bar j}g_\psi^{p\bar q}g_{\psi k\bar l}(\Delta_{\omega_\psi}\Psi_{ip}^k+\mathcal{L}_{\frac{X}{2}}\Psi_{ip}^k)\overline{\Psi_{jq}^l}\right)\\
                &\quad +2\Re\left({g_\psi^{i\bar j}g_\psi^{p\bar q}g_{\psi k\bar l}\left(\nabla^g\Ric(g)_{ip}^k-\nabla^{\bar b}\Rm(g)_{i\bar b p}^k\right)}\overline{\Psi_{jq}^l}\right).
            \end{split}
        \end{equation}
        Then \eqref{Modified PSS eq} follows from \eqref{use laplacian of S}, \eqref{full lie of S} and \eqref{full time derivative of S}.

        To establish \eqref{Time derivative of Psi}, compute
        \begin{equation}\label{baby time derivative of psi}
            \frac{\partial}{\partial\tau}\Psi_{ip}^k=\frac{\partial}{\partial\tau}\Gamma(g_\psi)_{ip}^k=\nabla_i(\mathcal{L}_{\frac{X}{2}}g_\psi-\Ric(g_\psi))^k
_p.        \end{equation}
On the other hand 
\begin{equation}\label{lie of psi}
    \begin{split}
        &\nabla_{\bar b}\Psi_{ip}^k=\partial_{\bar b}(\Gamma(g_\psi)_{ip}^k-\Gamma(g)_{ip}^k)=\Rm(g)_{i\bar b p}^k-\Rm(g_\psi)_{i\bar b p}^k,\\
        &\mathcal{L}_{\frac{X}{2}}\Psi_{ip}^k=\nabla_i(\mathcal{L}_{\frac{X}{2}}g_\psi)^k_p-\nabla^g_i(\mathcal{L}_{\frac{X}{2}}g)^k_p=\nabla_i(\mathcal{L}_{\frac{X}{2}}g_\psi)^k_p-\nabla^g\Ric(g)_{ip}^k,
    \end{split}
\end{equation}
and hence 
\begin{equation}\label{lap of psi}
    \Delta_{\omega_\psi}\Psi_{ip}^k=g_\psi^{a\bar b}\nabla_a\nabla_{\bar b}\Psi_{ip}^k=\nabla^{\bar b}\Rm(g)_{i\bar b p}^k-\nabla_i\Ric(g_\psi)_p^k,
\end{equation}
where for the last equality we have used the second Bianchi identity. Then \eqref{Time derivative of Psi} follows from \eqref{baby time derivative of psi}, \eqref{lie of psi} and \eqref{lap of psi}.
\end{proof}
With this evolution equation of $S$, we can now give a uniform estimate of $S$ along the normalized K\"ahler-Ricci flow.
\begin{prop}\label{bound of S}
    There exists a constant $C>0$ such that on $M\times[0,T_{\max})$, we have
    \begin{equation*}
        S\le C\frac{1}{f_\psi}.
    \end{equation*}
\end{prop}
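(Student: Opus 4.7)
The plan is to combine the evolution of $f_\psi S$ with a suitable multiple of $\tr_\omega \omega_\psi$ so as to cancel the problematic $+S$ coming from the modified Phong-\v{S}e\v{s}um-Sturm identity \eqref{Modified PSS eq}, and then apply Lemma \ref{useful lemma}. First I would compute, using the product rule for $\Delta_{\omega_\psi,X}$ together with \eqref{Modified PSS eq} and the evolution $(\partial_\tau-\Delta_{\omega_\psi,X})f_\psi=-f_\psi$ from Proposition \ref{proposition 4.6}, that
\begin{equation*}
\left(\tfrac{\partial}{\partial\tau}-\Delta_{\omega_\psi,X}\right)(f_\psi S)
= -f_\psi(|\nabla\Psi|^2+|\overline{\nabla}\Psi|^2)+2f_\psi\,\Re(\text{curv term})-2\Re\langle\partial f_\psi,\overline{\partial}S\rangle_{g_\psi},
\end{equation*}
where the $+f_\psi S$ and $-f_\psi S$ contributions cancel exactly.

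Next I would estimate the two error terms. For the curvature correction, I would use $|\Rm(g)|_g=O(f^{-1})$, $|\nabla^g\Rm(g)|_g=O(f^{-3/2})$ and Proposition \ref{metric equivalence} to deduce $f_\psi|\text{curv term}|\le C(\sqrt{S}/\sqrt{f_\psi}+S)\le C'(S+f_\psi^{-1})$ via AM-GM. For the drift term, I would apply Kato's inequality $|\overline{\partial}S|_{g_\psi}\le\sqrt{2}\,|\Psi|_{g_\psi}|\nabla^{g_\psi}\Psi|_{g_\psi}$ together with the key bound $|X|_{g_\psi}^2\le Af_\psi$ from Corollary \ref{bound of X} and Cauchy-Schwarz to obtain $|2\Re\langle\partial f_\psi,\overline{\partial}S\rangle|\le f_\psi(|\nabla\Psi|^2+|\overline{\nabla}\Psi|^2)+AS$, which absorbs the negative gradient squared terms completely. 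This yields
\begin{equation*}
\left(\tfrac{\partial}{\partial\tau}-\Delta_{\omega_\psi,X}\right)(f_\psi S)\le C_1 S+\tfrac{C_2}{f_\psi}.
\end{equation*}

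To handle the remaining $C_1 S$, I would exploit Lemma \ref{evolution equation of trace}. The first two terms on the right of \eqref{trace equation} are $O(1/f)$ (by $|\Ric(g)|_g,|\Rm(g)|_g=O(1/f)$ and Proposition \ref{metric equivalence}), while the last term is a norm squared of $\nabla^g g_\psi$ with respect to a mixed $g$-$g_\psi$ pairing; by metric equivalence it is bounded below by $c_3 S$. Hence
\begin{equation*}
\left(\tfrac{\partial}{\partial\tau}-\Delta_{\omega_\psi,X}\right)\tr_\omega\omega_\psi\le\tfrac{C_3}{f_\psi}-c_3 S.
\end{equation*}
Then, setting $u:=A'\tr_\omega\omega_\psi+f_\psi S$ with $A'c_3\ge C_1$, the combination gives $(\partial_\tau-\Delta_{\omega_\psi,X})u\le D/f_\psi$. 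The initial value $u(0)$ is bounded since Condition I provides $\tr_\omega\omega_{\psi_0}\le nC_0$ and $f|\nabla^g g_{\psi_0}|_g^2=O(1)$; the function $u$ has polynomial growth on any slab $M\times[0,T]$ by Propositions \ref{rough metric equivalence} and \ref{rough c3 estimate}. Applying Lemma \ref{useful lemma} then gives $u\le C$ on the whole spacetime, whence $S\le C/f_\psi$.

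The main obstacle is precisely the $+S$ term in \eqref{Modified PSS eq}: a direct maximum-principle attack on $f_\psi S$ alone only yields an inequality of Grönwall type $(\partial_\tau-\Delta_{\omega_\psi,X})(f_\psi S)\le C'\,(f_\psi S)/f_\psi+C''/f_\psi$, which is incompatible with Lemma \ref{useful lemma}. The Yau-type negative contribution $-c_3 S$ coming from the trace equation is what makes the cancellation possible, which is why Proposition \ref{metric equivalence} must precede this estimate.
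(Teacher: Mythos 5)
Your proposal is correct and follows essentially the same route as the paper: the paper likewise derives $(\partial_\tau-\Delta_{\omega_\psi,X})(f_\psi S)\le C_1 S-\tfrac12 f_\psi(|\nabla\Psi|^2+|\overline\nabla\Psi|^2)+C_1 f_\psi^{-1}$ (Lemma \ref{lemma 5.3}, with the $\pm f_\psi S$ cancellation, the drift term absorbed via $|X|_{g_\psi}^2\le Af_\psi$, and the curvature correction controlled by the conical decay of $\Rm(g)$), then adds a multiple of $\tr_\omega\omega_\psi$ whose evolution supplies the $-\alpha S$ term, and concludes with Lemma \ref{useful lemma}. The only cosmetic difference is that you absorb the full gradient-squared term rather than half of it, which is harmless here (though the paper retains $-\tfrac12 f_\psi(|\nabla\Psi|^2+|\overline\nabla\Psi|^2)$ for later use in Proposition \ref{prop bound of rm}).
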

Before proving Proposition \ref{bound of S}, we need to prove the following lemma. 
\begin{lemma}\label{lemma 5.3}
    There exists a constant $C_1>0$ such that on $M\times [0, T_{\max})$, we have,
    \begin{equation}\label{equation of S}
        \left(\frac{\partial}{\partial\tau}-\Delta_{\omega_\psi,X}\right)(f_\psi S)\le C_1S-\frac{1}{2}f_\psi(|\nabla\Psi|^2+|\overline{\nabla}\Psi|^2)+\frac{C_1}{f_\psi}.
    \end{equation}
\end{lemma}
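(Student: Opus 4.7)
The plan is to apply a Leibniz rule for the drift heat operator $\partial_\tau - \Delta_{\omega_\psi,X}$ to the product $f_\psi S$, substitute the evolution equations from Proposition~\ref{proposition 4.6} (for $f_\psi$) and Proposition~\ref{pss lemma} (for $S$), and then absorb the remaining cross terms using the asymptotically conical curvature decay of $g$ together with the pointwise bounds on $X$ and on $f_\psi$ already established.

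First I would compute, via the Leibniz rule,
\begin{equation*}
(\partial_\tau - \Delta_{\omega_\psi,X})(f_\psi S) = S\,(\partial_\tau - \Delta_{\omega_\psi,X})f_\psi + f_\psi\,(\partial_\tau - \Delta_{\omega_\psi,X})S - 2\Re\langle \partial f_\psi, \bar\partial S\rangle_{g_\psi}.
\end{equation*}
Substituting $(\partial_\tau - \Delta_{\omega_\psi,X})f_\psi = -f_\psi$ and the identity from Proposition~\ref{pss lemma}, the contributions $S\cdot(-f_\psi)$ and $f_\psi\cdot S$ cancel exactly. Using $\nabla^{g_\psi}f_\psi = X$, the cross term simplifies to $2\Re\langle \partial f_\psi, \bar\partial S\rangle_{g_\psi} = X\cdot S$, so
\begin{equation*}
(\partial_\tau - \Delta_{\omega_\psi,X})(f_\psi S) = -f_\psi\bigl(|\nabla\Psi|^2 + |\overline{\nabla}\Psi|^2\bigr) + 2f_\psi\,\Re(\mathcal{R}\ast\overline{\Psi}) - X\cdot S,
\end{equation*}
where $\mathcal{R}\ast\overline{\Psi}$ denotes the curvature-cross-$\Psi$ term appearing in Proposition~\ref{pss lemma}.

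Next I would estimate the curvature contribution. The asymptotically conical hypothesis (Definition~\ref{ACKR expander}) yields $|\Rm(g)|_g = O(f^{-1})$ and $|\nabla^g \Rm(g)|_g = O(f^{-3/2})$, and in particular $|\nabla^g \Ric(g)|_g = O(f^{-3/2})$. Since $\nabla^{g_\psi}\Rm(g) = \nabla^g\Rm(g) + \Psi\ast\Rm(g)$, the metric equivalence $g\sim g_\psi$ (Proposition~\ref{metric equivalence}) together with $f\sim f_\psi$ (Corollary~\ref{good cosntant for psi and psi dot}) gives
\begin{equation*}
2f_\psi \bigl|\mathcal{R}\ast\overline{\Psi}\bigr| \le C f_\psi^{-1/2}|\Psi| + C|\Psi|^2 \le \frac{C_1}{f_\psi} + C_1 S,
\end{equation*}
after one application of Young's inequality (using $\inf_M f_\psi > 0$).

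Finally, for the transport term $-X\cdot S$, since $S = |\Psi|^2$ one has $|\nabla^{g_\psi}S|_{g_\psi} \le C|\Psi|\bigl(|\nabla\Psi| + |\overline{\nabla}\Psi|\bigr)$, so Cauchy--Schwarz combined with $|X|_{g_\psi}^2 \le A f_\psi$ (Corollary~\ref{bound of X}) yields
\begin{equation*}
|X\cdot S| \le C\sqrt{f_\psi}\sqrt{S}\bigl(|\nabla\Psi| + |\overline{\nabla}\Psi|\bigr),
\end{equation*}
and Young's inequality with $\epsilon$ chosen small enough gives $|X\cdot S| \le C_1 S + \tfrac{1}{2}f_\psi\bigl(|\nabla\Psi|^2 + |\overline{\nabla}\Psi|^2\bigr)$. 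Summing the three estimates yields \eqref{equation of S} after possibly enlarging $C_1$. The delicate point I anticipate is the bookkeeping of the $\Psi\ast\Rm(g)$ correction introduced when converting between $\nabla^g$ and $\nabla^{g_\psi}$ inside $\nabla^{\bar b}\Rm(g)$: this contribution is quadratic in $\Psi$ but carries a factor $f^{-1}$ from $|\Rm(g)|_g$, so it absorbs into $C_1 S$; one must only verify that no stray higher-order term escapes.
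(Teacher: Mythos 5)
Your proposal is correct and follows essentially the same route as the paper: the Leibniz rule for the drift heat operator, the cancellation of the zeroth-order terms via Proposition \ref{proposition 4.6} and Proposition \ref{pss lemma}, the conversion $\nabla^{g_\psi}\Rm(g)=\nabla^g\Rm(g)+\Psi*\Rm(g)$ with the conical decay to absorb the curvature cross term into $C_1S+C_1f_\psi^{-1}$, and Cauchy--Schwarz with $|X|_{g_\psi}^2\le Af_\psi$ to absorb $X\cdot S$ into $\tfrac{1}{2}f_\psi(|\nabla\Psi|^2+|\overline{\nabla}\Psi|^2)+C_1S$. The "delicate point" you flag is exactly the one the paper handles with the identity $\nabla\Rm(g)=\nabla g*\Rm(g)+\nabla^g\Rm(g)$, and it closes just as you anticipate.
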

\begin{proof}
    We compute
    \begin{equation*}
        \begin{split}
            \left(\frac{\partial}{\partial\tau}-\Delta_{\omega_\psi,X}\right)(f_\psi S)&=f_\psi \left(\frac{\partial}{\partial\tau}-\Delta_{\omega_\psi,X}\right)S+S \left(\frac{\partial}{\partial\tau}-\Delta_{\omega_\psi,X}\right)f_\psi\\
            &\quad -2\Re(<\partial f_\psi,\bar\partial S>)\\
            &=f_\psi \left(\frac{\partial}{\partial\tau}-\Delta_{\omega_\psi,X}\right)S-f_\psi S-X\cdot S.
        \end{split}
    \end{equation*}
  Notice that $\nabla\Rm(g)=\nabla g*\Rm(g)+\nabla^g\Rm(g)$, then by [\eqref{Modified PSS eq}, Proposition \ref{pss lemma}], there exits a constant $C_2>0$ such that
    \begin{equation*}
         \left(\frac{\partial}{\partial\tau}-\Delta_{\omega_\psi,X}\right)S\le -|\overline{\nabla}\Psi|^2-|\nabla\Psi|^2+S+C_2(\frac{S}{f_\psi}+\frac{\sqrt{S}}{f_\psi^{\frac{3}{2}}}).
    \end{equation*}
    Thus,
    \begin{equation}\label{lemma for nest lemma}
         \left(\frac{\partial}{\partial\tau}-\Delta_{\omega_\psi,X}\right)(f_\psi S)\le -f_\psi|\overline{\nabla}\Psi|^2-f_\psi|\nabla\Psi|^2+C_2(S+\frac{\sqrt{S}}{f_\psi^{\frac{1}{2}}})-X\cdot S.
    \end{equation}
    We notice that by Cauchy-Schwarz inequality, for any $\sigma>0$
    \begin{equation*}
        |X\cdot S|\le 2|X||\nabla^{g_\psi}\Psi||\Psi|\le \sigma|X|^2|\nabla^{g_\psi}\Psi|^2+\frac{1}{\sigma}S.
    \end{equation*}
    Take $\sigma<1$ such that $\sigma|X|^2_{g_\psi}\le \frac{1}{2}f_\psi$ thanks to Corollary \ref{bound of X}, hence we get
    \begin{equation*}
        \left(\frac{\partial}{\partial\tau}-\Delta_{\omega_\psi,X}\right)(f_\psi S)\le -\frac{1}{2}f_\psi|\overline{\nabla}\Psi|^2-\frac{1}{2}f_\psi|\nabla\Psi|^2+C_2(S+\frac{\sqrt{S}}{f_\psi^{\frac{1}{2}}})+\frac{1}{\sigma}S.    \end{equation*}
        Using Cauchy-Schwarz inequality $\frac{\sqrt
        S}{f_\psi^{\frac{1}{2}}}\le S+\frac{1}{f_\psi} $, there exists a constant $C_1>0$ such that
        \begin{equation*}
            \left(\frac{\partial}{\partial\tau}-\Delta_{\omega_\psi,X}\right)(f_\psi S)\le -\frac{1}{2}f_\psi|\overline{\nabla}\Psi|^2-\frac{1}{2}f_\psi|\nabla\Psi|^2+C_1(S+\frac{1}{f_\psi}),
        \end{equation*}
        as required.
\end{proof}
\begin{proof}[Proof of Proposition \ref{bound of S}]
   By [\eqref{trace equation}, Lemma \ref{evolution equation of trace}], there exist constants $C_2,\alpha>0$ such that
             \begin{equation*}
        \left(\frac{\partial}{\partial\tau}-\Delta_{\omega_\psi,X}\right)\tr_\omega \omega_\psi\le \frac{C_2}{f_\psi}-\alpha S.
    \end{equation*}
Together with [\eqref{equation of S}, Lemma \ref{lemma 5.3}], we get
\begin{equation*}
     \left(\frac{\partial}{\partial\tau}-\Delta_{\omega_\psi,X}\right)\left(f_\psi S+\frac{C_1}{\alpha}\tr_\omega \omega_\psi\right)\le C_1(\frac{C_2}{\alpha}+1)\frac{1}{f_\psi}.
\end{equation*}
Thanks to Proposition \ref{rough c3 estimate}, we have a rough estimate that allows us to apply Lemma \ref{useful lemma}: there exists a constant $C_3>0$ such that for all $\tau\in [0,T_{\max})$,
\begin{equation*}
    \sup_{M}\left(f_\psi S+\frac{C_1}{\alpha}\tr_\omega \omega_\psi\right)\le \sup_M\left(f_\psi S+\frac{C_1}{\alpha}\tr_\omega \omega_\psi\right)\bigg|_{\tau=0}+C_3.
\end{equation*}
Since $\psi(0)$ satisfies Condition I, $\sup_M\left(f_\psi S+\frac{C_1}{\alpha}\tr_\omega \omega_\psi\right)|_{\tau=0}$ is finite, and because $\tr_\omega \omega_\psi$ is positive, we can find a constant $C>0$ such that on $M\times [0,T_{\max})$, $S\le \frac{C}{f_\psi}$ holds.
\end{proof}
Building upon the metric equivalence established in Proposition \ref{metric equivalence} and the $C^3$-estimates obtained in Proposition \ref{bound of S}, we now derive estimates for the Riemann curvature tensor under the normalized Kähler-Ricci flow.
\begin{prop}\label{evolution function of Rm}
    Along the normalized K\"ahler-Ricci flow, the curvature tensor evolves by
    \begin{equation}\label{evolution equation of Rm}
        \begin{split}
            \frac{\partial}{\partial\tau}\Rm(g_\psi)=\frac{1}{2}\Delta_{g_\psi}\Rm(g_\psi)+\mathcal{L}_{\frac{X}{2}}\Rm(g_\psi)-\Rm(g_\psi)+\Rm(g_\psi)*\Rm(g_\psi).
        \end{split}
    \end{equation}
\end{prop}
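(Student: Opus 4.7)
The plan is to separate the right-hand side of the normalized flow
\begin{equation*}
\frac{\partial}{\partial\tau}g_\psi \;=\; \mathcal{L}_{\frac{X}{2}}g_\psi \;-\; \Ric(g_\psi) \;-\; g_\psi
\end{equation*}
into its three constituent pieces and to track how each one acts on the full curvature tensor. The Ricci-flow piece is standard: for the unnormalized K\"ahler--Ricci flow $\partial_t g_\varphi = -\Ric(g_\varphi)$, Hamilton's derivation (with the usual halving coming from the K\"ahler convention $-\Ric$ versus $-2\Ric$) gives
\begin{equation*}
\frac{\partial}{\partial t}\Rm(g_\varphi) \;=\; \tfrac{1}{2}\,\Delta_{g_\varphi}\Rm(g_\varphi) \;+\; \Rm(g_\varphi)\ast\Rm(g_\varphi).
\end{equation*}
The diffeomorphism piece $\mathcal{L}_{\frac{X}{2}}g_\psi$, taken on its own, generates the flow by the family $\Phi_X^{\tau/2}$; since $\Rm$ is canonically constructed from the metric, evolving the metric by pullback evolves $\Rm$ by the same pullback, contributing the Lie derivative $\mathcal{L}_{\frac{X}{2}}\Rm(g_\psi)$. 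The scaling piece $-g_\psi$ rescales the metric by $e^{-\tau}$; as a $(0,4)$-tensor, $\Rm_{i\bar j k\bar l}$ scales linearly with $g$, producing the term $-\Rm(g_\psi)$.

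To make these three contributions truly additive I would use the correspondence from Lemma \ref{correspondence of NKRF and RF}, namely $g_\psi(\tau) = e^{-\tau}\Phi_{e^{-\tau}}^{\ast}g_\varphi(e^\tau)$ with $\Phi_t = \Phi_X^{-\frac{1}{2}\log t}$. Because pullback and conformal rescaling both commute with the curvature assignment (in the $(0,4)$ index placement), we obtain
\begin{equation*}
\Rm(g_\psi(\tau)) \;=\; e^{-\tau}\,\Phi_{e^{-\tau}}^{\ast}\,\Rm\bigl(g_\varphi(e^\tau)\bigr).
\end{equation*}
Differentiating in $\tau$ by the product and chain rules, and using $\partial_\tau\Phi_{e^{-\tau}} = \tfrac{1}{2}X\circ\Phi_{e^{-\tau}}$ together with the inner chain-rule factor $\partial_\tau(e^\tau)=e^\tau$ applied to the KRF evolution equation above, the three contributions separate cleanly: the derivative of the prefactor $e^{-\tau}$ yields $-\Rm(g_\psi)$; the derivative of the pullback yields $\mathcal{L}_{\frac{X}{2}}\Rm(g_\psi)$; and the inner time derivative, after pulling back and using the tensorial identities $\Phi^{\ast}\Delta_g = \Delta_{\Phi^{\ast}g}$ and the naturality of $\Rm\ast\Rm$ under pullback (together with the fact that the $e^{\tau}\cdot e^{-\tau}$ factors cancel in the Laplacian contribution while combining correctly for the quadratic contribution), produces exactly $\tfrac{1}{2}\Delta_{g_\psi}\Rm(g_\psi)+\Rm(g_\psi)\ast\Rm(g_\psi)$. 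Summing the four terms gives the claimed identity.

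The only delicate point is bookkeeping: one must fix a definite index placement for $\Rm$ (I would use $R_{i\bar j k\bar l}$, which scales linearly under $g\mapsto\lambda g$), since changing index placement would alter both the coefficient and sign of the $-\Rm(g_\psi)$ term. Apart from this, no analytic input beyond the standard KRF curvature evolution is required; the rest is chain rule and tensorial naturality, and so the argument is essentially algebraic.
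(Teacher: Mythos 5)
Your argument is correct, but it takes a genuinely different route from the paper. The paper proves the identity by a direct local computation: it differentiates $\Rm(g_\psi)_{i\bar jk\bar l}=-g_{\psi p\bar j}\partial_{\bar l}\Gamma(g_\psi)_{ik}^p$ in $\tau$, substitutes the normalized flow equation to get $\mathcal{L}_{\frac{X}{2}}\Rm(g_\psi)-\Rm(g_\psi)-\Ric(g_\psi)_{\bar j}^{\bar a}\Rm(g_\psi)_{i\bar ak\bar l}+\nabla_{\bar l}\nabla_k\Ric(g_\psi)_{i\bar j}$, then uses the second Bianchi identity and the commutation formulae to identify $\Delta_{\omega_\psi}\Rm(g_\psi)$ with $\nabla_{\bar l}\nabla_k\Ric(g_\psi)_{i\bar j}+\Rm(g_\psi)*\Rm(g_\psi)$, and finally invokes $\Delta_{g_\psi}=\Delta_{\omega_\psi}+\overline{\Delta}_{\omega_\psi}$. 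You instead start from the standard unnormalized Kähler--Ricci flow evolution of $\Rm$ and transport it through the correspondence $g_\psi(\tau)=e^{-\tau}\Phi_{e^{-\tau}}^{*}g_\varphi(e^{\tau})$ of Lemma \ref{correspondence of NKRF and RF}, using that $\Rm$, $\Delta$, and the quadratic term are natural under pullback and scale with definite weights under $g\mapsto\lambda g$ (the $(0,4)$ curvature by $\lambda$, the Laplacian by $\lambda^{-1}$, the quadratic term by $\lambda^{0}$), so the three pieces of the normalized equation emerge from the product and chain rules. Your route buys conceptual economy --- the Bianchi/commutation work is outsourced to the known unnormalized evolution, and the $\mathcal{L}_{\frac{X}{2}}$ and $-\Rm$ terms come for free from naturality --- at the cost of citing that input and of presupposing that the solution arises via the correspondence (which it does, by construction in Section \ref{Shi's solution section}). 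The paper's route is self-contained and makes explicit how the discrepancy between $\Delta_{\omega_\psi}$ and $\tfrac{1}{2}\Delta_{g_\psi}$ on tensors is absorbed into $\Rm*\Rm$; since your cited input must be stated with a definite Laplacian convention, you should note that any mismatch there is likewise only a $\Rm(g_\psi)*\Rm(g_\psi)$ correction, so the schematic statement is unaffected.
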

\begin{proof}
    Recall that
    \begin{equation*}
        \begin{split}
            \frac{\partial}{\partial\tau}\Rm(g_\psi)_{i\bar jk\bar l}&=\frac{\partial}{\partial\tau}(-g_{\psi p\bar j}\partial_{\bar l}\Gamma(g_\psi)_{ik}^p)\\
            &=-(\mathcal{L}_{\frac{X}{2}}g_{\psi}-\Ric(g_\psi)-g_\psi)_{p\bar j}\partial_{\bar l}\Gamma(g_\psi)_{ik}^p+\nabla_{\bar l}\nabla_k\Ric(g_\psi)_{i\bar j}-\nabla_{\bar l}\nabla_k(\mathcal{L}_{\frac{X}{2}}g_\psi)_{i\bar j}\\
            &=\mathcal{L}_{\frac{X}{2}}\Rm(g_\psi)_{i\bar jk\bar l}-\Rm(g_\psi)_{i\bar jk\bar l}-\Ric(g_\psi)_{\bar j}^{\bar a}\Rm(g_\psi)_{i\bar ak\bar l}+\nabla_{\bar l}\nabla_k\Ric(g_\psi)_{i\bar j}\\
        \end{split}
    \end{equation*}
    Using the Bianchi identity and the commutation formulae, we obtain
    \begin{equation*}
       \begin{split}
            \Delta_{\omega_\psi}\Rm(g_\psi)_{i\bar jk\bar l}&=g_\psi^{a\bar b}\nabla_a\nabla_{\bar b}\Rm(g_\psi)_{i\bar jk\bar l}\\
            &=g_\psi^{a\bar b}\nabla_a\nabla_{\bar l}\Rm(g_\psi)_{i\bar jk\bar b}\\
            &=g_\psi^{a\bar b}\nabla_{\bar l}\nabla_{a}\Rm(g_\psi)_{i\bar jk\bar b}+g_\psi^{a\bar b}[\nabla_a,\nabla_{\bar l}]\Rm(g_\psi)_{i\bar jk\bar b}\\
            &=\nabla_{\bar l}\nabla_k\Ric(g_\psi)_{i\bar j}+\Rm(g_\psi)*\Rm(g_\psi).
       \end{split}
    \end{equation*}
    By conjugate formulae of $\overline{\Delta}_{\omega_\psi}$ and the fact $\Delta_{g_\psi}=\Delta_{\omega_\psi}+\overline{\Delta}_{\omega_\psi}$, we conclude that
    \begin{equation*}
        \frac{\partial}{\partial\tau}\Rm(g_\psi)=\frac{1}{2}\Delta_{g_\psi}\Rm(g_\psi)+\mathcal{L}_{\frac{X}{2}}\Rm(g_\psi)-\Rm(g_\psi)+\Rm(g_\psi)*\Rm(g_\psi).
    \end{equation*}
\end{proof}
\begin{corollary}\label{rm corollary}
    There exists a dimensional constant $C(n)>0$ such that
    \begin{equation}\label{evolution equation of Rm square}
       \begin{split}
            \left(\frac{\partial}{\partial\tau}-\Delta_{\omega_\psi,X}\right)|\Rm(g_\psi)|^2&\le -|\nabla\Rm(g_\psi)|^2-|\overline{\nabla}\Rm(g_\psi)|^2\\
            &\quad +C(n)|\Rm(g_\psi)|^3+2|\Rm(g_\psi)|^2,
       \end{split}
    \end{equation}
    and for points $(x,\tau)\in M\times [0,T_{\max})$ such that $|\Rm(g_\psi)|(x,\tau)>0$,
    \begin{equation}\label{evolution equation of Rm1}
        \left(\frac{\partial}{\partial\tau}-\Delta_{\omega_\psi,X}\right)|\Rm(g_\psi)|(x,\tau)\le \frac{C(n)}{2}|\Rm(g_\psi)|^2(x,\tau)+|\Rm(g_\psi)|(x,\tau).
    \end{equation}
\end{corollary}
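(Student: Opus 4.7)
The plan is to derive both evolution inequalities directly from Proposition \ref{evolution function of Rm}, combining the Bochner formula with (for the second inequality) Kato's inequality.

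To establish \eqref{evolution equation of Rm square}, I would compute $\bigl(\frac{\partial}{\partial\tau}-\Delta_{\omega_\psi,X}\bigr)|\Rm(g_\psi)|^2$ term by term, tracking both the evolution of $\Rm(g_\psi)$ from Proposition \ref{evolution function of Rm} and the contribution of the four inverse metrics needed to form $|\Rm(g_\psi)|^2$. The heat term $\frac{1}{2}\Delta_{g_\psi}\Rm(g_\psi)$, combined with the Bochner identity $\Delta_{g_\psi}|\Rm|^2 = 2\langle \Delta_{g_\psi}\Rm,\Rm\rangle + 2|\nabla^{g_\psi}\Rm|^2$ and the relation $\frac{1}{2}\Delta_{g_\psi}=\Delta_{\omega_\psi}$ on real functions, contributes $\Delta_{\omega_\psi}|\Rm|^2 - |\nabla^{g_\psi}\Rm|^2$, where in our K\"ahler notation $|\nabla^{g_\psi}\Rm|^2=|\nabla\Rm|^2+|\overline{\nabla}\Rm|^2$. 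The Lie-derivative term $\mathcal{L}_{X/2}\Rm$ combines with the $\mathcal{L}_{X/2}g_\psi$ piece of $\frac{\partial g_\psi}{\partial\tau}$ (acting once per raised index) to reassemble $\frac{X}{2}\cdot|\Rm|^2$, completing $\Delta_{\omega_\psi,X}|\Rm|^2$. The linear term $-\Rm$ in Proposition \ref{evolution function of Rm} yields $-2|\Rm|^2$ through $2\Re\langle -\Rm,\Rm\rangle$, while the $-g_\psi$ piece of $\frac{\partial g_\psi}{\partial\tau}$, acting once on each of the four inverse metrics, produces $+4|\Rm|^2$, for a net $+2|\Rm|^2$. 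Finally, the $\Rm*\Rm$ term in Proposition \ref{evolution function of Rm} together with the $-\Ric(g_\psi)$ contribution from $\frac{\partial g_\psi}{\partial\tau}$ (using the standard bound $|\Ric(g_\psi)|_{g_\psi} \le C|\Rm(g_\psi)|_{g_\psi}$) combine into an error bounded by $C(n)|\Rm|^3$. Assembling these and transferring $\Delta_{\omega_\psi,X}|\Rm|^2$ to the left-hand side yields \eqref{evolution equation of Rm square}.

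For the second inequality \eqref{evolution equation of Rm1}, at any point where $u:=|\Rm(g_\psi)|>0$, I would use the chain-rule identities $\frac{\partial u}{\partial\tau} = \frac{1}{2u}\frac{\partial u^2}{\partial\tau}$, $\frac{X}{2}\cdot u = \frac{1}{2u}\bigl(\frac{X}{2}\cdot u^2\bigr)$, and the functional Bochner identity $\Delta_{\omega_\psi}u = \frac{1}{2u}\Delta_{\omega_\psi}u^2 - \frac{|\partial u|^2_{g_\psi}}{u}$. Combining them gives
\begin{equation*}
\Bigl(\tfrac{\partial}{\partial\tau}-\Delta_{\omega_\psi,X}\Bigr)u \;=\; \tfrac{1}{2u}\Bigl(\tfrac{\partial}{\partial\tau}-\Delta_{\omega_\psi,X}\Bigr)u^2 \;+\; \tfrac{|\partial u|^2_{g_\psi}}{u}.
\end{equation*}
Dividing \eqref{evolution equation of Rm square} by $2u$ and invoking Kato's inequality in the form $2|\partial u|^2_{g_\psi} = |\nabla^{g_\psi}|\Rm(g_\psi)||^2 \le |\nabla^{g_\psi}\Rm(g_\psi)|^2 = |\nabla\Rm|^2 + |\overline{\nabla}\Rm|^2$, the extra term $\frac{|\partial u|^2_{g_\psi}}{u}$ is exactly absorbed by the negative $-\frac{|\nabla\Rm|^2+|\overline{\nabla}\Rm|^2}{2u}$ coming from the right-hand side of \eqref{evolution equation of Rm square}, leaving precisely $\frac{C(n)}{2}|\Rm|^2 + |\Rm|$.

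The main bookkeeping obstacle lies in the first inequality: one must carefully verify that the four $\mathcal{L}_{X/2}$ pieces contributed by the inverse-metric evolution together with $2\Re\langle \mathcal{L}_{X/2}\Rm,\Rm\rangle$ reconstruct exactly $\frac{X}{2}\cdot|\Rm|^2$, and that the four copies of the $-g_\psi$ contribution produce the precise coefficient $+4$ needed to combine with the $-2$ from the linear $-\Rm$ term into the stated $+2|\Rm|^2$. Once \eqref{evolution equation of Rm square} is in hand, \eqref{evolution equation of Rm1} is a short algebraic consequence of Kato's inequality and the chain-rule identities above.
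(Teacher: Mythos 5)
Your proposal is correct and follows essentially the same route as the paper: both derive \eqref{evolution equation of Rm square} by combining the tensor evolution equation of Proposition \ref{evolution function of Rm} with the evolution of the four inverse metrics (so that the $\mathcal{L}_{X/2}$ pieces reassemble the drift term, the four $+g_\psi$ contributions net $+4|\Rm|^2$ against the $-2|\Rm|^2$ from the linear term, and the $\Ric$ and $\Rm*\Rm$ pieces are absorbed into $C(n)|\Rm|^3$), and then deduce \eqref{evolution equation of Rm1} from the chain rule together with Kato's inequality. The paper phrases the Kato step as $\bigl|\nabla|\Rm|^2\bigr|^2\le 2|\Rm|^2\bigl(|\nabla\Rm|^2+|\overline{\nabla}\Rm|^2\bigr)$, which is equivalent to your $2|\partial u|^2_{g_\psi}\le|\nabla\Rm|^2+|\overline{\nabla}\Rm|^2$, so the two arguments coincide.
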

\begin{proof}
    First, we prove that \eqref{evolution equation of Rm square} implies \eqref{evolution equation of Rm1}. Note that
    \begin{equation}\label{4,22 1}
        \begin{split}
            \left(\frac{\partial}{\partial\tau}-\Delta_{\omega_\psi,X}\right)|\Rm(g_\psi)|&=\frac{1}{2|\Rm(g_\psi)|^2}\left(\frac{\partial}{\partial\tau}-\Delta_{\omega_\psi,X}\right) |\Rm(g_\psi)|^2\\&\quad +\frac{1}{4|\Rm(g_\psi)|^3}\left|\nabla|\Rm(g_\psi)|^2\right|^2,
        \end{split}
    \end{equation}
    and
    \begin{equation}\label{4.22 2}
        \left|\nabla|\Rm(g_\psi)|^2\right|^2\le 2|\Rm(g_\psi)|^2\left(|\nabla\Rm(g_\psi)|^2+|\overline{\nabla}\Rm(g_\psi)|^2\right).
    \end{equation}
    Then \eqref{evolution equation of Rm1} comes from \eqref{4,22 1} and \eqref{4.22 2}.

    Now we prove \eqref{evolution equation of Rm square}. In local holomorphic coordinates, we compute
    \begin{equation*}
        \begin{split}
            \frac{\partial}{\partial\tau}|\Rm(g_\psi)|^2&= \frac{\partial}{\partial\tau}\left(g_\psi^{i\bar p}g_\psi^{q\bar j}g_\psi^{k\bar r}g_\psi^{s\bar l}\Rm(g_\psi)_{i\bar j k\bar l}\Rm(g_\psi)_{\bar p q \bar r s}\right)\\
            &=2\Re\left(<\frac{1}{2}\Delta_{g_\psi}\Rm(g_\psi)+\mathcal{L}_{\frac{X}{2}}\Rm(g_\psi),\Rm(g_\psi)>\right)\\
            &\quad+2\Re\left(<-\Rm(g_\psi)+\Rm(g_\psi)*\Rm(g_\psi),\Rm(g_\psi)>\right)\\
            &\quad +\frac{\partial}{\partial\tau}g_\psi^{i\bar p}g_\psi^{q\bar j}g_\psi^{k\bar r}g_\psi^{s\bar l}\Rm(g_\psi)_{i\bar j k\bar l}\Rm(g_\psi)_{\bar p q \bar r s}\\
            &\quad+g_\psi^{i\bar p}\frac{\partial}{\partial\tau}g_\psi^{q\bar j}g_\psi^{k\bar r}g_\psi^{s\bar l}\Rm(g_\psi)_{i\bar j k\bar l}\Rm(g_\psi)_{\bar p q \bar r s}\\
             &\quad +g_\psi^{i\bar p}g_\psi^{q\bar j}\frac{\partial}{\partial\tau}g_\psi^{k\bar r}g_\psi^{s\bar l}\Rm(g_\psi)_{i\bar j k\bar l}\Rm(g_\psi)_{\bar p q \bar r s}\\
             &\quad+g_\psi^{i\bar p}g_\psi^{q\bar j}g_\psi^{k\bar r}\frac{\partial}{\partial\tau}g_\psi^{s\bar l}\Rm(g_\psi)_{i\bar j k\bar l}\Rm(g_\psi)_{\bar p q \bar r s}.\\
        \end{split}
    \end{equation*}
    Moreover,
    \begin{equation*}
       \begin{split}
            \Delta_{\omega_\psi,X}|\Rm(g_\psi)|^2&=2\Re\left(<\frac{1}{2}\Delta_{g_\psi}\Rm(g_\psi)+\mathcal{L}_{\frac{X}{2}}\Rm(g_\psi),\Rm(g_\psi)>\right)\\
            &\quad +|\nabla\Rm(g_\psi)|^2+|\overline{\nabla}\Rm(g_\psi)|^2\\
             &\quad-\mathcal{L}_{\frac{X}{2}}g_\psi^{i\bar p}g_\psi^{q\bar j}g_\psi^{k\bar r}g_\psi^{s\bar l}\Rm(g_\psi)_{i\bar j k\bar l}\Rm(g_\psi)_{\bar p q \bar r s}\\
             &\quad-g_\psi^{i\bar p}\mathcal{L}_{\frac{X}{2}}g_\psi^{q\bar j}g_\psi^{k\bar r}g_\psi^{s\bar l}\Rm(g_\psi)_{i\bar j k\bar l}\Rm(g_\psi)_{\bar p q \bar r s}\\
             &\quad -g_\psi^{i\bar p}g_\psi^{q\bar j}\mathcal{L}_{\frac{X}{2}}g_\psi^{k\bar r}g_\psi^{s\bar l}\Rm(g_\psi)_{i\bar j k\bar l}\Rm(g_\psi)_{\bar p q \bar r s}\\
             &\quad -g_\psi^{i\bar p}g_\psi^{q\bar j}g_\psi^{k\bar r}\mathcal{L}_{\frac{X}{2}}g_\psi^{s\bar l}\Rm(g_\psi)_{i\bar j k\bar l}\Rm(g_\psi)_{\bar p q \bar r s}.\\
       \end{split}
    \end{equation*}
    Since \begin{equation*}
        \frac{\partial}{\partial\tau}g_\psi^{i\bar p}=-\mathcal{L}_{\frac{X}{2}}g_\psi^{i\bar p}+\Ric(g_\psi)^{i\bar q}+g_\psi^{i\bar p}.
    \end{equation*}
    Then we get
    \begin{equation*}
         \begin{split}
             \left(\frac{\partial}{\partial\tau}-\Delta_{\omega_\psi,X}\right)|\Rm(g_\psi)|^2&=2|\Rm(g_\psi)|^2+\Ric(g_\psi)*\Rm(g_\psi)*\Rm(g_\psi)\\
             &\quad+2\Re\left(<\Rm(g_\psi)*\Rm(g_\psi),\Rm(g_\psi)>\right).
         \end{split}
    \end{equation*}
    Thus \eqref{evolution equation of Rm square} holds as desired.
\end{proof}
The following corollary tells us the Riemann curvature is uniformly bounded along the normalized K\"ahler-Ricci flow.
\begin{corollary}\label{condition I bound of Rm}
    There exists a constant $C>0$ such that on $M\times [0,T_{\max})$
    \begin{equation*}
        |\Rm(g_\psi)|_{g_\psi}\le C.
    \end{equation*}
\end{corollary}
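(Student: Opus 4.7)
The plan is to apply the maximum principle of Lemma \ref{useful lemma} to a carefully chosen coupled quantity, upgrading the local-in-time curvature bound of Theorem \ref{shi's solution to NKRF} into a uniform estimate through a bootstrap that closes via a quadratic inequality.

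First I would sharpen the parabolic inequality for $S$ from Proposition \ref{pss lemma}. The identity $\nabla_{\bar b}\Psi_{ip}^k = \Rm(g)_{i\bar b p}^k - \Rm(g_\psi)_{i\bar b p}^k$ derived in \eqref{lie of psi} yields $|\bar\nabla\Psi|_{g_\psi}^2 \ge \tfrac{1}{2}|\Rm(g_\psi)|^2 - C|\Rm(g)|_g^2$ via the elementary bound $|A-B|^2 \ge \tfrac{1}{2}|B|^2 - |A|^2$ and the metric equivalence of Proposition \ref{metric equivalence}. Since $(M,g,X)$ is asymptotically conical, $|\Rm(g)|_g \le C f^{-1}$; using the comparison $f \asymp f_\psi$ from Corollary \ref{good cosntant for psi and psi dot} and the prior estimate $S \le C/f_\psi$ of Proposition \ref{bound of S} to absorb the lower-order error terms of Proposition \ref{pss lemma} (which involve $\nabla\Rm(g)$ paired against $\Psi$), the modified Phong--\v{S}e\v{s}um--Sturm identity reduces to
\begin{equation*}
\left(\frac{\partial}{\partial\tau}-\Delta_{\omega_\psi,X}\right)S \le -\tfrac{1}{2}|\Rm(g_\psi)|^2 + \frac{C}{f_\psi}.
\end{equation*}

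Next I would introduce the coupled auxiliary function $F := KS + |\Rm(g_\psi)|^2$, with a parameter $K > 0$ to be chosen. Combining the refined inequality above with the curvature evolution of Corollary \ref{rm corollary},
\begin{equation*}
\left(\frac{\partial}{\partial\tau}-\Delta_{\omega_\psi,X}\right)F \le C(n)|\Rm(g_\psi)|^3 + \Bigl(2-\tfrac{K}{2}\Bigr)|\Rm(g_\psi)|^2 + \frac{CK}{f_\psi}.
\end{equation*}
For each $T<T_{\max}$, Theorem \ref{shi's solution to NKRF} guarantees $M(T) := \sup_{M\times[0,T]}|\Rm(g_\psi)|_{g_\psi}<\infty$; choosing $K := 2C(n)M(T)+4$ makes the first two curvature terms combine non-positively on $M\times[0,T]$, so that $\mathcal{H}F \le CK/f_\psi$ on that slab. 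Applying Lemma \ref{useful lemma} (localized to $[0,T]$ via the same barrier argument) yields $\sup_{M\times[0,T]}F \le \sup_M F(0) + C_*K$ for universal constants. Since $\sup_M F(0) \le K\sup_M S(0) + \sup_M|\Rm(g_{\psi_0})|^2$, with $\sup_M S(0) \le C/\varepsilon$ by Corollary \ref{good cosntant for psi and psi dot} and $\sup_M|\Rm(g_{\psi_0})|_g$ finite by Condition I, substituting the choice of $K$ produces an inequality of the form
\begin{equation*}
M(T)^2 \le a\, M(T) + b,
\end{equation*}
with $a,b$ universal constants independent of $T$. Solving this quadratic gives $M(T) \le \tfrac{1}{2}(a+\sqrt{a^2+4b})$, a bound independent of $T$; letting $T\to T_{\max}$ then yields the desired uniform estimate.

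The principal obstacle is the cubic non-linearity $C(n)|\Rm|^3$ in the evolution of $|\Rm|^2$, which in general can drive finite-time blow-up and which cannot be dominated by the quadratic negative contribution $-\tfrac{K}{2}|\Rm|^2$ pointwise without prior control on $|\Rm|$. The bootstrap circumvents this by feeding Shi's non-uniform bound $M(T)$ into the choice of coupling constant $K = K(T)$; the decisive point is that although the resulting estimate seems to depend on $K$ (hence on $M(T)$), the dependence is only linear on the right-hand side while the left-hand side is quadratic in $M(T)$, so the implicit inequality closes into a universal bound.
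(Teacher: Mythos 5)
Your proposal is correct, but it takes a genuinely different route from the paper's proof. The paper couples $S$ with the \emph{first} power of the curvature: it shows $\bigl(\tfrac{\partial}{\partial\tau}-\Delta_{\omega_\psi,X}\bigr)S\le -\tfrac12|\Rm(g_\psi)|^2+C_4$ exactly as you do, but then adds $(C(n)+1)S$ to $|\Rm(g_\psi)|$ rather than to $|\Rm(g_\psi)|^2$. Since the evolution inequality \eqref{evolution equation of Rm1} for $|\Rm(g_\psi)|$ is only \emph{quadratic} on the right-hand side (at points where the curvature is nonzero), the fixed universal coefficient $C(n)+1$ already makes the combined reaction term $-\tfrac12|\Rm(g_\psi)|^2+|\Rm(g_\psi)|+C_5$ negative for large curvature, and evaluating at a spacetime maximum of $|\Rm(g_\psi)|+(C(n)+1)S-\eta e^\tau f_\psi$ gives the bound directly with no bootstrap. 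Your version works with $|\Rm(g_\psi)|^2$, which is smooth everywhere (avoiding the caveat $|\Rm(g_\psi)|>0$ needed for \eqref{evolution equation of Rm1}) but reintroduces the cubic term; you neutralize it by letting the coupling constant $K$ depend on $M(T)=\sup_{M\times[0,T]}|\Rm(g_\psi)|$, which is finite by Theorem \ref{shi's solution to NKRF}, and then close the loop through the quadratic inequality $M(T)^2\le aM(T)+b$ with $a,b$ independent of $T$ — this is the decisive observation and it is sound, since the constant produced by Lemma \ref{useful lemma} is indeed linear in the source term $D$. Two small housekeeping points: the bound $\sup_M S(0)\le C/\varepsilon$ follows from Proposition \ref{bound of S} at $\tau=0$ (equivalently from the assumption $f^{1/2}|\nabla^g g_{\psi_0}|_g=O(1)$ in Condition I together with Proposition \ref{lower bound of f}), not from Corollary \ref{good cosntant for psi and psi dot}; and you should state explicitly that Lemma \ref{useful lemma} is being invoked on the slab $M\times[0,T]$ only, since your differential inequality $\mathcal{H}F\le CK/f_\psi$ relies on $|\Rm(g_\psi)|\le M(T)$ and hence does not hold globally in time — as you note, the proof of Proposition \ref{maximum principle} localizes to $[0,T]$ without change. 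Neither point affects the validity of the argument.
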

\begin{proof}
    Notice that \begin{equation*}
        \nabla_{\bar b}\Psi_{ip}^k=\partial_{\bar b}(\Gamma(g_\psi)_{ip}^k-\Gamma(g)_{ip}^k)=\Rm(g)_{i\bar b p}^k-\Rm(g_\psi)_{i\bar b p}^k.
    \end{equation*}
    Since $g_\psi$ and $g$ are uniformly comparable on $M\times[0,T_{\max})$ by Proposition \ref{metric equivalence}, there exists a constant $C_1>0$ such that 
    \begin{equation*}
        |\overline{\nabla}\Psi|^2\ge \frac{1}{2}|\Rm(g_\psi)|^2-C_1|\Rm(g)|^2.
    \end{equation*}
   By Proposition \ref{good cosntant for psi and psi dot}, there exists $C_2>0$ such that $|\Rm(g)|\le \frac{C_2}{f_\psi}$. As a consequence, we conclude that there exists a constant $C_3>0$ such that
   \begin{equation*}
       |\overline{\nabla}\Psi|^2\ge \frac{1}{2}|\Rm(g_\psi)|^2-\frac{C_3}{f_\psi^2}.
   \end{equation*}
   Recall [\eqref{Modified PSS eq}, Proposition \ref{pss lemma}] and Proposition \ref{bound of S} and the fact that $f_\psi$ is bounded from below, there exists a constant $C_4>0$ such that
   \begin{equation*}
       \left(\frac{\partial}{\partial\tau}-\Delta_{\omega_\psi,X}\right)S\le -\frac{1}{2}|\Rm(g_\psi)|^2+C_4.
   \end{equation*}
   Recall [\eqref{evolution equation of Rm1}, Corollary \ref{rm corollary}],
    \begin{equation*}
        \left(\frac{\partial}{\partial\tau}-\Delta_{\omega_\psi,X}\right)|\Rm(g_\psi)|\le \frac{C(n)}{2}|\Rm(g_\psi)|^2+|\Rm(g_\psi)|.
    \end{equation*}
    Combining these inequalities, we claim that there exists a constant $C_5>0$ such that
    \begin{equation*}
         \left(\frac{\partial}{\partial\tau}-\Delta_{\omega_\psi,X}\right)\left(|\Rm(g_\psi)|+(C(n)+1)S\right)\le -\frac{1}{2}|\Rm(g_\psi)|^2+|\Rm(g_\psi)|+C_5.
    \end{equation*}
    Recall the evolution equation of $e^\tau f_\psi$: $ \left(\frac{\partial}{\partial\tau}-\Delta_{\omega_\psi,X}\right)e^\tau f_\psi=0$. Thus, for all $\eta>0$, we have
    \begin{equation*}
         \left(\frac{\partial}{\partial\tau}-\Delta_{\omega_\psi,X}\right)\left(|\Rm(g_\psi)|+(C(n)+1)S-\eta e^\tau f_\psi\right)\le -\frac{1}{2}|\Rm(g_\psi)|^2+|\Rm(g_\psi)|+C_5.
    \end{equation*}
For all $0<T<T_{\max}$, $|\Rm(g_\psi)|+(C(n)+1)S$ is bounded on $M\times [0,T]$. Hence, the function $|\Rm(g_\psi)|+(C(n)+1)S-\eta e^\tau f_\psi$ tends to $-\infty$ at spatial infinity uniformly. This implies the existence of maximum point of $|\Rm(g_\psi)|+(C(n)+1)S-\eta e^\tau f_\psi$. We denote it by $(x_0,\tau_0)$. If $\tau_0>0$, then by the weak maximum principle, we have
\begin{equation*}
    \begin{split}
        0&\le  \left(\frac{\partial}{\partial\tau}-\Delta_{\omega_\psi,X}\right)\left(|\Rm(g_\psi)|+(C(n)+1)S-\eta e^\tau f_\psi\right)(x_0,\tau_0)\\
        &\le -\frac{1}{2}|\Rm(g_\psi)|^2(x_0,\tau_0)+|\Rm(g_\psi)|(x_0,\tau_0)+C_5.
    \end{split}
\end{equation*}
Therefore, we get $|\Rm(g_\psi)|(x_0,\tau_0)\le 1+\sqrt{2C_5+1}$. For all $(x,\tau)\in M\times [0,T]$, since $S$ is non negative and bounded uniformly by Proposition \ref{bound of S}, it follows that
\begin{equation*}
   \begin{split}
        |\Rm(g_\psi)|(x,\tau)-\eta e^\tau f_\psi(x,\tau)&\le |\Rm(g_\psi)|(x_0,\tau_0)+(C(n)+1)S(x_0,\tau_0)\\
        &\le 1+\sqrt{2C_5+1}+\sup_{M\times [0,T_{\max})}S(C(n)+1).
   \end{split}
\end{equation*}
If $\tau_0=0$, then for all $(x,\tau)\in M\times [0,T]$, we have
\begin{equation*}
     |\Rm(g_\psi)|(x,\tau)-\eta e^\tau f_\psi(x,\tau)\le\sup_M \left(|\Rm(g_\psi)|(0)+(C(n)+1)S(0)\right)<\infty.
\end{equation*}
Let $\eta$ tend to 0, and we conclude that there exists a constant $C>0$ such that on $M\times [0,T_{\max})$, $\sup_M|\Rm(g_\psi))|(\tau)\le C$ holds.
\end{proof}
\begin{corollary}  Let $T_{\max}$ be the maximal time as in Theorem \ref{shi's solution to NKRF}, then
    $T_{\max}=\infty$.
\end{corollary}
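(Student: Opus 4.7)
The proof is a direct contradiction argument combining the uniform curvature bound just established with the blow-up criterion built into Shi's existence theorem. I would assume for contradiction that $T_{\max}<\infty$ and then invoke Theorem \ref{shi's solution to NKRF}(ii), which asserts that in the finite-time case one must have $\limsup_{\tau\to T_{\max}}\sup_M|\Rm(g_\psi(\tau))|_{g_\psi(\tau)}=\infty$. This will be incompatible with the conclusion of Corollary \ref{condition I bound of Rm}, which gives a \emph{uniform} constant $C>0$ (independent of $\tau\in[0,T_{\max})$) such that $|\Rm(g_\psi(\tau))|_{g_\psi(\tau)}\le C$ on the entire spacetime $M\times[0,T_{\max})$.

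More concretely, the plan is the following. First, I note that all the preparatory estimates needed to establish Corollary \ref{condition I bound of Rm} (the rough metric equivalence of Proposition \ref{rough metric equivalence}, the improved uniform metric equivalence of Proposition \ref{metric equivalence}, the $C^3$-bound of Proposition \ref{bound of S}, and the maximum principle of Proposition \ref{maximum principle}) are valid on $M\times[0,T_{\max})$ without any dependence on $T<T_{\max}$. Therefore the bound $\sup_{M\times[0,T_{\max})}|\Rm(g_\psi)|_{g_\psi}\le C$ is genuinely uniform up to $T_{\max}$. Second, I apply Theorem \ref{shi's solution to NKRF}(ii): if $T_{\max}$ were finite, the curvature norm would have to diverge as $\tau\nearrow T_{\max}$. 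The two statements are contradictory, so we must have $T_{\max}=+\infty$.

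There is essentially no obstacle here, since all the work has already been carried out in the previous subsections. The one point worth highlighting in writing is that Corollary \ref{condition I bound of Rm} really does provide a bound with constant independent of $T_{\max}$ (in contrast to the time-dependent bounds of Proposition \ref{rough metric equivalence} or Proposition \ref{rough c3 estimate}), and this independence is precisely what upgrades the statement to long-time existence. Once $T_{\max}=\infty$ is established, the remaining assertions of Theorem \ref{longtime existence theorem}, namely items (ii)--(iv) and the higher-order curvature bounds, follow immediately from Corollary \ref{good cosntant for psi and psi dot}, Proposition \ref{metric equivalence}, Proposition \ref{bound of S}, Corollary \ref{condition I bound of Rm}, and Shi's local derivative estimates applied on intervals $[\alpha,\infty)$ with $\alpha>0$.
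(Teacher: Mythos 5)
Your argument is exactly the paper's proof: assume $T_{\max}<\infty$, invoke the blow-up criterion in Theorem \ref{shi's solution to NKRF}(ii), and contradict the uniform curvature bound of Corollary \ref{condition I bound of Rm}. The proposal is correct and matches the paper's approach.
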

\begin{proof}
    Recall in Theorem \ref{shi's solution to NKRF}, if $T_{\max}<\infty$, then 
    \begin{equation*}
        \limsup_{\tau\to T_{\max}}\sup_M|\Rm(g_{\psi}(\tau))|_{g_\psi(\tau)}=+\infty,
    \end{equation*}
    this gives a contradiction to Corollary \ref{condition I bound of Rm}. Therefore $T_{\max}=\infty$.
\end{proof}
Now to prove Theorem \ref{longtime existence theorem}, it suffices to prove the uniform bound for all orders' covariant derivatives of Riemann curvature operators.
\begin{proof}[Proof of Theorem \ref{longtime existence theorem}]
    This proof relies on Shi's global estimates on curvature along Ricci flow (see \cite{MR1001277}). Recall $g_\varphi(t)_{t\ge 1}$ is the solution to K\"ahler-Ricci flow corresponding to solution $g_\psi(\tau)$ as in Theorem \ref{shi's solution}. The curvature bound of $g_\psi(\tau)$ in Corollary \ref{prop bound of rm} implies that there exists a constant $C>0$ such that for all $t\ge 1$
    \begin{equation}
        \sup_M|\Rm(g_\varphi(t))|_{g_\varphi(t)}\le \frac{C}{t}.
    \end{equation}
    Now we take $\alpha>1$, for all $t\ge\alpha$, we apply Shi's global estimates on the time interval $[\frac{t}{\alpha}, t]$. Since on $M\times [\frac{t}{\alpha}, t]$, we have 
   \begin{equation*}
       \sup_{M\times [\frac{t}{\alpha}, t]}|\Rm(g_\varphi(\cdot))|_{g_\varphi(\cdot)}\le \frac{\alpha C}{t}.
   \end{equation*}
    Thanks to Shi's global estimates, for any $m\in \N_0$, there exists a constant $C_m=C(\dim M,m)$ such that for all $s\in [\frac{t}{\alpha}, t] $
   \begin{equation*}
       \sup_M|(\nabla^{g_\varphi(s)})^m\Rm(g_\varphi(s))|_{g_\varphi(s)}\le C_m\frac{\alpha C}{t}(s-\frac{t}{\alpha})^{-\frac{m}{2}}.
   \end{equation*}
    In particular, there exists a constant $C_m(\alpha)>0$ such that,
    \begin{equation*}
        \sup_M|(\nabla^{g_\varphi(t)})^m\Rm(g_\varphi(t))|_{g_\varphi(t)}\le C_m\frac{\alpha C}{t}(t-\frac{t}{\alpha})^{-\frac{m}{2}}=C_m(\alpha) t^{-1-\frac{m}{2}}.
    \end{equation*}
    By the correspondence of K\"ahler-Ricci flow and normalized K\"ahler-Ricci flow, therefore, for all $m\ge 1$,
    \begin{equation*}
        \sup_M|(\nabla^{g_\psi(\tau)})^m\Rm(g_\psi(\tau))|_{g_\psi(\tau)}\le C_m(\alpha),
    \end{equation*}
    holds for all $\tau\ge \log\alpha$. 
\end{proof}
\section{Proof of convergence theorem \ref{convergence theorem}}\label{proof of 2}
Let $(M,g,X)$ be an asymptotically conical gradient K\"ahler-Ricci expander and let $\psi_0$ be a smooth function defined on $M$ satisfying Condition II. Since Condition II is stronger than Condition I, all the previous uniform estimates along the normalized K\"ahler-Ricci flow still hold. In summary, we have
\begin{prop}\label{summary}
    There exists a real-valued smooth function $\psi\in C^{\infty}(M\times[0,\infty))$ such that $g_\psi:=g+\partial\bar\partial\psi$ is an immortal complete solution to normalized K\"ahler-Ricci flow starting from $g_{\psi_0}$ and $\psi_(\tau)_{\tau\in[0,\infty)}$ satisfies
    \begin{equation*}
        \begin{split}
            &\frac{\partial}{\partial\tau}\psi(\tau)=\log\frac{\omega_\psi(\tau)^n}{\omega^n}+\frac{X}{2}\cdot\psi(\tau)-\psi(\tau),\\
            &JX\cdot\psi=0,\\
            &\psi(0)=\psi_0.
        \end{split}
    \end{equation*}

    Moreover, there exists a constant $C>1$ such that for all $\tau\ge 0$
  \begin{enumerate}
      \item $\frac{1}{C}g\le g_\psi\le Cg$;
      \item $|\frac{X}{2}\cdot\psi-\psi|\le C$;
      \item $\frac{1}{C}f\le f_\psi\le Cf$;
      \item $|\dot\psi(\tau)|\le Ce^{-\tau},f_\psi|\partial\dot\psi|_{g_\psi}^2\le Ce^{-\tau}$;
      \item $f_\psi S\le C$;
      \item $|\Rm(g_\psi)|\le C$.
  \end{enumerate}
\end{prop}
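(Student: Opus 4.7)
The plan is essentially one of assembly rather than new computation. Since Condition II is strictly stronger than Condition I, as emphasized in the remark immediately preceding this section and justified quantitatively in Remark \ref{refined geometric condition II} (the bi-Lipschitz comparison $\tfrac{1}{C_0} g \le g_{\psi_0} \le C_0 g$ follows automatically from the asymptotic control together with the asymptotic conicity of $g$), the potential $\psi_0$ verifies every hypothesis used in Section \ref{proof of 1}. Consequently the proposition is a compilation of previously established estimates, now formulated on the full half-line $[0,\infty)$.

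First I would invoke Theorem \ref{longtime existence theorem}, which, under Condition I alone, already produces an immortal complete solution $g_\psi(\tau) = g + \partial\bar\partial\psi(\tau)$ to the normalized Kähler--Ricci flow starting from $g_{\psi_0}$. The complex Monge--Ampère equation governing $\psi$, the identity $JX\cdot\psi(\tau)=0$, and the initial condition $\psi(0)=\psi_0$ are supplied by Proposition \ref{normalized complex monge-ampere} together with the reduction carried out in Section \ref{reduction} (namely Proposition \ref{without changing the metric}, Proposition \ref{cohomology prop}, and Proposition \ref{complex monge-ampere}).

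For the six uniform bounds, each is a direct reference to Section \ref{proof of 1}: estimate (i) is Proposition \ref{metric equivalence}; estimate (ii) is part (ii) of Corollary \ref{good cosntant for psi and psi dot}; estimate (iii) is part (v) of the same corollary; estimate (iv) combines Corollary \ref{fin estimate of dotpsi} (for the pointwise bound $|\dot\psi|\le Ce^{-\tau}$) with Corollary \ref{estimate of gradient dotpsi} (which yields $|\nabla^{g_\psi}\dot\psi|_{g_\psi}^2 \le Ce^{-2\tau}/f_\psi$, hence the claimed $f_\psi |\partial\dot\psi|_{g_\psi}^2 \le Ce^{-\tau}$ since $|\nabla^{g_\psi}\dot\psi|^2_{g_\psi}=2|\partial\dot\psi|_{g_\psi}^2$); estimate (v) is Proposition \ref{bound of S}; and estimate (vi) is Corollary \ref{condition I bound of Rm}. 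The essential point is that in each of these cited statements the constant is shown to be \emph{independent of time} on $[0,T_{\max})$, so once $T_{\max} = \infty$ has been established at the end of Section \ref{proof of 1}, all six bounds propagate automatically to the entire half-line $\tau \ge 0$.

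There is no substantive obstacle to overcome in this proposition: the tailored maximum principle on the non compact manifold $M$ (Proposition \ref{maximum principle} and Lemma \ref{useful lemma}), the parabolic Yau $C^2$-estimate (Proposition \ref{metric equivalence}), the Phong--Sesum--Sturm-type calculation (Proposition \ref{pss lemma} and Proposition \ref{bound of S}), and the curvature bound via the evolution of $|\Rm(g_\psi)|$ (Corollary \ref{condition I bound of Rm}) have all been executed already. The only point worth flagging is that Condition II provides strictly more information at $\tau=0$ than is exploited in this summary; that surplus regularity is precisely what will be consumed in the remainder of Section \ref{proof of 2} to upgrade uniform bounds to smooth convergence, but it plays no role in the present statement.
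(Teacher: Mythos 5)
Your proposal is correct and matches the paper exactly: the paper offers no separate argument for this proposition, presenting it precisely as a compilation of Theorem \ref{longtime existence theorem}, Proposition \ref{normalized complex monge-ampere}, Proposition \ref{metric equivalence}, Corollary \ref{good cosntant for psi and psi dot}, Corollaries \ref{fin estimate of dotpsi} and \ref{estimate of gradient dotpsi}, Proposition \ref{bound of S}, and Corollary \ref{condition I bound of Rm}, all valid on $[0,\infty)$ once $T_{\max}=\infty$ is known. Your observation that Condition II supplies the bi-Lipschitz bound of Condition I via Remark \ref{refined geometric condition II} is also the paper's justification for applying the Section \ref{proof of 1} machinery.
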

\subsection{A priori estimates}
To study the convergence of flow, we need better uniform estimates on the Riemann curvature $\Rm(g_\psi)$. In this subsection we omit all the time dependence of $g_\psi(\tau)$, we denote it simply by $g_\psi$. Moreover, we use $\nabla$(resp. $\overline{\nabla}$) to denote the holomorphic (resp. anti-holomorphic) part of $\nabla^{g_\psi}$, we also use $|\cdot|$ to denote $|\cdot|_{g_\psi}$ and $<>$ to denote the scalar product of $g_\psi$.

\begin{prop}\label{prop bound of rm}
    There exists a constant $C>0$ such that on $M\times [0,\infty)$,
    \begin{equation*}
        |\Rm(g_\psi)|\le C\frac{1}{f_\psi},
    \end{equation*}
    holds uniformly.
\end{prop}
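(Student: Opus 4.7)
The plan is to bound the quantity $u := f_\psi|\Rm(g_\psi)|^2 + \beta f_\psi S$ uniformly on $M\times[0,\infty)$ for a suitable constant $\beta>0$, via a parabolic maximum principle adapted to the drift Laplacian $\Delta_{\omega_\psi,X}$. The key improvement over Condition I is that Condition II forces better initial spatial decay of the curvature: from $|(\nabla^g)^k(g_{\psi_0}-g)|_g=O(f^{-k/2})$ for $k=0,1,2$, one deduces via the usual Christoffel/curvature formulas that $|\Psi(0)|_g=O(f^{-1/2})$, $|\nabla^g\Psi(0)|_g=O(f^{-1})$, and hence $|\Rm(g_{\psi_0})|_g=O(1/f)$. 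Combined with the rough equivalence $f\asymp f_\psi$ of Proposition \ref{summary}, this gives $\sup_M u(0)<\infty$, which will serve as the initial input to the maximum principle.

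\textbf{Evolution computation.} Applying the product rule with $(\partial_\tau-\Delta_{\omega_\psi,X})f_\psi=-f_\psi$ (Proposition \ref{proposition 4.6}) to Corollary \ref{rm corollary} yields
\begin{equation*}
(\partial_\tau-\Delta_{\omega_\psi,X})(f_\psi|\Rm|^2)\le -f_\psi(|\nabla\Rm|^2+|\overline{\nabla}\Rm|^2)+C(n)f_\psi|\Rm|^3+f_\psi|\Rm|^2-2\Re\langle\partial f_\psi,\bar\partial|\Rm|^2\rangle.
\end{equation*}
The cross term is handled by Cauchy-Schwarz together with $|\partial f_\psi|_{g_\psi}^2\le Af_\psi/2$ (Corollary \ref{bound of X}), absorbing half of the good gradient squares at the cost of an extra term $C_A|\Rm|^2$; the uniform bound $|\Rm|\le C_0$ of Proposition \ref{summary}(vi) turns $C(n)f_\psi|\Rm|^3$ into $C_B\cdot f_\psi|\Rm|^2$. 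For the Phong-\v{S}e\v{s}um-Sturm companion, Lemma \ref{lemma 5.3} combined with $|\overline{\nabla}\Psi|^2\ge\tfrac12|\Rm(g_\psi)|^2-C|\Rm(g)|^2$ and $|\Rm(g)|=O(1/f)$ gives $(\partial_\tau-\Delta_{\omega_\psi,X})(f_\psi S)\le -\tfrac14 f_\psi|\Rm|^2+D/f_\psi$.

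\textbf{Cancellation and maximum principle.} Choosing $\beta=4(C_B+1)$ makes the $f_\psi|\Rm|^2$ terms cancel in the combined evolution, leaving
\begin{equation*}
(\partial_\tau-\Delta_{\omega_\psi,X})u\le -(f_\psi-C_A)|\Rm|^2+\beta D/f_\psi.
\end{equation*}
To run the maximum principle on the non compact space-time, I introduce $v_\eta:=u-\eta e^\tau f_\psi$ for $\eta>0$; since $(\partial_\tau-\Delta_{\omega_\psi,X})(e^\tau f_\psi)=0$ this preserves the subsolution inequality. The a priori bound $u\le C_0^2 f_\psi+\beta C$ from Proposition \ref{summary} guarantees $v_\eta\to-\infty$ at spatial infinity, uniformly on any compact time interval $[0,T]$, so $v_\eta$ attains its maximum on $M\times[0,T]$. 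At an interior maximum, $(\partial_\tau-\Delta_{\omega_\psi,X})v_\eta\ge 0$ forces $(f_\psi-C_A)|\Rm|^2\le\beta D/f_\psi$; distinguishing according to whether $f_\psi\ge 2C_A$ (giving $|\Rm|^2\le\beta D/(f_\psi(f_\psi-C_A))$, hence $u$ bounded) or $f_\psi<2C_A$ (a compact region where $u\le 2C_AC_0^2+\beta C$ trivially), one concludes $u$ is bounded at the maximum by a constant $M$ independent of $T$ and $\eta$. Sending $T\to\infty$ and then $\eta\to0$ pointwise yields $u\le M$ on $M\times[0,\infty)$, whence $|\Rm(g_\psi)|\le C/f_\psi$.

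\textbf{Main obstacle.} The delicate point is the presence of the constant-order term $C_A|\Rm|^2$ in the final evolution inequality: it prevents a direct application of Lemma \ref{useful lemma} (which demands a right-hand side of the form $D/f_\psi$). The resolution is to exploit the negative $-f_\psi|\Rm|^2$ that dominates $C_A|\Rm|^2$ precisely outside the compact set $\{f_\psi\le 2C_A\}$, combined with the trivial continuity bound on the complementary region. The other technical subtlety is verifying that the bound obtained at the maximum is uniform in both the parameters $T$ and $\eta$, so that the limits $T\to\infty$ and $\eta\to0$ may be taken without losing the uniform constant.
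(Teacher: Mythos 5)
Your proof has a genuine gap, and it lies in the very first line: the quantity you choose to bound is underweighted by one power of $f_\psi$. A uniform bound $u = f_\psi|\Rm(g_\psi)|^2 + \beta f_\psi S \le M$ yields only $|\Rm(g_\psi)| \le \sqrt{M}\, f_\psi^{-1/2}$, not the claimed $|\Rm(g_\psi)| \le C f_\psi^{-1}$; the final "whence" does not follow. The paper avoids exactly this trap by working with the \emph{first} power $f_\psi|\Rm(g_\psi)|$ (plus a multiple of $f_\psi S$): at the interior maximum the inequality $f_\psi|\Rm|^2 \le C_4/f_\psi$ gives $|\Rm| \le \sqrt{C_4}\,f_\psi^{-1}$ there, so the \emph{linear-in-curvature} quantity $f_\psi|\Rm|$ is $O(1)$ at the maximum, hence $O(1)$ everywhere, which is precisely the desired decay. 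With your quadratic quantity the same maximum-point information only bounds $f_\psi|\Rm|^2$ by a constant. Repairing this is not cosmetic: switching to $f_\psi^2|\Rm|^2$ reintroduces the cubic term as $C(n)f_\psi^2|\Rm|^3 \le C(n)C_0 f_\psi^2|\Rm|^2$, which can no longer be absorbed by the $-\tfrac12 f_\psi|\Rm|^2$ coming from the $f_\psi S$ companion (it is one power of $f_\psi$ short), so the whole cancellation scheme has to be rebuilt. The paper's first-power formulation sidesteps this because the quadratic term it must absorb, $\tfrac{C_1}{2}f_\psi|\Rm|^2$, matches the weight of the good term from $f_\psi S$ exactly.

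Two secondary points. First, your barrier $\eta e^\tau f_\psi$ does not force $v_\eta \to -\infty$ at spatial infinity: the a priori bound $u \le C_0^2 f_\psi + \beta C$ grows linearly in $f_\psi$, and for $\eta < C_0^2$ one has $v_\eta \le (C_0^2-\eta)f_\psi + \beta C \to +\infty$, so the maximum need not be attained. The paper uses $\eta e^{2\tau}f_\psi^2$ together with a modified drift vector field $\tilde X = \tfrac{X}{2} - \tfrac{X}{f_\psi}$ precisely to have a quadratically growing supersolution. (In your setting this particular issue is actually avoidable: since $-(f_\psi - C_A)|\Rm|^2 \le D'/f_\psi$ globally once one splits into $\{f_\psi \ge 2C_A\}$ and its complement, Lemma \ref{useful lemma} applies directly and no barrier is needed — but it still only bounds the underweighted $u$.) Second, with $\beta = 4(C_B+1)$ the $f_\psi|\Rm|^2$ terms cancel exactly and no spare $-f_\psi|\Rm|^2$ survives; you need $\beta$ strictly larger to write $-(f_\psi - C_A)|\Rm|^2$ on the right-hand side. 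Your observation that Condition II forces $|\Rm(g_{\psi_0})| = O(f^{-1})$ at $\tau = 0$ is correct and is indeed the reason the initial value of the (correctly weighted) quantity is finite.
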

\begin{proof}
    We consider $f_\psi |\Rm(g_\psi)|$, at a point $(x,\tau)\in M\times [0,\infty)$ such that $|\Rm(g_\psi)|(x,\tau)>0$. Due to [\eqref{evolution equation of Rm1}, Corollary \ref{rm corollary}], there exist constants $C_1,A>0$ such that
    \begin{equation*}
         \begin{split}
             \left(\frac{\partial}{\partial\tau}-\Delta_{\omega_\psi,X}\right)(f_\psi|\Rm(g_\psi)|)&=-f_\psi|\Rm(g_\psi)|+f_\psi\left(\frac{\partial}{\partial\tau}-\Delta_{\omega_\psi,X}\right)|\Rm(g_\psi)|\\
             &\quad -2\Re<\partial f_\psi,\bar\partial|\Rm(g_\psi)|>_{g_\psi}\\
             &\le \frac{C_1}{2}f_\psi|\Rm(g_\psi)|^2-X\cdot |\Rm(g_\psi)|\\
             &=\frac{C_1}{2}f_\psi|\Rm(g_\psi)|^2-\frac{X}{f_\psi}\cdot(f_\psi|\Rm(g_\psi)|)+\frac{|X|^2}{f_\psi}|\Rm(g_\psi)|\\
             &\le \frac{C_1}{2}f_\psi|\Rm(g_\psi)|^2-\frac{X}{f_\psi}\cdot(f_\psi|\Rm(g_\psi)|)+A|\Rm(g_\psi)|\\
             &\le \frac{C_1}{2}f_\psi|\Rm(g_\psi)|^2-\frac{X}{f_\psi}\cdot(f_\psi|\Rm(g_\psi)|)\\
             &\quad +Af_\psi|\Rm(g_\psi)|^2+\frac{A}{f_\psi}\\
         \end{split}
    \end{equation*}
    Here the fourth line inequality is ensured by Corollary \ref{bound of X}, and the last line inequality comes from the Cauchy-Schwarz inequality.
    If we denote $\frac{X}{2}-\frac{X}{f_\psi}$ by $\tilde{X}$, and we still use $\frac{C_1}{2}$ to denote $\frac{C_1}{2}+A$, then we get
    \begin{equation*}
        \left(\frac{\partial}{\partial\tau}-\Delta_{\omega_\psi}-\tilde{X}\right)(f_\psi|\Rm(g_\psi)|)\le \frac{C_1}{2}f_\psi|\Rm(g_\psi)|^2+\frac{A}{f_\psi}.
    \end{equation*}
    Moreover, recall [\eqref{lemma for nest lemma}, Lemma \ref{lemma 5.3}, there is a constant $C_2$ so that
    \begin{equation*}
        \left(\frac{\partial}{\partial\tau}-\Delta_{\omega_\psi,X}\right)(f_\psi S)\le -f_\psi|\overline{\nabla}\Psi|^2-f_\psi|\nabla\Psi|^2+C_2(S+\frac{\sqrt{S}}{f_\psi^{\frac{1}{2}}})-X\cdot S.
    \end{equation*}
    Therefore,
    \begin{equation*}
         \left(\frac{\partial}{\partial\tau}-\Delta_{\omega_\psi}-\tilde X\right)(f_\psi S)\le -f_\psi|\overline{\nabla}\Psi|^2-f_\psi|\nabla\Psi|^2+C_2(S+\frac{\sqrt{S}}{f_\psi^{\frac{1}{2}}})+\frac{|X|^2}{f_\psi}S.
    \end{equation*}
    Recall that there exists a constant $C_3>0$ so that $S\le\frac{C_3}{f_\psi}$ and $|\overline\nabla \Psi|^2\ge \frac{1}{2}|\Rm(g_\psi)|^2-\frac{C_3}{f_\psi}$, we conclude that there exists a constant $C_4>0$ so that
    \begin{equation*}
        \left(\frac{\partial}{\partial\tau}-\Delta_{\omega_\psi}-\tilde X\right)(f_\psi S)\le -\frac{1}{4}f_\psi|\Rm(g_\psi)|^2+C_4\frac{1}{f_\psi}.
    \end{equation*}
    Now we use $e^{2\tau}f_\psi^2$ as a barrier function, we compute:
    \begin{equation*}
    \begin{split}
         \left(\frac{\partial}{\partial\tau}-\Delta_{\omega_\psi}-\tilde{X}\right)(e^{2\tau}f_\psi^2)   &=2e^{2\tau}f_\psi^2+e^{2\tau}(\frac{\partial}{\partial\tau}-\Delta_{\omega_\psi}-\tilde{X})(f_\psi^2)\\
         &=2e^{2\tau}f_\psi^2+e^{2\tau}(-2f_\psi^2+2|\partial f_\psi|^2)\ge 0.
    \end{split} 
    \end{equation*}
    Hence we get there exists a constant $C_5>0$ such that for all $\eta>0$, 
    \begin{equation*}
    \left(\frac{\partial}{\partial\tau}-\Delta_{\omega_\psi}-\tilde{X}\right)\left(f_\psi|\Rm(g_\psi)+(2C_1+4)f_\psi S-\eta e^{2\tau}f_\psi^2\right)    \le -|\Rm(g_\psi)|^2f_\psi+\frac{C_4}{f_\psi}.
    \end{equation*}
  For all $T<\infty$, there exists a constant $B>0$ such that $f_\psi|\Rm(g_\psi)+(2C_1+4)f_\psi S\le Bf$ by the rough bound of Riemann curvature in Theorem \ref{shi's solution to NKRF}, therefore, $f_\psi|\Rm(g_\psi)+(2C_1+4)f_\psi-\eta e^{2\tau}f_\psi^2<0$ tends to $-\infty$ at spatial infinity uniformly so that the maximum point $(x_0,\tau_0)$ exists. 
If $\tau_0>0$, by the weak maximum principle, therefore,
  \begin{equation*}
      0\le -|\Rm(g_\psi)|^2f_\psi(x_0,\tau_0)+\frac{C_4}{f_\psi(x_0,\tau_0)}.
  \end{equation*}
  This implies that $|\Rm(g_\psi)|(x_0,\tau_0)\le\sqrt{C_4}f_\psi(x_0,\tau_0)^{-1}$, and therefore for all $\tau\in [0,T]$
  \begin{equation*}
      f_\psi|\Rm(g_\psi)|+(2C_1+4)f_\psi S-\eta e^{2\tau}f_\psi^2\le f_\psi|\Rm(g_\psi)|(x_0,\tau_0)+(2C_1+4)f_\psi S(x_0,\tau_0).
  \end{equation*}
  Since $|\Rm(g_\psi)|(x_0,\tau_0)\le\sqrt{C_4}f_\psi(x_0,\tau_0)^{-1}$ and $S\le \frac{C}{f_\psi}$ for some $C$ on $M\times [0,\infty)$, we conclude that there exists a constant $C_5>0$ independent of $T$ and $\eta$ such that for all $\tau\in [0,T]$
  \begin{equation*}
       f_\psi|\Rm(g_\psi)|+(2C_1+4)f_\psi S-\eta e^{2\tau}f_\psi^2\le C_5.
  \end{equation*}

  If $\tau_0=0$ , we have for all $\tau\in [0,T]$,
  \begin{equation*}
       f_\psi|\Rm(g_\psi)|+(2C_1+4)f_\psi S-\eta e^{2\tau}f_\psi^2\le \sup_M f_\psi(0)|\Rm(g_\psi)|(0)+(2C_1+4)f_\psi(0) S(0)<\infty.
  \end{equation*}
  Take $C_6=C_5+\sup_M f_\psi(0)|\Rm(g_\psi)(0)+(2C_1+4)f_\psi(0) S(0)$, this constant is independent of $T$ and $\eta$, hence we have for all $\tau\in [0,\infty)$, for all $\eta>0$,
  \begin{equation}\label{5.25}
       f_\psi|\Rm(g_\psi)|\le f_\psi|\Rm(g_\psi)+(2C_1+4)f_\psi S\le C_6+\eta e^{2\tau}f_\psi^2.
  \end{equation}
  Let $\eta$ in \eqref{5.25} tend to 0 to get $ f_\psi|\Rm(g_\psi)|\le C_6$ as desired.
\end{proof}
Now we use $\nabla$ to denote $\nabla^{g_\psi}$, the real covariant derivative with respect to $g_\psi$, and we investigate to estimate higher order covariant derivatives of $\Rm(g_\psi)$.

Before developing the evolution equation of high order covariant derivatives of the Riemann curvature tensor, we need a technical lemma as follows:
\begin{lemma}\label{lie lemma}
   Let $k\in\N^*$ and let $R\in C^\infty(\otimes^kT^*M)$ be a $k-$tensor that may depend on time. Then along the normalized K\"ahler-Ricci flow
   \begin{equation*}
       \frac{\partial}{\partial\tau}\nabla R-\mathcal{L}_{\frac{X}{2}}\left(\nabla R\right)=\nabla\left(\frac{\partial}{\partial\tau}R-\mathcal{L}_{\frac{X}{2}}R\right)+\nabla \Ric*R.
   \end{equation*}
\end{lemma}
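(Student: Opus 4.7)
The plan is to compute the commutator $\partial_\tau \nabla - \nabla \partial_\tau$ acting on $R$ directly and then identify the three contributions on the right-hand side. The starting point is the standard identity
\begin{equation*}
\frac{\partial}{\partial\tau}(\nabla R) - \nabla\frac{\partial}{\partial\tau}R = -\bigl(\partial_\tau \Gamma(g_\psi)\bigr) * R,
\end{equation*}
together with the well-known Koszul-type formula
\begin{equation*}
\partial_\tau \Gamma(g_\psi)_{ij}^k = \tfrac{1}{2}\,g_\psi^{kl}\bigl(\nabla_i h_{jl} + \nabla_j h_{il} - \nabla_l h_{ij}\bigr), \qquad h := \partial_\tau g_\psi,
\end{equation*}
where $*$ denotes the natural contraction summed over the $k$ tensor slots of $R$. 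Plugging in the normalized Kähler--Ricci flow $h = \mathcal{L}_{X/2}g_\psi - \Ric(g_\psi) - g_\psi$ splits $\partial_\tau \Gamma(g_\psi)$ into three pieces that I would handle in turn.

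Next I would treat the three contributions. The $-g_\psi$ piece produces nothing, since $\nabla g_\psi = 0$. The $-\Ric(g_\psi)$ piece yields, after contraction with $R$, a term of the schematic form $\nabla\Ric * R$, which matches the last term in the claimed identity. The crucial step is to recognize the $\mathcal{L}_{X/2}g_\psi$ piece: although $\mathcal{L}_{X/2}\Gamma(g_\psi)$ is defined as the infinitesimal change of the Christoffel symbols along the flow of $X/2$ (and hence is a genuine tensor, since the difference of two Levi-Civita connections is tensorial), it is given by exactly the same Koszul expression with $h$ replaced by $\mathcal{L}_{X/2}g_\psi$. Therefore the $\mathcal{L}_{X/2}g_\psi$ contribution to $\partial_\tau \Gamma(g_\psi)$ coincides with $\mathcal{L}_{X/2}\Gamma(g_\psi)$.

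To finish, I would invoke the general commutator identity
\begin{equation*}
\mathcal{L}_{X/2}\nabla R - \nabla \mathcal{L}_{X/2}R = -\bigl(\mathcal{L}_{X/2}\Gamma(g_\psi)\bigr) * R,
\end{equation*}
valid for any tensor $R$, to reinterpret the $\mathcal{L}_{X/2}g_\psi$ contribution as $\mathcal{L}_{X/2}\nabla R - \nabla \mathcal{L}_{X/2}R$. Moving this term to the left-hand side produces the claimed formula. The only delicate point is the sign bookkeeping in the Koszul expression and the sum over the $k$ tensor slots of $R$; once the Lie derivative piece is correctly isolated and matched with $\mathcal{L}_{X/2}\Gamma(g_\psi)$, no serious obstacle remains.
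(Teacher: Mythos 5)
Your proposal is correct and follows essentially the same route as the paper: both compute $\partial_\tau\nabla R-\nabla\partial_\tau R$ via the Koszul formula for $\partial_\tau\Gamma(g_\psi)$, identify the $\mathcal{L}_{X/2}g_\psi$ piece with $\mathcal{L}_{X/2}\Gamma(g_\psi)$ through the commutator identity $\mathcal{L}_{X/2}\nabla R-\nabla\mathcal{L}_{X/2}R=-(\mathcal{L}_{X/2}\Gamma(g_\psi))*R$, and observe that the remaining $-\Ric(g_\psi)$ piece contributes $\nabla\Ric*R$ while the $-g_\psi$ piece vanishes by metric compatibility.
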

\begin{proof}
    We compute it in local real coordinates, let $\{a,i_0,i_1,...,i_k\}$ be any real index.
    \begin{equation}\label{lie lemma 1}
       \begin{split}
            \frac{\partial}{\partial\tau}\nabla_{i_0} R_{i_1...i_k}&=\nabla_{i_0}\left(\frac{\partial}{\partial\tau}R\right)_{i_1...i_k}-\sum_j\frac{\partial}{\partial\tau}\Gamma(g_\psi)_{i_0i_j}^aR_{i_1...a...i_k}\\
            &=\nabla_{i_0}\left(\frac{\partial}{\partial\tau}R\right)_{i_1...i_k}-\sum_jA_{i_0i_j}^aR_{i_1...a...i_k}.
       \end{split}
    \end{equation}
    Here 
    \begin{equation*}
        \begin{split}
            A_{i_0i_j}^a&=\frac{1}{2}g_{\psi}^{ab}\left(\nabla_{i_0} (\mathcal{L}_{\frac{X}{2}}g_\psi)_{i_j b}+\nabla_{i_j}(\mathcal{L}_{\frac{X}{2}}g_\psi)_{i_0 b}-\nabla_{b}(\mathcal{L}_{\frac{X}{2}}g_\psi)_{i_0 i_j}\right)\\
            &\quad -\frac{1}{2}g_{\psi}^{ab}\left(\nabla_{i_0} \Ric(g_\psi)_{i_j b}+\nabla_{i_j}\Ric(g_\psi)_{i_0 b}-\nabla_{b}\Ric(g_\psi)_{i_0 i_j}\right).
        \end{split}
    \end{equation*}
    Moreover the Lie derivative of $\nabla R$ in local holomorphic coordinates is given by:
    \begin{equation}\label{lie lemma 2}
        \begin{split}
            \mathcal{L}_{\frac{X}{2}}\left(\nabla R\right)_{i_0i_1...i_k} &=\nabla_{i_0}\left(\mathcal{L}_{\frac{X}{2}}R\right)_{i_1...i_k}-\sum_j\left(\mathcal{L}_{\frac{X}{2}}\Gamma(g_\psi)\right)_{i_0i_j}^aR_{i_1...a...i_k}\\
            &=\nabla_{i_0}\left(\mathcal{L}_{\frac{X}{2}}R\right)_{i_1...i_k}-\sum_jB_{i_0i_j}^aR_{i_1...a...i_k},
        \end{split}
    \end{equation}
    where $B_{i_0i_j}^a=\frac{1}{2}g_{\psi}^{ab}\left(\nabla_{i_0} (\mathcal{L}_{\frac{X}{2}}g_\psi)_{i_j b}+\nabla_{i_j}(\mathcal{L}_{\frac{X}{2}}g_\psi)_{i_0 b}-\nabla_{b}(\mathcal{L}_{\frac{X}{2}}g_\psi)_{i_0 i_j}\right)$.

    Combing \eqref{lie lemma 1} and $\eqref{lie lemma 2}$, we have that $ \frac{\partial}{\partial\tau}\nabla R-\mathcal{L}_{\frac{X}{2}}\left(\nabla R\right)=\nabla\left(\frac{\partial}{\partial\tau}R-\mathcal{L}_{\frac{X}{2}}R\right)+\nabla \Ric*R$ holds as desired.
\end{proof}
 \begin{lemma}
         Along the normalized K\"ahler-Ricci flow, the $m-$th covariant derivative curvature tensor evolves by
    \begin{equation}\label{evolution function of covariant derivative of Rm}
        \begin{split}
            \frac{\partial}{\partial\tau}\nabla^m\Rm(g_\psi)&=\frac{1}{2}\Delta_{g_\psi}\nabla^m\Rm(g_\psi)+\mathcal{L}_{\frac{X}{2}}\nabla^m\Rm(g_\psi)-\nabla^m\Rm(g_\psi)\\
            &\quad +\sum_{p+q=m}\nabla^p\Rm(g_\psi)*\nabla^q\Rm(g_\psi).
        \end{split}
    \end{equation}
    \end{lemma}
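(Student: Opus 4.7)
The plan is to proceed by induction on $m \in \mathbb{N}_0$. The base case $m=0$ is exactly Proposition \ref{evolution function of Rm}. Assuming the formula holds at order $m-1$, I would apply the covariant derivative $\nabla = \nabla^{g_\psi}$ to both sides and then carefully commute $\nabla$ past the three operators appearing on the right-hand side: $\partial_\tau$, $\mathcal{L}_{X/2}$, and $\Delta_{g_\psi}$. The key algebraic input for the first two is Lemma \ref{lie lemma} applied to the tensor $R = \nabla^{m-1}\Rm(g_\psi)$, which yields
\begin{equation*}
\frac{\partial}{\partial\tau}\nabla^m\Rm(g_\psi) - \mathcal{L}_{X/2}\nabla^m\Rm(g_\psi) = \nabla\!\left(\frac{\partial}{\partial\tau}\nabla^{m-1}\Rm(g_\psi) - \mathcal{L}_{X/2}\nabla^{m-1}\Rm(g_\psi)\right) + \nabla\Ric(g_\psi) * \nabla^{m-1}\Rm(g_\psi).
\end{equation*}

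Plugging the induction hypothesis into the parenthesis on the right, I would obtain three kinds of contributions after applying the outer $\nabla$: (i) the term $\tfrac{1}{2}\nabla \Delta_{g_\psi}\nabla^{m-1}\Rm(g_\psi)$; (ii) the linear term $-\nabla^m\Rm(g_\psi)$; and (iii) the differentiated quadratic sum $\nabla \sum_{p+q=m-1}\nabla^p\Rm(g_\psi)*\nabla^q\Rm(g_\psi)$, which by the Leibniz rule reindexes cleanly to $\sum_{p+q=m}\nabla^p\Rm(g_\psi)*\nabla^q\Rm(g_\psi)$. To turn (i) into $\tfrac{1}{2}\Delta_{g_\psi}\nabla^m\Rm(g_\psi)$ I would invoke the Ricci identity: the commutator $[\nabla,\Delta_{g_\psi}]$ applied to any tensor $T$ is a universal linear combination of contractions of $\Rm(g_\psi) * \nabla T$ and $\nabla\Rm(g_\psi) * T$, hence when applied to $T = \nabla^{m-1}\Rm(g_\psi)$ it produces only terms of the form $\nabla^p\Rm(g_\psi) * \nabla^q\Rm(g_\psi)$ with $p+q = m$. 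Finally, the extra term $\nabla\Ric(g_\psi) * \nabla^{m-1}\Rm(g_\psi)$ also has this form, since $\Ric(g_\psi)$ is a contraction of $\Rm(g_\psi)$ and so $\nabla\Ric(g_\psi)$ is of the form $\nabla\Rm(g_\psi)$ up to a metric contraction. Collecting everything produces the stated identity.

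The main obstacle is really just bookkeeping: one must track how the commutator $[\nabla,\Delta_{g_\psi}]$ generates Riemann curvature terms and ensure that every resulting term fits into the schematic form $\nabla^p\Rm(g_\psi) * \nabla^q\Rm(g_\psi)$ with $p+q = m$. The virtue of the $*$-notation is precisely that it absorbs all the combinatorial constants, index contractions, and distinctions between $\Rm$, $\Ric$, and the scalar curvature, so no genuinely delicate estimate appears—the identity is tensorial and purely algebraic once the base case and Lemma \ref{lie lemma} are in hand.
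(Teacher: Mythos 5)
Your proposal is correct and follows essentially the same route as the paper: induction on $m$, with Lemma \ref{lie lemma} applied to $\nabla^{m}\Rm(g_\psi)$ handling the $\partial_\tau$ and $\mathcal{L}_{X/2}$ commutations, and the commutator $[\nabla,\Delta_{g_\psi}]$ (the paper's ``high order Bochner's formula'') producing exactly the $\Rm*\nabla^{m}\Rm$ and $\nabla\Rm*\nabla^{m-1}\Rm$ terms you describe, all absorbed into the schematic sum. No substantive difference.
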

    \begin{proof}
   This result is proved by induction. For $m=0$, \eqref{evolution function of covariant derivative of Rm} holds thanks to Proposition \ref{evolution function of Rm}. We suppose that \eqref{evolution function of covariant derivative of Rm} holds for all integers $k\le m$. By applying Lemma \ref{lie lemma} for tensor $\nabla^m\Rm(g_\psi)$, we have
   \begin{equation}\label{time for co der}
       \begin{split}
           \frac{\partial}{\partial\tau}\nabla^{m+1}\Rm(g_\psi)-\mathcal{L}_{\frac{X}{2}}\nabla^{m+1}\Rm(g_\psi)&=\nabla\left(\frac{\partial}{\partial\tau}\nabla^m\Rm(g_\psi)-\mathcal{L}_{\frac{X}{2}}\nabla^{m}\Rm(g_\psi)\right)\\
           &\quad+\nabla\Ric(g_\psi)*\nabla^m\Rm(g_\psi).
       \end{split}
   \end{equation}
   And by high order Bochner's formula
   \begin{equation}\label{lap and lie for co der}
       \begin{split}
               \frac{1}{2}\Delta_{g_\psi}\nabla^{m+1}\Rm(g_\psi)&=\nabla\left(\frac{1}{2}\Delta_{g_\psi}\nabla^{m}\Rm(g_\psi)\right)\\
           &\quad +\nabla\Rm(g_\psi)*\nabla^m\Rm(g_\psi)+\Rm(g_\psi)*\nabla^{m+1}\Rm(g_\psi).
       \end{split}
   \end{equation}
   Combining \eqref{time for co der} and \eqref{lap and lie for co der} and the induction hypothesis, we have that
   \begin{equation*}
      \begin{split}
           &\quad \frac{\partial}{\partial\tau}\nabla^{m+1}\Rm(g_\psi)-\frac{1}{2}\Delta_{g_\psi}\nabla^{m+1}\Rm(g_\psi)-\mathcal{L}_{\frac{X}{2}}\nabla^{m+1}\Rm(g_\psi)\\
           &=\nabla\left( \frac{\partial}{\partial\tau}\nabla^m\Rm(g_\psi)-\frac{1}{2}\Delta_{g_\psi}\nabla^m\Rm(g_\psi)-\mathcal{L}_{\frac{X}{2}}\nabla^m\Rm(g_\psi)\right)\\
           &\quad-\nabla\Rm(g_\psi)*\nabla^m\Rm(g_\psi)+\Rm(g_\psi)*\nabla^{m+1}\Rm(g_\psi)\\
           &=-\nabla^{m+1}\Rm(g_\psi)+\nabla\left(\sum_{p+q=m}\nabla^p\Rm(g_\psi)*\nabla^q\Rm(g_\psi)\right)\\
           &\quad-\nabla\Rm(g_\psi)*\nabla^m\Rm(g_\psi)+\Rm(g_\psi)*\nabla^{m+1}\Rm(g_\psi).
      \end{split}
   \end{equation*}
   Thus, \eqref{evolution function of covariant derivative of Rm} holds for all $m\in\N_0$.
    \end{proof}
    \begin{corollary}\label{induction formulae}
        For $m\in\N_0$, there exists a constant $C(m,n)>0$ such that
        \begin{equation}
           \begin{split}
                &\quad \left(\frac{\partial}{\partial\tau}-\Delta_{\omega_\psi,X}\right)\left(|\nabla^m\Rm(g_\psi)|^2\right)\\&\le -|\nabla^{m+1}\Rm(g_\psi)|^2+(m+2)|\nabla^m\Rm(g_\psi)|^2\\
                &\quad +C(m,n)\sum_{p+q=m}|\nabla^p\Rm(g_\psi)||\nabla^q\Rm(g_\psi)||\nabla^m\Rm(g_\psi)|.
           \end{split}
        \end{equation}
    \end{corollary}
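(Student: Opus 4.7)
My plan is to carry out a direct tensorial calculation starting from the evolution equation \eqref{evolution function of covariant derivative of Rm}. The idea is standard: differentiate $|\nabla^m\Rm(g_\psi)|^2$ in time, substitute the evolution equation, and apply a Bochner identity to produce the gradient term $-|\nabla^{m+1}\Rm(g_\psi)|^2$ together with the drift piece of $\Delta_{\omega_\psi,X}$.

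More explicitly, I would write
\begin{equation*}
\partial_\tau |\nabla^m\Rm(g_\psi)|^2 = 2\Re\langle \partial_\tau \nabla^m\Rm(g_\psi),\nabla^m\Rm(g_\psi)\rangle + \mathcal{A},
\end{equation*}
where $\mathcal{A}$ accounts for the time-derivative of the $m+4$ inverse metrics appearing in the norm squared. Using $\partial_\tau g_\psi = \mathcal{L}_{\frac{X}{2}}g_\psi - \Ric(g_\psi) - g_\psi$, the term $\mathcal{A}$ produces three kinds of contributions: a Lie-derivative piece involving $\mathcal{L}_{\frac{X}{2}}g_\psi$; a Ricci piece absorbed into the nonlinear remainder via $|\Ric(g_\psi)|\le C(n)|\Rm(g_\psi)|$; and a contribution $+(m+4)|\nabla^m\Rm(g_\psi)|^2$ arising from the $-g_\psi$ term. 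Next, substituting \eqref{evolution function of covariant derivative of Rm}, the $\tfrac{1}{2}\Delta_{g_\psi}\nabla^m\Rm(g_\psi)$ piece paired with $\nabla^m\Rm(g_\psi)$, combined with the tensorial Bochner identity, yields $\Delta_{\omega_\psi}|\nabla^m\Rm(g_\psi)|^2 - |\nabla^{m+1}\Rm(g_\psi)|^2$; the linear term $-\nabla^m\Rm(g_\psi)$ contributes $-2|\nabla^m\Rm(g_\psi)|^2$, which combined with $+(m+4)|\nabla^m\Rm(g_\psi)|^2$ from $\mathcal{A}$ produces the stated coefficient $m+2$; and the nonlinear part $\sum_{p+q=m}\nabla^p\Rm(g_\psi)*\nabla^q\Rm(g_\psi)$, paired with $\nabla^m\Rm(g_\psi)$ and bounded via Cauchy--Schwarz, yields the cubic remainder with the constant $C(m,n)$. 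Finally, to recover the drift in $\Delta_{\omega_\psi,X}$, the tensorial contribution $2\Re\langle \mathcal{L}_{\frac{X}{2}}\nabla^m\Rm(g_\psi),\nabla^m\Rm(g_\psi)\rangle$ combines with the Lie-derivative piece of $\mathcal{A}$, by Leibniz on the contractions, to produce exactly $\tfrac{X}{2}\cdot |\nabla^m\Rm(g_\psi)|^2$.

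The principal obstacle is the precise bookkeeping required to extract the sharp coefficient $m+2$ from the cancellation between the linear term $-\nabla^m\Rm(g_\psi)$ in the evolution equation and the $m+4$ metric contractions of the norm squared. A secondary technical point is the reconciliation between the real Laplacian $\Delta_{g_\psi}$ appearing in Bochner's identity and the K\"ahler Laplacian $\Delta_{\omega_\psi}$ that enters $\Delta_{\omega_\psi,X}$ when acting on a tensor with mixed-type indices, so that the gradient term $-|\nabla^{m+1}\Rm(g_\psi)|^2$ (with the \emph{real} covariant derivative) emerges with coefficient exactly $-1$. Once these two bookkeeping points are settled, the inequality follows immediately by bounding the remaining cross terms with Cauchy--Schwarz into the announced form.
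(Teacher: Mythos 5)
Your proposal is correct and follows essentially the same route as the paper: the paper omits the proof of this corollary, referring to the analogous computation for the case $m=0$ in Corollary \ref{rm corollary}, and your argument is precisely that computation generalized to an $(m+4)$-tensor, with the coefficient $m+2$ arising exactly as you describe from the $m+4$ inverse-metric contractions (each contributing $+1$ via the $-g_\psi$ term in $\partial_\tau g_\psi$) against the $-2$ from the linear term $-\nabla^m\Rm(g_\psi)$ in \eqref{evolution function of covariant derivative of Rm}. The handling of the drift, Bochner, Ricci, and cubic remainder terms likewise mirrors the paper's treatment of $|\Rm(g_\psi)|^2$ and of $S$.
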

    \begin{proof}
        This proof is analogous to the proof of [\eqref{evolution equation of Rm square}, Corollary \ref{rm corollary}] and will therefore be omitted.
    \end{proof}
    \begin{corollary}\label{higher covariant derivative bound of Rm} For all $m\in\N_0$
        there exists a constant $C_m>0$ such that on $M\times [0,\infty)$,
        \begin{equation}\label{boun co der rm}
            |\nabla^m\Rm(g_\psi)|\le C_m\frac{1}{f_\psi^{\frac{m}{2}+1}}.
        \end{equation}
    \end{corollary}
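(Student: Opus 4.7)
The plan is to proceed by induction on $m \in \N_0$. The base case $m = 0$ is exactly Proposition \ref{prop bound of rm}. Assume the bound $|\nabla^k\Rm(g_\psi)| \le C_k/f_\psi^{k/2+1}$ holds on $M \times [0,\infty)$ for every $k = 0,1,\dots,m-1$. The goal is to establish the same bound at order $m$, or equivalently, to prove that the weighted quantity
\begin{equation*}
    U_m := f_\psi^{m+2}\,|\nabla^m\Rm(g_\psi)|^2
\end{equation*}
is uniformly bounded on $M\times[0,\infty)$.

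First I would compute the evolution of $U_m$. Using Proposition \ref{proposition 4.6} together with the Kähler identity $|\partial f_\psi|_{g_\psi}^2 = \tfrac{1}{2}|X|_{g_\psi}^2$, one has
\begin{equation*}
    \left(\tfrac{\partial}{\partial\tau} - \Delta_{\omega_\psi,X}\right) f_\psi^{m+2} = -(m+2)\,f_\psi^{m+2} - \tfrac{(m+2)(m+1)}{2}\, f_\psi^m\, |X|_{g_\psi}^2,
\end{equation*}
so combining the product rule with Corollary \ref{induction formulae} produces a crucial cancellation: the dangerous $(m+2)|\nabla^m\Rm|^2$ term from the evolution of $|\nabla^m\Rm|^2$ is exactly matched by $-(m+2) U_m$ coming from $f_\psi^{m+2}$. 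The remaining drift $-(m+2) f_\psi^{m+1}\, X\cdot|\nabla^m\Rm|^2$ is absorbed against the available negative term $-f_\psi^{m+2}|\nabla^{m+1}\Rm|^2$ by Cauchy--Schwarz together with the bound $|X|_{g_\psi}^2 \le A f_\psi$ from Corollary \ref{bound of X}. Splitting the cross sum $\sum_{p+q=m}|\nabla^p\Rm||\nabla^q\Rm||\nabla^m\Rm|$ into the two endpoint contributions $(p,q)=(0,m),(m,0)$ --- which after multiplication by $f_\psi^{m+2}$ produce $O(U_m/f_\psi)$ --- and interior contributions with both $p,q<m$ --- which by the induction hypothesis become $O(U_m^{1/2}/f_\psi)$ and can be tamed by a weighted Young inequality as $\sigma U_m/f_\psi + C(\sigma)/f_\psi$ --- one arrives at
\begin{equation*}
    \left(\tfrac{\partial}{\partial\tau} - \Delta_{\omega_\psi,X}\right) U_m \,\le\, \Lambda\,\tfrac{U_m}{f_\psi} + \tfrac{D}{f_\psi},
\end{equation*}
for constants $\Lambda, D > 0$ depending only on $m$, $n$, and the previous induction constants.

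To eliminate the bad $\Lambda U_m/f_\psi$ term I would couple $U_m$ with the lower-order quantity $V_{m-1} := f_\psi^{m+1}|\nabla^{m-1}\Rm(g_\psi)|^2$, which is already uniformly bounded by the induction hypothesis together with Proposition \ref{summary} (iii). The same cancellation mechanism applied at order $m-1$ produces a clean negative contribution $-f_\psi^{m+1}|\nabla^m\Rm|^2 = -U_m/f_\psi$ (arising from the $-|\nabla^m\Rm|^2$ term in the evolution of $|\nabla^{m-1}\Rm|^2$ scaled by $f_\psi^{m+1}$), with all remaining terms controlled by $C/f_\psi$ via the induction hypothesis, so that
\begin{equation*}
    \left(\tfrac{\partial}{\partial\tau} - \Delta_{\omega_\psi,X}\right) V_{m-1} \,\le\, -\tfrac{1}{2}\,\tfrac{U_m}{f_\psi} + \tfrac{C}{f_\psi}.
\end{equation*}
Setting $W := U_m + B\, V_{m-1}$ with $B := 2\Lambda + 2$ then yields $(\tfrac{\partial}{\partial\tau} - \Delta_{\omega_\psi,X}) W \le D'/f_\psi$, and Lemma \ref{useful lemma} gives $\sup_{M\times[0,\infty)} W \le \sup_M W(0) + C$. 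Since $V_{m-1}$ is bounded by induction, this transfers to a uniform bound on $U_m$, yielding $|\nabla^m\Rm(g_\psi)| \le C_m/f_\psi^{m/2+1}$, as required.

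The main obstacle is the careful choice of the coupling constant $B$, which must be selected \emph{after} the induction step at order $m$ fixes the effective coefficient $\Lambda$; a subtler point is engineering the Cauchy--Schwarz absorption of the drift $X\cdot|\nabla^m\Rm|^2$ so that the residue lands inside $-f_\psi^{m+2}|\nabla^{m+1}\Rm|^2$ rather than creating a new higher-order bad term --- this is precisely why the weight $f_\psi^{m+2}$ (and not any other power) produces the matching cancellation. Finally, the hypotheses of Lemma \ref{useful lemma} require the finiteness of $W(0)$, which follows by expanding $\Rm(g_{\psi_0})$ in terms of $(\nabla^g)^k(g_{\psi_0}-g)$ and invoking Condition II (Definition \ref{condition II}), and a polynomial spacetime growth bound for $W$ on each slab $M\times[0,T]$, which is supplied by the higher-order curvature estimates of Theorem \ref{longtime existence theorem} together with the smoothness of the initial data under Condition II.
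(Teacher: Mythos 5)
Your argument is essentially the paper's own proof, modulo an index shift: you run the induction from order $m-1$ to $m$ with the weighted quantities $f_\psi^{m+2}|\nabla^m\Rm|^2$ and $f_\psi^{m+1}|\nabla^{m-1}\Rm|^2$, exploiting exactly the same cancellation of the $(m+2)|\nabla^m\Rm|^2$ term against the evolution of the weight, the same Cauchy--Schwarz absorption of the drift $X\cdot|\nabla^m\Rm|^2$ into $-f_\psi^{m+2}|\nabla^{m+1}\Rm|^2$ via $|X|_{g_\psi}^2\le Af_\psi$, the same coupling with the one-order-lower quantity to kill the residual $\Lambda U_m/f_\psi$, and the same conclusion via Lemma \ref{useful lemma}. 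The one imprecision is your source for the polynomial growth hypothesis of Lemma \ref{useful lemma} on each slab $M\times[0,T]$: Theorem \ref{longtime existence theorem} only bounds $(\nabla^{g_\psi})^m\Rm(g_\psi)$ for $\tau\ge\alpha>0$ (and Shi's estimates degenerate like $\tau^{-m/2}$ at $\tau=0$), so near the initial time one needs the quantitative decay of $(\nabla^g)^k(g_{\psi_0}-g)$ from Condition II combined with a pseudolocality/local-derivative argument, which is exactly what the paper's Lemma \ref{a priori estimate of rm} supplies.
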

    To obtain this estimate, we need a rough estimate on higher order covariant derivatives of $\Rm(g_\psi)$. Now we consider $g_\varphi(t)_{t\in [1,\infty)}$ as the solution to K\"ahler-Ricci flow corresponding to $g_\psi(\tau)_{\tau\in [1,\infty)}$. The following lemma asserts that $\nabla^m \mathrm{Rm}(g_\psi)$ remains uniformly bounded on every compact time interval $[0,T]$. This follows from Shi’s local derivative estimates for the Ricci flow on non-compact manifolds.
    \begin{lemma}\label{a priori estimate of rm}
        For any $0<T<\infty$, for all $m\in\N_0$ there exists a constant $B_m=B(m,T)>0$ such that on $M\times [0,T]$,
        \begin{equation}
            |\nabla^m\Rm(g_\psi)|\le C_m.
        \end{equation}
    \end{lemma}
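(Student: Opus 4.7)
The plan is to invoke Shi's derivative estimates for the associated Ricci flow solution $g_\varphi(t)$ and transfer the resulting bounds back to $g_\psi(\tau)$ via the correspondence of Lemma \ref{correspondence of NKRF and RF}. Since Condition II is strictly stronger than Condition I, Corollary \ref{condition I bound of Rm} provides a uniform bound $|\Rm(g_\psi(\tau))|_{g_\psi(\tau)}\le C$ on all of $M\times[0,\infty)$, and via the rescaling $g_\psi(\tau)=e^{-\tau}\Phi_{e^{-\tau}}^{\ast}g_\varphi(e^\tau)$ this yields $\sup_{M\times[1,e^T]}|\Rm(g_\varphi(\cdot))|_{g_\varphi(\cdot)}\le C(T)$.

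Next, I would use the asymptotic decay in Definition \ref{condition II}, namely $|(\nabla^g)^k(g_{\psi_0}-g)|_g=O(f^{-k/2})$ for every $k\in\N_0$, combined with the soliton's own asymptotic decay $|(\nabla^g)^k\Rm(g)|_g=O(f^{-1-k/2})$ from Definition \ref{ACKR expander}, to conclude that $\sup_M|(\nabla^{g_{\psi_0}})^k\Rm(g_{\psi_0})|_{g_{\psi_0}}<\infty$ for every $k\in\N_0$. In particular, at the initial time $t=1$, all covariant derivatives of the curvature of $g_\varphi(1)=g_{\psi_0}$ are globally bounded on $M$.

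With uniformly bounded curvature on $M\times[1,e^T]$ and globally bounded initial curvature derivatives at $t=1$, Shi's global derivative estimates in the version retaining initial regularity (a standard Bernstein iteration adapted from \cite{MR1001277}) yield, for every $m\in\N_0$, a constant $B_m'=B'(m,T,\psi_0)$ with $\sup_M|(\nabla^{g_\varphi(t)})^m\Rm(g_\varphi(t))|_{g_\varphi(t)}\le B_m'$ for all $t\in[1,e^T]$. Pullback by the diffeomorphism $\Phi_{e^{-\tau}}$ is pointwise-norm preserving, and the rescaling by $e^{-\tau}$ contributes at most a factor $e^{(1+m/2)T}$; this establishes the claim with $B_m:=e^{(1+m/2)T}B_m'$.

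The main obstacle is the initial-data-regularity version of Shi's estimate: the form stated in Theorem \ref{shi's solution} blows up like $(t-1)^{-m/2}$ as $t\to 1^+$, whereas here I need a bound uniform down to $t=1$. One bypasses this by induction on $m$: the evolution equation in Corollary \ref{induction formulae} together with the already-bounded lower-order derivatives lets one run the maximum principle on the auxiliary function
$$
F_m:=e^{-\lambda_m\tau}\left(|\nabla^m\Rm(g_\psi)|^2+A_m|\nabla^{m-1}\Rm(g_\psi)|^2\right),
$$
with $A_m,\lambda_m$ chosen large enough so that the negative $-|\nabla^m\Rm|^2$ term arising from the $(m-1)$-level equation absorbs the positive $(m+2)|\nabla^m\Rm|^2$ term at the $m$-level. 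The initial value $F_m(0)$ is globally bounded on $M$ by the paragraph above, and a crude polynomial-in-$f$ growth estimate on $F_m$, needed to apply Proposition \ref{maximum principle}, follows from the Shi-type bound of Theorem \ref{shi's solution to NKRF} valid for $\tau>0$. A simple Grönwall argument applied to $\sup_M F_m(\tau)$ then closes the induction on the compact time interval $[0,T]$.
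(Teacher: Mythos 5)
Your first two paragraphs match the paper's own setup: the paper likewise passes to the Kähler--Ricci flow $g_\varphi(t)=t\Phi_t^*g_\psi(\log t)$, obtains a curvature bound on $M\times[1,e^T]$ (in fact the sharper decay $|\Rm(g_\varphi)|_{g_\varphi}\le B(T)f^{-1}$ from Proposition \ref{prop bound of rm}), and uses the asymptotic condition of Definition \ref{condition II} to bound $|(\nabla^{g_\varphi})^k\Rm(g_\varphi)|$ at $t=1$. Where you diverge is the final step: the paper concludes by invoking a \emph{local} Shi-type derivative estimate retaining initial regularity (cited as Perelman's pseudolocality theorem, \cite[Lemma A.4]{MR2734348}), whereas you attempt a \emph{global} maximum-principle argument with the Bando--Shi auxiliary function $F_m$.

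The absorption trick in your last paragraph is fine as a pointwise differential inequality, but the application of Proposition \ref{maximum principle} to $F_m$ has a genuine gap. Hypothesis (ii) of that proposition demands a single constant $B$ and integer $k$ with $F_m\le Bf^k$ on all of $M\times[0,T]$, \emph{including} $\tau=0$. The only a priori control you invoke is the Shi bound of Theorem \ref{shi's solution to NKRF}, which reads $\sup_M|(\nabla^{g_\psi(\tau)})^m\Rm(g_\psi(\tau))|\le C_m\tau^{-m}$: for each fixed $\tau>0$ this is spatially uniform, but the constant degenerates as $\tau\to0^+$, so no uniform polynomial-in-$f$ bound on $M\times[0,T]$ follows. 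Concretely, in the proof of Proposition \ref{maximum principle} the growth hypothesis is exactly what guarantees that $u_\eta=u-\eta e^h$ attains its maximum; with your bound one cannot exclude a maximizing sequence $(x_i,\tau_i)$ with $x_i\to\infty$ and $\tau_i\to0$ faster than any negative power of $f(x_i)$. Running the maximum principle on $[\epsilon,T]$ instead and letting $\epsilon\to0$ would require $\sup_MF_m(\epsilon)\to\sup_MF_m(0)$, i.e. convergence of $\nabla^m\Rm(g_\psi(\epsilon))$ to its initial value uniformly over all of $M$ --- which is essentially the statement being proved, so the argument becomes circular. This is precisely why the paper resorts to a local estimate: on each parabolic neighborhood the curvature is bounded and the initial covariant derivatives of curvature are bounded, and the local Shi estimate with initial regularity yields a derivative bound uniform down to the initial time with no global growth hypothesis. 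Your proof is repaired by substituting that local estimate for the global maximum principle in the last step; the rest of your outline, including the transfer back to $g_\psi$ with the factor $e^{(1+m/2)T}$, is correct.
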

    \begin{proof}
    Recall that $g_\varphi(t)=t\Phi_t^*g_\psi(\log t)$ for all $t\ge 1$ is the solution to K\"ahler-Ricci flow as in Proposition \ref{correspondence}.
It suffices to prove that there exists a constant $B_m>0$ such that on $M\times [1,e^T]$,
\begin{equation*}
    |(\nabla^{g_\varphi})^m\Rm(g_\varphi)|_{g_\varphi}\le B_m.
\end{equation*}
    By Proposition \ref{prop bound of rm}, on $M\times [0,\infty)$, there exists a constant $C>0$ such that
        \begin{equation*}
            |\Rm(g_\psi)|\le\frac{C}{f}.
        \end{equation*}
        Hence on $M\times [1,e^T]$, there exists a constant $B=B(T)$ such that
        \begin{equation*}
            |\Rm(g_\varphi)|_{g_\varphi}\le \frac{B(T)}{f}.
        \end{equation*}
        Moreover, by our initial condition $|(\nabla^g)^k(g_{\psi_0}-g)|=O(f^{-\frac{k}{2}})$ as in Definition \ref{condition II}, for all $k\le m$, there exists a constant $A_k>0$ such that
        \begin{equation*}
            |(\nabla^{g_\varphi})^k\Rm(g_\varphi)|(0)\le \frac{A_k}{f^{1+\frac{m}{2}}}.
        \end{equation*}
        Thus by Perelman's pseudolocality theorem (see \cite[Lemma A.4]{MR2734348} or \cite{MR2274812}) , we conclude that there exists a constant $B_m$ such that
        \begin{equation*}
            |(\nabla^{g_\varphi})^m\Rm(g_\varphi)|_{g_\varphi}\le B_m.
        \end{equation*}
    \end{proof}
    With the above Lemma, we can prove Corollary \ref{higher covariant derivative bound of Rm}.
    \begin{proof}[Proof of Corollary \ref{higher covariant derivative bound of Rm}]
        For $m\in\N_0$, we compute,
         \begin{equation}\label{proof of corollary 5.7 1}
           \begin{split}
                \left(\frac{\partial}{\partial\tau}-\Delta_{\omega_\psi,X}\right)(f_\psi^{m+2}|\nabla^m\Rm(g_\psi)|^2)&=|\nabla^m\Rm(g_\psi)|^2\left(\frac{\partial}{\partial\tau}-\Delta_{\omega_\psi,X}\right)f_\psi^{m+2}\\
                &\quad +f_\psi^{m+2}\left(\frac{\partial}{\partial\tau}-\Delta_{\omega_\psi,X}\right)|\nabla^m\Rm(g_\psi)|^2\\
                &\quad -2(m+2)f_\psi^{m+1}\Re(<\partial f_\psi,\bar\partial|\nabla^m\Rm(g_\psi)|^2>).
           \end{split}
        \end{equation}
        On the one hand,
        \begin{equation}\label{proof of corollary 5.7 2}
            \left(\frac{\partial}{\partial\tau}-\Delta_{\omega_\psi,X}\right)f_\psi^{m+2}=-(m+2)f_\psi^{m+2}-(m+2)(m+1)|\partial f_\psi|^2f_\psi^m.
        \end{equation}
        On the other hand, by Corollary \ref{induction formulae},
         \begin{equation}\label{proof of corollary 5.7 3}
           \begin{split}
                &\quad\left(\frac{\partial}{\partial\tau}-\Delta_{\omega_\psi,X}\right)\left(|\nabla^m\Rm(g_\psi)|^2\right)\\
                &\le -|\nabla^{m+1}\Rm(g_\psi)|^2+(m+2)|\nabla^m\Rm(g_\psi)|^2\\
                &\quad +C(m,n)\sum_{p+q=m}|\nabla^p\Rm(g_\psi)||\nabla^q\Rm(g_\psi)||\nabla^m\Rm(g_\psi)|.
           \end{split}
        \end{equation}
        By combining \eqref{proof of corollary 5.7 1}, \eqref{proof of corollary 5.7 2} and \eqref{proof of corollary 5.7 3}, we get,
        \begin{equation*}
            \begin{split}
                  &\quad \left(\frac{\partial}{\partial\tau}-\Delta_{\omega_\psi,X}\right)(f_\psi^{m+2}|\nabla^m\Rm(g_\psi)|^2)\\
                  &\le -f_\psi^{m+2}|\nabla^{m+1}\Rm(g_\psi)|^2\\
                  &\quad+f_\psi^{m+2}C(n,m)\sum_{p+q=m}|\nabla^p\Rm(g_\psi)||\nabla^q\Rm(g_\psi)||\nabla^m\Rm(g_\psi)|\\
                  &\quad-(m+2)f_\psi^{m+1}X\cdot|\nabla^m\Rm(g_\psi)|^2.
            \end{split}
        \end{equation*}
     By Cauchy-Schwarz inequality, for any $\sigma>0$, we have
        \begin{equation*}
          \begin{split}
                X\cdot|\nabla^m\Rm(g_\psi)|^2&\le 2|X||\nabla^{m+1}\Rm(g_\psi)||\nabla^m\Rm(g_\psi)|\\
                &\le \sigma|X|^2|\nabla^{m+1}\Rm(g_\psi)|^2+\frac{1}{\sigma}|\nabla^m\Rm(g_\psi)|^2.
          \end{split}
        \end{equation*}
        Choose $\sigma>0$ such that $(m+2)\sigma|X|^2\le \frac{1}{2}f_\psi$ thanks to Corollary \ref{bound of X}, to get
        \begin{equation}\label{general formulae for all m}
            \begin{split}
                  &\left(\frac{\partial}{\partial\tau}-\Delta_{\omega_\psi,X}\right)(f_\psi^{m+2}|\nabla^m\Rm(g_\psi)|^2)\\&\le -\frac{1}{2}f_\psi^{m+2}|\nabla^{m+1}\Rm(g_\psi)|^2+(m+2)\frac{1}{\sigma}f_\psi^{m+1}|\nabla^m\Rm(g_\psi)|^2\\
                  &\quad+f_\psi^{m+2}C(n,m)\sum_{p+q=m}|\nabla^p\Rm(g_\psi)||\nabla^q\Rm(g_\psi)||\nabla^m\Rm(g_\psi)|.
            \end{split}
        \end{equation}
        Now we use induction to prove [\eqref{boun co der rm}, Corollary \ref{higher covariant derivative bound of Rm}], since it holds when $m=0$ by Proposition \ref{prop bound of rm}, we suppose that \eqref{boun co der rm} holds for all $k\le m\in \N_0$, now for $k=m+1$, \eqref{general formulae for all m} implies that there exists a constant $C'>0$ such that
        \begin{equation*}
             \begin{split}
                 \left(\frac{\partial}{\partial\tau}-\Delta_{\omega_\psi,X}\right)(f_\psi^{m+3}|\nabla^{m+1}\Rm(g_\psi)|^2)&\le C'f_\psi^{m+2}|\nabla^{m+1}\Rm(g_\psi)|^2\\
                 &\quad +C'f_\psi^{\frac{m+1}{2}}|\nabla^{m+1}\Rm(g_\psi)|.
             \end{split}
        \end{equation*}
        By Cauchy-Schwarz inequality, there exists a constant $C''>0$ such that
          \begin{equation*}
                 \left(\frac{\partial}{\partial\tau}-\Delta_{\omega_\psi,X}\right)(f_\psi^{m+3}|\nabla^{m+1}\Rm(g_\psi)|^2)\le C''f_\psi^{m+2}|\nabla^{m+1}\Rm(g_\psi)|^2+C''\frac{1}{f_\psi}.
        \end{equation*}
        Now for $m$, by induction hypothesis and \eqref{general formulae for all m}, we can find a constant $C_0>0$ such that
        \begin{equation*}
              \left(\frac{\partial}{\partial\tau}-\Delta_{\omega_\psi,X}\right)(f_\psi^{m+2}|\nabla^{m}\Rm(g_\psi)|^2)\le -\frac{1}{2}f_\psi^{m+2}|\nabla^{m+1}\Rm(g_\psi)|^2+C_0\frac{1}{f_\psi}.
        \end{equation*}
        Hence \begin{equation*}
            \left(\frac{\partial}{\partial\tau}-\Delta_{\omega_\psi,X}\right)(f_\psi^{m+3}|\nabla^{m+1}\Rm(g_\psi)|^2+2C''f_\psi^{m+2}|\nabla^{m}\Rm(g_\psi)|^2)\le (2C''C_0+C'')\frac{1}{f_\psi}.
        \end{equation*}
       Thanks to Lemma \ref{a priori estimate of rm}, we can apply Lemma \ref{useful lemma} on $f_\psi^{m+3}|\nabla^{m}\Rm(g_\psi)|^2+2C''f_\psi^{m+2}|\nabla^{m+1}\Rm(g_\psi)|^2$, there exists a constant $C_1>0$ such that on $M\times [0,\infty)$ such that
       \begin{equation*}
          \begin{split}
               \sup_{M\times [0,\infty)}f_\psi^{m+3}|\nabla^{m}\Rm(g_\psi)|^2+2C''f_\psi^{m+2}|\nabla^{m+1}\Rm(g_\psi)|^2&\le C_1+C_2,
          \end{split}
       \end{equation*}
       with $C_2=\sup_Mf_\psi^{m+3}(0)|\nabla^{m+1}\Rm(g_\psi)|^2(0)+2C''f_\psi^{m+2}(0)|\nabla^{m+1}\Rm(g_\psi)|^2(0)$ which is finite by Condition II in Definition \ref{condition II}, thus [\eqref{boun co der rm}, Corollary \ref{higher covariant derivative bound of Rm}] holds for $m+1$, therefore \eqref{boun co der rm} holds for all integers as expected.
    \end{proof}
 With these crucial curvature estimates in hand, one can investigate other geometric tensors along the normalized Kähler–Ricci flow. Of particular importance is the tensor
\begin{equation*}
    \mathcal{L}_{\tfrac{X}{2}} g_\psi - \Ric(g_\psi) - g_\psi,
\end{equation*}
which measures whether a metric is an expanding Kähler–Ricci soliton. By the normalized Kähler–Ricci flow equation, this tensor is in fact given by $\partial \bar{\partial} \dot{\psi}$.
    \begin{definition}[Obstruction tensor]
        Define $T:=\partial\bar\partial\dot\psi$, or equivalently, by normalized K\"ahler-Ricci flow equation \begin{equation*}
            T=\mathcal{L}_{\frac{X}{2}}g_\psi-\Ric(g_\psi)-g_\psi.
        \end{equation*}
    \end{definition}
   This obstruction tensor plays a crucial role in analyzing the long time behavior of solutions to the normalized K\"ahler-Ricci flow. Establishing precise estimates for this tensor will enable us to prove Theorem \ref{convergence theorem}. Before giving uniform control of $\nabla^kT$ for all $k\in\N_0$, we present some rough estimates of $\nabla^kT$.
    \begin{lemma}[Rough estimates on $\nabla^kT$]\label{A priori estimate of nabla^kT}
        For all $0<T<\infty$ $k\in \N_0$, there exists a constant $B_k=B(k,T)>0$ such that on $M\times [0,T]$, we have
       \begin{equation*}
           |\nabla^kT|\le B_k.
       \end{equation*}
    \end{lemma}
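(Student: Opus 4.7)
The plan is to proceed by induction on $k \in \mathbb{N}_0$, working directly from the identity
\[ T = \partial\bar\partial\dot\psi = \mathcal{L}_{X/2}g_\psi - \mathrm{Ric}(g_\psi) - g_\psi \]
together with the evolution equation $(\partial_\tau - \Delta_{\omega_\psi,X})\dot\psi = -\dot\psi$. First I would establish the heat-type equation governing $T$: commuting $\partial_i\bar\partial_{\bar j}$ with the drift Laplacian via Kähler identities (which yields curvature correction terms), and using $\partial_\tau T_{i\bar j} = \partial_i\bar\partial_{\bar j}\dot\psi$, I obtain the schematic evolution
\[ \left(\frac{\partial}{\partial\tau} - \Delta_{\omega_\psi,X}\right) T = -T + \mathrm{Rm}(g_\psi) * T. \]
Differentiating this identity $k$ times using Lemma \ref{lie lemma} and the commutator $[\nabla, \Delta_{\omega_\psi,X}]$ (which brings in terms of the form $\nabla^p\mathrm{Rm}(g_\psi)*\nabla^q T$ for $p+q = k$), and pairing with $\nabla^k T$ as in Corollary \ref{induction formulae}, I derive
\[ \left(\frac{\partial}{\partial\tau} - \Delta_{\omega_\psi,X}\right)|\nabla^k T|^2 \le -|\nabla^{k+1}T|^2 + C(k,n)\sum_{p+q=k}|\nabla^p \mathrm{Rm}(g_\psi)|\,|\nabla^q T|\,|\nabla^k T| + c_k|\nabla^k T|^2. \]

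Next I would control the initial data. Since $\mathcal{L}_{X/2}g = \mathrm{Ric}(g) + g$ by the soliton equation, one computes
\[ T(0) = \mathcal{L}_{X/2}g_{\psi_0} - g_{\psi_0} - \mathrm{Ric}(g_{\psi_0}) = \bigl(\mathcal{L}_{X/2}g_{\psi_0} - g_{\psi_0}\bigr) - \mathrm{Ric}(g_{\psi_0}). \]
The asymptotic hypothesis of Condition II gives $|(\nabla^g)^k(\mathcal{L}_{X/2}g_{\psi_0}-g_{\psi_0})|_g = O(f^{-1-k/2})$, which together with the asymptotically conical bound $|(\nabla^g)^k\mathrm{Rm}(g)|_g = O(f^{-1-k/2})$ (and hence of $\mathrm{Ric}(g_{\psi_0})$ via $|(\nabla^g)^k(g_{\psi_0}-g)|_g = O(f^{-k/2})$) yields in particular that $|\nabla^k T(0)|_{g_\psi(0)}$ is uniformly bounded on $M$. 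The inductive step is then carried out by combining the above evolution inequality with the uniform bound $|\nabla^j\mathrm{Rm}(g_\psi)|_{g_\psi} \le C_j(T)$ on $M\times[0,T]$ from Lemma \ref{a priori estimate of rm}, the induction hypothesis $|\nabla^j T|_{g_\psi} \le B_j(T)$ for $j < k$, and the maximum principle of Proposition \ref{maximum principle}.

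The main obstacle is justifying the use of the maximum principle on the non-compact manifold, which requires an a priori polynomial growth bound on $|\nabla^k T|^2$ at each fixed time. This I would obtain via the correspondence (Proposition \ref{correspondence}) with Shi's solution $g_\varphi(t)$ of the Kähler-Ricci flow: Shi's local derivative estimates give, for every compact subinterval of $[1,e^T]$ and every order, a uniform-in-space a priori bound on $(\nabla^{g_\varphi})^k\mathrm{Rm}(g_\varphi)$, which translates (via the diffeomorphism $\Phi_{e^{-\tau}}$ and the factor $e^{-\tau}$) to a uniform-in-space bound on $\nabla^k T$ at each fixed $\tau\in[0,T]$. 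This bound is qualitative — it only needs to be polynomial in $f$ — and is precisely what is required to activate Proposition \ref{maximum principle} and close the induction, thereby upgrading the qualitative bound into the desired uniform bound $|\nabla^k T|\le B_k(T)$ on the full space-time slab $M\times[0,T]$.
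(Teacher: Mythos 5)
Your strategy (parabolic evolution equation for $\nabla^kT$ plus the maximum principle of Proposition \ref{maximum principle}) is not the paper's, and it has a genuine gap at exactly the step you single out as ``the main obstacle'': the a priori polynomial growth bound needed to activate the maximum principle on the non-compact manifold. You propose to extract it from Shi's derivative estimates for $\Rm(g_\varphi)$, but $T=\mathcal{L}_{X/2}g_\psi-\Ric(g_\psi)-g_\psi$ contains the term $\mathcal{L}_{X/2}g_\psi=(\nabla^{g_\psi})^2f_\psi$, whose covariant derivatives are not curvature quantities: $\nabla^k(\mathcal{L}_{X/2}g_\psi)$ involves $X$ (which grows like $f^{1/2}$) contracted against full derivatives of the Christoffel difference $\Psi$, of which curvature only controls the anti-holomorphic part $\nabla_{\bar b}\Psi=\Rm(g)-\Rm(g_\psi)$. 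So Shi's curvature bounds do not ``translate'' into any bound, polynomial or otherwise, on $\nabla^kT$; supplying such a bound is precisely the content of the lemma, and your proposed source for it is essentially circular. (Note also that the parabolic route you describe is the one the paper runs later, in Corollary \ref{final corollary}, to obtain the refined decay $|\nabla^kT|\le C_ke^{-\tau}f_\psi^{-1-\frac{k}{2}}$ --- and that argument takes the present rough lemma as an input, which is another sign that the rough lemma cannot itself be proved this way.)

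The paper avoids the maximum principle entirely here. Since $\nabla^k(\Ric(g_\psi)+g_\psi)$ is already uniformly bounded by Corollary \ref{higher covariant derivative bound of Rm}, the lemma reduces to bounding $(\nabla^{g_\varphi})^k\mathcal{L}_{X/2}g_\varphi$ for the corresponding unnormalized flow (Proposition \ref{correspondence}), and this is done by a pointwise ODE in time: the K\"ahler--Ricci flow equation gives $\partial_t\mathcal{L}_{X/2}g_\varphi=-\mathcal{L}_{X/2}\Ric(g_\varphi)=-\bigl(\nabla^{g_\varphi}\Ric(g_\varphi)*X+\nabla^{g_\varphi}X*\Ric(g_\varphi)\bigr)$, where the first term is bounded because the sharp decay $|\nabla^m\Rm(g_\psi)|\le C_mf_\psi^{-1-\frac{m}{2}}$ compensates the linear growth of $X$, and the second equals $\mathcal{L}_{X/2}g_\varphi*\Ric(g_\varphi)$, so the quantity satisfies a linear ODE with bounded coefficients and a Gr\"onwall/induction argument on $k$ closes. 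If you wish to keep your maximum-principle framework, you would first have to carry out this (or an equivalent) integration-in-time argument anyway to obtain the a priori growth of $\nabla^k(\mathcal{L}_{X/2}g_\psi)$ --- at which point the uniform bound is already in hand and the maximum principle is superfluous.
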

    \begin{proof}
        Recall that $T=\mathcal{L}_{\frac{X}{2}}g_\psi-\Ric(g_\psi)-g_\psi$. Now $\nabla^k(\Ric(g_\psi)+g_\psi)$ is uniformly bounded on space-time by Corollary \ref{higher covariant derivative bound of Rm}. It suffices to show that $(\nabla^{g_\psi})^k\mathcal{L}_{\frac{X}{2}}g_\psi$ is bounded on $M\times [0,T]$. Let $g_\varphi$ be the solution to K\"ahler-Ricci flow corresponding to $g_\psi$ as in Proposition \ref{correspondence}, it suffices for us to control $(\nabla^{g_\varphi})^k\mathcal{L}_{\frac{X}{2}}g_\varphi$ on $M\times [0,T]$.

        Thanks to K\"ahler-Ricci flow equation
        \begin{equation}\label{rough estimates of TT}
            \begin{split}
                \frac{\partial}{\partial t}(\nabla^{g_\varphi})^k\mathcal{L}_{\frac{X}{2}}g_\varphi&=-(\nabla^{g_\varphi})^k\mathcal{L}_{\frac{X}{2}}\Ric(g_\varphi)+\sum_{\substack{p+q=k\\ q<k}}(\nabla^{g_\varphi})^p\Ric(g_\varphi)*(\nabla^{g_\varphi})^q\mathcal{L}_{\frac{X}{2}}g_\varphi\\
                &=(\nabla^{g_\varphi})^k\left(\nabla^{g_\varphi}\Ric(g_\varphi)*X+\nabla^{g_\varphi}X*\Ric(g_\varphi)\right)\\
                &\quad +\sum_{{\substack{p+q=k\\ q<k}}}(\nabla^{g_\varphi})^p\Ric(g_\varphi)*(\nabla^{g_\varphi})^q\mathcal{L}_{\frac{X}{2}}g_\varphi.
            \end{split}
        \end{equation}

        Thanks to Corollary \ref{higher covariant derivative bound of Rm}, $(\nabla^{g_\varphi})^k\left(\nabla^{g_\varphi}\Ric(g_\psi)*X+\nabla^{g_\varphi}X*\Ric(g_\psi)\right)$ is bounded on $M\times[0,T]$ for all $k\in\N_0$. In particular, $\mathcal{L}_{\frac{X}{2}}g_\varphi$ is bounded on $M\times [0,T]$. Now we assume that $(\nabla^{g_\varphi})^l\mathcal{L}_{\frac{X}{2}}g_\varphi$ is bounded for all $l\le k$ on $M\times [0,T]$, then for $l=k+1$, the induction hypothesis, together with curvature bound and ODE method imply that $(\nabla^{g_\varphi})^{l+1}\mathcal{L}_{\frac{X}{2}}g_\varphi$ is bounded on $M\times [0,T]$.

        Thus, on $M\times [0,T]$, $\nabla^kT$ is bounded by some positive constant $B_k=B(k,T)$ as expected.
    \end{proof}
    \begin{lemma}
        The $k-$th covariant derivative $\nabla^k T$ evolves along the normalized K\"ahler-Ricci flow by 
        \begin{equation}\label{inductive eovolution equation of dotpsi}
            \left(\frac{\partial}{\partial\tau}-\frac{1}{2}\Delta_{g_\psi}-\mathcal{L}_{\frac{X}{2}}\right)\nabla^kT=-\nabla^kT+\sum_{p+q=k}\nabla^p\Rm(g_\psi)*\nabla^qT.
        \end{equation}
    \end{lemma}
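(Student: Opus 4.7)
The plan is to prove \eqref{inductive eovolution equation of dotpsi} by induction on $k \in \mathbb{N}_0$, relying on the general commutation Lemma \ref{lie lemma} (which controls the time derivative of $\nabla R$) together with the standard Bochner-type commutation between the Laplacian and a covariant derivative. The base case $k=0$ will be derived from the evolution equation of $\dot{\psi}$ (Corollary \ref{fin estimate of dotpsi}); the inductive step will then simply differentiate and absorb all lower-order curvature couplings into the schematic sum $\sum_{p+q=k+1} \nabla^p \Rm(g_\psi) * \nabla^q T$.

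For the base case, recall that $T = \partial\bar\partial \dot\psi$. Applying $\partial\bar\partial$ to \eqref{evolution equation of dotpsi} and using that $\partial\bar\partial$ commutes with $\partial_\tau$, while commuting $\partial\bar\partial$ past $\Delta_{\omega_\psi} + \frac{X}{2}\cdot$ produces curvature error terms of schematic form $\Rm(g_\psi) * T$ (from the Kähler commutation of $\partial\bar\partial$ with $\Delta_{\omega_\psi}$) and Lie-derivative terms of the form $\mathcal{L}_{X/2} T$. A clean route is instead to write $T = \mathcal{L}_{X/2} g_\psi - \Ric(g_\psi) - g_\psi$ and directly compute $\partial_\tau T$ using the normalized flow equation, combined with the well-known identity $\partial_\tau \Ric(g_\psi) = \frac{1}{2}\Delta_{g_\psi}\Ric(g_\psi) + \Rm(g_\psi)*\Ric(g_\psi) + \mathcal{L}_{X/2}\Ric(g_\psi) - \Ric(g_\psi)$ (a consequence of Proposition \ref{evolution function of Rm}). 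Either route yields
\begin{equation*}
\left(\frac{\partial}{\partial\tau} - \frac{1}{2}\Delta_{g_\psi} - \mathcal{L}_{X/2}\right) T = -T + \Rm(g_\psi) * T,
\end{equation*}
which is the $k=0$ case.

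For the inductive step, assume the formula at level $k$. Apply Lemma \ref{lie lemma} with $R = \nabla^k T$ to obtain
\begin{equation*}
\left(\frac{\partial}{\partial\tau} - \mathcal{L}_{X/2}\right)\nabla^{k+1} T = \nabla\!\left[\left(\frac{\partial}{\partial\tau} - \mathcal{L}_{X/2}\right)\nabla^k T\right] + \nabla \Ric(g_\psi) * \nabla^k T.
\end{equation*}
Substituting the inductive hypothesis converts the bracketed expression into $\frac{1}{2}\Delta_{g_\psi}\nabla^k T - \nabla^k T + \sum_{p+q=k}\nabla^p\Rm(g_\psi)*\nabla^q T$. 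Taking $\nabla$ of this and using the standard commutator $\nabla \Delta_{g_\psi} - \Delta_{g_\psi}\nabla = \Rm(g_\psi)*\nabla + \nabla \Rm(g_\psi)*(\cdot)$ on $\nabla^k T$, one reassembles
\begin{equation*}
\left(\frac{\partial}{\partial\tau} - \frac{1}{2}\Delta_{g_\psi} - \mathcal{L}_{X/2}\right)\nabla^{k+1} T = -\nabla^{k+1} T + \sum_{p+q=k+1}\nabla^p\Rm(g_\psi) * \nabla^q T,
\end{equation*}
where the extra $\nabla \Ric * \nabla^k T$ term and the commutator terms are each of the required schematic form and therefore merge into the sum, noting $\Ric(g_\psi)$ is a contraction of $\Rm(g_\psi)$.

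The main obstacle is essentially bookkeeping: one must verify that every auxiliary term generated by commuting $\nabla$ past $\Delta_{g_\psi}$ and past $\mathcal{L}_{X/2}$ truly fits into the schematic sum $\sum_{p+q=k+1}\nabla^p\Rm(g_\psi)*\nabla^q T$ with the correct index range. The $-\nabla^{k+1}T$ coefficient is preserved because the operator $\partial_\tau - \frac{1}{2}\Delta_{g_\psi} - \mathcal{L}_{X/2}$ interacts with $\nabla$ only through curvature commutators, which contribute to the sum rather than to the linear term. No new ideas beyond Lemma \ref{lie lemma} and the Kähler Bochner formulas are needed.
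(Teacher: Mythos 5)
Your proposal is correct and follows essentially the same route as the paper: the base case comes from applying $\partial\bar\partial$ to the evolution equation $(\partial_\tau-\Delta_{\omega_\psi,X})\dot\psi=-\dot\psi$ and commuting derivatives to produce the $\Rm(g_\psi)*T$ term, and the inductive step combines Lemma \ref{lie lemma} applied to $\nabla^kT$ with the higher-order Bochner commutation, absorbing the $\nabla\Ric(g_\psi)*\nabla^kT$ and commutator terms into the schematic sum. The alternative base-case route you sketch via $T=\mathcal{L}_{X/2}g_\psi-\Ric(g_\psi)-g_\psi$ is a harmless variant but is not needed.
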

    \begin{proof}
        Let us prove this Lemma by induction, \eqref{inductive eovolution equation of dotpsi} holds for $k=0$ because
        \begin{equation*}
          \begin{split}
                \frac{\partial}{\partial\tau}T&=\partial\bar\partial \left(\frac{\partial}{\partial\tau}\dot\psi\right)=\partial\bar\partial \left(\Delta_{\omega_\psi,X}\dot\psi-\dot\psi\right)\\
            &=-T+\mathcal{L}_{\frac{X}{2}}T+\partial\bar\partial(\Delta_{\omega_\psi}\dot\psi)\\
            &=\left(\frac{1}{2}\Delta_{g_\psi}+\mathcal{L}_{\frac{X}{2}}\right)T+\Rm(g_\psi)*T-T.
          \end{split}
        \end{equation*}
        Here the last line is ensured by 
        \begin{equation*}
            \partial\bar\partial(\Delta_{\omega_\psi}\dot\psi)=\Delta_{\omega_\psi}\partial\bar\partial \dot\psi+\Rm(g_\psi)*\partial\bar\partial \dot\psi,
        \end{equation*}
        and,
        \begin{equation}
           \Delta_{\omega_\psi}\partial\bar\partial \dot\psi=\overline\Delta_{\omega_\psi}\partial\bar\partial\dot\psi+\Rm(g_\psi)* \partial\bar\partial\dot\psi.
        \end{equation}
        Suppose that \eqref{inductive eovolution equation of dotpsi} holds for all integers $m\le k$. For $\nabla^{k+1}T$, by Lemma \ref{lie lemma}, we compute
        \begin{equation}\label{time der of co dotpsi}
            \frac{\partial}{\partial\tau}\nabla^{k+1}T-\mathcal{L}_{\frac{X}{2}}\nabla^{k+1}T=\nabla\left(\frac{\partial}{\partial\tau}\nabla^kT-\mathcal{L}_{\frac{X}{2}}\nabla^kT\right)+\nabla\Ric(g_\psi)*\nabla^kT.
        \end{equation}
        And by Bochner's formula, we have
        \begin{equation}\label{lie der and lap of dotpsi}
            \begin{split}
                \frac{1}{2}\Delta_{g_\psi}\nabla^{k+1}T=\nabla\left(\frac{1}{2}\Delta_{g_\psi}\nabla^kT\right)+\nabla\Rm(g_\psi)*\nabla^kT+\Rm(g_\psi)*\nabla^{k+1}T.
            \end{split}
        \end{equation}
        Combining \eqref{time der of co dotpsi} and \eqref{lie der and lap of dotpsi}, we have
        \begin{equation*}
           \begin{split}
                 &\quad \left(\frac{\partial}{\partial\tau}-\frac{1}{2}\Delta_{g_\psi}-\mathcal{L}_{\frac{X}{2}}\right)\nabla^{k+1}T\\&=\nabla\left(\left(\frac{\partial}{\partial\tau}-\frac{1}{2}\Delta_{g_\psi}-\mathcal{L}_{\frac{X}{2}}\right)\nabla^kT\right)+\Rm(g_\psi)*\nabla^{k+1}T+\nabla\Rm(g_\psi)*\nabla^kT.
           \end{split}
        \end{equation*}
        By the induction hypothesis, we have that \eqref{inductive eovolution equation of dotpsi} holds for $k+1$.
        
    \end{proof}
    \begin{prop}
        There exists a constant $C(k,n)>0$ such that on $M\times [0,\infty)$,
        \begin{equation*}
            \left(\frac{\partial}{\partial\tau}-\Delta_{\omega_\psi,X}\right)|\nabla^kT|^2\le -|\nabla^{k+1}T|^2+k|\nabla^kT|^2+C(k,n)\sum_{p+q=k}|\nabla^p\Rm(g_\psi)||\nabla^qT||\nabla^kT|.
        \end{equation*}
    \end{prop}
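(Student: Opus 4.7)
The plan is to derive this evolution inequality by computing $\partial_\tau |\nabla^k T|^2$ and $\Delta_{\omega_\psi,X}|\nabla^k T|^2$ separately and combining them via the evolution equation \eqref{inductive eovolution equation of dotpsi}. The argument parallels exactly the derivation of Corollary \ref{rm corollary} from Proposition \ref{evolution function of Rm}, and of the inductive Corollary \ref{induction formulae} from \eqref{evolution function of covariant derivative of Rm}, so I would treat those as templates.

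First, I would expand $\partial_\tau |\nabla^k T|^2$. Since $T=\partial\bar\partial\dot\psi$ is a $(1,1)$-form carrying $2$ real indices, the tensor $\nabla^k T$ has $k+2$ real indices, and $|\nabla^k T|^2$ is a full contraction of $\nabla^k T \otimes \overline{\nabla^k T}$ against $k+2$ copies of $g_\psi^{-1}$. The product rule yields
\[
\frac{\partial}{\partial\tau}|\nabla^k T|^2 \;=\; 2\Re\langle \partial_\tau \nabla^k T, \nabla^k T\rangle \;+\; \sum_{\ell=1}^{k+2}(\text{contribution from }\partial_\tau g_\psi^{-1}\text{ in the }\ell\text{-th slot}),
\]
and the relation $\partial_\tau g_\psi = T = \mathcal{L}_{X/2}g_\psi - \Ric(g_\psi) - g_\psi$ splits each of these $k+2$ contributions into three pieces.

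Second, I would invoke the Bochner identity
\[
\Delta_{\omega_\psi}|\nabla^k T|^2 \;=\; 2\Re\langle \tfrac{1}{2}\Delta_{g_\psi}\nabla^k T, \nabla^k T\rangle + |\nabla^{k+1} T|^2 + \Rm(g_\psi)*\nabla^k T*\nabla^k T,
\]
together with
\[
\tfrac{X}{2}\cdot |\nabla^k T|^2 \;=\; 2\Re\langle \mathcal{L}_{X/2}\nabla^k T, \nabla^k T\rangle + \sum_{\ell=1}^{k+2}(\text{corrections from }\mathcal{L}_{X/2}g_\psi\text{ in the }\ell\text{-th slot}).
\]
The $\mathcal{L}_{X/2}g_\psi$ pieces from $\partial_\tau g_\psi^{-1}$ cancel precisely against the Lie-derivative corrections to the metric factors appearing in $\tfrac{X}{2}\cdot|\nabla^k T|^2$, while the $\Ric(g_\psi)$ pieces combine with the curvature-commutator terms of the Bochner formula and are both absorbed into the schematic $\Rm(g_\psi)*\nabla^k T*\nabla^k T$ term, producing after Cauchy--Schwarz the $C(k,n)\sum_{p+q=k}|\nabla^p\Rm(g_\psi)||\nabla^q T||\nabla^k T|$ summand in the statement (after also accounting for the $\nabla^p\Rm*\nabla^qT$ source term from \eqref{inductive eovolution equation of dotpsi}).

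Third, substituting \eqref{inductive eovolution equation of dotpsi} gives
\[
2\Re\langle \nabla^k T, (\partial_\tau - \tfrac{1}{2}\Delta_{g_\psi} - \mathcal{L}_{X/2})\nabla^k T\rangle = -2|\nabla^k T|^2 + 2\Re\bigl\langle \nabla^k T,\, \textstyle\sum_{p+q=k}\nabla^p\Rm(g_\psi)*\nabla^q T\bigr\rangle.
\]
The $-2|\nabla^k T|^2$ here combines with the $+(k+2)|\nabla^k T|^2$ produced by the $-g_\psi$ piece of $\partial_\tau g_\psi^{-1}$ acting on each of the $k+2$ metric contractions, giving the net coefficient $(k+2)-2=k$ that appears in the statement, while the Bochner gradient term delivers $-|\nabla^{k+1} T|^2$. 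The main obstacle is the bookkeeping of the three types of terms coming from the three pieces of $\partial_\tau g_\psi^{-1}$ and verifying the clean cancellation between the $\mathcal{L}_{X/2}g_\psi$ contributions and the $\mathcal{L}_{X/2}-\nabla_{X/2}$ correction terms, so that only the sharp integer coefficient $k$ survives on the right; modulo this, everything else is a routine application of Cauchy--Schwarz to the curvature cross-terms.
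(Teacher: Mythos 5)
Your proposal is correct and follows essentially the same route as the paper, which omits this proof as "analogous to Corollary \ref{induction formulae}" (itself analogous to Corollary \ref{rm corollary}): expand the time derivative of the $k+2$ inverse-metric contractions using $\partial_\tau g_\psi = \mathcal{L}_{X/2}g_\psi-\Ric(g_\psi)-g_\psi$, cancel the Lie-derivative pieces against the $\tfrac{X}{2}\cdot$ corrections, absorb the Ricci and commutator terms into $\Rm(g_\psi)*\nabla^kT*\nabla^kT$, and substitute \eqref{inductive eovolution equation of dotpsi}. Your coefficient bookkeeping $(k+2)-2=k$ is exactly the right analogue of the $(m+4)-2=m+2$ count behind Corollary \ref{induction formulae}.
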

    \begin{proof}
        This proof is analogous to the proof of Corollary \ref{induction formulae} and will therefore be omitted..
    \end{proof}
We observe that $\nabla^k T$ satisfies an evolution equation analogous to that of $\nabla^k \mathrm{Rm}(g_\psi)$. Our goal is therefore to establish that $\nabla^k T$ decays at a polynomial rate in $f$, comparable to the decay of its initial data $\nabla^k T(0)$.

    \begin{corollary}\label{final corollary}
        There exists a constant $C_k>0$ such that on $M\times [0,\infty)$,
        \begin{equation*}
            |\nabla^kT|\le C_ke^{-\tau}f_{\psi}^{-\frac{k}{2}-1}.   
        \end{equation*}
    \end{corollary}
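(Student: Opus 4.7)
The proof proceeds by induction on $k$. Setting $w_k := e^{2\tau} f_\psi^{k+2} |\nabla^k T|^2$, the claim is equivalent to uniform boundedness of $w_k$ on $M \times [0,\infty)$. The exponent $k+2$ is dictated by the target decay rate. The identity
\begin{equation*}
    (\partial_\tau - \Delta_{\omega_\psi, X})(e^{2\tau} f_\psi^{\alpha}) = (2-\alpha) e^{2\tau} f_\psi^{\alpha} - \alpha(\alpha-1) e^{2\tau} f_\psi^{\alpha-2} |\partial f_\psi|^2,
\end{equation*}
specialised at $\alpha = k+2$, contributes $-k\, e^{2\tau}f_\psi^{k+2}|\nabla^k T|^2$, which exactly cancels the $+k|\nabla^k T|^2$ appearing in the evolution inequality for $|\nabla^k T|^2$ proved just above.

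After applying the product rule for $\partial_\tau - \Delta_{\omega_\psi, X}$, absorbing the gradient cross term $-2\Re\langle\partial(e^{2\tau}f_\psi^{k+2}),\bar\partial|\nabla^k T|^2\rangle$ via Cauchy--Schwarz and the bound $|X|_{g_\psi}^2 \le A f_\psi$ (Corollary \ref{bound of X}), and controlling the curvature sum $\sum_{p+q=k}|\nabla^p \Rm(g_\psi)||\nabla^q T||\nabla^k T|$ by distinguishing the case $q<k$ (using the inductive hypothesis and Corollary \ref{higher covariant derivative bound of Rm}) from $q=k$ (using $|\Rm(g_\psi)| \le C/f_\psi$ from Proposition \ref{prop bound of rm}), one arrives at
\begin{equation*}
    (\partial_\tau - \Delta_{\omega_\psi, X}) w_k \;\le\; \frac{C}{f_\psi}\, w_k + \frac{C}{f_\psi}.
\end{equation*}
The $w_k/f_\psi$ term on the right-hand side prevents a direct application of Lemma \ref{useful lemma}.

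To remedy this, in the spirit of Corollary \ref{estimate of gradient dotpsi}, I introduce an auxiliary companion: for $k=0$, set $v_{-1} := e^{2\tau} f_\psi |\nabla \dot\psi|^2$, already bounded by Corollary \ref{estimate of gradient dotpsi}; for $k \ge 1$, set $v_{k-1} := e^{2\tau} f_\psi^{k+1}|\nabla^{k-1} T|^2$, bounded by the inductive hypothesis. The same weight identity with $\alpha = k+1$, combined with the pointwise Kählerian inequality $|\nabla^2\dot\psi|^2 \ge c|T|^2$ in the base case, or with the $-|\nabla^k T|^2$ term in the evolution of $|\nabla^{k-1}T|^2$ in the inductive step, together with the cancellation $(2-(k+1))+(k-1)=0$ of the zeroth-order contributions, yields
\begin{equation*}
    (\partial_\tau - \Delta_{\omega_\psi, X}) v_{k-1} \;\le\; -\frac{c}{2}\, \frac{w_k}{f_\psi} + \frac{C}{f_\psi}.
\end{equation*}
For $\lambda$ sufficiently large, $u := w_k + \lambda v_{k-1}$ satisfies $(\partial_\tau - \Delta_{\omega_\psi, X}) u \le C'/f_\psi$. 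Combining Lemma \ref{useful lemma} with the rough polynomial bound of Lemma \ref{A priori estimate of nabla^kT} and the finiteness of $u(0)$ (guaranteed by Condition II, Definition \ref{condition II}) gives $u \le C''$ uniformly, whence $w_k \le C''$.

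The principal obstacle is precisely the $w_k/f_\psi$ term on the right-hand side of the evolution of $w_k$: it is produced by the curvature contribution $|\Rm(g_\psi)||\nabla^k T|^2$ together with the gradient cross terms, and cannot be absorbed pointwise by the single negative quantity $-\tfrac12 e^{2\tau}f_\psi^{k+2}|\nabla^{k+1}T|^2$. The companion $v_{k-1}$ supplies the required negative counterweight, but its own uniform boundedness only becomes available \emph{after} the inductive hypothesis is in place, which makes the induction unavoidable. A secondary technical point is the finiteness of the initial values $w_k(0)$ and $v_{k-1}(0)$ at spatial infinity, for which the full asymptotic hypothesis $|(\nabla^g)^k(\mathcal{L}_{X/2}g_{\psi_0}-g_{\psi_0})|_g = O(f^{-1-k/2})$ in Definition \ref{condition II} is essential.
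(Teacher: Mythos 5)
Your proposal is correct and follows essentially the same route as the paper: the weighted quantity $e^{2\tau}f_\psi^{k+2}|\nabla^kT|^2$, the cancellation of the zeroth-order terms, the absorption of the gradient cross term via $|X|_{g_\psi}^2\le Af_\psi$, the identification of the leftover $e^{2\tau}f_\psi^{k+1}|\nabla^kT|^2$ term as the obstruction, and its elimination by adding the previous-order quantity (respectively $e^{2\tau}f_\psi|\nabla\dot\psi|^2$ in the base case) whose evolution supplies the matching negative term, before concluding with Lemma \ref{useful lemma}, the rough bounds of Lemma \ref{A priori estimate of nabla^kT}, and the initial decay from Condition II. No substantive differences from the paper's argument.
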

    \begin{proof}
        First, we compute
        \begin{equation*}
             \begin{split}
                \left(\frac{\partial}{\partial\tau}-\Delta_{\omega_\psi,X}\right)(e^{2\tau}f_\psi^{k+2}|\nabla^{k}T|^2)&=|\nabla^kT|^2\left(\frac{\partial}{\partial\tau}-\Delta_{\omega_\psi,X}\right)(e^{2\tau}f_\psi^{k+2})\\
                &\quad +e^{2\tau}f_\psi^{k+2}\left(\frac{\partial}{\partial\tau}
         -\Delta_{\omega_\psi,X}\right)|\nabla^kT|^2\\
         &\quad-2(k+2)e^{2\tau}f_\psi^{k+1}\Re(<\partial f_\psi,\bar\partial|\nabla^kT|^2>).\\
           \end{split}
        \end{equation*}
        On the one hand
        \begin{equation*}
            \left(\frac{\partial}{\partial\tau}-\Delta_{\omega_\psi,X}\right)(e^{2\tau}f_\psi^{k+2})=-ke^{2\tau}f_\psi^{k+2}-(k+2)(k+1)e^{2\tau}f_\psi^{k}|\partial f_\psi|^2.
        \end{equation*}
        On the other hand
             \begin{equation*}
            \left(\frac{\partial}{\partial\tau}-\Delta_{\omega_\psi,X}\right)|\nabla^kT|^2\le -|\nabla^{k+1}T|^2+k|\nabla^kT|^2+C(k,n)\sum_{p+q=k}|\nabla^p\Rm(g_\psi)||\nabla^qT||\nabla^kT|.
        \end{equation*}
        Hence we get 
        \begin{equation*}
            \begin{split}
                 \left(\frac{\partial}{\partial\tau}-\Delta_{\omega_\psi,X}\right)(e^{2\tau}f_\psi^{k+2}|\nabla^{k}T|^2)&\le -e^{2\tau}f_\psi^{k+2}|\nabla^{k+1}T|^2\\
&\quad+C(k,n)e^{2\tau}f_\psi^{k+2}\sum_{p+q=k}|\nabla^p\Rm(g_\psi)||\nabla^qT||\nabla^kT|\\
                 &\quad -(k+2)e^{2\tau}f_\psi^{k+1}X\cdot|\nabla^{k}T|^2
            \end{split}
        \end{equation*}
        Now we notice that for all $\sigma>0$, by Cauchy-Schwarz inequality 
        \begin{equation*}
         X\cdot|\nabla^{k}T|^2\le 2|X||\nabla^{k+1}T||\nabla^k T| \le \sigma|X|^2|\nabla^{k+1}T|^2+\frac{1}{\sigma}|\nabla^k T|^2.  
        \end{equation*}
        Take $\sigma>0$ such that $(k+2)\sigma|X|^2\le\frac{1}{2}f_\psi$, we have
        \begin{equation}\label{general formula on k}
            \begin{split}
                 \left(\frac{\partial}{\partial\tau}-\Delta_{\omega_\psi,X}\right)(e^{2\tau}f_\psi^{k+2}|\nabla^{k}T|^2)&\le -\frac{1}{2}e^{2\tau}f_\psi^{k+2}|\nabla^{k+1}T|^2+(k+2)\frac{1}{\sigma}e^{2\tau}f_\psi^{k+1}|\nabla^kT|^2\\
&\quad+C(k,n)e^{2\tau}f_\psi^{k+2}\sum_{p+q=k}|\nabla^p\Rm(g_\psi)||\nabla^qT||\nabla^kT|.
\end{split}
        \end{equation}
        From now on, we use induction to prove Corollary \ref{final corollary}. First, we prove there exists a constant $C_0>0$ such that $|T|\le C_0 e^{-\tau}f_\psi^{-1}$.
        On the one hand we have by \eqref{general formula on k}
        \begin{equation*}
           \begin{split}
                 \left(\frac{\partial}{\partial\tau}-\Delta_{\omega_\psi,X}\right)(e^{2\tau}f_\psi^{2}|T|^2)\le C(0,n)e^{2\tau}f_\psi|T|^2.
\end{split} 
        \end{equation*}
        Now recall [\eqref{order 1}, Corollary \ref{estimate of gradient dotpsi}], there exists a constant $A>0$ such that 
        \begin{equation*}
            \begin{split}
                \left(\frac{\partial}{\partial\tau}-\Delta_{\omega_\psi,X}\right)(e^{2\tau}f_\psi|\nabla\dot\psi|^2)&=-\frac{1}{2}e^{2\tau}f_\psi|\nabla^2\dot\psi|^2+Af_\psi^{-1}\\
                &\le -\frac{1}{2}e^{2\tau}f_\psi|T|^2+Af_\psi^{-1}.
            \end{split}
        \end{equation*}
 Hence we get
 \begin{equation*}
     \left(\frac{\partial}{\partial\tau}-\Delta_{\omega_\psi,X}\right)(e^{2\tau}f_\psi^{2}|T|^2+2C(0,n)e^{2\tau}f_\psi|\nabla\dot\psi|^2)\le 2C(0,n)Af_\psi^{-1}.
 \end{equation*}
 Thanks to Lemma \ref{A priori estimate of nabla^kT} and initial condition $|\mathcal{L}_{\frac{X}{2}}g_{\psi}-g_{\psi_0}|_g=O(f^{-1})$ as in Definition \ref{condition II}, by Lemma \ref{useful lemma}, there exists a constant $C>0$ such that 
 \begin{equation*}
     \sup_{M\times [0,\infty)}e^{2\tau}f_\psi^{2}|T|^2\le C+\sup_M f_\psi^{2}(0)|T|^2(0)+2C(0,n)f_\psi(0)|\nabla\dot\psi|^2(0)<\infty. 
 \end{equation*}
 Hence there exists a constant $C_0>0$ such that for all $\tau\in [0,\infty)$,
 \begin{equation*}
     |T|\le C_0e^{-\tau}f_\psi^{-1}.
 \end{equation*}
 Suppose that for all $0\le m\le k$, there exists a constant $C_m>0$ such that on $M\times [0,\infty)$,
 \begin{equation*}
     |\nabla^mT|\le C_me^{-\tau}f_\psi^{-1-\frac{m}{2}}.
 \end{equation*}
 For $k+1$, by \eqref{general formula on k}, there exists a constant $C(k+1)>0$ such that
 \begin{equation*}
      \begin{split}
                & \left(\frac{\partial}{\partial\tau}-\Delta_{\omega_\psi,X}\right)(e^{2\tau}f_\psi^{k+3}|\nabla^{k+1}T|^2)\\&\le -\frac{1}{2}e^{2\tau}f_\psi^{k+3}|\nabla^{k+2}T|^2+C(k+1)e^{2\tau}f_\psi^{k+2}|\nabla^{k+1}T|^2\\
&\quad+C(k+1)e^{2\tau}f_\psi^{k+3}\sum_{p+q=k+1}|\nabla^p\Rm(g_\psi)||\nabla^qT||\nabla^{k+1}T|.
\end{split}
 \end{equation*}
 By Corollary \ref{evolution function of covariant derivative of Rm} and the induction hypothesis, we deduce that there exists a constant $C(k+1)>0$ such that
 \begin{equation*}
    \begin{split}
         &\quad \left(\frac{\partial}{\partial\tau}-\Delta_{\omega_\psi,X}\right)(e^{2\tau}f_\psi^{k+3}|\nabla^{k+1}T|^2)\\
         &\le C(k+1)e^{2\tau}f_\psi^{k+2}|\nabla^{k+1}T|^2
+C(k+1)e^{\tau}f_\psi^{\frac{k+1}{2}}|\nabla^{k+1}T|.
    \end{split}
 \end{equation*}
 By Cauchy-Schwarz inequality, there exists a constant $C(k+1)>0$ such that
 \begin{equation*}
       \left(\frac{\partial}{\partial\tau}-\Delta_{\omega_\psi,X}\right)(e^{2\tau}f_\psi^{k+3}|\nabla^{k+1}T|^2)\le C(k+1)e^{2\tau}f_\psi^{k+2}|\nabla^{k+1}T|^2
+\frac{1}{f_\psi}.
 \end{equation*}
 By the induction hypothesis and \eqref{general formula on k} there exists a constant $C(k)>0$ such that
 \begin{equation*}
                 \left(\frac{\partial}{\partial\tau}-\Delta_{\omega_\psi,X}\right)(e^{2\tau}f_\psi^{k+2}|\nabla^{k}T|^2)\le -\frac{1}{2}e^{2\tau}f_\psi^{k+2}|\nabla^{k+1}T|^2+C(k)\frac{1}{f_\psi}
 \end{equation*}
 Hence
 \begin{equation*}
      \left(\frac{\partial}{\partial\tau}-\Delta_{\omega_\psi,X}\right)(e^{2\tau}f_\psi^{k+3}|\nabla^{k+1}T|^2+2C(k+1)e^{2\tau}f_\psi^{k+2}|\nabla^{k}T|^2)\le (1+2C(k+1)C(k))\frac{1}{f_\psi}.
 \end{equation*}
 Thanks to Lemma \ref{A priori estimate of nabla^kT} and the initial condition $|(\nabla^g)^k(\mathcal{L}_{\frac{X}{2}}g_{\psi}-g_{\psi_0})|_g=O(f^{-\frac{k}{2}-1})$ in Definition \ref{condition II}, by Lemma \ref{useful lemma}, there exists a constant $A>0$ such that 
 \begin{equation*}
    \begin{split}
         &\quad \sup_{M\times [0,\infty)}e^{2\tau}f_\psi^{k+3}|\nabla^{k+1}T|^2\\&\le A+\sup_Mf_\psi^{k+3}(0)|\nabla^{k+1}T|^2(0)+2C(k+1)f_\psi^{k+2}(0)|\nabla^{k}T|^2(0)<\infty.
    \end{split}
 \end{equation*}
 Hence there exists a constant $C_{k+1}>0$ such that for all $\tau\in [0,\infty)$,
 \begin{equation*}
     |\nabla^{k+1}T|\le C_{k+1}f_\psi^{-1-\frac{k+1}{2}}.
 \end{equation*}
 Therefore Corollary \ref{final corollary} holds for all $k\in\N_0$
    \end{proof}
  Before proceeding to the proof of Theorem \ref{convergence theorem}, we require a technical lemma concerning covariant derivatives of tensors.
  \begin{lemma}\label{tensorial lemma}
      Let $M$ be a Riemannian manifold with Riemannian metrics $g_1,g_2$. Let $k\in\N$ be an integer, and let $R\in C^\infty(\otimes^kT^*M)$ be a $k-$tensor. Then for any $m\in\N^*$, the difference of $(\nabla^{g_1})^mR$ and $(\nabla^{g_2})^mR$ is given by:
      \begin{equation*}
          \begin{split}
             (\nabla^{g_1})^mR- (\nabla^{g_2})^mR=\sum_{\substack{p+q=m\\ q<m}}\sum_{\substack{i_1+\cdot\cdot\cdot+i_l=p-l\\l\le p}}(\nabla^{g_1})^{i_1}\Gamma*\cdot\cdot\cdot (\nabla^{g_1})^{i_l}\Gamma*(\nabla^{g_2})^qR.
          \end{split}
      \end{equation*}
      Here  $\Gamma=\Gamma(g_1)-\Gamma(g_2)$.
  \end{lemma}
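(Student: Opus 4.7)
The plan is to argue by induction on $m$, using the schematic $*$-notation throughout so that combinatorial constants and index contractions can be absorbed freely.

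First I would establish the base case $m=1$. In local coordinates the two covariant derivatives of any $k$-tensor $R$ differ only through Christoffel symbols, so a direct computation gives, for any tensor field $T$ on $M$,
\begin{equation*}
    \nabla^{g_1}T - \nabla^{g_2}T = \Gamma * T,
\end{equation*}
where $\Gamma=\Gamma(g_1)-\Gamma(g_2)$ is a genuine tensor (since it is the difference of two connections). Applied to $T=R$ this gives exactly the $m=1$ instance of the claimed formula, with the unique schematic term $\Gamma*(\nabla^{g_2})^0R$ corresponding to $p=1$, $q=0$, $l=1$, $i_1=0$.

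For the inductive step, assume the identity holds at level $m$ and apply $\nabla^{g_1}$ to both sides. On the left we obtain $(\nabla^{g_1})^{m+1}R$. On the right, I would distribute $\nabla^{g_1}$ using two inputs: the base-case identity
$\nabla^{g_1}(\nabla^{g_2})^qR = (\nabla^{g_2})^{q+1}R + \Gamma*(\nabla^{g_2})^qR$ applied to the ``tail'' factor, and the Leibniz rule applied to each prefactor $(\nabla^{g_1})^{i_j}\Gamma$, which either promotes one of these to $(\nabla^{g_1})^{i_j+1}\Gamma$ or leaves it unchanged while differentiating another slot. Each resulting term is then schematically of the form
\begin{equation*}
    (\nabla^{g_1})^{i_1'}\Gamma*\cdots*(\nabla^{g_1})^{i_{l'}'}\Gamma*(\nabla^{g_2})^{q'}R,
\end{equation*}
with $q'<m+1$ and $i_1'+\cdots+i_{l'}' = p'-l'$ for the corresponding $p'=m+1-q'$ and some $l'\le p'$. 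Collecting all such terms and isolating the single ``clean'' contribution $(\nabla^{g_2})^{m+1}R$ coming from the leading piece yields the claimed formula at level $m+1$.

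The only place where care is required is the bookkeeping of the indices $(p,q,l,i_1,\dots,i_l)$ in the inductive step: one has to check that every term produced really fits the index pattern in the statement, i.e.\ that differentiating either raises some $i_j$ by one (preserving $\sum i_j = p-l$ after shifting $p\mapsto p+1$) or inserts a new factor $\Gamma$ with exponent $0$ (raising $l\mapsto l+1$ and $p\mapsto p+1$ simultaneously). This is the main ``obstacle,'' but since both the identity $\nabla^{g_1}-\nabla^{g_2}=\Gamma*$ and the Leibniz rule preserve the total order of derivatives, the combinatorics match exactly, and no constants need tracking thanks to the $*$-convention.
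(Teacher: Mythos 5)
Your proposal is correct and follows essentially the same route as the paper: the base case $\nabla^{g_1}-\nabla^{g_2}=\Gamma\,*$, induction on $m$ via the telescoping identity combined with the Leibniz rule, and the same index bookkeeping for $(p,q,l,i_1,\dots,i_l)$. The only cosmetic slip is the phrase that applying $\nabla^{g_1}$ to the left side yields $(\nabla^{g_1})^{m+1}R$ alone --- it of course also produces $-\nabla^{g_1}(\nabla^{g_2})^{m}R$, which is exactly the term supplying the clean $(\nabla^{g_2})^{m+1}R$ and the extra $\Gamma*(\nabla^{g_2})^{m}R$ that you correctly account for afterwards.
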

\begin{proof}
    This Lemma is proved by induction. When $m=1$, by the definition of Christoffel symbols, we have
    \begin{equation*}
        \nabla^{g_1}R-\nabla^{g_1}R=\Gamma*R.
    \end{equation*}
    Hence Lemma \ref{tensorial lemma} holds for $m=1$. We suppose that Lemma \ref{tensorial lemma} holds for all integers $j\le m$. Now for $j=m+1$, we have
    \begin{equation*}
        \begin{split}
           (\nabla^{g_1})^{m+1}R- (\nabla^{g_2})^{m+1}R &=(\nabla^{g_1}-\nabla^{g_2})(\nabla^{g_2})^mR+\nabla^{g_1}\left((\nabla^{g_1})^{m}R-(\nabla^{g_2})^mR\right)\\
           &=\Gamma*(\nabla^{g_2})^mR+\nabla^{g_1}\left((\nabla^{g_1})^{m}R-(\nabla^{g_2})^mR\right).
        \end{split}
    \end{equation*}
    By the induction hypothesis, we have
    \begin{equation*}
        \begin{split}
            \nabla^{g_1}\left((\nabla^{g_1})^{m}R-(\nabla^{g_2})^mR\right)&=\nabla^{g_1}\sum_{\substack{p+q=m\\ q<m}}\sum_{\substack{i_1+\cdot\cdot\cdot+i_l=p-l\\l\le p}}(\nabla^{g_1})^{i_1}\Gamma*\cdot\cdot\cdot (\nabla^{g_1})^{i_l}\Gamma*(\nabla^{g_2})^qR\\
            &=\sum_{\substack{p+q=m+1\\ q<m+1}}\sum_{\substack{i_1+\cdot\cdot\cdot+i_l=p-l\\l\le p}}(\nabla^{g_1})^{i_1}\Gamma*\cdot\cdot\cdot (\nabla^{g_1})^{i_l}\Gamma*(\nabla^{g_2})^qR.
        \end{split}
    \end{equation*}
    Therefore, we have \begin{equation}
         (\nabla^{g_1})^{m+1}R- (\nabla^{g_2})^{m+1}R=\sum_{\substack{p+q=m+1\\ q<m+1}}\sum_{\substack{i_1+\cdot\cdot\cdot+i_l=p-l\\l\le p}}(\nabla^{g_1})^{i_1}\Gamma*\cdot\cdot\cdot (\nabla^{g_1})^{i_l}\Gamma*(\nabla^{g_2})^qR,
    \end{equation}
    as expected. Lemma \ref{tensorial lemma}
 is true for all $m\in\N^*$.
 \end{proof}
    With uniform estimates on $T$, we can prove Theorem \ref{convergence theorem}.
\begin{proof}[Proof of Theorem \ref{convergence theorem}]
    For any $k\in \N_0$, by Corollary \ref{final corollary}, there exists a constant $C_k>0$ such that
    \begin{equation*}
        |(\nabla^{g_\psi})^k\partial\bar\partial\dot\psi|\le C_ke^{-\tau}f_{\psi}^{-1-\frac{k}{2}}.
    \end{equation*}
  Therefore, thanks to Proposition \ref{summary} and Proposition \ref{bound of S}, for any $k\in\N_0$, there exists a constant $C_k>0$ such that on $M\times [0,\infty)$,
    \begin{equation}\label{equation for claim 1}
        |(\nabla^{g_\psi})^k\partial\bar\partial\dot\psi|_g\le C_ke^{-\tau}f^{-1-\frac{k}{2}}.
    \end{equation}
\begin{claim}\label{claim 1}
    For any $k\in\N_0$, there exists a constant $C_k>0$ such that on $M\times [0,\infty)$,
    \begin{equation*}
        |(\nabla^{g})^k\partial\bar\partial\psi|_g\le C_kf^{-\frac{k}{2}},
    \end{equation*}
    and,
    \begin{equation*}
        |(\nabla^{g})^k\partial\bar\partial\dot\psi|_g\le C_ke^{-\tau}f^{-1-\frac{k}{2}}.
    \end{equation*} 
\end{claim}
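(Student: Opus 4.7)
My plan is to prove Claim \ref{claim 1} by induction on $k$, with Lemma \ref{tensorial lemma} serving as the central device that converts the $\nabla^{g_\psi}$-derivatives controlled by \eqref{equation for claim 1} into $\nabla^g$-derivatives, at the cost of correction terms built from the tensor $\Psi = \Gamma(g_\psi) - \Gamma(g)$. For the base case $k = 0$, the bound $|\partial\bar\partial\psi|_g = |g_\psi - g|_g \le C$ is immediate from the metric equivalence in Proposition \ref{summary}, and $|\partial\bar\partial\dot\psi|_g = |T|_g \le C e^{-\tau} f^{-1}$ follows from Corollary \ref{final corollary} at $k = 0$ combined with $g \sim g_\psi$.

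For the inductive step, assuming both estimates hold for all $j \le k - 1$, I will apply Lemma \ref{tensorial lemma} with $g_1 = g$, $g_2 = g_\psi$, $R = \partial\bar\partial\dot\psi$ and $m = k$ (so that $\Gamma(g) - \Gamma(g_\psi) = -\Psi$) to obtain
\begin{equation*}
    (\nabla^g)^k \partial\bar\partial\dot\psi = (\nabla^{g_\psi})^k \partial\bar\partial\dot\psi + \sum (\nabla^g)^{i_1}\Psi * \cdots * (\nabla^g)^{i_l}\Psi * (\nabla^{g_\psi})^q \partial\bar\partial\dot\psi,
\end{equation*}
with $i_j \le k - 1$ and $q < k$. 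The leading term and the $(\nabla^{g_\psi})^q$-factors have the required decay thanks to \eqref{equation for claim 1}. Since $\Psi = g_\psi^{-1}\cdot \nabla^g\partial\bar\partial\psi$, each $(\nabla^g)^{i_j}\Psi$ unfolds as a polynomial in $g_\psi^{-1}$, $g_\psi$, and $(\nabla^g)^a \partial\bar\partial\psi$ for $1 \le a \le i_j + 1 \le k$. For $a \le k - 1$ the inductive hypothesis delivers the expected $f^{-a/2}$ decay; the top-order case $a = k$, which only appears when $i_j = k - 1$, couples the bound on $(\nabla^g)^k \partial\bar\partial\dot\psi$ to the one on $(\nabla^g)^k \partial\bar\partial\psi$ that I am simultaneously trying to prove.

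To close this coupling I plan to set $\alpha(\tau) := \sup_M f^{k/2}|(\nabla^g)^k\partial\bar\partial\psi(\tau)|_g$ and $\beta(\tau) := \sup_M e^{\tau} f^{1+k/2}|(\nabla^g)^k\partial\bar\partial\dot\psi(\tau)|_g$. The expansion above yields an inequality of the form $\beta(\tau) \le C_1 + C_2 \alpha(\tau)$, while integrating $\partial_\tau \partial\bar\partial\psi = \partial\bar\partial\dot\psi$ in time and using $\inf_M f \ge \varepsilon > 0$ from Proposition \ref{lower bound of f} gives
\begin{equation*}
    \alpha(\tau) \le \alpha(0) + \frac{1}{\varepsilon}\int_0^\tau e^{-s}\beta(s)\,ds \le \alpha(0) + C_1' + C_2' \int_0^\tau e^{-s}\alpha(s)\,ds.
\end{equation*}
Gronwall's inequality then produces a uniform bound for $\alpha$ on $[0,\infty)$, hence for $\beta$; since $\alpha(0)$ is finite by the asymptotic hypothesis in Condition II (Definition \ref{condition II}), both estimates of Claim \ref{claim 1} follow for $j = k$. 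The main technical obstacle will be justifying the a priori finiteness of $\alpha(\tau)$ and $\beta(\tau)$ at each fixed $\tau$ before invoking Gronwall, since the $f$-weighted sup-norms could a priori be infinite; I plan to handle this by a truncation argument on the sublevel sets $\{f \le R\}$, exploiting the rough $C^\infty$-estimates from Lemma \ref{a priori estimate of rm} and then letting $R \to \infty$.
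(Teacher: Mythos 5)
Your proposal is correct and follows essentially the same route as the paper: induction on $k$, Lemma \ref{tensorial lemma} to convert $\nabla^{g_\psi}$-bounds on $\partial\bar\partial\dot\psi$ into $\nabla^g$-bounds up to a top-order coupling with $(\nabla^g)^k\partial\bar\partial\psi$, and an integration in time closed by Gronwall. The only divergence is in the last step: the paper works pointwise, fixing $x\in M$ and integrating the scalar ODE $\frac{d}{d\rho}y(\rho)\le D e^{-\rho}f(x)^{-1}\bigl(y(\rho)+f(x)^{-\frac{k+1}{2}}\bigr)$ with the explicit integrating factor $\exp\{D e^{-\rho}f(x)^{-1}\}$, whereas you take weighted sup-norms over $M$ first and then apply Gronwall, which forces you to confront the a priori finiteness of $\alpha(\tau)$ and $\beta(\tau)$ via a truncation on $\{f\le R\}$. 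Your truncation does go through (the pointwise differential inequalities restrict to sublevel sets with $R$-independent constants), but the paper's pointwise formulation makes the issue you flag as your ``main technical obstacle'' disappear entirely, so you may as well adopt it.
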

\begin{proof}[Proof of Claim \ref{claim 1}]
    We prove this claim with induction. When $k=0$, according to \eqref{equation for claim 1}, after integration, it is clear to see that Claim \ref{claim 1} holds for $k=0$. Now we suppose that Claim \ref{claim 1} holds for all integers $l\le k$. Now for $l=k+1$, since $g_\psi$ is uniformly bi-Lipschitz to $g$, by Lemma \ref{tensorial lemma} we have
    \begin{equation*}
       ( \nabla^g)^{k+1}\partial\bar\partial\dot\psi-(\nabla^{g_\psi})^{k+1}\partial\bar\partial\dot\psi=\sum_{\substack{p+q=k+1\\ q\le k}}\sum_{\substack{i_1+\cdot\cdot\cdot +i_l=p\\i_j\ge 1}}(\nabla^{g})^{i_1}g_\psi*\cdot\cdot\cdot*(\nabla^g)^{i_l}g_\psi*(\nabla^{g_\psi})^q\partial\bar\partial\dot\psi.
    \end{equation*}
    By the induction hypothesis and \eqref{equation for claim 1}, there exists a constant $D_{k+1}>0$ such that
    \begin{equation}\label{claim 1 1}
       | ( \nabla^g)^{k+1}\partial\bar\partial\dot\psi|_g\le D_{k+1}e^{-\tau}f^{-1}\left(|(\nabla^{g})^{k+1}\partial\bar\partial\psi|_g+f^{-\frac{k+1}{2}}\right).
    \end{equation}
    We define $y(\rho)=|(\nabla^{g})^{k+1}\partial\bar\partial\psi|_g(x,\rho)$ for all $\rho\le\tau$ and fixed $x\in M$. Since $f$ is bounded from below by $\varepsilon>0$, then \eqref{claim 1} implies
    \begin{equation*}
        \begin{split}
            \frac{d}{d\rho}\left(y(\rho)\exp\{D_{k+1}e^{-\rho}f^{-1}(x)\}\right)&\le \exp\{D_{k+1}e^{-\rho}f^{-1}(x)\}D_{k+1}e^{-\rho}f(x)^{-\frac{k+3}{2}}\\
            &\le \exp\{D_{k+1}\varepsilon^{-1}\}D_{k+1}e^{-\rho}f(x)^{-\frac{k+3}{2}}
        \end{split}
    \end{equation*}
    By integration and the initial condition $(\nabla^g)^{k+1}g_{\psi_0}=O(f^{-\frac{1+k}{2}})$, we conclude that there exists a constant $C_{k+1}>0$ such that 
    \begin{equation*}
         |(\nabla^{g})^{k+1}\partial\bar\partial\psi|_g\le C_kf^{-\frac{k+1}{2}}.
    \end{equation*}
    This result together with \eqref{claim 1 1}, implies that
    \begin{equation*}
         | ( \nabla^g)^{k+1}\partial\bar\partial\dot\psi|_g\le D_{k+1}(C_{k+1}+1)e^{-\tau}f^{-\frac{k+3}{2}}.
    \end{equation*}
    Hence Claim \ref{claim 1} holds for all integers.
\end{proof}
\begin{claim}\label{claim 2}
    The solution $g_\psi$ converges uniformly and smoothly to a asymptotically conical gradient expander $(M,g_\infty,X)$. Moreover, as $g_{\psi_0}$ is asymptotically conical with a unique asymptotic K\"ahler cone $(M,g_0')$ as in Proposition \ref{geometric interpretation of condition II}, $(M,g_\infty,X)$ is the unique (up to biholomorphisms) asymptotically conical gradient expander with asymptotic cone $C,g_0'$.
\end{claim}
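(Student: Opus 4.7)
The plan is to combine the exponential-in-$\tau$, polynomial-in-$f$ decay estimates of Claim \ref{claim 1} with the Cauchy criterion to produce a smooth limit, verify the soliton identity in the limit, and then match asymptotic cones to invoke the structure theorem for uniqueness. For $0\le\tau_1<\tau_2$ one has
\begin{equation*}
g_\psi(\tau_2)-g_\psi(\tau_1)=\partial\bar\partial\int_{\tau_1}^{\tau_2}\dot\psi(s)\,ds,
\end{equation*}
so Claim \ref{claim 1} yields the Cauchy bound
\begin{equation*}
\bigl|(\nabla^g)^k(g_\psi(\tau_2)-g_\psi(\tau_1))\bigr|_g\le C_k(e^{-\tau_1}-e^{-\tau_2})f^{-1-k/2}.
\end{equation*}
This produces a smooth limit $g_\infty:=\lim_{\tau\to\infty}g_\psi(\tau)=g+\partial\bar\partial\psi_\infty$, where $\psi_\infty:=\psi_0+\int_0^\infty\dot\psi(s)\,ds$ converges uniformly by Proposition \ref{summary}(iv) and remains $JX$-invariant. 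Letting $\tau_2\to\infty$ gives $|(\nabla^g)^k(g_\psi(\tau)-g_\infty)|_g\le C_ke^{-\tau}f^{-1-k/2}$, which is converted to the $g_\psi(\tau)$-norm form \eqref{convergence rate} using the uniform equivalence $\tfrac{1}{C}g\le g_\psi(\tau)\le Cg$ and Lemma \ref{tensorial lemma} together with the bounds on $\Psi=\Gamma(g_\psi)-\Gamma(g)$ from Proposition \ref{bound of S} and Corollary \ref{higher covariant derivative bound of Rm}.

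Passing $\tau\to\infty$ in the flow equation $\partial_\tau g_\psi=T=\mathcal{L}_{X/2}g_\psi-\Ric(g_\psi)-g_\psi$ and using $|T(\tau)|_g\le Ce^{-\tau}f^{-1}\to 0$ (Corollary \ref{final corollary}) together with the smooth convergence $g_\psi(\tau)\to g_\infty$ yields $\mathcal{L}_{X/2}g_\infty=\Ric(g_\infty)+g_\infty$. Hence $(M,g_\infty,X)$ is a gradient expanding Kähler-Ricci soliton with normalized potential $f+\tfrac{1}{2}X\cdot\psi_\infty$, and the curvature bounds $|(\nabla^{g_\psi(\tau)})^k\Rm(g_\psi(\tau))|_{g_\psi(\tau)}\le C_kf^{-1-k/2}$ of Corollary \ref{higher covariant derivative bound of Rm} pass to the limit, so $(M,g_\infty,X)$ satisfies Definition \ref{ACKR expander}.

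For the asymptotic cone, taking $\tau_1=0$ and $\tau_2\to\infty$ in the Cauchy estimate above gives $|(\nabla^g)^k(g_\infty-g_{\psi_0})|_g\le C_kf^{-1-k/2}$, so $g_\infty-g_{\psi_0}$ decays two orders faster than the rate at which $g_{\psi_0}$ approaches its unique conical model. By Proposition \ref{geometric interpretation of condition II}, $g_{\psi_0}$ is asymptotically conical with unique asymptotic Kähler cone $(C,g_0')$, and the above decay forces $g_\infty$ to share the same unique asymptotic cone $(C,g_0')$. Uniqueness (up to biholomorphisms) of an asymptotically conical gradient Kähler-Ricci expander with a prescribed asymptotic cone is then exactly Theorem \ref{CDS theorem}(iv), which concludes the proof.

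The main technical point will be the conversion between $\nabla^g$- and $\nabla^{g_\psi(\tau)}$-weighted norms uniformly in $\tau$, which is required both for the precise form of \eqref{convergence rate} and for propagating the polynomial $f$-decay of all higher Riemann curvature derivatives from the $\tau$-slice down to $g_\infty$; this is handled by iterating Lemma \ref{tensorial lemma} against the uniform control on $\Psi$ and its higher covariant derivatives encoded in the induction behind Corollary \ref{higher covariant derivative bound of Rm}.
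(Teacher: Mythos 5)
Your proposal is correct and follows essentially the same route as the paper: integrate the decay of $\partial\bar\partial\dot\psi$ from Claim \ref{claim 1} to get Cauchy convergence, pass to the limit in the flow equation to obtain the soliton identity, and invoke Theorem \ref{CDS theorem} for uniqueness. The only place the paper is more explicit is the cone-matching step, where it verifies via the parabolic rescaling $t\Phi_t^*$ (using that $\Phi_t^*f\to\infty$ uniformly on $\{r\ge\varepsilon\}$) that $t\Phi_t^*g_\infty$ and $t\Phi_t^*g_{\psi_0}$ have the same limit, rather than inferring it directly from the bound $|g_\infty-g_{\psi_0}|_g\le C_0f^{-1}$ — and note that this bound matches, rather than improves on, the rate at which $g_{\psi_0}$ approaches $g_0'$, which is still sufficient.
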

  \begin{proof}[Proof of Claim \ref{claim 2}]
       By integration in time for $0\le \tau<\rho<\infty$,
      \begin{equation*}
      |(\nabla^g)^k(g_\psi(\tau)-g_\psi(\rho)|_g\le C_k(e^{-\tau}-e^{-\rho})f^{-1-\frac{k}{2}}.
    \end{equation*}
    This implies that $g_\psi(\tau)$, as $\tau$ goes to $\infty$, converges uniformly smoothly to a K\"ahler metric $g_\infty$ such that for all $k\in\N_0$,
     \begin{equation*}
        |(\nabla^g)^k(g_\psi(\tau)-g_\infty)|_g\le C_ke^{-\tau}f^{-1-\frac{k}{2}}.
    \end{equation*}
At the same we have $|(\nabla^{g})^kg_\psi|_g\le C_k f^{-\frac{k}{2}}$ for all $k\in\N_0$, then we conclude that for all $k\in\N_0$,
     \begin{equation*}
        |(\nabla^{g_\psi(\tau)})^k(g_\psi(\tau)-g_\infty)|_{g_\psi(\tau)}\le C_ke^{-\tau}f^{-1-\frac{k}{2}}.
    \end{equation*}
    Moreover, since $\partial\bar\partial\dot\psi=\mathcal{L}_{\frac{X}{2}}g_\psi-\Ric(g_\psi)-g_\psi$, by letting $\tau$ tend to $\infty$, we have
    \begin{equation*}
        \mathcal{L}_{\frac{X}{2}}g_\infty-\Ric(g_\infty)-g_\infty=0.
    \end{equation*}
    As a consequence, the Hamiltonian potential $f_\psi$ converges uniformly to a smooth function $f_\infty$ as $\tau$ tends to $\infty$, and it turns out that $f_\infty$ is a Hamiltonian potential of $X$ with respect to $X$. Furthermore, by Proposition \ref{metric equivalence} and Corollary \ref{higher covariant derivative bound of Rm}, we find that $(M,g_\infty,X)$ is an asymptotically conical gradient K\"ahler-Ricci expander which is bi-Lipschitz to $(M,g,X)$.

    In particular, $|g_{\psi_0}-g_\infty|_g\le C_0f^{-1}$. Let $\Phi_t$ denote the flow of $-\frac{1}{2t}X$ for $t>0$, $g(t)$ be the self-similar solution of $g$ as in Proposition \ref{self-similar solution}, we have
    \begin{equation*}
        |t\Phi_t^*g_{\psi_0}-t\Phi_t^*g_\infty|_{g(t)}\le C_0\frac{1}{\Phi_t^*f}.
    \end{equation*}
Let $E$ denote the exceptional set as in Theorem \ref{CDS theorem}, then if $x\notin E$, $X(x)\neq 0$. Since $\Phi_t$ is the flow of $-\frac{1}{2t}X$ and $f$ is proper, thus, as $t$ tends to $0$, $\Phi_t^*f$ tends to $\infty$. Moreover, we identify $C\setminus \{o\}$ with $M\setminus E$ via the K\"ahler resolution as in Theorem \ref{CDS theorem}. Let $r$ denote the radial function of the K\"ahler cone $(C,g_0)$. Then for any $\varepsilon>0$, $\Phi_t^*f$ tends to $\infty$ uniformly on $\{r(x)\ge \varepsilon\}$. This implies that when $t$ tends to 0, $t\Phi_t^*g_{\psi_0}$ and $t\Phi_t^*g_\infty$ converge to the same limit metric. By Proposition \ref{geometric interpretation of condition II}, as $t$ tends to $0$, $t\Phi_t^*g_\infty$ converges locally to the conical metric $g_0'$ in Proposition \ref{geometric interpretation of condition II}. Hence $g_\infty$ is asymptotically conical with the unique asymptotic cone $(C,g_0')$.

  \end{proof}

    Now if the initial perturbation satisfies
\begin{equation*}
    \left|(\nabla^g)^k (g_{\psi_0} - g)\right|_g = o\left(f^{-\frac{k}{2}}\right)
\quad \text{for all } k \in \mathbb{N}_0,
\end{equation*}
then we have that
\begin{equation*}
    \left|(\nabla^g)^k (g_\infty - g)\right|_g = o\left(f^{-\frac{k}{2}}\right)
\quad \text{for all } k \in \mathbb{N}_0.
\end{equation*}
Let $g_\infty(t)_{t>0}$ be the self-similar solution to K\"ahler-Ricci flow associated to $g_\infty$. Since $\left|(\nabla^g)^k (g_\infty - g)\right|_g = o\left(f^{-\frac{k}{2}}\right)$
for all  $k \in \mathbb{N}_0$, thus for all $t>0$,
\begin{equation*}
   \begin{split}
        \left|(\nabla^{g(t)})^k (g_\infty(t) - g(t))\right|_{g(t)}(x)&=t^{-\frac{k}{2}} \left|(\nabla^g)^k (g_\infty - g)\right|_g (\Phi_t(x))\\
        &=t^{-\frac{k}{2}}o(f\circ\Phi_t^{-\frac{k}{2}})=o(1)(t\Phi_t^*f)^{-\frac{k}{2}},
   \end{split}
\end{equation*}
where $o(1)$ denotes a function which tends to 0 as $\Phi_t^*f$ tends to $\infty$. Since when $t$ tends to 0, on $\{r\ge \varepsilon\}$, $\Phi_t^*f$ tends to $\infty$ uniformly, hence $o(1)$ converges to $0$ uniformly when $t$ converges to $0$ on $\{r\ge\varepsilon\}$. Now we prove that on $\{r\ge\varepsilon\}$, function $t\Phi_t^*f$ is bounded from below. Recall the normalization of $f$ in Lemma \ref{soliton indentities}:
\begin{equation*}
    f=n+R_\omega+|\partial f|_g^2.
\end{equation*}
Now we compute,
\begin{equation*}
    \frac{\partial}{\partial t}t\Phi^*_tf=\Phi_t^*(f-|\partial f|^2)=\Phi_t^*(R_\omega+n)>0.
\end{equation*}
Here the last inequality comes from the fact $R_\omega+n>0$ on $M$. Moreover, for any $x\in C-\{o\}$, 
\begin{equation*}
    \lim_{t\to 0^+}t\Phi^*_tf(x)=\lim_{t\to 0^+}\left(nt+\Phi_t^*R_\omega(x)+\frac{1}{2}g(x,t)(X,X)\right)=\frac{1}{2}r(x)^2.
\end{equation*}
We conclude that on $\{r\ge \varepsilon\}$, for all $t>0$, $t\Phi_t^*f\ge \frac{1}{2}r^2\ge \frac{1}{2}\varepsilon^2.$ Function $t\Phi_t^*f$ is bounded from below on $\{r\ge \varepsilon\}$, and therefore, $g_\infty(t)$ converges locally smoothly to $g_0$ as $g(t)$ converges locally smoothly to $g_0$. The solution $g_\infty(t)$ to K\"ahler-Ricci flow satisfies conical condition in \cite[Theorem 1.4]{2025arXiv250500167C}.

Moreover, $g_\infty$ and $g$ are two asymptotically conical gradient K\"ahler-Ricci expander with bounded curvature, solution $g_\infty(t)$ satisfies cohomology condition and curvature condition in \cite[Theorem 1.4]{2025arXiv250500167C}. For the Killing condition, $JX$ is naturally a Killing vector field for $g_\infty$ because $(M,g_\infty,X)$ is a gradient expander (see Lemma \ref{Killing vector}).

As a result of Theorem 1.4 of \cite{2025arXiv250500167C}, we claim that $g_\infty(t)=g(t)$ for all $t>0$, thus $g_\infty=g$.
\end{proof}
    \subsection{Applications}
    In this section, we characterize the limiting metric $g_\infty$ obtained when perturbing the initial metric g by a term $\alpha\partial\bar\partial f$ (for some suitably chosen constant $\alpha$) using Theorem \ref{convergence theorem}. This characterization applies in particular to the canonical examples of Cao \cite{MR1449972}.
\begin{theorem}\label{last theorem}
    Let $(M,g,X)$ be an asymptotically conical gradient K\"ahler-Ricci expander. Let $f$ be the normalized Hamiltonian potential of $X$ with respect to $g$. 
    For any $\alpha\in\R$ such that $(1+\alpha)g+\alpha\Ric(g)>0$, the following holds:
    \begin{enumerate}
        \item the symmetric two-tensor $g_\alpha:=g+\alpha\partial\bar\partial f$ defines a K\"ahler metric;
        \item the solution to the normalized K\"ahler-Ricci flow starting from $g_\alpha$ exists for all time;
        \item as time goes to $\infty$, this solution converges to $\Phi_{\frac{1}{1+\alpha}}^*g$ smoothly uniformly, where $\Phi_t$ is the flow generated by $-\frac{1}{2t}X$.
    \end{enumerate}  
\end{theorem}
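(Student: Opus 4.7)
Part (i) follows immediately from the soliton identity $\partial\bar\partial f=\Ric(g)+g$: substituting yields $g_\alpha=(1+\alpha)g+\alpha\Ric(g)$, which is positive by hypothesis and hence a K\"ahler metric. For part (ii), the discussion after Definition \ref{condition II} already shows that $\psi_0:=\alpha f$ satisfies Condition II with respect to $(M,g,X)$: since $JX\cdot f=g(JX,X)=0$, the Killing condition $\mathcal{L}_{JX}g_\alpha=\alpha\partial\bar\partial(JX\cdot f)=0$ holds; the soliton identity $\frac{X}{2}\cdot f=|\partial f|_g^2=f-R_\omega-n$ gives $\frac{X}{2}\cdot\psi_0-\psi_0=-\alpha(R_\omega+n)=O(1)$; and using $\mathcal{L}_{\frac{X}{2}}\partial\bar\partial f=\partial\bar\partial(\frac{X}{2}\cdot f)=\partial\bar\partial f-\partial\bar\partial R_\omega$, one obtains $\mathcal{L}_{\frac{X}{2}}g_\alpha-g_\alpha=\Ric(g)-\alpha\partial\bar\partial R_\omega$, whose $k$-th covariant derivative is $O(f^{-1-k/2})$ by the asymptotically conical bounds on $\Rm(g)$ and its derivatives. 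Theorem \ref{convergence theorem}(a), (b) then produces an immortal NKRF solution $g_\psi(\tau)$ starting from $g_\alpha$ converging smoothly uniformly to some asymptotically conical gradient K\"ahler-Ricci expander $(M,g_\infty,X)$.

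For part (iii), my plan is to identify $g_\infty=\tilde g:=\Phi_{s}^*g$ with $s:=\frac{1}{1+\alpha}$ by re-applying Theorem \ref{convergence theorem}(c) using $\tilde g$ as the background expander in place of $g$. The triple $(M,\tilde g,X)$ is itself an asymptotically conical gradient K\"ahler-Ricci expander with normalized potential $\tilde f:=\Phi_s^*f$: a vector field is invariant under its own flow, so $\Phi_s^*X=X$, and pulling back the soliton equation gives $\operatorname{Hess}_{\tilde g}\tilde f=\Ric(\tilde g)+\tilde g$; the normalization $|\partial\tilde f|^2_{\tilde g}+R_{\tilde\omega}+n=\tilde f$ follows by pullback invariance. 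On the asymptotic cone $X=r\partial_r$, so $\Phi_s=\Phi_X^{\frac{1}{2}\log(1+\alpha)}$ acts as the radial scaling $r\mapsto\sqrt{1+\alpha}\,r$; hence the asymptotic cone of $\tilde g$ under the resolution $\pi$ of Theorem \ref{CDS theorem} is $(C,(1+\alpha)g_0)$. This coincides with the asymptotic cone of $g_\alpha$, which also equals $(C,(1+\alpha)g_0)$ since $g\to g_0$ and $\partial\bar\partial f\to g_0$ at infinity.

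The technical heart of the argument is to verify Condition II and the quantitative decay \eqref{quantitative asymptotic decay} for $g_\alpha$ relative to $\tilde g$. A K\"ahler potential $\tilde\psi_0$ with $g_\alpha=\tilde g+\partial\bar\partial\tilde\psi_0$ is obtained by integrating $\mathcal{L}_X\omega=2i\partial\bar\partial f$ along the flow of $X$, yielding $\tilde\omega-\omega=2i\partial\bar\partial h$ with $h:=\int_0^{\frac{1}{2}\log(1+\alpha)}f\circ\Phi_X^\tau\,d\tau$, and hence $\tilde\psi_0=\alpha f-2h$. An ODE analysis for $u(\tau):=f\circ\Phi_X^\tau$, based on $u'(\tau)=|X|^2_g\circ\Phi_X^\tau=2(u-n-R_\omega\circ\Phi_X^\tau)$, produces $2h=\alpha f-\alpha n+n\log(1+\alpha)+O(f^{-1})$; the leading $\alpha f$ terms cancel, leaving $\tilde\psi_0=n[\alpha-\log(1+\alpha)]+O(f^{-1})$. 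Since both $g-g_0$ and $\tilde g-(1+\alpha)g_0$ decay to all orders at the rates $O(f^{-1-k/2})$ prescribed by the asymptotically conical definition, and $\alpha\partial\bar\partial f-\alpha g_0=\alpha\rho_\omega+\alpha(g-g_0)$ decays at the same rate, one concludes $|(\nabla^{\tilde g})^k(g_\alpha-\tilde g)|_{\tilde g}=O(\tilde f^{-1-k/2})=o(\tilde f^{-k/2})$, verifying \eqref{quantitative asymptotic decay} (the remaining parts of Condition II for $\tilde\psi_0$ follow similarly). Theorem \ref{convergence theorem}(c) then yields a NKRF solution from $g_\alpha$ converging smoothly uniformly to $\tilde g$; since the NKRF equation depends only on $X$, not on the choice of background expander, and the constructed flow is uniquely determined by the initial data via the uniqueness of Shi's solution to the corresponding K\"ahler-Ricci flow, this solution coincides with the one from the first paragraph. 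Therefore $g_\infty=\tilde g=\Phi_{\frac{1}{1+\alpha}}^*g$. The main technical obstacle is the careful tracking of the polynomial decay rates for $\tilde\psi_0$ and the associated Condition II quantities under the change of background from $g$ to $\tilde g$, which requires combining the soliton identities with the pullback behavior of geometric quantities under $\Phi_s$.
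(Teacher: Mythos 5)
Your proposal is correct, and parts (i)--(ii) coincide with the paper's argument ($\psi_0=\alpha f$ satisfies Condition~II, then Theorems~\ref{longtime existence theorem} and \ref{convergence theorem} apply). For part (iii) you and the paper exploit the same underlying self-similarity, but conjugated differently: the paper pulls the \emph{flow} back by the fixed diffeomorphism $\Phi_{1+\alpha}$, writes $\Phi_{1+\alpha}^*g_\alpha=g+\partial\bar\partial\tilde\psi_0$ with $\tilde\psi_0$ built from $\int_1^{1+\alpha}\log\frac{\omega(s)^n}{\omega(1+\alpha)^n}\,ds$, verifies $(\nabla^g)^k\tilde\psi_0=O(f^{-k/2})$ by an induction on $(\nabla^g)^k g(s)$ and $(\nabla^g)^k\Ric(g(s))$ together with the comparability $\frac{1}{A}f\le s\Phi_s^*f\le Af$, and then applies Theorem~\ref{convergence theorem}(c) with background $g$ before undoing the pullback. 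You instead keep the flow fixed and change the \emph{background} to $\tilde g=\Phi_{1/(1+\alpha)}^*g$, verifying the quantitative decay \eqref{quantitative asymptotic decay} of $g_\alpha-\tilde g$ via the explicit potential $\tilde\psi_0=\alpha f-2h$ and the ODE for $f\circ\Phi_X^\tau$; this is an equivalent computation (indeed $g_\alpha-\tilde g=\alpha\Ric(g)-(1+\alpha)\int_{1/(1+\alpha)}^{1}\Ric(g(u))\,du$ exhibits the decay directly) and arguably more transparent about why the leading $\alpha f$ terms cancel. The one extra ingredient your route needs, which you correctly flag, is that the NKRF solution produced with background $\tilde g$ agrees with the one produced with background $g$; this follows from Chen--Zhu uniqueness of the bounded-curvature Shi solution, the same fact the paper implicitly uses when identifying $\Phi_{1+\alpha}^*g_\alpha(\tau)$ with the solution issued from $\Phi_{1+\alpha}^*g_\alpha$. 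I see no gap; both the sign bookkeeping ($\Phi_{1+\alpha}^{-1}=\Phi_{1/(1+\alpha)}$, asymptotic cone $(C,(1+\alpha)g_0)$) and the uniformity of the $O(f^{-1})$ remainders over the bounded flow-time interval (needed also when $\alpha<0$) check out.
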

\begin{proof}
    First, we prove that $1+\alpha>0$. Since $\Ric(g)$ converges to 0 at infinity, the condition $(1+\alpha)g+\alpha\Ric(g)>0$ implies $1+\alpha>0$. Let the initial Kähler potential be given by $\psi_0 = \alpha \partial \bar{\partial} f$. Then, as shown in Section \ref{main results}, $\psi_0$ satisfies Condition II in the sense of Definition \ref{condition II}.

    Thanks to Theorem \ref{longtime existence theorem} and Theorem \ref{convergence theorem}, the solution to the normalized K\"ahler-Ricci flow starting from $g_\alpha$ exists for all time and when time goes to $\infty$, this solution converges smoothly uniformly to some asymptotical conical gradient K\"ahler-Ricci expander $(M,g_\infty^\alpha,X)$. Let $g_\alpha(\tau)_{\tau}$ denote this solution.

    Now we consider $\tilde g_\alpha(\tau):=\Phi_{1+\alpha}^*g_\alpha(\tau)$, $\tilde g_\alpha(\tau)$ is a solution to K\"ahler-Ricci flow starting from $\Phi_{1+\alpha}^*g_\alpha$. Let $g(t)_{t>0}$ be the self-similar solution to K\"ahler-Ricci flow associated to $g$. Then we observe that
    \begin{equation*}
        \begin{split}
            \Phi_{1+\alpha}^*g_\alpha=g(1+\alpha)+\alpha\Phi_{1+\alpha}^*\Ric(g)&=g(1+\alpha)+\alpha\Ric(g(1+\alpha))\\
            &=g-\int_1^{1+\alpha}\Ric(g(s))ds+\alpha\Ric(g(1+\alpha))\\
            &=g+\alpha\partial\bar\partial \int_1^{1+\alpha}\log\frac{\omega(s)^n}{\omega(1+\alpha)^n}ds.
        \end{split}
    \end{equation*}
    Let $\tilde\psi_0=\alpha \int_1^{1+\alpha}\log\frac{\omega(s)^n}{\omega(1+\alpha)^n}ds$, now we check that $(\nabla^g)^k\tilde\psi_0=O(f^{-\frac{k}{2}})$ for all $k\in\N_0$. It suffices to show that for any $k\in\N_0$, there exists a constant $C_k>0$ such that for all $s\in [1,1+\alpha]$
    \begin{equation*}
        |(\nabla^g)^kg(s)|_{g}\le C_kf^{-\frac{k}{2}},
    \end{equation*}
    and 
    \begin{equation*}
        |(\nabla^g)^k\Ric(g(s))|_g\le C_kf^{-\frac{k+2}{2}}.
    \end{equation*}
We proceed by induction. First, we prove that there exists a constant $A>1$ such that for all $s\in [1,1+\alpha]$, we have
\begin{equation*}
    \frac{1}{A}f\le s\Phi_s^*f\le Af.
\end{equation*}
Thanks to the soliton identity $f=R_\omega+n+|\partial f|_g^2$, we compute,
\begin{equation*}
   \frac{d}{ds}s\Phi_s^*f=\Phi_s^*(f-|\partial f|_g^2)=\Phi_s^*(n+R_\omega).
\end{equation*}
Let $A_1:=\sup_M |R_\omega|+n$, then we have $|\frac{d}{ds}s\Phi_s^*f|\le A_1$. Thus, for all $s\in [1,1+\alpha]$, we have
\begin{equation*}
    |s\Phi_s^*f-f|\le A_1|s-1|\le A_1|\alpha|.
\end{equation*}
The potential $f$ is bounded from below by $\varepsilon>0$, so on the one hand
\begin{equation*}
    s\Phi_s^*f\le f+A_1|\alpha|\le \left(1+\frac{A_1|\alpha|}{\varepsilon}\right)f.
\end{equation*}
On the other hand
\begin{equation*}
    s\Phi_s^*f\ge f-A_1|\alpha|\ge f-\frac{A_1|\alpha|}{s\varepsilon}s\Phi_s^*f\ge f-\frac{A_1|\alpha|}{\min\{1+\alpha,1\}\varepsilon}s\Phi_s^*f.
\end{equation*}
If we define $A=1+\frac{A_1|\alpha|}{\min\{1+\alpha,1\}}+\frac{A_1|\alpha|}{\varepsilon}$, then we have
\begin{equation}
    \frac{1}{A}f\le s\Phi_s^*f\le Af,
\end{equation}
as expected.

Since $g$ has bounded Riemannian curvature, there exists a constant $C_0>1$ such that for all $s \in [1,1+\alpha]$,
\begin{equation*}
    \frac{1}{C_0} g\leq\; g(s)\leq C_0 g.
\end{equation*}
Moreover, there exists a constant $C'>0$ such that $|\Ric(g)|_g\le \frac{C'}{f}$. Thus, we have \begin{equation*}
    |\Ric(g(s))|_g\le C_0|\Ric(g(s))|_{g(s)}=\frac{C_0C'}{s\Phi_s^*f}\le \frac{C_0C'A}{f}.
\end{equation*}
Hence, the claim holds for the base case $k=0$. We suppose this argument holds for all integers $l\le k$. Now for $l=k+1$. By Lemma \ref{tensorial lemma}, we have
\begin{equation*}
   \begin{split}
        &\quad (\nabla^g)^{k+1}\Ric(g(s))-(\nabla^{g(s)})^{k+1}\Ric(g(s))\\&=\sum_{\substack{p+q=k+1\\ q\le k}}\sum_{i_1+\cdot\cdot\cdot i_l=p}(\nabla^g)^{i_1}g(s)*\cdot\cdot\cdot(\nabla^g)^{i_l}g(s)*(\nabla^{g(s)})^q\Ric(g(s)).
   \end{split}
\end{equation*}
By induction hypothesis and by the curvature bound of $g$, there exists a constant $D_{k+1}>0$ such that
\begin{equation}\label{application}
   | (\nabla^g)^{k+1}\Ric(g(s))|_g\le D_{k+1}f^{-1}\left(|(\nabla^g)^{k+1}g(s)|_g+f^{-\frac{k+1}{2}}\right).
\end{equation}
 Let us define $y(s):=|(\nabla^g)^{k+1}g(s)|_g(x)$ for fixed $x\in M$ and $s\in [1,1+\alpha]$. Then we have the following ODE inequality:
 \begin{equation*}
     \begin{split}
         \frac{d}{ds}y(s)&\le D_{k+1}f^{-1}\left(y(s)+f^{-\frac{k+1}{2}}\right)
     \end{split}
 \end{equation*}
By integration and the initial condition $y(1)=0$, we can find a constant $C_{k+1}>0$ such that $|(\nabla^g)^{k+1}g(s)|_g(x)\le C_{k+1}f^{-\frac{k+1}{2}}$. This result together with \eqref{application} imply that this argument holds for all integers.

    Then by Theorem \ref{convergence theorem}, $\tilde g_\alpha(\tau)$ converges smoothly uniformly to $g$. Therefore, $g_\alpha(\tau)$, as $\tau$ tends to $\infty$, converges smoothly uniformly to $\Phi_{\frac{1}{1+\alpha}}^*g$.
\end{proof}
\begin{corollary}
    Let $(M,g,X)$ be the canonical examples of Cao \cite{MR1449972}. Then for any $\alpha\ge 0$, the following holds:
    \begin{enumerate}
        \item the form $g_\alpha:=g+\alpha\partial\bar\partial f$ defines a K\"ahler metric;
        \item the solution to the normalized K\"ahler-Ricci flow starting from $g_\alpha$ exists for all time;
        \item as time goes to $\infty$, this solution converges to $\Phi_{\frac{1}{1+\alpha}}^*g$ smoothly uniformly, where $\Phi_t$ is the flow generated by $-\frac{1}{2t}X$.
    \end{enumerate}  
\end{corollary}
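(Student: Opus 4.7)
The plan is to deduce this corollary directly from Theorem \ref{last theorem} by verifying the positivity hypothesis $(1+\alpha)g + \alpha\Ric(g) > 0$ for Cao's solitons when $\alpha \geq 0$. Since Cao's solitons on $\C^n$ are constructed precisely so as to carry positive holomorphic bisectional curvature (see \cite{MR1449972}), in particular their Ricci curvature is strictly positive: $\Ric(g) > 0$ as a Hermitian $(1,1)$-form. This is the only geometric input beyond Theorem \ref{last theorem} that is required.

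Given this, the verification is immediate. First I would note that for $\alpha \geq 0$ we have $1+\alpha \geq 1 > 0$, so $(1+\alpha)g$ is a positive-definite Hermitian form. Since $\alpha \geq 0$ and $\Ric(g) > 0$, the tensor $\alpha \Ric(g)$ is positive semidefinite (and positive definite when $\alpha>0$). Adding a positive-definite form to a positive semidefinite one yields a positive-definite form, so
\begin{equation*}
    (1+\alpha)g + \alpha \Ric(g) > 0.
\end{equation*}
Thus the hypothesis of Theorem \ref{last theorem} is satisfied for every $\alpha \geq 0$, and all three conclusions (i), (ii), (iii) of the corollary follow verbatim from that theorem.

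There is essentially no obstacle here: the result is a clean specialization. The only point worth flagging is that Theorem \ref{last theorem} is actually stated for all $\alpha \in \R$ with $(1+\alpha)g + \alpha\Ric(g) > 0$, and the present corollary only uses the sub-range $\alpha \geq 0$. In Cao's setting, because $\Ric(g) > 0$, the admissible range is in fact strictly larger than $[0,\infty)$: one can also admit slightly negative $\alpha$ provided that $(1+\alpha)g + \alpha\Ric(g)$ remains positive, which holds as long as $-\alpha/(1+\alpha)$ does not exceed the reciprocal of the supremum of the eigenvalues of $\Ric(g)$ relative to $g$. However, since the corollary as stated restricts to $\alpha \geq 0$, no such refinement is needed, and the proof reduces to citing Theorem \ref{last theorem}.
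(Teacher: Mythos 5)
Your proposal is correct and matches the paper's argument exactly: the paper likewise observes that Cao's models have positive Ricci curvature, so $(1+\alpha)g+\alpha\Ric(g)>0$ for all $\alpha\ge 0$, and then invokes Theorem \ref{last theorem}. Your additional remark about admissible slightly negative $\alpha$ is a harmless aside not needed for the statement.
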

\begin{proof}
    Since in Cao's models, the Ricci curvature is positive, hence for all $\alpha\ge 0$, $(1+\alpha)g+\alpha\Ric(g)>0.$ Hence we can use Theorem \ref{last theorem}.
\end{proof}
\bibliographystyle{plain}
\bibliography{references}
\end{document}